\numberwithin{equation}{section}
\let\oldsection\section
\def\section{%
\@ifstar{\@Starred}{\@nonStarred}%
}
\def\@Starred{%
\@ifnextchar[%
{\GenericWarning{}{Warning: A starred chapter can not have parameters. I am going to ignore them!}\@StarredWith}%
{\@StarredWithout}%
}      
\def\@StarredWith[#1]#2{%
\xdef\mysec{#2}\oldsection{#2}
}
\def\@StarredWithout#1{%
\xdef\mysec{#1}\oldsection*{#1}%
}
\def\@nonStarred{%
\@ifnextchar[%
{\@nonStarredWith}%
{\@nonStarredWithout}%
}
\def\@nonStarredWith[#1]#2{%
\xdef\mysec{#2}\oldsection[#1]{#2}%
}
\def\@nonStarredWithout#1{%
\xdef\mysec{#1}\oldsection{#1}%
}
\def\thmhead@plain#1#2#3{%
  \thmname{#1}\thmnumber{\@ifnotempty{#1}{ }\@upn{#2}}%
  \thmnote{ {\the\thm@notefont#3}}}
\let\thmhead\thmhead@plain
\newtheorem{definition}{Definition}
\newtheorem*{definition*}{Definition}
\newtheorem{theorem}{Theorem}
\newtheorem*{theorem*}{Theorem}
\newtheorem*{corollary*}{Corollary}
\newtheorem{proposition}{Proposition}
\newtheorem*{proposition*}{Proposition}
\newtheorem*{lemma*}{Lemma}
\newtheorem{remarkc}[definition]{Remark}
\newtheorem*{remark}{Remark}
\newtheorem{lemma}[definition]{Lemma}
\newtheorem{corollary}[definition]{Corollary}
\numberwithin{definition}{section}
\newcommand{\T}{\mathbb{T}}
\newcommand{\R}{\mathbb{R}}
\newcommand{\N}{\mathbb{N}}
\newcommand{\Z}{\mathbb{Z}}
\newcommand{\calP}{\mathcal{P}}
\newcommand{\calR}{\mathcal{R}}
\newcommand{\calU}{\mathcal{U}}
\newcommand{\calW}{\mathcal{W}}
\newcommand{\calL}{\mathcal{L}}
\newcommand\numberthis{\addtocounter{equation}{1}\tag{\theequation}}
\newcommand{\De}{\Delta}
\newcommand{\calD}{\mathcal{D}}
\DeclareMathOperator{\dir}{dir}
\DeclareMathOperator{\vol}{Vol}
\DeclareMathOperator{\env}{env}
\newcommand{\e}{\varepsilon}
\newcommand{\de}{\delta}
\begin{document}
\title{The quasi-Assouad dimension of $(1,2t)$-Furstenberg sets in $\R^3$ is extremized by sticky sets.}
\author{Sam Craig}

\begin{abstract}
    A $(1,2t)$-Furstenberg set in $\R^3$ is naturally defined as a set containing a union of unit line segments forming a $2t$-dimensional subset of the affine Grassmannian in $\R^3$ and satisfying a suitable variant of the Frostman Convex Wolff Axiom. Some of these sets have a multi-scale self-similarity property called stickiness. We investigate the extremizers of the quasi-Assouad dimension of $(1,2t)$-Furstenberg sets, a slightly stronger variant of the Assouad dimension. We prove that sticky $(1,2t)$-Furstenberg sets have the least possible quasi-Assouad dimension among all $(1,2t)$-Furstenberg sets.
\end{abstract}
\maketitle
\section{Introduction}

The $(s,2t)$-Furstenberg problem asks for the minimum dimension of a set in $\R^d$ which has $s$-dimensional intersection with each unit line segment coming from a $2t$-dimensional family of lines. The study of such sets in the planar case is a famous problem formulated by Furstenberg in the 1970s, with the most difficult cases of the problem recently resolved in papers by Orponen and Shmerkin and Ren and Wang \cite{OS23,RW23}. Substantial difficulties arise when moving to higher dimensions, including finding the correct formulation of the problem. Beyond $\R^2$, it is possible that unions of tubes could be smaller than expected because of excess concentration near planes. For this reason, we additionally require the line segments in $(s,2t)$-Furstenberg sets to satisfy a non-concetration condition called the \emph{$t$-Frostman Convex Wolff Axiom}, first defined by Wang and Wu in \cite{WW24} as a natural generalization of the Frostman Convex Wolff Axiom developed in Wang and Zahl's recent proof of the Kakeya conjecture \cite{WZ22, WZ24, WZ25}.

A key idea in the proofs of both the planar Furstenberg conjecture and the Kakeya conjecture in $\R^3$ is that we should prove sharp results for strongly self-similar sets and use those results and a multiscale decomposition to prove sharp results in general. In this paper, we provide the second step in that line of reasoning for $(1,2t)$-Furstenberg sets, though with a weaker dimension called the \emph{quasi-Assouad dimension}, generalizing a similar result for Kakeya sets by Wang and Zahl \cite{WZ24}. The quasi-Assouad dimension captures the maximum concentration of a set between two close but well-separated scales. Our strongly self-similar sets are a natural generalization of sticky Kakeya sets, which we call \emph{sticky $(1,2t)$-Furstenberg sets}. 

Before defining the dimensions and non-concentration conditions needed to formally state our main theorem, we give an informal variant which captures the main idea of the result. In the main theorem, we need control over sets which are close to being sticky $(1,2t)$-Furstenberg sets in a quantitative sense. For now, we call such sets ``approximately sticky $(1,2t)$-Furstenberg sets''. 
\begin{theorem*}[\textbf{1} (\textit{informal version})]
    Suppose every approximately sticky $(1,2t)$-Furstenberg set has quasi-Assouad dimension $\ge \sigma$. Then every $(1,2t)$-Furstenberg set has quasi-Assouad dimension $\ge \sigma$.
\end{theorem*}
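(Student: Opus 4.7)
The plan is to prove the contrapositive: assuming some $(1,2t)$-Furstenberg set $F$ has quasi-Assouad dimension strictly less than $\sigma$, I would produce an approximately sticky $(1,2t)$-Furstenberg set whose quasi-Assouad dimension is also strictly less than $\sigma$, contradicting the hypothesis. Unpacking the definition of quasi-Assouad dimension, the assumption on $F$ supplies constants $s<\sigma$ and $\lambda>0$, a pair of separated scales $\delta \le \Delta^{1+\lambda}$, and a ball $B(x,\Delta)$ such that $F \cap B(x,\Delta)$ is covered by at most $C(\Delta/\delta)^s$ balls of radius $\delta$. Fixing this scale pair and this ball is the starting input of the construction.

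The central step is a two-scale decomposition of the line family $\calL$ generating $F$. At the coarse scale $\Delta$, I would partition $\calL$ into $\Delta$-clusters by grouping lines according to the $\Delta$-neighborhood in the affine Grassmannian that contains them. Each cluster, after rescaling by a factor of $1/\Delta$, becomes a unit-scale line family at the fine scale $\delta/\Delta$, and the rescaled image of $F \cap B(x,\Delta)$ is a candidate Furstenberg-like set at that fine scale. A pigeonholing refinement then extracts a subfamily $\calL'$ in which both the number of active $\Delta$-clusters and the typical per-cluster count of $(\delta/\Delta)$-tubes are close to their maxima simultaneously. This uniformity is precisely the quantitative approximate stickiness: the fine-scale tube count factors cleanly as coarse count times per-cluster count, without anomalous concentration into a few clusters.

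The main technical obstacle is verifying two preservation statements for the refined structure. First, both the coarse $\Delta$-cluster family and the rescaled fine families inside a typical cluster must continue to satisfy the $t$-Frostman Convex Wolff Axiom, with losses only of logarithmic order in $\delta$; this should follow by averaging the Frostman bound for $\calL$ over Grassmannian balls of radius $\Delta$ and observing that excess concentration in a low-dimensional variety at either the coarse or the rescaled fine scale would already force excess concentration in $\calL$. Second, the small $\delta$-cover of $F \cap B(x,\Delta)$ must descend to a small $(\delta/\Delta)$-cover of the rescaled set so that the approximately sticky set extracted inherits the bad quasi-Assouad behavior of $F$. Since pigeonholing only costs $(\log 1/\delta)^{O(1)}$ factors and the scale pair was chosen deep in the quasi-Assouad regime, the effective exponent remains strictly below $\sigma$.

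Once both preservation statements are in place, the extracted structure is by construction an approximately sticky $(1,2t)$-Furstenberg set whose quasi-Assouad dimension is at most $s<\sigma$, contradicting the hypothesis and completing the proof. I expect the hardest point to be calibrating what ``approximately'' means in approximately sticky so that the pigeonholing both produces such sets and falls under the hypothesis; this calibration is what ultimately determines the precise quantitative formulation of the formal theorem.
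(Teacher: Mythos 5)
There is a genuine gap, and it is the central one. Your construction fixes a single witnessing pair of scales $\delta \le \Delta^{1+\lambda}$ and performs a two-scale decomposition with pigeonholing, then declares the resulting uniformized family ``approximately sticky.'' But stickiness in this paper means the $t$-Frostman Convex Wolff Axiom \emph{at all scales}: for every intermediate scale $\rho$, the tubes inside each $\rho$-tube must satisfy the (rescaled) $t$-Frostman Convex Wolff Axiom. A two-scale uniformization says nothing about the intermediate-scale structure, and your first ``preservation statement'' --- that the rescaled fine families inherit the Convex Wolff Axiom up to logarithmic losses by averaging --- is false in general: the whole difficulty is that a generic Furstenberg family can badly overconcentrate inside coarser tubes or, worse, inside flat $\rho\times\Delta\times 1$ prisms, and no pigeonholing repairs this. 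Consequently the object you extract need not fall under the hypothesis at all, and the contradiction never materializes.

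The paper's proof has to do two things that your outline omits. First, instead of an arbitrary counterexample it takes an \emph{extremal} one: among tube families satisfying the $t$-Frostman Convex Wolff Axiom with the smallest possible quasi-Assouad dimension, it takes one of maximal cardinality $\delta^{-\beta}$. A multiscale decomposition (Proposition \ref{prop:WZ256.3}) then yields four alternatives: a denser coarse-scale family, a denser rescaled family inside some tube, concentration of the tubes in a family of flat prisms satisfying the Convex Wolff Axiom above and below, or genuine stickiness. The density extremality kills the first two alternatives (this is the role your ``counts close to their maxima'' pigeonholing cannot play, since it does not compare against all competing configurations), and only the last alternative is handled by the hypothesis on sticky sets. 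Second, the flat-prism alternative is not an error term to be absorbed: it requires a direct proof that such configurations already have upper spectrum about $2t+1\ge\sigma$ (Proposition \ref{prop:prismsetup}, proved via induction on scales, brush arguments, and $L^2$ estimates in Sections \ref{sec:prismsetup}--\ref{sec:lem}); this is the bulk of the paper and has no counterpart in your argument. Without an analogue of the extremization step and of the prism-case bound, the reduction from general to (approximately) sticky sets cannot be closed.
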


In terms of more familiar dimensions, we have a mildly weaker corollary.

\begin{corollary*}[(\textit{Theorem 1 informal corollary})]
    Suppose every approximately sticky $(1,2t)$-Furstenberg set has Hausdorff dimension $\ge \sigma$. Then every $(1,2t)$-Furstenberg set has Assouad dimension $\ge \sigma$. 
\end{corollary*}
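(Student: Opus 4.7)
The plan is to deduce the corollary directly from Theorem 1 using the standard inequalities between the Hausdorff, quasi-Assouad, and Assouad dimensions. For every bounded set $E \subset \R^d$ one has
\[
\dim_H E \;\le\; \dim_{\mathrm{qA}} E \;\le\; \dim_A E,
\]
where the subscripts $H$, $\mathrm{qA}$, and $A$ denote Hausdorff, quasi-Assouad, and Assouad, respectively. Both inequalities are classical and should already be recorded (or be a direct consequence of the definitions) in the preliminary section on the quasi-Assouad dimension.

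First I would convert the Hausdorff hypothesis into a quasi-Assouad hypothesis on the sticky side: the assumption that every approximately sticky $(1,2t)$-Furstenberg set $E$ satisfies $\dim_H E \ge \sigma$ yields, via the first inequality above, that $\dim_{\mathrm{qA}} E \ge \sigma$ for every such $E$. This is exactly the hypothesis of the (informal) Theorem 1.

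Applying Theorem 1 now upgrades the quasi-Assouad lower bound on approximately sticky sets to a quasi-Assouad lower bound on \emph{all} $(1,2t)$-Furstenberg sets: every such $F$ satisfies $\dim_{\mathrm{qA}} F \ge \sigma$. Invoking the second inequality $\dim_{\mathrm{qA}} F \le \dim_A F$ then gives $\dim_A F \ge \sigma$, which is the stated conclusion.

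The only substantive content in this deduction is Theorem 1 itself; the rest is monotonicity of dimensions. The main obstacle is therefore not in this corollary but in the proof of Theorem 1, which forms the body of the paper. The one verification I would flag is that the informal notion of ``approximately sticky $(1,2t)$-Furstenberg set'' in the hypothesis here is taken to be the same class as (or contained in) the one appearing in the hypothesis of Theorem 1, so that the chained implication is not vacuous; once the precise quantitative definition is fixed in the main body, this should be immediate.
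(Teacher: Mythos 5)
Your proposal is correct and is exactly the intended deduction: the paper states this as a ``mildly weaker'' consequence of Theorem 1 precisely because of the monotonicity chain $\dim_H \le \dim_{qA} \le \dim_A$, so the Hausdorff hypothesis on (approximately) sticky sets implies the quasi-Assouad hypothesis of Theorem 1, and the quasi-Assouad conclusion implies the Assouad conclusion. The paper gives no separate argument for the corollary, and your one flagged verification (that ``approximately sticky'' means the same quantitative class as in Theorem 1, i.e.\ sticky at scale $\de$ with error $\de^{-\eta}$) is the right thing to check once the formal statement (Theorem \ref{thm:main}) is in hand.
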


In the next three subsections, we discuss the history of the Furstenberg problem; the non-concentration conditions for lines in $\R^3$; and the upper spectrum and quasi-Assouad dimension. We finish the introduction with a formal statement of Theorem 1.

\subsection{History of the Furstenberg problem}\label{subsec:history}

Define an $\alpha$-Furstenberg set to be a set $E \subset \R^2$ such that for each direction $e \in S^1$, there exists a line segment $\ell$ in direction $e$ intersecting $E$ in a set of Hausdorff dimension $\ge \alpha$. The original formulation of the Furstenberg problem asked for the smallest Hausdorff dimension of an $\alpha$-Furstenberg set. This problem was likely inspired by Furstenberg's work on variants of Cantor sets in 1970, but first appeared in the literature in Wolff's notes on the Kakeya problem \cite{F70, W03}. Wolff gave a pair of elementary arguments proving that such sets have Hausdorff dimension at least $\max(2\alpha, \alpha+1/2)$ and conjectured that $\alpha$-Furstenberg sets should have Hausdorff dimension at least $\frac{3\alpha}{2} + \frac{1}{2}$. This definition was generalized by Molter and Rela to consider lower dimensional line sets as well, leading to the current definition of planar $(s,t)$-Furstenberg sets \cite{MR12}. As mentioned in the introduction, this problem was resolved in 2023 by Ren and Wang, whose work built on papers of Orponen and Shmerkin and of Guth, Solomon, and Wang proving the conjecture for Furstenberg sets of tubes under stronger assumptions on their structure \cite{OS23, GSW19}. The introduction to \cite{RW23} and the other papers mentioned here contain more background on the progress towards the resolution of this problem.

The planar $(s,t)$-Furstenberg set definition generalizes immediately to higher dimensions. With this definition, the question of $(1,2t)$-Furstenberg sets is essentially trivial. An $L^2$ argument appearing in a paper of H\'era, Shmerkin, and Yavicoli (who attributed it to Guth) tells us that in any dimension, such sets must have Hausdorff dimension at least $\min(2, 1 + 2t)$ \cite[Appendix A]{HSY21}. It is not hard to prove that these bounds are sharp: if $t < 1/2$, we take the product of an interval and any planar $t$-dimensional set; and if $t \ge 1/2$ we put all the line segments in a hyperplane. Regardless of the ambient dimension, under this definition, the $(1,2t)$-Furstenberg set problem degenerates to the planar problem. This phenomenon appears in the discrete setting as well. The Szemeredi-Trotter theorem tells us that \emph{for any ambient dimension} $n$, sets $\calP$ of points in $\R^n$ and $\calL$ of lines in $\R^n$ have $\lessapprox (|\calP||\calL|)^{2/3} + |\calP| + |\calL|$ many incidences. 

However, there are reasonable conditions one could place on the set of line segments which force the inequality to reflect the dimensionality of the ambient space. One such condition for lines in $\R^3$ is that at most $|\calL|^{1/2}$ lines lie in any hyperplane in $\R^3$. In this case, Guth gives a stronger bound on the number of incidences \cite[Theorem 12.1]{G16}\begin{equation}\label{eqn:STR3}I(\calP, \calL) \lesssim |\calP|^{1/2}|\calL|^{3/4} + |\calP|^{2/3}|\calL|^{1/2} + |\calL| + |\calP|.\end{equation}If our line set has $\de^{-2t}$ elements and each element intersects $\de^{-1}$ many points, we can compute that the final term in (\ref{eqn:STR3}) must dominate the sum and hence our set must contain $\gtrsim \de^{-1-2t}$ points. 

We need an additional continuous non-concentration condition which rules out concentration near hyperplanes to find a genuinely three-dimensional variant of the Furstenberg set problem. The $t$-Frostman Convex Wolff Axiom arises as a natural choice. We discuss the $t$-Frostman Convex Wolff Axiom in more detail in the following subsection. 

\subsection{Background on non-concentration conditions for line segments}\label{subsec:def}

Now, we discuss continuous analogs of the Frostman Convex Wolff Axiom. First, we give some context for this condition in the discrete case. The Frostman Convex Wolff Axiom is a recent creation and its motivation may not be familiar to many readers, so we describe how it naturally arises when working on incidence problems in dimension $\ge 3$. 

The classical definition of Kakeya sets in $\R^3$ is a union of unit line segments containing at least one in each direction in $S^2$. The condition of having one line segment in each direction in $S^2$ implies that the set of line segments constituting the Kakeya set must have Hausdorff dimension $\ge 2$ in the affine Grassmannian of lines in $\R^3$ (after identifying line segments with the affine line they lie on). On the other hand, if $L$ is a set of line segments which is $2$-dimensional in the affine Grassmannian in the same sense, $\bigcup_{\ell \in L} \ell$ can have Hausdorff dimension $2$ instead of the conjectured dimension $3$, for example if all lines fall in a hyperplane. The classical Kakeya condition rules this case out, but is not well-adapted to the Furstenberg problem, since unlike in the Kakeya set case, a $2t$-dimensional subset $D \subset S^2$ is not necessarily evenly distributed in or near geodesics on the sphere, which means that a set of line segments pointing in the directions of $D$ could be overly concentrated in or near a hyperplane. 

Fortunately, the literature on the Kakeya problem indicates a more suitable condition, at least after discretization. In 1995, Wolff used a brush argument to prove that Kakeya sets in $\R^3$ have Hausdorff dimension at least $\frac{5}{2}$ \cite{W95}. Wolff quickly discretized the Kakeya problem to a problem about unions of $\de$-tubes with one pointing in each direction. He wanted an argument which applied in greater generality than just Kakeya sets, so he found a minimal pair of conditions on the concentration of tubes for which his argument applied. The first non-concentration condition is that the tubes must satisfy a discertized variant of the $2$-Frostman measure condition in the affine Grassmannian, which implies that for any $\rho \in [\de, 1]$ and any $\rho$-tube $T_{\rho}$, \[|\{T \in \T : T \subset T_{\rho}\}| \le \rho^2 |\T|.\]The second non-concentration conditon is now called the \emph{Wolff Axiom}. We say that a set of tubes $\T$ satisfies the Wolff Axiom with error $C$ if for any $\rho \in [\de, 1]$ and any $2\de \times \rho \times 1$ slab $V$, then \[|\{T \in \T : T \subset V\}| \le C\frac{\sigma}{\de}.\]This is suitable for arguments that only require information on the structure of the line segments at scale $\de$, but this paper and recent breakthroughs on Kakeya in $\R^3$ and the planar Furstenberg problem have required information on the structure of the line segments at scale $\rho$ at all $\rho \in [\de, 1]$. The Wolff Axiom for $\T$ does not impart useful information about $\T$ at scales other than $\de$, so it is not suitable for multiscale arguments. 

To address this, we want a non-concentration condition for sets of $\de$-tubes $\T$ that captures the Wolff Axiom condition at many different scales and applies for Kakeya sets. We make the simplifying assumption that for any scale $\rho > \de$, we can cover $\T$ with a set of $\rho$-tubes $\T_{\rho}$, so that for each $T_{\rho} \in \T_{\rho}$, $|\{T \in \T : T \subset T_{\rho}\}|$ is a fixed constant $M_{\rho}$, and that for distinct $T_{\rho}, T'_{\rho} \in \T_{\rho}$, $\{T \in \T : T \subset T_{\rho} \}$ and $\{T \in \T : T \subset T'_{\rho}\}$ are disjoint. Given this assumption, we see that if $\T_{\rho}$ satisfies the Wolff Axiom at scale $\rho$, then for any $2\rho \times \sigma \times 1$ prism $V$, \[|\{T \in \T : T \subset V\} \le C\frac{\sigma}{\rho}M_{\rho}.\]If $\T$ also satisfies the $2$-Frostman condition with error $C$, then $M_{\rho} \le C\rho^2 |\T|$, so \[|\{T \in \T : T \subset V\} \le C^2\sigma \rho |\T| = C^2 \vol(V)|\T|.\]It will be convenient to allow $V$ to be any convex set, rather than just rectangular prisms, and with this modification we arrive at what is now called the Frostman Convex Wolff Axiom. 

\begin{definition*}\cite{WZ24}
    We say a finite set of tubes $\T$ in $\R^3$ satisfies \emph{Frostman Convex Wolff Axiom with error $C$} if for any convex set $W \subset \R^3$, \[|\{T \in \T : T \subset W\}| \le C \vol(W)|\T|.\]
\end{definition*}

Our derivation of the Frostman Convex Wolff Axiom shows that if $\T$ satisfies the Frostman Convex Wolff Axiom with small error, then not only does $\T$ itself essentially satisfy the Wolff Axiom with small error, but any cover of $\T$ by $\rho$-tubes $\T_{\rho}$ essentially satisfies the Wolff Axiom with small error as well. The Frostman Convex Wolff Axiom first appeared in Wang and Zahl's proof that Kakeya sets have full Assouad dimension, where they called it the Convex Wolff Axiom \cite{WZ24}. Wang and Zahl used it again in their proof that Kakeya sets have full Hausdorff dimension, where they referred to it as the Frostman Convex Wolff Axiom, which is the name we use in this paper \cite{WZ25}.

This motivation for the Frostman Convex Wolff Axiom largely applies to the $t$-Frostman Convex Wolff Axiom as well. A naive definition of a $(1,2t)$-Furstenberg set in $\R^3$ is a union of line segments making up a $2t$-dimensional subset of the affine Grassmannian. When $t \le 1$, all line segments can fall in a plane, in which case this problem devolves to the planar Furstenberg problem. For a genuinely three dimensional variant of Furstenberg sets, we would like a $t$-dimensional variant of the Wolff Axiom to hold. Specifically, after discretizing our lines to a set of $\de$-tubes, we would like \[|\{T \in \T : T \subset V\}| \le C\left(\frac{\sigma}{\de}\right)^t.\]As in the Kakeya setting, this does not hold at each scale, but if we instead require that \[|\{T \in \T : T \subset V\}| \le C \vol(V)^t |\T|,\]we have the $t$-dimensional variant of the Wolff Axiom at each scale. This condition is now called the $t$-Frostman Convex Wolff Axiom.

\begin{definition*}\cite{WW24}
    We say a finite set of tubes $\T$ in $[0,1]^3$ satisfies the \emph{$t$-Frostman Convex Wolff Axiom with error $C$} if for any convex set $W \subset \R^3$, \[|\{T \in \T : T \subset W\}| \le C \vol(W)^t|\T|.\]
\end{definition*}

This definition originates from Wang and Wu's work on the Fourier restriction problem, where they predicted such estimates could serve is an intermediate step towards improved restriction estimates in higher dimensions \cite{WW24}. We mildly generalize to replace sets of tubes with other collections of convex sets and to allow for ambient spaces other than the unit cube.

\begin{definition}
    We say a finite set of congruent convex sets $\calR$ satisfies the \emph{$t$-Frostman Convex Wolff Axiom with error $C$} in a convex set $V$ if each $R \in \calR$ is contained in $2V$ and for any convex set $W \subset 2V$, \[|\{R \in \calR : R \subset 2W\}| \le C \frac{\vol(W)^t}{\vol(V)^t}|\calR|.\]
    If $V = [0,1]^3$, we simply say that $\calR$ satisfies the $t$-Frostman Convex Wolff Axiom with error $C$.
\end{definition}

\begin{remark}
    When discussing the $t$-Frostman Convex Wolff Axiom and its relatives informally, we often omit the error and simply say $\calR$ satisfies the $t$-Frostman Convex Wolff Axiom. This should be interpreted as $\calR$ satisfying the $t$-Frostman Convex Wolff Axiom with some sufficiently small error.
\end{remark}

These conditions were originally developed for unions of $\de$-tubes, since standard discretization arguments reduce continuum incidence problems like the Kakeya problem to lower bounds on the volume of unions of $\de$-tubes. We would like a continuous variant of the Frostman Convex Wolff Axiom to capture the original spirit of the Furstenberg problem. Our definition is a mild generalization of the Oberlin affine curvature condition, a pleasingly simple non-concentration condition originally intended to capture the curvature of submanifolds in Euclidean space \cite{O00}.

In our definition, we restrict our set of line segments from the affine Grassmannian to the collection of lines $\mathcal{L}$ defined by \begin{equation}\label{eqn:calL}\mathcal{L} = \{\{(a, b, 0) + t(c, d, 1) : t \in \R\} : a, b, c, d \in (-1,1)^4\}.\end{equation}There is no harm in doing so, as the collections of lines intersecting the unit ball is the finite union of isometric copies of $\mathcal{L}$. We equip $\mathcal{L}$ with the flat metric induced by the obvious diffeomorphism with $(-1,1)^4$. We denote the Riemannian volume form on $\mathcal{L}$ by $\text{Vol}_{\mathcal{L}}$ and say a subset $L \subset \mathcal{L}$ is convex if it is geodesically convex (that is, for any pair of points $\ell, \ell' \in L$, if a geodesic $\gamma$ connects $\ell$ and $\ell'$, then $\gamma \subset L$). 

Finally, we are ready to define the continuous $t$-Frostman Convex Wolff Axiom. 
\begin{definition}\label{def:cts}
    We say a collection $L$ of unit line segments, compact in the affine Grassmannian of $\R^3$, satisfies the \emph{continuous $t$-Frostman Convex Wolff Axiom with error $K$} if it supports a probability measure $\mu$ such that for any convex set $W \subset \mathcal{L}$, \begin{equation}\label{eqn:bear}\mu(W) \le K\left(\vol_{\mathcal{L}}(W)\right)^{t/2}. \end{equation}
    We say a set $E \subset [0,1]^3$ is a \emph{$(1,2t)$-Furstenberg set} if there is a collection of line segments $L$ each contained in $E$ and satisfying the $t$-Frostman Convex Wolff Axiom with error $K$ for some $K$.
\end{definition}

For comparison, we state a continuous version of the $\alpha$-Frostman measure condition alluded to previously. 

\begin{definition}\label{def:frostmanmeasure}
    Let $(X, d)$ be a metric space. We say a set $Y \subset X$ is an \emph{$\alpha$-Frostman measure} with error $C > 0$ if $Y$ supports a probability measure $\mu$ such that for any $r \in (0, \text{diam}(X))$ and any $r$-ball $B$, \[\mu(Y \cap B) \le Cr^{\alpha}.\]
\end{definition}

If $L$ satisfies the continuous $t$-Frostman Convex Wolff Axiom, $\mu$ is the probability measure supported on $L$ satisfying (\ref{eqn:bear}), and $W$ is an $r$-ball, we have $\mu(W) \le Kr^{2t}$, so we see that $L$ must be a $2t$-Frostman measure. Definition \ref{def:cts} is stronger than simply being a $2t$-Frostman measure, since it also rules out concentration in convex sets.

We are now ready to define $(1,2t)$-Furstenberg sets.

\begin{definition*}
    A compact set $E \subset [0,1]^3$ is a \emph{$(1,2t)$-Furstenberg set} if there is a set $L$ of unit line segments satisfying the continuous $t$-Frostman Convex Wolff Axiom with some error $K$ such that $\ell \subset E$ for each $\ell \in L$.
\end{definition*}

Similarly to how the $t$-Frostman Convex Wolff Axiom prevents a collection of tubes in $\R^3$ from looking like a collection of tubes in $\R^2$, the continuous $t$-Frostman Convex Wolff Axiom prevents $(1,2t)$-Furstenberg sets in $\R^3$ from resembling a planar Furstenberg set by limiting the mass of the line segments that can lie in the neighborhood of a hyperplane. 

We define sticky Furstenberg sets next. Originally, sticky sets were defined as Kakeya sets which comprise line segments with equal packing and Hausdorff dimension in the affine Grassmannian. Sticky Kakeya sets have a strong self-similarity property, which Wang and Zahl used to prove that they have have full Hausdorff dimension in $\R^3$ \cite{WZ22}. Wang and Zahl later found that a discretized condition which accepts small errors was better suited for applications to proving the general Kakeya problem. They called this condition the \emph{Frostman Convex Wolff Axiom at all scales} and their proof in \cite{WZ22} implied such sets have full Hausdorff dimension. We modify the Frostman Convex Wolff Axiom at all scales to the $t$-Frostman Convex Wolff Axiom at all scales analogously to how Wang and Wu modified the Frostman Convex Wolff Axiom to the $t$-Frostman Convex Wolff Axiom in \cite{WW24}.

\begin{definition}\label{def:FCWAAAS}
    We say $\T$ satisfies the \emph{$t$-Frostman Convex Wolff Axiom at all scales with error $C$} if for any $\rho_0 \in [\de, 1]$, there exists some $\rho \in [\rho_0, C\rho_0]$ such that each $T \in \T$ is contained in a $\rho$-tube $T_{\rho}$ and $\T[T_{\rho}] := \{T \in \T:T \subset 2T_{\rho}\}$ satisfies the $t$-Frostman Convex Wolff Axiom with error $C$ in $T_{\rho}$.

    If $\T$ is a set of $\de$-tubes satisfying the $t$-Frostman Convex Wolff Axiom at all scales with error $C$, then we say $\bigcup_{T \in \T} T$ is a \emph{sticky $(1,2t)$ Furstenberg set at scale $\de$ with error $C$.}
\end{definition}

We define a sticky $(1,2t)$-Furstenberg set to be a set containing a set of $\de$-tubes satisfying the $t$-Frostman Convex Wolff Axiom at all scales. For our application, we need to track both the error in the $t$-Frostman Convex Wolff Axiom at all scales and the scale $\de$ in the definition of sticky $(1,2t)$-Furstenberg sets.

\begin{definition*}
    We say $E$ is a \emph{sticky $(1,2t)$-Furstenberg sets at scale $\de$ with error $C$} if $E$ contains a collection of $\de$-tubes $\T$ satisfying the $t$-Frostman Convex Wolff Axiom at all scales with error $C$.
\end{definition*}

We have already seen that for a set of $\de$-tubes $\T$ satisfying the $t$-Frostman Convex Wolff Axiom, a reasonably uniform cover $\T_{\rho}$ of $\T$ by $\rho$-tubes also satisfies the $t$-Frostman Convex Wolff Axiom. If $\T$ satisfies the $t$-Frostman Convex Wolff Axiom at all scales, then in addition to $\T_{\rho}$ satisfying the $t$-Frostman Convex Wolff Axiom, we also know that $\T[T_{\rho}]$ satisfies the $t$-Frostman Convex Wolff Axiom in $T_{\rho}$. This condition gives dimensional extremizers an additional special structures called grains. The idea of aiming for grains originated from an attempt by Katz and Tao in the early 2000s to solve the Kakeya conjecture in $\R^3$. Katz and Tao's approach can be found in Tao's blog post on their attempt at the Kakeya conjecture \cite{T14}. 

The grains structure gives the best chance to make improvements by applying projection or sum-set theorems. Wang and Zahl used this argument in 2022 to prove that sticky Kakeya sets in $\R^3$ have full Hausdorff dimension \cite{WZ22}. Wang and Zahl later used methods closer to what appears in this paper to prove if sticky Kakeya sets have full Hausdorff dimension, then the general case of the Kakeya conjecture is true as well. They proved this first for the Assouad dimension and later the stronger Hausdorff dimension \cite{WZ24, WZ25}. We follow the second step of Wang and Zahl's reasoning for the quasi-Assouad dimension, proving that if we have good quasi-Assouad dimensional bounds on sets of $\de$-tubes satisfying the $t$-Frostman Convex Wolff Axiom at all scales, then we have good quasi-Assouad dimensional bounds for any $(1,2t)$-Furstenberg set.

\subsection{Background on the quasi-Assouad dimension}\label{subsec:qA}

The quasi-Assouad dimension falls into the large family of dimensions which measure the dimension of a set $E$ by applying the heuristic that $E$ is $\alpha$ dimensional if $|E|_{\de} \ge \de^{-\alpha}$ for any $\de$ sufficiently small. The most famous and strongest member in this family is the Hausdorff dimension. We say a set $E$ has Hausdorff dimension $\ge \alpha$ if \[\lim_{\de \to 0} \inf \left\{ \sum_{i=1}^{\infty} \text{radius}(B_i)^{\alpha} : \{B_i\}_{i=1}^{\infty} \text{ a collection of balls with }\bigcup_{i=1}^{\infty} B_i \supset X, \text{radius}(B_i) < \de\right\} > 0.\]
Hausdorff dimensional bounds implies fine-grained control over the global spacing of $E$ at any sufficiently small scale. In some applications, weaker notions of dimensions from this family are more appropriate. Assouad considered the question of when a metric space $(E,d)$ could be embedded into Euclidean space through a bi-Lipschitz map \cite{A77}. He proved that if there exists a finite value $\alpha$ for which there exists a constant $C$ such that for any $r < R$ and $x \in E$, \[|B(x, R) \cap E|_{r} \le C \left(\frac{R}{r}\right)^\alpha,\]then for any $\e < 1$, $(E, d^{\e})$ has a bi-Lipschitz embedding into Euclidean space. This condition gave rise to what is now called the \textit{Assouad dimension}. 

\begin{definition*}
    We say the \emph{Assouad dimension} of a set $E$ is the infimal value of $\alpha \ge 0$ for which there exists a constant $C$ such that for any $r < R$ and $x \in E$, \begin{equation}\label{eqn:assouad}|B(x, R) \cap E|_{r} \le C \left(\frac{R}{r}\right)^\alpha,\end{equation}where $|F |_r$ denotes the $\de$-covering number of $F$.
\end{definition*}

The Assouad dimension is the weakest reasonable dimension coming from counting the size of a $r$-covering of $E$. In fact, it is too weak to be useful in this paper. It seems very difficult to make any use from Assouad dimensional bounds for sticky $(1,2t)$-Furstenberg sets $E$, because any small modification to $E$ could cause a substantial decrease in the Assouad dimension if the pair of scales $\de, \rho$ extremizing (\ref{eqn:assouad}) are very close together. To address this issue, we need a dimension which forces some separation between the scales $\de, \rho$. We begin by defining the upper spectrum, a dimension parametrized by some parameter $\eta \in (0, 1]$ which forces some separation between the pair of scales witnessing the Assouad dimension. We need both a discrete and continuous variant of this definition.

\begin{definition*}
    A set $E$ comprising a union of $\de$-cubes has \emph{upper spectrum at $\eta$ and scale $\de$} at least $\sigma$ if there exists $\de \le r < R \le 1$ with $r \le\de^{\eta} R$ and an $R$-ball $B_R$ such that $|E \cap B_R|_{r} \ge \left(\frac{R}{r}\right)^{\sigma}$.

    For any $E \subset \R^3$, we say $E$ has \emph{upper spectrum at $\eta$} at least $\sigma$ if there exists some $\de > 0$ for which $E$ has upper spectrum at $\eta$ and scale $\de$ at least $\sigma$
\end{definition*}

We record a more common but plainly equivalent definition of the upper spectrum at $\eta$. This definition originates from work of Fraser, Kathryn Hare, Kevin Hare, Troscheit, and Yu in 2018 \footnote{ The parameter $\eta$ in this paper is more commonly replaced by $1-\eta$ in the literature, in the sense that the dimension in the upper spectrum which we label by $\eta$ is usually labelled by $1-\eta$. It is convenient to use the same parameter for the scale separation as other error terms, motivating our choice of taking $\eta$ closer to $0$.}\cite{FHHTY19}.

\begin{definition*}
    A set $E \subset \R^3$ has upper spectrum at $\eta$ at least $\sigma$ if there exists scales $r, R$ with $r < r^{1-\eta} \le R$ and an $R$-ball $B_R$ such that $|E \cap B_R|_{r} \ge \left(\frac{R}{r}\right)^{\sigma}$.
\end{definition*}

The quasi-Assouad dimension is defined as the infimum of the upper spectrum at $\eta$ as $\eta \searrow 0$. Again, we give discretized and continuous variants of this definition.

\begin{definition*}
    A set $E$ comprising a union of $\de$-cubes has \emph{quasi-Assouad dimension at scale $\de$} at least $\sigma$ if for any $\e > 0$, there exists $\eta > 0$ such that $E$ has upper spectrum at $\eta$ and scale $\de$ at least $\sigma - \e$.

    A set $E \subset \R^3$ has \emph{quasi-Assouad dimension} at least $\sigma$ if for any $\e > 0$, there exists $\eta > 0$ such that $E$ has upper spectrum at $\eta$ at least $\sigma - \e$.
\end{definition*}

This turns out to be a stronger dimension than the Assouad dimension itself. When we try to apply quasi-Assouad dimensional bounds, we see a clear benefit compared with Assouad dimensional bounds. Quasi-Assouad dimensional bounds for a sticky $(1,2t)$-Furstenberg set $E$ implies nearly the same bounds for the upper spectrum of $E$ at small values of $\eta$. In the proof of our main theorem, these good bounds on the upper spectrum of $E$ at small values of $\eta$ play a crucial role in proving good bounds on the upper spectrum of any $(1,2t)$-Furstenberg set for small values of $\eta$, which in turn imply good bounds for the quasi-Assouad dimension of any $(1,2t)$-Furstenberg set.

On the other hand, the quasi-Assouad dimension still allows us mostly free choice of the range of scales where we measure the dimensionality of our Furstenberg sets. In several key points of the proof of the main theorem, we prove upper spectrum bounds for a Furstenberg set by carefully choosing scales $\de \ll \rho$ where we know (\ref{eqn:assouad}) applies. In particular, this idea underlies the extremization argument in Section \ref{sec:proof} and finding the quasi-Assouad dimension of unions of prisms in Section \ref{sec:prismsetup}. 

For further discussion on the Assouad dimension and its variants, we refer to the reader to Fraser's excellent monograph on the topic \cite{F20}.

\subsection{Statement of the main result}
We are now ready to formally state the main theorem of this paper, which says that a quantitative form of a quasi-Assouad dimensional bound for sticky $(1,2t)$-Furstenberg sets imply all $(1,2t)$-Furstenberg set satisfy the same bound.

\begin{theorem}\label{thm:main}
    Fix $t \in (0, 1]$. Suppose there exists $\sigma > 0$ such that the following statement is true. 
    
    \begin{center}For any $\e > 0$, there exists $\eta, \de_0 > 0$ such that for any $\de \in (0, \de_0)$, every sticky $(1,2t)$-Furstenberg set at scale $\de$ with error $\de^{-\eta}$ has quasi-Assouad dimension at scales $\de$ at least $\sigma - \e$.\end{center}
    Then every $(1,2t)$-Furstenberg set has quasi-Assouad dimension at least $\sigma$.
\end{theorem}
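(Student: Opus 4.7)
The approach is a multiscale decomposition of $E$ combined with a pigeonhole-and-refinement procedure that extracts a sticky sub-structure to which the hypothesis can be applied. Fix $\e > 0$ and a small scale $\de$. Given a $(1,2t)$-Furstenberg set $E$ with line collection $L$ satisfying the continuous $t$-Frostman Convex Wolff Axiom with error $K$, discretize $L$ to a set $\T$ of $\de$-tubes satisfying the corresponding discrete bound with polylogarithmic error. Choose a geometric sequence of intermediate scales $\de = \rho_0 < \rho_1 < \cdots < \rho_N = 1$ with ratios $\rho_{j+1}/\rho_j = \de^{-1/N}$, and at each scale $\rho_j$ cover $\T$ by a family of $\rho_j$-tubes $\T_{\rho_j}$.

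The core of the proof is the extraction, via pigeonhole and iterative refinement, of a sub-collection $\T' \subset \T$ (losing only a polylogarithmic factor in $\de^{-1}$) whose union is a sticky $(1,2t)$-Furstenberg set at scale $\de$ with error $\de^{-\eta_0}$ for some small $\eta_0$ depending on $\e$ and $N$. A standard double pigeonhole produces a sub-collection with uniform branching factors $M_j = |\{T \in \T' : T \subset T_{\rho_j}\}|$ and for which the refined covers $\T_{\rho_j}$ each satisfy the $t$-Frostman Convex Wolff Axiom with error $\de^{-O(1/N)}$. The more delicate requirement is the internal $t$-Frostman Convex Wolff Axiom inside each $\rho_j$-tube after rescaling, which is what separates stickiness from mere scale-by-scale uniformity. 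When this internal condition is violated inside some $T_{\rho_j}$, a sub-prism inside $T_{\rho_j}$ absorbs excess mass; one refines by replacing that $\rho_j$-tube in the cover with the offending sub-prism, using the prism machinery developed in Section \ref{sec:prismsetup}. This iterative prism-refinement must be shown to terminate with only $\de^{-o(1)}$ total loss, which is the main obstacle: it requires controlling the accumulation of errors across the iterations and the $N$ scales, and this is where the freedom of the quasi-Assouad framework (which tolerates sub-polynomial losses) is crucial.

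Once a sticky $\T' \subset \T$ is obtained, the hypothesis applied at scale $\de$ gives $|\bigcup \T' \cap B_R|_r \ge (R/r)^{\sigma - \e/2}$ for some scales $\de \le r < R$ with $r \le \de^{\eta_0} R$. The $\de$-tubes in $\T'$ are indistinguishable at scale $r \ge \de$ from their core line segments, which lie in $L \subset E$, so the bound descends to $|E \cap B_R|_r \gtrsim (R/r)^{\sigma-\e/2}$. Absorbing the polylogarithmic and $\de^{-o(1)}$ losses by taking $\de$ sufficiently small yields $|E \cap B_R|_r \ge (R/r)^{\sigma-\e}$; since $r \ge \de$ we have $r^{-\eta_0} \le \de^{-\eta_0} \le R/r$, so $E$ has upper spectrum at $\eta_0$ at least $\sigma - \e$. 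As $\e > 0$ was arbitrary, $E$ has quasi-Assouad dimension at least $\sigma$. The analogous strategy carried out for Kakeya sets in \cite{WZ24} is the template to be adapted here.
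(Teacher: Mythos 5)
There is a genuine gap at the core of your argument: the claimed extraction, from an arbitrary tube family $\T$ satisfying the $t$-Frostman Convex Wolff Axiom, of a sub-collection $\T'\subset\T$ that is sticky at scale $\de$ with only $\de^{-o(1)}$ loss. No such extraction is possible in general, and the paper does not attempt it. A family $\T$ can fail the internal (rescaled) Frostman Convex Wolff Axiom in an essential way at intermediate scales -- for instance, the tubes may organize into a $t$-dimensional family of flat $\rho\times\De\times 1$ prisms, each prism filled with a $t$-dimensional family of tubes -- and any genuinely sticky subfamily of such a configuration is polynomially smaller, so it no longer carries the Frostman condition or the cardinality needed to apply the hypothesis. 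Your proposed fix, ``replace the offending $\rho_j$-tube in the cover with the offending sub-prism, using the prism machinery of Section \ref{sec:prismsetup},'' misreads what that machinery does: Propositions \ref{prop:prismsetup}, \ref{prop:largeprism}, and \ref{prop:smallprism} are not a refinement step that can be iterated toward stickiness; they constitute a direct proof (the bulk of the paper) that in the prism configuration the union of tubes already has upper spectrum about $2t+1\ge\sigma$, i.e.\ the prism case is a \emph{terminal} alternative, not something to be absorbed and continued. The termination-with-$\de^{-o(1)}$-loss claim, which you yourself flag as the main obstacle, is precisely the point where the argument has no mechanism and, as far as is known, cannot be made to work.

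The paper's actual route is different in structure: one reduces to the discretized Theorem \ref{thm:realmain}, then runs an \emph{extremization} argument. Let $\sigma^*$ be the minimal possible upper spectrum among tube families satisfying the $t$-Frostman Convex Wolff Axiom with small error, and let $\beta^*$ be the largest density exponent $\log_{1/\de}|\T|$ among such minimizers; one applies the four-outcome multiscale decomposition (Proposition \ref{prop:WZ256.3}) to a near-extremal $\T$. Outcomes (A) and (B) (a denser coarsened family $\T_\rho$, or a denser rescaled family $\T^{T_\rho}$) are eliminated not by refinement but by contradiction with the maximality of $\beta^*$, which requires Lemma \ref{lem:qa} showing that upper spectrum does not increase under coarsening or rescaling -- a step entirely absent from your proposal, and one that only makes sense because you argue about the extremizer rather than an arbitrary set. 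Outcome (C) is the prism case, closed by Proposition \ref{prop:prismsetup}; only outcome (D) invokes the sticky hypothesis. Your final descent step (from a discretized upper-spectrum bound for tubes in $N_\de(E)$ to one for $E$) is essentially fine and matches Proposition \ref{prop:implication}, but the reduction feeding into it needs to be replaced by the extremization-plus-decomposition scheme above.
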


\subsection{Paper outline}\label{subsec:PO}

\begin{itemize}
    \item In Section \ref{sec:bpop}, we outline the proof of Theorem \ref{thm:main}. 
    \item In Section \ref{sec:disc}, we carry out an initial discretization, reducing Theorem \ref{thm:main} to a discretized variant Theorem \ref{thm:realmain}.
    \item In Section \ref{sec:multi}, we prove a multiscale decomposition for sets of $\de$-tubes.
    \item In Section \ref{sec:proof}, we prove Theorem \ref{thm:realmain}, given Proposition \ref{prop:prismsetup}, which gives a good bound on the upper spectrum of sets of tubes that look $t$-dimensional in a collection of flat prisms\footnote{A flat prism is a right rectangular $a \times b \times 1$ prism with $a \ll b$.}. The rest of the paper will be devoted to the proof of Proposition \ref{prop:prismsetup}.
    \item In Section \ref{sec:prismsetup}, we use an induction on scales argument to prove Proposition \ref{prop:prismsetup}. This induction on scales argument relies on Proposition \ref{prop:largeprism} and Proposition \ref{prop:smallprism}, which we prove in the subsequent sections.
    \item In Section \ref{sec:largeprism}, we prove Proposition \ref{prop:largeprism}, which states that a collection of tubes that look $1$-dimensional in a collection of flat prisms either arrange to look $1$-dimensional in substantially larger flat prisms or have large upper spectrum.
    \item In Section \ref{sec:smallprism}, we prove Proposition \ref{prop:smallprism}, which states that a collection of tubes that look $t$-dimensional in a collection of flat prisms either arrange to look $1$-dimensional in slightly larger flat prisms or have large upper spectrum.
    \item In Section \ref{sec:lem}, we give proofs for a brush argument and various $L^2$ argument we use earlier in the paper. 
\end{itemize}

\subsection{Notation}\label{subsec:notation}

\begin{enumerate}[label = (\roman*)]
    \item We write $\T$ to denote a set of tubes. For other collections of convex sets, we usually use the calligraphic font, for example $\calR$ or $\calW$. If we denote a collection of congruent sets with a letter in calligraphic font, we may refer to an abstract element of that set by the same letter in regular font. For example, if $\calR$ is a collection of congruent, convex sets, then we denote the volume of an element in $\calR$ by $\vol(R)$. 
    \item We say a pair of tubes $T, T'$ is essentially distinct if $T \not \subset 2T'$. If $\T$ is a set of tubes with each pair of tubes essentially distinct, we say that $\T$ is a set of essentially distinct tubes.
    \item For a set $A \subset \R^3$ and $\de > 0$, we write $N_{\de}A) = \{x \in \R^3, \text{dist}(x, A) < \de\}$. If $\mathcal{A}$ is a collection of subsets of $\R^3$ and $\de > 0$, we write $\calD_{\de}(\mathcal{A}) = \{N_{\de}(A) :A \in \mathcal{A}\}$.
    \item When we say prism, we always mean a right rectangular prism. Any prism $W$ with side lengths $a \times b \times 1$ has a central affine plane $\Pi$ such that $\Pi$ intersects $W$ in a $b \times 1$ rectangle. We denote the normal direction to $\Pi$ by $n(W)$. This value is well-defined up to error of magnitude $\frac{a}{b}$. For a prism $W$, we denote the prism with the same center and direction and dimensions scaled by $C$ as $CW$. 
    \item For a family of convex sets $\calR$ and a convex set $W$, we denote $\calR[W] = \{R \in \calR: R \subset 2W\}$.
    \item We say a dyadic $\de$-cube is a set of the form $[x, x+\de] \times [y, y + \de] \times [z, z+\de]$ for dyadic values $x,y,z \in \R$. Write $|E|_{\de}$ to be the the maximal number of disjoint dyadic $\de$-cubes incident to $E$.  
    \item Let $|\cdot|$ be the counting measure, $\text{Vol}$ the three-dimensional Lebesgue measure, $\text{Vol}_{\R^2}$ the two-dimensional Lebesgue measure and $\text{Vol}_{\R}$ the one-dimensional Lebesgue measure.
    \item We write $a \lesssim_{\alpha, \beta, \dots} b$ if there exists a constant $C > 0$ depending only on $\alpha, \beta, \dots$ such that $a \le Cb$. If $C$ is an absolute constant, we simply write $a \lesssim b$. We write $a \lessapprox_{\de; \alpha, \beta, \dots} b$ if there exist absolute constants $C, N$ depending only on $\alpha, \beta, \dots$ such that $a \le C\log(1/\de)^N b$ when $\de < 1$. If $C, N$ are absolute constants, we write $a \lessapprox_{\de} b$.
    \item We use $\de, \De, \rho, r, R, a, b$ to denote scales. We use $\eta, \e, \omega, \tau$ to denote errors. 
\end{enumerate}

\subsection{Acknowledgement}\label{subsec:Acknowledgements}

The author thanks Betsy Stovall for her advice and support during this project and for many suggestions which greatly improved this manuscript. The author was supported during this project by the National Science Foundation under grant numbers DMS-2037851 and DMS-2246906. 

\section{Outline of the proof of Theorem 1}\label{sec:bpop}

Now we outline in more detail the proof of Theorem \ref{thm:main} and discuss the differences between the $t=1$ case that is already known and the $t < 1$ case that we prove here. We first note that Theorem \ref{thm:main} reduces to the following discretized variant. 

\begin{theorem}\label{thm:realmain}
    Suppose that for any $\e > 0$, there exists $\eta, \de_0 > 0$ such that for any $\de \in (0, \de_0)$ and any set of $\de$-tubes $\T$ satisfying the $t$-Frostman Convex Wolff Axiom at all scales with error $\de^{-\eta}$, $\bigcup_{T \in \T} T$ has upper spectrum at $\eta$ and scale $\de$ at least $\sigma - \e$. 

    Then for any $\omega > 0$, there exists $\eta, \de_0 > 0$ such that for any $\de \in (0, \de_0)$ and any set of $\de$-tubes $\T$ satisfying the $t$-Frostman Convex Wolff Axiom with error $\de^{-\eta}$, $\bigcup_{T \in \T} T$ has upper spectrum at $\eta$ and scale $\de$ at least $\sigma - \omega$.
\end{theorem}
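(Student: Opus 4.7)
The plan is to reduce to Proposition \ref{prop:prismsetup} via a dichotomy: either a dominant sub-collection of $\T$ satisfies the $t$-Frostman Convex Wolff Axiom at all scales (and the hypothesis of Theorem \ref{thm:realmain} applies), or the $t$-dimensional mass concentrates in flat prisms at some intermediate scale (and Proposition \ref{prop:prismsetup} applies). The failure mode of the $t$-Frostman Convex Wolff Axiom at all scales is by definition a convex set $W$ inside some $\rho$-tube $T_\rho$ in which $\T[T_\rho]$ is over-concentrated; the role of Section \ref{sec:multi} is to convert such a $W$ into a flat prism at a nearby dyadic scale without losing much mass, so that the prism hypothesis of Proposition \ref{prop:prismsetup} is actually met.

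Concretely, given $\omega > 0$, I would feed $\varepsilon \ll \omega$ into the hypothesis to obtain a scale-separation $\eta_{\text{stick}}$ and threshold $\delta_0'$, choose $\eta_{\text{prism}}$ to match Proposition \ref{prop:prismsetup} at loss $\omega/2$, set $\eta = \min(\eta_{\text{stick}}, \eta_{\text{prism}})/C$ with $C$ absorbing per-scale losses, and take $\delta_0$ small. Given $\T$ at scale $\delta < \delta_0$ with $t$-Frostman Convex Wolff Axiom error $\delta^{-\eta}$, I would run the multiscale decomposition of Section \ref{sec:multi} on dyadic scales $1 = \rho_0 > \rho_1 > \cdots > \rho_N = \delta$. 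At each step, either the all-scales axiom persists with error $\delta^{-O(\eta)}$, or a $\delta^{O(\eta)}$ fraction of tubes concentrates in flat prisms $\calW$ that themselves inherit a $2t$-dimensional non-concentration property from the original axiom on $\T$. Pigeonholing over the finitely many scales, either case (i) survives down to scale $\delta$ on a $1 - O(\eta \log(1/\delta))$ fraction of tubes, yielding a sticky $(1,2t)$-Furstenberg set on which the hypothesis supplies upper spectrum $\sigma - \varepsilon$, or case (ii) fires at some scale $\rho_j$, supplying the hypothesis of Proposition \ref{prop:prismsetup} and thereby upper spectrum $\sigma - \omega/2$ after rescaling the prism back to the unit cube.

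The main obstacle I expect is bookkeeping the error blow-up across the $\sim 1/\eta$ scales of the decomposition while still outputting an upper-spectrum witness at scale separation $\eta$ rather than something strictly larger. The accumulated per-scale loss of $\delta^{-O(\eta)}$ must remain within the $\delta^{-\eta_{\text{stick}}}$ envelope that the sticky hypothesis requires, pinning down how small $\eta$ must be compared to $\eta_{\text{stick}}$. A secondary subtlety is that Proposition \ref{prop:prismsetup} outputs its upper-spectrum witness on a rescaled tube collection living inside a prism, and transporting this witness back to a witness for $\bigcup_{T \in \T} T$ at scale $\delta$ in $\R^3$ requires the scale-separation $\eta_{\text{prism}}$ to survive the rescaling intact, forcing $\eta \ll \eta_{\text{prism}}$ as well. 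Balancing these two constraints against each other and against the chain $\omega \mapsto \varepsilon \mapsto \eta_{\text{stick}} \mapsto \eta$ is what makes the quantifier order in the statement delicate, but it is otherwise a finite combinatorial computation.
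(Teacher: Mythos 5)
There is a genuine gap: your dichotomy ``either a dominant sub-collection is sticky or the mass concentrates in flat prisms'' is not the actual outcome of the multiscale analysis, and the missing cases are precisely the ones that cannot be closed by the argument you describe. When the $t$-Frostman Convex Wolff Axiom at all scales fails, the offending convex set $W$ need not be a flat prism: it can be a thinner tube, i.e.\ the failure can be that $\T$ is over-concentrated in some $\rho$-tube $T_\rho$ (so that $\T^{T_\rho}$ is much denser than $(\de/\rho)^{-\beta}$), or dually that the cover $\calD_\rho(\T)$ is much denser than $\rho^{-\beta}$ at some coarser scale. These are outcomes (A) and (B) of Proposition \ref{prop:WZ256.3}, and they do not produce a family of $\rho \times \De \times 1$ prisms with $\rho \ll \De$ factoring $\T$ from above and below, so Proposition \ref{prop:prismsetup} does not apply to them; nor is the resulting configuration sticky, so the hypothesis of Theorem \ref{thm:realmain} does not apply either. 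Your sketch silently converts every failure of stickiness into the prism case, which is exactly the step that fails.

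The paper handles this by not arguing for an arbitrary $\T$ at all, but by an extremization: one sets $\sigma^*$ to be the infimal upper spectrum over admissible tube families, assumes $\sigma^* < \sigma - 3\omega/4$, and then takes $\beta^*$ maximal among exponents $\beta$ with $\sigma^*(\beta) = \sigma^*$, i.e.\ the \emph{densest} extremizer. Applying Proposition \ref{prop:WZ256.3} to such a near-extremal $\T$, outcomes (A) and (B) each produce a strictly denser tube family (at scale $\rho$ or after rescaling through $T_\rho$) whose upper spectrum is no larger than that of $\T$ — this transport of the upper spectrum across coarsening and rescaling is Lemma \ref{lem:qa}, and it is where the quasi-Assouad (rather than Assouad) scale separation is genuinely used — contradicting the maximality of $\beta^*$. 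Only then do the remaining outcomes (C) and (D) reduce to Proposition \ref{prop:prismsetup} and to the sticky hypothesis, respectively. Without this variational structure (or some substitute argument eliminating the tube-concentration outcomes), your iteration over dyadic scales cannot be closed: at some scale the decomposition may simply hand you a denser sub-family in a sub-tube, and your proof has nothing to do with it. The quantifier bookkeeping you flag as the main obstacle is real but secondary; the missing idea is the extremal choice of $(\sigma^*, \beta^*)$ together with Lemma \ref{lem:qa}.
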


The reduction from Theorem \ref{thm:main} to Theorem \ref{thm:realmain} takes place in Section \ref{sec:disc}. The proof of the reduction is similar to the well-known argument reducing dimensional estimates for Kakeya sets to volume bounds on unions of $\de$-tubes with one in each direction, but in the Furstenberg case, we need more care to use definition \ref{def:cts} to ensure that the union of $\de$-tubes satisfies the $t$-Frostman Convex Wolff Axiom. For the remainder of this section, we focus on the proof of Theorem \ref{thm:realmain}. 

\subsection{Proving Theorem \ref{thm:realmain} in the $t=1$ case}\label{subsec:bpop1}

To motivate our proof of Theorem \ref{thm:realmain} in the general case, we first outline the proof for the $t = 1$ case from \cite{WZ24}. We begin with the set of $\de$-tubes $\T$ satisfying the $1$-Frostman Convex Wolff Axiom with error $\approx 1$.

\begin{enumerate}[label = Step \arabic*]
    \item We consider the structure of the densest quasi-Assouad extremal sets of tubes. By this, we mean sets of $\de$-tubes $\T$ satisfying the $t$-Frostman Convex Wolff Axiom with the smallest possible quasi-Assouad dimension and with the largest possible value of $\log_{1/\de}(|\T|)$ among sets of tubes $\T$ with the smallest possible quasi-Assouad dimension. We prove that these tubes must either be sticky or fill out a set of $a \times b \times 1$ flat prisms $\calR$. If the set of tubes are sticky, we use our assumption on sticky Furstenberg sets to close the argument.
    \item Let $\T_a$ be a cover of $\T$ by $a$-tubes. First, suppose that at each point $x \in \bigcup_{T \in \T_a} T$, all the prisms $R \in \calR$ such that $x \in T$ for some $T \in \T_a[R]$ intersect tangentially. By this, we mean that the normal angle of the center plane of the prisms is contained in a $\lesssim \frac{b}{a}$ ball in the unit sphere. Then the prisms in $\calR$ must arrange themselves into $a' \times b' \times 1$ prisms with $a' \ll b'$ and $b' \gg b$. We repeat this step until we reach a transverse intersection, each time increasing the scales by a fixed amount. In finitely many steps, we have $b' = 1$, in which case the prisms must intersect transversely \footnote{We actually need the prisms to intersect transversally or tangentially after rescaling the prisms through larger tubes. The following steps also need to be carried out after rescaling. This adds some technical complexity to the proof, but the steps here capture the key ideas.}. 
    \item Now, suppose that at each point $x \in \bigcup_{T \in \T_a} T$, all the prisms $R \in \calR$ such that $x \in T$ for some $T \in \T_a[R]$ intersect transversely, that is, with normal angle $\gg \frac{b}{a}$. Using an $L^2$ argument, we see that $\bigcup_{T \in \T_a[R]} T \approx R$, so it suffices to prove $\bigcup_{R \in \calR} R$ has large upper spectrum.
    \item Now, we break the $a \times b \times 1$ prisms in $\calR$ into disjoint unions of $a \times b \times b$ small prisms. We can use a brush argument between the small prisms to find an $c \times b \times b$ thick prism $K$ such that $K \cap \bigcup_{R \in \calR} R \approx K$ and $a \ll c \le b$. Pigeonholing in this prism, we find a ball $B_R$ such that $B_R \cap \bigcup_{R \in \calR} R \approx B_R$. This gives quasi-Assouad dimension $3 \ge \sigma$, completing the argument. 
    
\end{enumerate}
Now, we move to a discussion of the additional difficulties in the proof of Theorem \ref{thm:realmain}. Step 1 carries over without significant changes; we give more details in the following subsection. The other three steps require substantial change, since the reasoning of \cite{WZ24} breaks down in two ways when the direction set has dimension $< 2$.

The first issue is that $\T_a[R]$  no longer sufficiently concentrated to essentially fill out $R$. This means that \textit{a priori} we do not have spacing in the the union of tubes in $a \times b \times b$ subsets of $R$ necessary to carry out the brush and $L^2$ arguments referred to in Step 4. We specifically need $\T_P = \{T \cap P : T \in  \T_a[R]\}$ to satisfy the $t$-Frostman Convex Wolff Axiom in $P$ for many different choices of $a \times b \times b$ prisms $P \subset R$. 

The second problem is that if $K$ is a $c \times b \times b$ prism such that $\bigcup_{R \in \calR} \bigcup_{T \in \T_a[R]} T \cap K$ concentrates like a $\sigma$-dimensional set in $K$, this does not imply that $\bigcup_{R \in \calR} \bigcup_{T \in \T_a[R]} T \cap B_c$ looks $\sigma$-dimensional for any $c$-ball $B_c$, because the mass of $\bigcup_{R \in \calR} \T_a[R] \cap K$ could be widely spread through $K$, rather than concentrating like a $\sigma$-dimensional set in any individual $c$-ball. This phenomenon does not occur when $\sigma$ is the ambient dimension (in this paper, $3$), since in that case, the set must concentrate fully in any subset of $K$. This is how \cite{WZ24} closes the argument, but we must do something different when $t <1$. 

\subsection{Proving Theorem \ref{thm:realmain} in the general case: reducing to tubes concentrating in prisms}

Our proof of Theorem \ref{thm:realmain} in the general case begins with a generalization of Step 1 from the $t=1$ case. As mentioned above, the $t=1$ argument generalizes fairly easily to the $t < 1$ case, although it requires some technical work in either case. This step comprises Sections \ref{sec:multi} and \ref{sec:proof}. In Section \ref{sec:multi}, we prove that for any set of $\de$-tubes $\T$ satisfying the $t$-Frostman Convex Wolff Axiom with small error and $|\T| \approx \de^{-\beta}$, one of four outcomes must happen. 
\begin{enumerate}[label = (\Alph*)]
    \item There exists a scale $\rho$ and a cover of $\T$ by $\rho$-tubes $\T_{\rho}$ such that $\T_{\rho}$ satisfies the $t$-Frostman Convex Wolff Axiom with small error and is denser than $\T$, that is, $|\T_{\rho}| \gg \rho^{-\beta}$.
    \item There exists a scale $\rho$ and a $\rho$-tube $T_{\rho}$ such that $\T[T_{\rho}] = \{T \in \T: T \subset 2T_{\rho}\}$ satisfies the $t$-Frostman Convex Wolff Axiom in $T_{\rho}$ with small error and is denser than $\T$, that is $|\T[T_{\rho}]| \gg \left(\frac{\de}{\rho}\right)^{\beta}$. We actually prove the same conclusion for the image of $\T[T_{\rho}]$ under the affine mapping sending $T_{\rho}$ to the unit cube. The image of $\T[T_{\rho}]$ under this mapping is essentially a set of $\de/\rho$-tubes and we denote it by $\T^{T_{\rho}}$. 
    \item There exists scales $\rho, \De$ with $\de \le \rho \ll \De \le 1$ and a set of $\rho \times \De \times 1$ prisms $\calW$ so that $\T[W] = \{T \in \T : T \subset 2W\}$ satisfies the $t$-Frostman Convex Wolff Axiom in $W$ with small error and $\calW$ itself satisfies the $t$-Frostman Convex Wolff Axiom with small error.
    \item $\T$ satisfies the $t$-Frostman Convex Wolff Axiom at all scales with small error.
\end{enumerate}

We give a precise statement in Proposition \ref{prop:WZ256.3}, whose proof we outline here. The proof follows similarly reasoning as \cite{WZ24}, although arranged differently. Take a set of tubes $\T$ satisfying the $t$-Frostman Convex Wolff Axiom and let $\T_{\rho} = \calD_{\rho}(\T)$ for each $\rho \in [\de, 1]$. Without loss of generality, we can assume that $\T_{\rho}$ is a set of essentially distinct tubes. If there exists a scale $\rho \in [\de, 1]$ for which $|\T_{\rho}| \gg \rho^{-\beta}$ we arrive at (A). If there exists $\rho$ such that $|\T_{\rho}| \ll \rho^{-  \beta}$, then we arrive at (B) or (C), depending on whether $\T^{T_{\rho}}$ usually satisfies the $t$-Frostman Convex Wolff Axiom or not. Suppose $\T_{\rho} \approx \rho^{-\beta}$ for each $\rho \in [\de, 1]$. If $\T$ satisfies the $t$-Frostman Convex Wolff Axiom at all scales, then we have (D). If $\T$ fails to satisfy the $t$-Frostman Convex Wolff Axiom at all scales, then it must be overconcentrated in some cover of $\T$ by congruent convex sets $\calW$. If $\calW$ was a set of tubes $\T_{\rho}$, it would contradict our assumption that $\T_{\rho} \approx \rho^{-\beta}$, so we know that $\calW$ must be a collection of flat prisms, which leads to (C). This argument combines parts of the multiscale decomposition in \cite[Proposition 4.1]{WZ24} and in \cite[Proposition 6.3]{WZ25} and will be given in detail in Section \ref{sec:multi}.

In Section \ref{sec:proof}, we reduce proving Theorem \ref{thm:realmain} to Proposition \ref{prop:prismsetup}. Proposition \ref{prop:prismsetup} states that if $\T$ is a set of $\de$-tubes and $\calR$ is a set of $\de \times \rho \times 1$ prisms such that $\T[R]$ satisfies the $t$-Frostman Convex Wolff Axiom with small error in $R$ for each $R \in \calR$ and $\calR$ itself satisfies the $t$-Frostman Convex Wolff Axiom with small error, then for some small parameter $\eta$, $\bigcup_{T \in \T} T$ has upper spectrum at $\eta$ and scale $\de$ about $2t+1$. If outcome (C) in the multiscale decomposition occurs, then the $\rho$-neighborhood of $\T$ satisfies the assumptions of Proposition \ref{prop:prismsetup}.

To reduce Theorem \ref{thm:realmain} to Proposition \ref{prop:prismsetup}, we let $\sigma^*$ denote the smallest possible quasi-Assouad dimension of a set of tubes satisfying the $t$-Frostman Convex Wolff Axiom with small error. Let $\beta^*$ denote the largest value of $\log_{1/\de} |\T|$ among $\de$-tubes $\T$ satisfying the $t$-Frostman Convex Wolff Axiom with small error and with quasi-Assouad dimension $\sigma^*$. We take a set of $\de$-tubes $\T$ with quasi-Assouad dimension $\sigma^*$ and multiplicity $\de^{-\beta^*}$ and apply the multiscale decomposition to $\T$. If conclusions (A) or (B) occur, then we easily find a denser set of tubes with quasi-Assouad dimension $\le \sigma^*$, contradicting our assumption that $\T$ is the densest possible quasi-Assouad dimension extremizer. Therefore, (A) or (B) cannot occur. If (C) occurs, then Proposition \ref{prop:prismsetup} implies $\sigma^*$ is about $2t+1$. If (D) occurs, then the hypothesis of Theorem \ref{thm:realmain} implies that $\sigma^*$ is at least $\sigma$. In either case, we have the conclusion of Theorem \ref{thm:realmain}. All that remains is proving Proposition \ref{prop:prismsetup}.

\subsection{Proposition \ref{prop:prismsetup}: Upper spectrum bouds for tubes concentrating in prisms} 

Proposition \ref{prop:prismsetup} states that if $\T$ is a set of $\de$-tubes which satisfies the $t$-Frostman Convex Wolff Axiom with small error in a set of $\de \times \rho \times 1$ prisms $\calR$ and $\calR$ satisfies the $t$-Frostman Convex Wolff Axiom with small error, then $\T$ has upper spectrum at small values of $\eta$ and scale $\de$ about $2t+1$. We would like to apply the induction on scales argument from Steps 2, 3, and 4 in the $t=1$ outline to prove this, but as we noted previously, Steps 3 and 4 break down in the $t < 1$ case. The main innovation of this paper is Proposition \ref{prop:smallprism}, a substantially more difficult argument which proves that $\T$ either has the desired upper spectrum or $\T_{\overline{\de}}$, the $\overline{\de}$-neighborhood of $\T$, can be arranged into $\overline{\de} \times \overline{\De} \times 1$ prisms $\overline{\calR}$ so that $\T_{\overline{\de}}$ satisfies the $1$-Frostman Convex Wolff Axiom with small error in $\overline{\calR}$ and $\overline{\calR}$ satisfies the $t$-Frostman Convex Wolff Axiom. 

We discuss the proof of Proposition \ref{prop:smallprism} in more depth in the following subsection, but before doing so, we note how to close the proof of Proposition \ref{prop:prismsetup} given Proposition \ref{prop:smallprism}. If we apply Proposition \ref{prop:smallprism} to $\T$ and we get the desired upper spectrum for Proposition \ref{prop:prismsetup}, we conclude the proof of Proposition \ref{prop:prismsetup}. Otherwise, we are in a position to follow Steps 2, 3, and 4 from the $t=1$ outline, applied to the sets of tubes $\T_{\overline{\de}}$ and the prisms $\overline{\calR}$. We carry out the induction on scales part of Step 2 in the proof of Proposition \ref{prop:prismsetup} itself and the rest of Steps 2, 3, and 4 in Proposition \ref{prop:largeprism}. The proof of Proposition \ref{prop:prismsetup} makes up Section \ref{sec:prismsetup} and the proof of Proposition \ref{prop:largeprism} makes up Section \ref{sec:largeprism}. All that remains is the proof of Proposition \ref{prop:smallprism}.

\subsection{Proposition \ref{prop:smallprism}: Improving dimensionality of tubes concentrating in prisms}\label{subsec:bpop3}

It remains to prove Proposition \ref{prop:smallprism}. We begin with a set of $\de$-tubes $\T$ covered by a set of $\de \times \De \times 1$ prisms $\calR$, so that $\T[R]$ satisfies the $t$-Frostman Convex Wolff Axiom in $R$ for each $R \in \calR$ and $\calR$ itself satisfies the $t$-Frostman Convex Wolff Axiom. We aim to prove that either $\bigcup_{T \in \T} T$ has upper spectrum at a small value $\eta$ and scale $\de$ about $2t+1$, which we call Conclusion A, or $\T$ arranges as a $1$-dimensional set into larger prisms, which we call Conclusion B. We write $x \in \T[R]$ if $x \in T$ for some $T \in \T[R]$, denote the long direction of $R$ by $\dir(R)$ and the normal direction to $R$ by $n(R)$. We now outline the proof of Proposition \ref{prop:smallprism}. To simplify this outline, we suppress all pigeonholing arguments by assuming any value we would pigeonhole on is everywhere constant. Accounting for pigeonholing introduces significant technical challenges in parts of the proof. While our outline here covers the main ideas of the proof of Proposition \ref{prop:smallprism}, we refer the reader to Section \ref{sec:smallprism} to see where these difficulties arise and how we manage them.

We can find a broadness parameter $u$ and a set of tubes $\T_u$ so that for each $x \in \bigcup_{T \in \T} T$, there exists one tube $T_u$ such that $x \in \T[R]$ for some $R \subset T_u$ and $\{\dir(R) : x \in \T[R], R \in \calR\}$ is $\beta_1$-dimensional within a $u$-ball, where $\beta_1 > 0$ is very small depending on our choice of $\omega$. We do this so that for any direction $\ell \in \{\dir(R) \in S^2 : x \in \T[R], R \in \calR\}$, we can find some $\ell' \in \{\dir(R) \in S^2 : x \in \T[R], R \in \calR\}$ with $\angle(\ell, \ell') \sim u$ and, importantly, this property is essentially maintained under small refinements of $\T$ or $\calR$. 

We then break up each $R \in \calR$ into disjoint $\de \times \De \times \frac{\De}{u}$ subprisms $\calP_R$, each containing a collection of tubes $\T_P = \{T \cap P : T \in \T[R]\}$. Later in the proof, we need $\T_P$ to satisfy the $t$-Frostman Convex Wolff Axiom in $P$ for each $P$, so we establish that property now. Suppose instead that $\T_P$ never satisfies the $t$-Frostman Convex Wolff Axiom. Then we can find a larger scale $\alpha$ such that $\T_P$ overconcentrates in $\calD_{\alpha}(\T_P)$, which means that $\mathcal{W}_P = \calD_{\alpha}(\T_P)$ is quantitatively sparse for each $P$. Summing over $\calP_R$, we conclude that \[\left|\bigcup_{T \in \calD_{\alpha}(\T[R])} T\right|_{\alpha} \ll |\calD_{\alpha}(\T[R])|\alpha^{-1}.\footnote{$\calD_{\alpha}(\T[R])$ is defined in Notation (iii).}\]On the other hand, an $L^2$ argument gives that \[\left|\bigcup_{T \in \calD_{\alpha}(\T[R])} T\right|_{\alpha} \approx |\calD_{\alpha}(\T[R])|\alpha^{-1}.\]We arrive at a contradiction and conclude that $\T_P$ satisfies the $t$-Frostman Convex Wolff Axiom with small error for each $P$. We carry this argument out in more detail in Lemma \ref{lem:grain}. 

Define the affine linear map $\phi_{T_u}:\R^3 \to \R^3$ which sends $T_u$ to the unit cube. If $R \in \calR[T_u]$, then $R^{T_u}:=\phi_{T_u}(R)$ is essentially a $\frac{\de}{u} \times \frac{\De}{u} \times 1$ prism, which has normal direction $n(\phi_{T_u}(R))$ and if $T \subset \T[T_u]$, then $T^{T_u}:=\phi_{T_u}(T)$ is essentially a $\frac{\de}{u}$-tube. We denote the $\calR^{T_u} = \{\phi_{T_u}(R) : R \in \calR[T_u] \}$ and similarly denote other rescaled collections of convex sets. We find a transversality paramters $\theta$ so that for each $x \in \bigcup_{R \in \calR^{T_u}} \bigcup_{T \in \T^{T_u}[R]} T$, the set of directions \[\left\{n(R^{T_u}) : x \in \bigcup_{R \in \calR^{T_u}} \bigcup_{T \in \T^{T_u}[R]} T, R^{T_u} \in \calR^{T_u}\right\} \subset S^2\]is $\beta_2$-dimensional within a $\theta$-ball, where $\beta_2$ is a parameter much smaller than $\beta_1$. 

The theorem now breaks into three paths, depending on the values of $u$ and $\theta$. The first path is the \emph{very transverse case}, when $\theta \approx 1$. In that case, we can break the prisms in $\calR^{T_u}$ into disjoint $\frac{\de}{u} \times \frac{\De}{u} \times \frac{\De}{u}$ small slabs $\calP^{T_u}$. We use a brush argument based on some $P_1 \in \calP^{T_u}$ to find a collection $\mathscr{S}^{T_u}_{P_1} \subset \calP^{T_u}$ satisfying the $t$-Frostman Convex Wolff Axiom in the $\frac{\theta \De}{u} \times \frac{\De}{u} \times \frac{\De}{u}$-prism $G$ concentric with $P_1$. We apply an $L^2$ argument to see that $\T^{T_u}$ concentrates like a $2t+1$-dimensional set in $G$. Since $\theta \approx 1$, $G$ is approximately a $\frac{\De}{u}$-ball, so $\T^{T_u}$ has the desired upper spectrum. This property is preserved after undoing the rescaling through $T_u$, so in the very transverse case, we have the desired upper spectrum bound for $\T$.

The second path is the \emph{very narrow case}, when $u \approx \De$. A consequence of our definition of the broadness parameter $u$ and the associated set of tubes $\T_u$ is that $\T[R]$ and $\T[R']$ are essentially disjoint unless $R$ and $R'$ are contained in the same $u$-tube. Similarly, the definition of the transversality parameter $\theta$ implies that if $R, R' \subset T_u$, $\T^{T_u}[R^{T_u}]$ and $\T^{T_u}[{R'}^{T_u}]$ are essentially disjoint unless $R, R'$ are contained in the same $\theta \times 1 \times 1$ prism. We know that $\calR^{T_u}$ is a collection of essentially $\frac{\de}{u} \times 1 \times 1$ prisms, since $u \approx \De$. For any prism $R^{T_u} \in \calR^{T_u}$, we use a brush argument to prove that $\T^{T_u} \cap P$ looks like a discertized $2t+1$-dimensional set in the $\theta \times 1 \times 1$ prism $P$ concentric with $R^{T_u}$. Carrying this out for each $R^{T_u} \in \calR^{T_u}$, we arrive at a cover of $\calR^{T_u}$ by essentially disjoint $\theta \times 1 \times 1$ prisms $\calR_{\text{env}}^{T_u}$, intersecting $\T^{T_u}$ in pairwise disjoint discretized $2t+1$-dimensional sets.

We undo the rescalings through $u$-tubes to arrive at a collection $\calR_{\text{env}} = \bigcup_{T_u \in \T'_u} \phi_{T_u}^{-1}\calR_{\env}^{T_u}$. Each $\T^{T_u} \cap P$ looks $2t+1$-dimensional and they are all disjoint. We can use the fact that $\calR$-satisfies the $t$-Frostman Convex Wolff Axiom to prove $\calR_{\text{env}}$ is large enough to conclude that \[\bigcup_{T_u \in \T'_u} \bigcup_{P \in \calR_{\text{env}}} \bigcup_{T \in \T[T_u]} T \cap P\]looks $2t+1$-dimensional in $B(0,1)$. This gives the desired upper spectrum bound for $\T$ in the narrow case.

If we assume neither the very transverse nor the very narrow case occurs, we must have that our prisms, and the tubes within them, can be arranged into $\overline{\de} \times \overline{\De} \times 1$ prisms $\overline{\calR}$. We call this the \emph{brush case}. We specifically take $\overline{\de} \approx u\theta$ and $\overline{\De} \approx u$. Since we have assumed the very transverse case does not occur, we know that $\theta \ll 1$, so $\overline{\de} \ll \overline{\De}$. Since the very narrow case does not occur, we know $\De \ll \overline{\De}$. This is close to Step 2 from Wang and Zahl's argument outlined previously, but our scale separations $\overline{\de} \ll \overline{\De}$ and $\De \ll \overline{\De}$ are \emph{not sufficient} to push $\overline{\De}$ to $1$ in finitely many steps of the induction on scales argument in Step 2. We must somehow improve the structure of $\T_{\overline{\de}}$ to succesfully complete the induction on scales argument. 

Fortunately, in the $t < 1$ case, the brush argument used to prove that tubes can be arranged into wider prisms also improves the dimensionality of the tubes within the prisms. When we carry out the brush argument, we have a collection of ``bristles'' $\calR_{\text{bristle}} \subset \calR$ all making angle essentially $\overline{\De} = u \gg \De$ with a ``stem'' $R_{\text{stem}}$ and all contained in a $\overline{\de} \times \overline{\De} \times 1$ prism $\overline{R}$ concentric with $R_{\text{stem}}$. These bristles come from taking prisms incident to one fixed tube $T_{\text{stem}}$ in $\T[R_{\text{stem}}]$ so that the prism approximately intersects $T_{\text{stem}}$ with angle $\approx u$. Then $\overline{\T}(R_{\text{stem}}):=\calD_{\overline{\de}}(\calR_{\text{stem}})$ is a union of $\overline{\de}$-tubes, all intersecting $T_{\text{stem}}$ with angle $\approx u$ and length of intersection $\approx \frac{\de}{u}$ Since essentially all of $T_{\text{stem}}$ must intersect with the tubes in $\overline{\T}(R_{\text{stem}})$, the brush arguments tells us that $\overline{\T}(R_{\text{stem}})$ must satisfy the $1$-Frostman Convex Wolff Axiom in $\overline{R}$. Applying an $L^2$ argument, we conclude that $\bigcup_{T \in \overline{\T}(R_{\text{stem}})} T$ essentially fills out $\overline{R}$.  

At this point, we have almost the same structure as in the $t=1$ case, with a worse scale separation and $\overline{\calR}$ satisfying only the $t$-Frostman Convex Wolff Axiom, rather than the $1$-Frostman Convex Wolff Axiom. The worse scale separation is still acceptable in Wang and Zahl's induction on scales process and replacing the $t$-Frostman Convex Wolff Axiom with the $1$-Frostman Convex Wolff Axiom has no impact on the challenging points in the induction on scales argument, so in Proposition \ref{prop:largeprism}, we follow the same induction on scale steps described Subsection \ref{subsec:bpop1} to conclude the argument.

\section{Discretizing the main theorem}\label{sec:disc}

In this section, we prove that Theorem \ref{thm:realmain} implies Theorem \ref{thm:main}.

\begin{proposition}\label{prop:implication}
    Suppose Theorem \ref{thm:realmain} is true. Then Theorem \ref{thm:main} is true.
\end{proposition}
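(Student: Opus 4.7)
The plan is to discretize $E$ to a set of $\de$-tubes whose union lies essentially inside $E$ and which satisfies the hypothesis of Theorem \ref{thm:realmain}, apply that theorem, and transfer the resulting upper spectrum bound back to $E$. Because a sticky $(1,2t)$-Furstenberg set at scale $\de$ with error $C$ is by Definition \ref{def:FCWAAAS} the union of a collection of $\de$-tubes satisfying the $t$-FCWA at all scales with error $C$, the hypothesis of Theorem \ref{thm:main} (with a mild adjustment of $\eta$ to convert the per-set quasi-Assouad bound into a uniform upper spectrum bound over the class) supplies that of Theorem \ref{thm:realmain}. So assuming Theorem \ref{thm:realmain}, for any $\omega > 0$ there exist $\eta, \de_0 > 0$ such that any $\de$-tubes (with $\de < \de_0$) satisfying the discrete $t$-FCWA with error $\le \de^{-\eta}$ have upper spectrum at $\eta$ and scale $\de$ at least $\sigma - \omega$.

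Fix a $(1,2t)$-Furstenberg set $E$, associated line collection $L$, and probability measure $\mu$ on $L$ with $\mu(W) \le K \vol_\calL(W)^{t/2}$ for all convex $W \subset \calL$. Given $\e > 0$, set $\omega = \e/2$ and obtain $\eta, \de_0$ as above. For $\de \in (0, \de_0)$, pigeonhole $L$ by the $\mu$-mass of $\de$-balls in $\calL$: for a dyadic $\rho > 0$, keep the subset $L_\rho = \{\ell : \mu(B_\calL(\ell, \de)) \in [\rho, 2\rho]\}$ of maximal total mass, which retains $\mu(L_\rho) \gtrsim (\log \de^{-1})^{-1}$. Extract a maximal $\de$-separated subset of $L_\rho$ and identify with the corresponding $\de$-tubes to form $\T_\de$, so that $|\T_\de| \gtrsim (\log \de^{-1})^{-1}/\rho$ and $\bigcup_{T \in \T_\de} T \subset N_{O(\de)}(E)$.

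To verify the discrete $t$-FCWA for $\T_\de$, fix a convex $W \subset \R^3$ and set $S_{2W} = \{\ell \in \calL : \ell \subset 2W\}$. The line through $(a,b,0)$ in direction $(c,d,1)$ is affine in $(a,b,c,d)$ at each fixed parameter value, so convexity of $2W$ makes $S_{2W}$ geodesically convex in $\calL$. A direct decomposition of $W$ along its long direction and transverse cross-section gives $\vol_\calL(S_{2W}) \lesssim \vol(W)^2$ whenever $2W$ contains a unit segment; otherwise $S_{2W}$ is empty. Combining this with the continuous $t$-FCWA and the lower bound $\mu(B_\calL(\ell, \de)) \ge \rho$ on $L_\rho$, one obtains
\begin{equation*}
|\{T \in \T_\de : T \subset 2W\}| \le \mu(S_{2W})/\rho \lesssim K \vol(W)^t/\rho \lesssim K(\log \de^{-1})^{O(1)} \vol(W)^t |\T_\de|,
\end{equation*}
so the discrete $t$-FCWA holds for $\T_\de$ with error $K(\log \de^{-1})^{O(1)} \le \de^{-\eta}$ once $\de$ is small enough.

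Theorem \ref{thm:realmain} now yields an $R$-ball $B_R$ and $r \le \de^\eta R$ with $|\bigcup_T T \cap B_R|_r \ge (R/r)^{\sigma - \omega}$. Since $\bigcup_T T \subset N_{O(\de)}(E)$ and $r \ge \de$, this transfers to $|E \cap B_R'|_r \gtrsim (R/r)^{\sigma - \omega}$ for a mild enlargement $B_R'$; absorbing the resulting absolute constant into the exponent via $R/r \ge \de^{-\eta}$ and $\omega = \e/2$ gives $|E \cap B_R'|_r \ge (R/r)^{\sigma - \e}$. Hence $E$ has upper spectrum at $\eta$ at least $\sigma - \e$, and since $\e > 0$ was arbitrary, the quasi-Assouad dimension of $E$ is at least $\sigma$. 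The main obstacle is the geometric inequality $\vol_\calL(S_{2W}) \lesssim \vol(W)^2$, which in the non-degenerate regime (where $2W$ contains a unit segment) follows by a direct computation of the constraints on $(a,b,c,d)$ imposed by the long direction and cross-section of $W$; care is also needed to handle the different charts of the affine Grassmannian as $W$ ranges over convex sets in different orientations.
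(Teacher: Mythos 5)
Your proposal is correct and follows essentially the same route as the paper: pigeonhole $L$ by $\mu$-mass at scale $\de$ to extract a $\de$-separated family of tubes inside $N_{O(\de)}(E)$, verify the discrete $t$-Frostman Convex Wolff Axiom by showing that $\{\ell \in \mathcal{L} : \ell \subset 2W\}$ is geodesically convex with $\vol_{\mathcal{L}} \lesssim \vol(W)^2$, apply Theorem \ref{thm:realmain}, and transfer the resulting upper spectrum bound back to $E$. The only divergence is at that volume inequality, where the paper first reduces to an almost-minimal $W$ and invokes the John ellipsoid, while your unconditional version is also valid (bound $\vol_{\mathcal{L}}$ by the product of the areas of the two endpoint slices at $z=0$ and $z=1$, and bound $\vol(2W)$ from below by the volume of the convex hull of those slices), though you leave this key computation as a sketch.
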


We briefly sketch the argument before proceeding with the proof. We begin by assuming Theorem \ref{thm:realmain} is true and the hypothesis for Theorem \ref{thm:main} is as well. The hypothesis for Theorem \ref{thm:main} essentially implies the hypothesis for Theorem \ref{thm:realmain}, so we have good control on the discretized upper spectrum of a set of tubes satisfying the $t$-Frostman Convex Wolff Axiom. We would like to use this control to lower bound the quasi-Assouad dimension of a set of line segments $L$ satisfying the continuous $t$-Frostman Convex Wolff Axiom. It suffices to find some value of $\de$ for which a subset of the $\de$-neighborhood of $L$ satisfies the $t$-Frostman Convex Wolff Axiom with small error. Since $L$ satisfies the continuous $t$-Frostman Convex Wolff Axiom, it supports a probability measure $\mu$ such that \[\mu(W) \le K\left(\text{Vol}_{\mathcal{L}}(W)\right)^{t/2} \text{ for any geodesically convex } W \subset \mathcal{L}.\]Following similar reasoning to the proof of Frostman's lemma, we can find a collection of $\de$-tubes contained in the $\de$-neighborhood of $L$ which satisfy the $t$-Frostman Convex Wolff Axiom with small error. Ensuring the $\de$-neighborhood of $L$ satisfies the $t$-Frostman Convex Wolff Axiom is where we use the continuous $t$-Frostman Convex Wolff Axiom. This part of the proof involves some technical work. 

Before beginning the proof of Proposition \ref{prop:implication}, we introduce a piece of notation in this proof which we use in the proof and throughout the remainder of the paper. 
\begin{definition}
    Recall that we have defined $\calW[R] = \{W \in \calW : W \subset 2R\}$. We denote by $\calW(R)$ some subset of $\calW[R]$. 
\end{definition}
\begin{remark}
    It is harmless to assume that any $W \in \calW(R)$ is in fact contained in $R$. Accounting for this change would occasionally introduce constant-factor errors in some bounds which never impact any of the final results.
\end{remark}
Now, we prove Proposition \ref{prop:implication}.
\begin{proof}
    Suppose Theorem \ref{thm:realmain} is true. Suppose furthermore that for any $\e > 0$, there exists $\eta, \de_0 > 0$ such that any sticky $(1,2t)$-Furstenberg set at scale $\de < \de_0$ with error $\de^{-\eta}$ has quasi-Assouad dimension at scale $\de$ at least $\sigma - \e$. Unwinding definitions, we see that this implies that for any $\e > 0$, there exists $\eta, \de_0 > 0$ such that if a set of tubes $\T$ satisfies the $t$-Frostman Convex Wolff Axiom at all scales with error $\le \de^{-\eta}$, then $\bigcup_{T \in \T} T$ has discretized upper spectrum at least $\sigma - \e$ at $\eta$ and scale $\de$. 
    
    Let $E$ be a $(1,2t)$-Furstenberg set and $L$ be the associated set of lines which satisfies the continuous $t$-Frostman Convex Wolff Axiom with error $K$. We claim that for any $\eta> 0$, there exists $\de_0 > 0$ such that for any $\de < \de_0$, $N_{\de}(E)$ contains a collection of $\de$-tubes satisfying the $t$-Frostman Convex Wolff Axiom with error $\de^{-\eta}$. Before proving this claim, we use it to complete the proof of Proposition \ref{prop:implication}. 
    
    It suffices to prove that for any $\omega > 0$, we can choose $\eta > 0$ such that for any $(1,2t)$-Furstenberg set $E$ and scales $r < r^{\eta}R$ and an $R$-ball $B_R$ with \[|E \cap B_R|_r > \left(\frac{R}{r}\right)^{\sigma - \omega}.\]
    
    If this is not the case, then for any scales $r, R$ with $r^{1-\eta} \le R$ and any $R$-ball $B_R$, \begin{equation}\label{eqn:origin}|E \cap B_R|_r <\left(\frac{R}{r}\right)^{\sigma-\omega}.\end{equation}By the conclusion of Theorem \ref{thm:realmain}, we know there exist $\eta, \de'_0 > 0$ such that if $\de < \de'_0$ and $\T$ is a collection of $\de$-tubes satisfying the $t$-Frostman Convex Wolff Axiom with error $\de^{-\eta}$, then $\bigcup_{T \in \T} T$ has upper spectrum at $\eta$ and scale $\de'_0$ at least $\sigma - \omega$. By our claim, for any $\de > 0$ with $\de < \min(\de'_0, \de_0)$ such that $N_{\de}(E)$ contains a collection of $\de$-tubes $\T$ satisfying the $t$-Frostman Convex Wolff Axiom with error $\de^{-\eta}$, $\bigcup_{T \in \T} T$ has upper spectrum at $\eta$ at least $\sigma - \omega$. Then there exist scales $r, R$ with $\de \le r \le \de^{-\eta}R$ and an $R$-ball $B_R$ such that \[|E \cap B_R|_r \ge \left(\frac{R}{r}\right)^{\sigma - \omega}.\]Since $r \ge \de$, $r^{1-\eta} \le R$, and we contradict (\ref{eqn:origin}). We conclude that $E$ has upper spctrum at $\eta$ at least $\sigma - \omega$. As $\omega$ was arbitrary, we see that $\dim_{qA}(E) \ge \sigma$.

    It remains to prove the claim. Fix a dyadic scale $\de > 0$, which we eventually require to be small enough so that $\de^{-\eta} \gtrapprox_{\de} 1$. Let $L$ be the set of lines constituting $E$ and $\mu$ be the probability measure supported on $L$ guaranteed in Definition \ref{def:cts}. Cover $\mathcal{L}$ with disjoint dyadic $\de$-cubes. We can refine $E$ to some $E_1$ with $\mu(E_1) \gtrsim \mu(E)$ so that the set of dyadic $\de$-cubes intersecting $\mu(E')$ are pairwise $3\de$-separated. Denote by $\mathcal{Q}$ the set of dyadic $\de$-cubes incident to $E'$. For $k \in \N$, let $Q_k = \{ Q \in \mathcal{Q}: \mu(Q) \in [2^{-k}, 2^{-k+1})\}$. We easily see that since $\mu$ is a probability measure, $\sum_{2^{-k} < \de^{100}} \sum_{Q \in \mathcal{Q}_k} \mu(Q) \le \frac{\mu(E_1)}{2}$. There are $\lessapprox_{\de} 1$ terms in that sum, so after pigeonholing, we can find one choice of $k$ such that $\sum_{Q \in \mathcal{Q}_k} \mu(Q) \gtrapprox_{\de} \mu(E_2) \gtrsim \mu(E) = 1$. For each $Q \in \mathcal{Q}_k$, we have some $\ell_Q \in L \cap Q$. Let $T_Q$ denote the tube formed by taking the $\de$-neighborhood of the line segment in $E$ corresponding to $\ell_Q$ and $\T = \{T_Q : Q \in \mathcal{Q}_k\}$. Clearly, $\T \subset N_{\de}(E)$ and since the elements of $\mathcal{Q}_k$ are $3\de$-separated, $\T$ is an essentially distinct set of tubes. Now, we must prove that it satisfies the $t$-Frostman Convex Wolff Axiom with error $\le \de^{-\eta}$, so long as $\de$ is sufficiently small.  

    If $\T$ fails the $t$-Frostman Convex Wolff Axiom with error $\de^{-\eta}$, then we must have some convex set $W$ with \begin{equation}\label{eqn:lab1}|\T[W]| \gtrsim \de^{-\eta}\text{Vol}(W)^t|\T|.\end{equation}We can assume $W$ is almost minimal, in the sense that any convex set $W' \subset W$ with $\T[W'] = \T[W]$ has $\vol(W') \ge \frac{1}{2} \vol(W)$. Let $\tilde{W} = \{\ell \in \mathcal{L} : \ell \subset W\}$ and \begin{equation*}\tilde{\T} = \{T \in \T : T = T_Q \text{ for some Q such that } Q \cap \tilde{W} \neq \emptyset\}.\end{equation*}Let $\ell, \ell'$ be line segments respectively paramterized as in the definition of $\mathcal{L}$ (see (\ref{eqn:calL})) by $(u,v,w,x)$ and $(u',v',w',x')$. We claim that a convex combination $t_1(u,v,w,x) + (1-t_1)(u',v',w',x') \in \mathcal{L}$ corresponds to a line $\ell_{t_1} \subset W$. To see this, note that each point $z_{t_1} = (t_1(u,v,0) + (1-t_1)(u', v', 0)) + s(t_1(w,x,1) + (1-t_1)(w', x', 0)) \in \ell_{t_1}$ is a convex combination of $z = (u,v,0) + s(w,x,1)$ and $z' = (u', v', 0) + s(w',x', 1)$. As $W$ is convex and $z, z' \in W$, $z_{t_1} \in W$ as well. As $z_{t_1}$ is arbitrary, it follows that $\ell_{t_1} \subset W$. In the flat metric on $\mathcal{L}$, an element $(u^*, v^*, w^*, x^*)$ is on the geodesic between $(u,v,w,x)$ and $(u',v',w',x')$ precisely when $(u^*, v^*, w^*, x^*) = t_1(u,v,w,x) + (1-t_1)(u',v',w',x')$, so we conclude that $\tilde{W}$ is geodesically convex. 
    
    Next, we prove that that $\text{Vol}_{\mathcal{L}}(\tilde{W}) \lesssim \text{Vol}(W)^2$. Let $E(W)$ be the outer John ellipsoid of $W$, $W_{\text{bottom}} = E(W) \cap \{(x,y,z) : z = 0\}$ and $W_{\text{top}} = E(W) \cap \{(x,y,z) : z = 1\}$. Any ellipsoid is the image of the unit ball under a linear map $A$, so parallel slices are formed by $A$ applied to parallel slices of the unit ball, which are always similar. All linear maps commute with translations and dilations, so the image of the slices under $A$ are similar as well. It follows that $W_{\text{bottom}}$ and $W_{\text{top}}$ are similar. Without loss of generality, assume that $W_{\text{bottom}}$ is smaller than $W_{\text{top}}$, in the sense that $W_{\text{bottom}}$ is $W_{\text{top}}$ rescaled by some scalar $\kappa < 1$. Expand $W_{\text{bottom}}$ to $W'_{\text{bottom}}$ which is congruent to $W_{\text{top}}$ and let $x_{\text{bottom}}$ be a point in $W_{\text{bottom}}$. Let $K$ be the convex hull of $W'_{\text{bottom}} \cup W_{\text{top}}$ and $C$ the convex hull of $\{x_{\text{bottom}}\} \cup W_{\text{top}}$. Note that $K$ is a cylinder with base $W_{\text{top}}$ and height $1$ and hence $\vol(K) = \vol_{\R^2}(W_{\text{top}})$, and $C$ is a cone with base $W_{\text{top}}$ and height $1$ and hence $\vol(C) \sim \vol_{\R^2}(W_{\text{top}})$. Finally, we know that $W \supset C$ and since $\T[K] \supset \T[W]$, by the minimality of $W$ in (\ref{eqn:lab1}), we must have that $\vol(K) \gtrsim \vol(W)$. Since $\vol(K) \sim \vol(C)$, we conclude that $\vol(K) \sim \vol(W)$. It is immediate from definitions that \[\text{Vol}_{\mathcal{L}}(\tilde{W}) \lesssim \text{Vol}_{\R^2}(W_{\text{bottom}})\text{Vol}_{\R^2}(W_{\text{top}}) \le \vol(K)^2 \sim \vol(W)^2,\]as desired.

    By our assumption that $\mu$ satisfies (\ref{eqn:bear}), we know that \begin{equation}\label{eqn:lab2}\mu(\tilde{W}) \le K~\text{Vol}_{\mathcal{L}}(\tilde{W})^{t/2} \lesssim \vol(W)^t.\end{equation}On the other hand, $\mu(\tilde{W}) \sim |\T[W]|2^k$ and $|\T| \sim 2^k\sum_{Q \in \mathcal{Q}_k} \mu(Q) \gtrapprox_{\de} 2^k$. Combining this estimate with (\ref{eqn:lab2}), we conclude that $|\T[W]| \lessapprox_{\de, K} \vol(W)^t|\T|$. So long as $\de$ is sufficiently small, we contradict (\ref{eqn:lab1}). We conclude that $\T$ satisfies the $t$-Frostman Convex Wolff Axiom with error $\de^{-\eta}$, completing the proof.
\end{proof}

\section{A multiscale decomposition for unions of tubes}\label{sec:multi}

In this section, we prove Proposition \ref{prop:WZ256.3}, a multiscale decomposition for collections of $\de$-tubes $\T$ satisfying the $t$-Frostman Convex Wolff Axiom with small error. This decomposition has four possible outcomes. One of the outcomes is that $\T$ satisfies the $t$-Frostman Convex Wolff Axiom at all scales with small error, so we before stating the multiscale decomposition, we recall Definition \ref{def:FCWAAAS}.
\begin{definition*}[\textbf{\ref{def:FCWAAAS}}]
    We say $\T$ satisfies the \emph{$t$-Frostman Convex Wolff Axiom at all scales with error $C$} if for any $\rho_0 \in [\de, 1]$, there exists some $\rho \in [\rho_0, C\rho_0]$ such that each $T \in \T$ is contained in a $\rho$-tube $T_{\rho}$ and $\T[T_{\rho}] := \{T \in \T:T \subset 2T_{\rho}\}$ satisfies the $t$-Frostman Convex Wolff Axiom with error $C$ in $T_{\rho}$.
\end{definition*}

Now we state Proposition \ref{prop:WZ256.3}. This proposition starts with a several rounds of quantifier choice. The way we choose quantifiers in the statement is aligned with how we apply Proposition \ref{prop:WZ256.3} in the proof of Theorem \ref{thm:realmain}. 

\begin{proposition}\label{prop:WZ256.3}
    For any $\beta \in [2t,4]$ and $\zeta > 0$, there exists $\alpha > 0$ such that the following holds. For any $\eta_1 > \eta_2 > 0$ with $\eta_1 < \zeta/2$ and any $\de'_0 > 0$, there exists $\eta, \de_0 > 0$ such that the following holds for all $\de \in (0, \de'_0)$. Let $\T$ be a set of $\de$-tubes satisfying the $t$-Frostman Convex Wolff Axiom with error $\le \de^{-\eta_1}$ and with $|\T| \in [\de^{-\beta+\alpha}, \de^{-\beta}]$. Then there exists $\T' \subset \T$ such that at least one of the following is true. 

    \begin{enumerate}[label = (\Alph*)]
        \item There exists a scale $\rho \in (\de, \min(\de'_0, \de^{\eta}))$ and some collection $\T'_{\rho} \subset \calD_{\rho}(\T)$ such that $|\T'_{\rho}| \ge \rho^{-\beta - \alpha}$ and $\T'_{\rho}$ satisfies the $t$-Frostman Convex Wolff Axiom with error $\rho^{-\eta_1}$.
        \item There exists a scale $\rho \in [\max(\de/\de'_0, \de^{1-\eta}), 1]$, a $\rho$-tube $T_{\rho}$, and a set of tubes $\T_{\ast} \subset \T'[T_{\rho}]$ such that $|\T_{\ast}| \ge \left(\frac{\de}{\rho}\right)^{-\beta-\alpha}$ and $\T_{\ast}^{T_{\rho}}$ satisfies the $t$-Frostman Convex Wolff Axiom with error $(\de/\rho)^{-\eta_1}$.
        \item There exist $\de \le \rho < \De \le 1$ with $\rho \le \de^{\eta_1/100}\De$ and $\rho < \de'_0$ and a set $\calW$ of $\rho \times \De \times 1$ prisms which factor $\T'$ from above and below with respect to the $t$-Frostman Convex Wolff Axiom with error $\le \rho^{-\eta_2}$. 
        \item $\T'$ satisfies the $t$-Frostman Convex Wolff Axiom at all scales with error $\de^{-\zeta}$.
    \end{enumerate}
\end{proposition}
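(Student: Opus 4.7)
The plan is to dyadically scan scales $\rho \in [\de, 1]$ and apply a dichotomy based on the size of $\calD_\rho(\T)$. For each dyadic $\rho$, either (i) $|\calD_\rho(\T)| \ge \rho^{-\beta-\alpha}$, or (ii) there is a $\rho$-tube $T_\rho$ with $|\T[T_\rho]| \ge (\de/\rho)^{-\beta-\alpha}$ (forced when $|\calD_\rho(\T)|$ is anomalously small), or (iii) $|\calD_\rho(\T)|$ is pinned in the narrow band around $\rho^{-\beta}$ and no $\rho$-tube absorbs an anomalous share. Case (i) at some admissible $\rho$ yields (A); case (ii) feeds into (B) or (C); case (iii) at every $\rho$ yields (D) or (C) depending on whether the $t$-Frostman Convex Wolff Axiom survives at every scale.

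Concretely, for (A) I take the scale $\rho \in (\de, \min(\de'_0, \de^\eta))$ furnished by (i) and prune $\calD_\rho(\T)$ to $\T'_\rho$ satisfying the $t$-Frostman Convex Wolff Axiom with error $\rho^{-\eta_1}$ by iteratively deleting, at logarithmic cost, convex sets that witness a violation; choosing $\alpha$ much smaller than $\eta_1$ absorbs the pigeonhole loss. For (B) I start from a $\rho$-tube $T_\rho$ with $|\T[T_\rho]|$ anomalously large (which must exist when $|\calD_\rho(\T)|$ is anomalously small, because $\T$ itself satisfies the $t$-Frostman Convex Wolff Axiom), pass to the rescaled family $\T[T_\rho]^{T_\rho}$, and apply the same pruning to either extract a subset satisfying the desired rescaled axiom (giving (B)) or to exhibit a further overconcentration in the rescaled coordinates; the latter branch is folded into case (C) below.

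For case (iii) applied at every scale, I first check whether $\T'$ satisfies Definition \ref{def:FCWAAAS} with error $\de^{-\zeta}$; if so, (D) holds. Otherwise, at some scale $\rho_0$ the axiom fails, producing an (almost volume-minimal) convex set $W$ that contains too many $\rho$-tubes. I apply John's theorem to replace $W$ by a comparable rectangular prism of side lengths $a \le b \le c$. The regimes $a \sim b \sim c$ (a ball) and $a \sim b \ll c$ (a tube) contradict respectively the upper bound on $|\calD_a(\T)|$ from excluding (A) and the upper bound on $|\T[T_b]|$ from excluding (B), so $W$ must be essentially a flat prism with $a \ll b \le c$. Covering $\T'$ by such prisms produces the candidate $\calW$, and the required factoring from above (the prisms themselves satisfy the $t$-Frostman Convex Wolff Axiom) and from below (each $\T'[W]$ satisfies it inside $W$) follows from applying the pruning of case (A) to $\calW$ and to each $\T'[W]$ separately. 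The main obstacle is parameter management: every pruning and rescaling step inflates the error, so $\alpha$ must be chosen sufficiently small compared to $\eta_2$ and $\eta_1 - \eta_2$, and $\eta, \de_0$ sufficiently small compared to all previously fixed parameters, so that the accumulated losses across the $O(\log(1/\de))$ dyadic scales stay well below $\rho^{-\eta_1}$, $\rho^{-\eta_2}$, and $(\de/\rho)^{-\eta_1}$ in the appropriate branches.
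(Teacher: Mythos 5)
Your scale-scanning skeleton matches the paper's broad outline (large covering number at some scale gives (A), anomalous concentration in a tube gives (B), the pinned case gives (C) or (D)), but the two mechanisms you lean on at every branch do not work as stated. First, ``iteratively deleting convex sets that witness a violation'' cannot manufacture the $t$-Frostman Convex Wolff Axiom: the axiom is relative to the cardinality of the surviving family, so each deletion shrinks the denominator and can create new violations, and in general no large subfamily of an arbitrary family satisfies the axiom (e.g.\ tubes packed in a plane). In the paper the axiom for $\T'_{\rho}$, for $\calW$, and for the coarsened covers is never obtained by pruning; it is \emph{inherited upwards} from the hypothesis that $\T$ itself satisfies the axiom with error $\de^{-\eta_1}$ (Proposition \ref{prop:inheritance}, applied to balanced partitioning covers), while the factoring \emph{from below} in outcome (C) comes from the structural Lemma \ref{lem:WZ2546}, not from any refinement of $\T'[W]$. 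Without one of these inputs your branches (A), (B), and (C) all assert conclusions you have no way to reach.

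Second, the pinned case is where the real work lies, and your treatment of it is essentially a placeholder. Failure of the axiom at all scales produces a violating convex set inside a \emph{rescaled} family $\T^{T_{\rho}}$, not a single convex set at the ambient scale, and a single violating $W$ does not yield a cover of $\T'$ by congruent flat prisms with the required two-sided factoring; that is exactly what Lemma \ref{lem:WZ2546} provides (together with pigeonholing to uniformize the prism dimensions $a_i\times b_i\times 1$ across the tubes $T_{\de_{i-1}}$, and an iterative refinement producing one $\T'$ that works simultaneously at all usable scales with accumulated loss below $\de^{-\zeta}$). Moreover, ruling out the tube-like regime $a\gtrsim \de^{\eta_1/100}b$ is not a one-line contradiction with ``(A)/(B) excluded'': in the paper it is a quantitative counting argument (covering the prisms $\calW[T_{\de_k}]$ by $\de_j$-tubes via the Katz--Tao bound to reach (B), and, inside the iteration, the comparison of $|\T_{\tilde a}|$ against the prism count to force $a_ib_i>\de^{\zeta/2}$), and it genuinely requires a separate argument when $\beta\le 2t$ using the sharpened bound (\ref{eqn:WZ2546+}). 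Your proposal contains neither the structural lemma nor these counting steps, so the derivation of outcomes (B), (C), and (D) from the pinned case is a gap rather than a different route.
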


In order to prove Proposition \ref{prop:WZ256.3}, we further discuss non-concentration conditions for collections of convex sets in Subsection \ref{subsec:nonconc}. 

\subsection{Non-concentration conditions for collections of convex sets.}\label{subsec:nonconc}

We begin with the $t$-Katz-Tao Convex Wolff Axiom, naturally generalizing the Katz-Tao Convex Wolff Axiom defined in \cite{WZ25}. 

\begin{definition}\label{def:KTCWA}
    A collection $\mathcal{U}$ of congruent convex sets satisfies the $t$-Katz-Tao Convex Wolff Axiom with error $C \ge 1$ if for any convex set $V$\[|\mathcal{U}[V]| \le C\left(\frac{\vol(V)}{\vol(U)}\right)^t.\]

    We denote the smallest constant $C$ for which this inequality holds by $C_{t\text{-KT-CW}}(\mathcal{U})$.
\end{definition}

Similarly to how the $t$-Frostman Convex Wolff Axiom is related to the $2t$-Frostman measure condition, the $t$-Katz-Tao Convex Wolff Axiom is related to the $(\de, 2t, C)$ Katz-Tao set condition, a well-known planar non-concentration condition originally from \cite[Refinement 2.2]{KT02}. These definitions coincide for a family of congruent convex sets $\calU$ when $|\calU| = \vol(U)^{-1}$, while the Frostman variant allows denser configurations of convex sets and the Katz-Tao variant allows sparser configurations. We focus on the Frostman Convex Wolff Axiom in this paper, but there are some points where it is convenient to consider the Katz-Tao Convex Wolff Axiom as well.

We would like to use these conditions at many scales. Before we can do this, we have to carefully define some ways of changing the scale of convex sets. We can most easily do so when the convex sets involved are prisms, so we use convex sets and prisms interchangeably in the remainder of the proof. We can do so because we can approximate convex sets with prisms from both inside and outside with only a constant factor loss in volume, which is harmless in our applications. This approximation follows essentially by the John ellipsoid theorem, which says that we can approximate any convex set $K$ from the inside by an ellipse $E_{\text{I}} \subset K$ and from the outside by an ellipse $E_{\text{O}} \supset K$ such that $|E_{\text{I}}| \sim |E_{\text{O}}|$. We can then approximate $E_{\text{I}}$ from the inside with a prism of approximately equal measure and $E_{\text{O}}$ from the outside with a prism of approximately equal measure. It is easiest to see this when the ellipses are spheres; the general case follows by applying an isotropic dilation, a rotation, and then a translation. For consistency, we also define tubes to be $\de \times \de \times 1$ prisms, which is an irrelevant change from the usual definition as the $\de$-neighborhood of a line segment. 

One way to change the scale of our prisms is to take neighborhoods of prisms. It will be convenient to do so in such a way that the neighborhood of a prism is also a prism. Up to small constant factors, we know that the $r$-neighborhood of an $a \times b \times c$ prism is an $(a+r) \times (b+r) \times (c+r)$ prism, which up to constant factors again is a $\max(a,r)  \times \max(b,r) \times \max(c,r)$ prism. For an $a \times b \times c$ prism $W$, we define $N_r(W)$ to be the concentric (that is, with the same short, medium, and long direction) $\max(a,r) \times \max(b,r) \times \max(c,r)$ prism. We define, for a family of prisms $\calW$, $\calD_{r}(\calW) = \{N_r(W) :W \in \calW\}$. 

Another way to change the scale is to rescale prisms through larger prisms. For a prism $W$, define $\phi_W$ to be the affine linear map sending $3W$ to $[0,1]^3$. For a set of prisms $\calR$, define $\calR^W = \phi_W(\calR[W])$. Since $\phi_W$ is angle-preserving, it sends prisms to prisms. In the cases we consider in this paper, the resulting prisms are congruent and we can easily find their dimensions\footnote{In general, this is not the case; see \cite[Figure 7]{WZ25} for an example.}.

Finally, we say that a set of prisms $\calR$ covers a set $\calW$ if each $W \in \calW$ is contained in $3R$ for some $R \in \calR$. Clearly, $\calD_{\de}(\calR)$ covers $\calR$ for any $\de \ge 0$. We state two conditions regarding covers, both originating from \cite{WZ25}.

\begin{definition}
    Let $\calW$ and $\calR$ be collections of congruent convex sets.
    \begin{itemize}
        \item $\calR$ is a $C$-balanced cover of $\calW$ if $\calR$ is a cover of $\calW$ and $|\calW[R_1]| \le C |\calW[R_2]|$ for any $R_1, R_2 \in \calR$.
        \item $\calR$ is a $C$-partitioning cover of $\calW$ if $\calR$ is a cover of $\calW$ and for each $W \in \calW$, $|\{R \in \calR : W \in \calW[R]\}| \le C$.
    \end{itemize}
\end{definition}

To simplify the relationship between the Convex Wolff Axioms with covers, we use Wang and Zahl's definition of factoring \cite{WZ25}.
\begin{definition}\cite[Definition 4.4]{WZ25}
    Suppose $\calR$ covers $\calW$.
    \begin{itemize}
        \item We say $\calR$ factors $\calW$ from above with respect to the $t$-Frostman (resp. Katz-Tao) Convex Wolff Axiom with error $C$ if $\calR$ satisfies the $t$-Frostman (resp. Katz-Tao) Convex Wolff Axiom with error $C$.
        \item We say $\calR$ factors $\calW$ from below with respect to the $t$-Frostman (resp. Katz-Tao) Convex Wolff Axiom with error $C$ if for each $R \in \calR$, $\calW^R$ satisfies the $t$-Frostman (resp. Katz-Tao) Convex Wolff Axiom with error $C$.
    \end{itemize}
\end{definition}

We have four easy consequences of the definitions of the Convex Wolff Axioms which play an important role in the remainder of the paper. The first and third consequences appear in \cite{WZ25} and the second in \cite{WZ24}. 

\begin{proposition}\label{prop:inheritance}
    Suppose $\calR$ covers $\calW$. 
    \begin{enumerate}
        \item If the cover is $C$-balanced and $C$-partitioning and $\calW$ satisfies the $t$-Frostman Convex Wolff Axiom with error $C'$, then $\calR$ satisfies the $t$-Frostman Convex Wolff Axiom with error $C^2C'$. 
        \item If $\calW[R] \neq \emptyset$ for each $R \in \calR$, $|\calR| = |\calW|$, and $\calW$ satisfies the $t$-Frostman Convex Wolff Axiom with error $C$, then $\calR$ satisfies the $t$-Frostman Convex Wolff Axiom with error $C$. In particular, if $\calR = \mathcal{D}_{r}(\calW)$ for some $r > 0$, then $\calR$ satisfies the $t$-Frostman Convex Wolff Axiom with error $C$.
        \item If $\calW$ satisfies the $t$-Katz Tao Convex Wolff Axiom with error $C'$, then for each $R \in \calR$, $\calW^R$ satisfies the $t$-Katz-Tao Convex Wolff Axiom with error $C'$. 
        \item If $\calW[R]$ satisfies the $t$-Frostman Convex Wolff Axiom in $R$ with error $C$, then $\calW^R$ satisfies the $t$-Frostman Convex Wolff Axiom with error $C$.
    \end{enumerate}
\end{proposition}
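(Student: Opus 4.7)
My plan is to derive each item by direct unpacking of the relevant Convex Wolff Axiom, absorbing absolute multiplicative constants (factors of $2$, $8$, $27$) into the stated errors as is implicit throughout the paper.

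For part $(1)$, I would fix a convex set $V$ and run a double-counting argument between $\calR[V]$ and $\calW$. The $C$-partitioning hypothesis yields the upper bound $\sum_{R \in \calR[V]} |\calW[R]| \le C|\calW[2V]|$, since every $W$ contributing to the sum satisfies $W \subset 2R \subset 4V$ for some $R \in \calR[V]$. The $C$-balanced hypothesis, together with the cover assumption, yields the lower bound $|\calW[R]| \ge |\calW|/(C|\calR|)$ for every $R \in \calR$. Combining these with the $t$-Frostman Convex Wolff Axiom for $\calW$ applied at $2V$ produces the claimed $C^2 C'$ bound on $|\calR[V]|$. Part $(2)$ is simpler: the hypotheses $|\calR| = |\calW|$ and $\calW[R] \neq \emptyset$ supply a bijection $f : \calR \to \calW$ with $f(R) \in \calW[R]$, which in the $\calD_r(\calW)$ case is just the inverse of $W \mapsto N_r(W)$. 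The map $R \mapsto f(R)$ then injects $\calR[V]$ into $\calW[2V]$, whose cardinality is bounded by $C \vol(V)^t |\calR|$ by hypothesis.

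For parts $(3)$ and $(4)$, I would transport the relevant Convex Wolff Axiom along the affine rescaling $\phi_R : 3R \to [0,1]^3$. The key observation is that for any $V \subset 3R$, the ratios $\vol(V)/\vol(W)$ and $\vol(V)/\vol(R)$ are preserved (up to absolute constants) under $\phi_R$, because numerator and denominator scale by the same Jacobian. For $(3)$, fix a convex $V' \subset [0,1]^3$, set $V = \phi_R^{-1}(2V')$, and note that $|\calW^R[V']| = |\{W \in \calW[R] : W \subset V\}|$; applying the Katz--Tao hypothesis on $\calW$ bounds this by $C'(\vol(V)/\vol(W))^t = C'(\vol(V')/\vol(W^R))^t$. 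Part $(4)$ runs the same argument for the Frostman condition: the in-$R$ Frostman hypothesis on $\calW[R]$ gives a bound of $C(\vol(V)/\vol(R))^t|\calW[R]|$, which rescales to $C \vol(V')^t |\calW^R|$ using $|\calW^R| = |\calW[R]|$.

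The main obstacle here is really just bookkeeping: the paper's conventions vary across definitions (whether the threshold is $W \subset V$ or $W \subset 2V$, and whether $\phi_R$ rescales $R$ or $3R$), so one must commit to a single convention and verify that every factor of $2$, $8$, $27$ is absorbed into the stated error. A secondary conceptual point is that the bijection needed in part $(2)$ need not exist under the literal hypotheses --- two distinct $R$'s could share the same unique $W \in \calW[R]$ --- so the statement implicitly assumes the matching that is automatic in the $\calD_r$ case highlighted by the \emph{in particular} clause.
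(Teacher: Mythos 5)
Your argument is correct and is essentially the intended one: the paper gives no proof of this proposition, presenting all four items as easy consequences of the definitions (citing Wang--Zahl), and your double counting for (1), choice of representatives for (2), and invariance of volume ratios under the affine rescaling $\phi_R$ for (3)--(4) is the standard way those consequences are obtained, with the dilation constants ($2$, $8$, $27^t$, and the $3R$ versus $R$ convention in $\phi_R$) absorbed into the stated errors exactly as the paper's conventions permit. Your caveat on (2) --- that the literal hypotheses $\calW[R] \neq \emptyset$ and $|\calR| = |\calW|$ do not by themselves force an injective assignment $R \mapsto W \in \calW[R]$, and that the statement is only invoked in situations such as $\calR = \calD_r(\calW)$ where this matching is automatic --- is an accurate reading of how the proposition is used in the paper.
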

We use the following terminology from \cite{WZ25} to refer to Proposition \ref{prop:inheritance} (1) and (2). 

\begin{definition}\cite[Remark 4.3]{WZ25}
    If we apply Proposition \ref{prop:inheritance} (1) or (2), we say the $t$-Frostman Convex Wolff Axiom is ``inherited upwards''. 
\end{definition}

An important tool in our proof of Proposition \ref{prop:WZ256.3} is Lemma \ref{lem:WZ2546}, which appears in the $t = 1$ case in \cite[Lemma 4.6]{WZ25}. They only state the result when $t = 1$, but their proof applies equally well in the case that $t \in (0, 1]$ needed in this paper. 

\begin{lemma}[{\cite[Lemma 4.6]{WZ25}}]\label{lem:WZ2546}
    Suppose $t \in (0, 1]$. Let $\mathcal{U}$ be a finite set of congruent convex subsets of the unit ball in $\R^d$, each of which contains a ball of radius $\de$ such that $|\mathcal{U}| \le \de^{-100}$\footnote{ As long as $|\mathcal{U}| \le \de^{-K}$ for some value of $K$, the error terms in the rest of the proposition are $\lessapprox_{\de} 1$. We can assume any family of objects we apply Lemma \ref{lem:WZ2546} to satisfies that assumption.}. Then there is a set $\mathcal{W}$ of congruent convex subsets of $\R^3$ and a set $\mathcal{U}' \subset \mathcal{U}$ with the following properties.

    \begin{enumerate}[label = (\roman*)]
        \item $|\mathcal{U}'| \gtrapprox_{\de} |\mathcal{U}|$.
        \item $\mathcal{W}$ is a $\lessapprox_{\de} 1$-balanced, $\lessapprox_{\de} 1$-almost partitioning cover of $\mathcal{U}'$ and \begin{equation}\label{eqn:WZ2546}|\mathcal{U}'[W]| \gtrapprox_{\de} C_{t\text{-KT-CW}}(\mathcal{U}')(\vol(W)/\vol(U))^t.\end{equation}
        \item $\mathcal{W}$ factors $\mathcal{U}'$ from above with respect to the $t$-Katz-Tao Convex Wolff Axiom with error $\lessapprox_{\de} 1$.
        \item $\mathcal{W}$ factors $\mathcal{U}'$ from below with respect to the $t$-Frostman Convex Wolff Axiom with error $\lessapprox_{\de} 1$.
    \end{enumerate}
\end{lemma}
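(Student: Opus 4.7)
The plan is to build $\calW$ as a pigeonholed family of near-extremizers of the Katz--Tao Convex Wolff Axiom for $\mathcal{U}$, and then verify the four conclusions by counting. For each $U \in \mathcal{U}$, let $h(U) := \sup_W |\mathcal{U}[W]|/(\vol(W)/\vol(U))^t$, where the supremum is over convex $W$ with $U \subset 2W$, and let $W(U)$ be a convex set achieving this supremum up to a factor of $2$. Approximate $W(U)$ by a rectangular prism via John's ellipsoid (at constant-factor cost in volume), and dyadically pigeonhole simultaneously on the value of $h(U)$ and on the shape of $W(U)$. Since $|\mathcal{U}| \le \de^{-100}$, only $\lessapprox_{\de} 1$ dyadic classes are involved, so this loses only a $\lessapprox_{\de} 1$ factor. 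I obtain $\mathcal{U}_1 \subset \mathcal{U}$ with $|\mathcal{U}_1| \gtrapprox_{\de} |\mathcal{U}|$, a single congruence class $\mathcal{C}$ of prisms, and a density level $\tilde K$ such that every $U \in \mathcal{U}_1$ lies in some $W \in \mathcal{C}$ with $|\mathcal{U}[W]| \in [N, 2N)$, where $N := \tilde K (\vol(W)/\vol(U))^t$. Crucially, the same pigeonholing yields the a priori upper bound $|\mathcal{U}[V]| \lesssim \tilde K (\vol(V)/\vol(U))^t$ for every convex $V$ that meets $\mathcal{U}_1$, since any such $V$ contains some $U \in \mathcal{U}_1$ with $h(U) \le 2\tilde K$.

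Next I would construct $\calW$ greedily inside $\mathcal{C}$: while there exists $W \in \mathcal{C}$ with $|\mathcal{U}_1[W]| \ge N/C_0$ (for a suitable constant $C_0 \lessapprox_{\de} 1$ that absorbs later losses), add $W$ to $\calW$ and delete $\mathcal{U}_1[W]$ from $\mathcal{U}_1$; let $\mathcal{U}'$ be the union of the deleted subsets. One further dyadic pigeonholing on the size of $|\mathcal{U}'[W]|$ (losing $\lessapprox_{\de} 1$) makes $|\mathcal{U}'[W]| \sim N$ for every $W \in \calW$, which is $\lessapprox 1$-balancedness; almost-partitioning is immediate from the bounded diameter of congruent prisms in $\mathcal{C}$. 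The greedy termination rule combined with the lower bound on $|\mathcal{U}_1|$ gives $|\mathcal{U}'| \gtrapprox_{\de} |\mathcal{U}|$, yielding (i). Combining balancedness with $N \sim \tilde K(\vol(W)/\vol(U))^t$ and the bound $C_{t\text{-KT-CW}}(\mathcal{U}') \lesssim \tilde K$ (which follows from the a priori bound above) produces (\ref{eqn:WZ2546}), giving (ii).

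The factoring properties then reduce to (ii) together with the a priori bound. For (iii), fix a convex $V$; almost-partitioning gives $|\calW[V]| \cdot \min_{W \in \calW[V]} |\mathcal{U}'[W]| \lessapprox_{\de} |\mathcal{U}'[V']|$ for a bounded dilate $V'$ of $V$, and the right-hand side is $\lesssim \tilde K(\vol(V)/\vol(U))^t$ by the a priori bound, while the minimum on the left is $\gtrapprox_{\de} \tilde K(\vol(W)/\vol(U))^t$ by (\ref{eqn:WZ2546}). Dividing gives $|\calW[V]| \lessapprox_{\de} (\vol(V)/\vol(W))^t$, which is the Katz--Tao Convex Wolff Axiom for $\calW$. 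For (iv), fix $W \in \calW$ and a convex $V \subset 3W$: $|\mathcal{U}'[V]| \le |\mathcal{U}[V]| \lesssim \tilde K(\vol(V)/\vol(U))^t$ while $|\mathcal{U}'[W]| \gtrapprox_{\de} \tilde K(\vol(W)/\vol(U))^t$, so dividing and applying the rescaling $\phi_W$ produces the Frostman Convex Wolff Axiom for $\mathcal{U}'^W$.

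The main obstacle is arranging simultaneously for $\calW$ to be partitioning, balanced, and to saturate the Katz--Tao density of $\mathcal{U}'$; a naive greedy selection across all convex shapes fails balancedness, and running greedy without the a priori pigeonhole upper bound on $h$ fails to control $C_{t\text{-KT-CW}}(\mathcal{U}')$. The two-fold pigeonholing on the local density $h(U)$ and on the shape of $W(U)$ \emph{before} greedy selection is the step that forces these three requirements to be compatible, after which the four conclusions follow from careful counting with $\lessapprox_{\de} 1$ log-losses.
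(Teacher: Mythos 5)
The paper itself does not reprove this lemma: it is imported wholesale from \cite{WZ25} (Lemma 4.6 there, stated for $t=1$) with the remark that the proof extends verbatim to $t\in(0,1]$, so your proposal has to stand on its own --- and as written it has two genuine gaps, both located in the construction of $\calW$ and $\mathcal{U}'$. First, the assertion that ``almost-partitioning is immediate from the bounded diameter of congruent prisms in $\mathcal{C}$'' is false for your greedy selection. Nothing forces distinct selected prisms to be separated: a later prism $W'$ may nearly coincide with an earlier $W$ (it qualifies because it still holds $\ge N/C_0$ not-yet-deleted elements, which can sit in the sliver $2W'\setminus 2W$), and then every element deleted by $W$ also lies in $2W'$. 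Iterating, a single $U\in\mathcal{U}'$ can lie in $2W$ for as many as $\sim C_0|\mathcal{U}_1|/N$ selected prisms, which is in general a power of $\de^{-1}$ rather than $\lessapprox_{\de}1$. This is not cosmetic: bounded membership multiplicity is part of conclusion (ii) and is exactly what your proof of (iii) uses to pass from $\sum_{W\in\calW[V]}|\mathcal{U}'[W]|$ to $|\mathcal{U}'[2V]|$. The standard repair is to draw $\calW$ from $O(1)$ shifted grids (tilings) by prisms of the pigeonholed shape, which makes the multiplicity automatic, but then the lower bound $|\mathcal{U}'[W]|\gtrsim N$ must be re-derived for grid cells from the extremality of $W(U)$, an argument your write-up does not contain.

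Second, the greedy termination does not yield conclusion (i). The density $h(U)\approx\tilde K$, and hence the count $|\mathcal{U}[W(U)]|\gtrsim N$, is computed with respect to the \emph{full} family $\mathcal{U}$, while your greedy threshold counts only elements of the pigeonholed class $\mathcal{U}_1$, which may be only a polylogarithmic fraction of $\mathcal{U}$. The mass of $\mathcal{U}[W(U)]$ can be carried entirely by elements in other pigeonhole classes (different dyadic level of $h$, or different extremizer shape), so it is possible that $|\mathcal{U}_1[W]|<N/C_0$ for every $W\in\mathcal{C}$ already at the first step, in which case the greedy selects nothing and $\mathcal{U}'=\emptyset$; more generally, for every surviving $U$ the deficit between $|\mathcal{U}[W(U)]|$ and the remaining in-class count can be explained by out-of-class or previously deleted elements, so extremality produces no contradiction and no lower bound on $|\mathcal{U}'|$. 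Repairing this essentially forces you to extract near-extremizers with respect to the \emph{current remaining} family and to pigeonhole on the shape and density of the extracted sets afterwards (which is the route taken in \cite{WZ25}), rather than pigeonholing on a per-element extremizer first. The remaining ingredients are fine: the a priori bound $|\mathcal{U}[V]|\lesssim\tilde K(\vol(V)/\vol(U))^t$ whenever $2V$ contains an element of $\mathcal{U}_1$ is correct, and granting (i), balancedness, and the partitioning property, your deductions of (\ref{eqn:WZ2546}), (iii), and (iv) are sound.
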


The $t$-Katz-Tao Convex Wolff Axiom, defined in \ref{def:KTCWA}, plays a small role in some of our applications of Lemma \ref{lem:WZ2546}. If $|\mathcal{U}| = \vol(U)^{-1}$, then the $t$-Katz-Tao Convex Wolff Axiom and $t$-Frostman Convex Wolff Axiom conditions are identical. In some cases, when we apply Lemma \ref{lem:WZ2546}, we make sure that $|\mathcal{U}| = \vol(U)^{-1}$ before doing so and conclude that if $\mathcal{U}$ does \emph{not} satisfy the $t$-Frostman Convex Wolff Axiom, then (ii) in Lemma \ref{lem:WZ2546} implies that the resulting collection of convex sets $\calW$ must be substantially sparser than $\mathcal{U}$. One way we can turn this qualitative statement into a quantitative bound is by rearranging (\ref{eqn:WZ2546}) and using that $|\mathcal{U}'[W]| \approx_{\de} \frac{|\mathcal{U}'|}{|\mathcal{W}|}$. We conclude that
\begin{equation}\label{eqn:WZ2546+} |\mathcal{W}| \lessapprox_{\de} C_{t\text{-KT-CW}}(\mathcal{U}')^{-1}|\mathcal{U'}|(\vol(W)/\vol(U))^{-t}.\end{equation}

For the interested reader, the Convex Wolff Axioms are discussed in more detail in \cite{WZ25}. This concludes our discussion about the non-concentration conditions for convex sets. We proceed to the proof of Proposition \ref{prop:WZ256.3}.

\subsection{The proof of Proposition \ref{prop:WZ256.3}}

Recall the statement of Proposition \ref{prop:WZ256.3}.

\begin{proposition*}[\ref{prop:WZ256.3}]
    For any $\beta \in [2t,4]$ and $\zeta > 0$, there exists $\alpha > 0$ such that the following holds. For any $\eta_1 > \eta_2 > 0$ with $\eta_1 < \zeta/2$ and any $\de'_0 > 0$, there exists $\eta, \de_0 > 0$ such that the following holds for all $\de \in (0, \de'_0)$. Let $\T$ be a set of $\de$-tubes satisfying the $t$-Frostman Convex Wolff Axiom with error $\le \de^{-\eta_1}$ and with $|\T| \in [\de^{-\beta+\alpha}, \de^{-\beta}]$. Then there exists $\T' \subset \T$ such that at least one of the following is true. 

    \begin{enumerate}[label = (\Alph*)]
        \item There exists a scale $\rho \in (\de, \min(\de'_0, \de^{\eta}))$ and some collection $\T'_{\rho} \subset \calD_{\rho}(\T)$ such that $|\T'_{\rho}| \ge \rho^{-\beta - \alpha}$ and $\T'_{\rho}$ satisfies the $t$-Frostman Convex Wolff Axiom with error $\rho^{-\eta_1}$.
        \item There exists a scale $\rho \in [\max(\de/\de'_0, \de^{1-\eta}), 1]$, a $\rho$-tube $T_{\rho}$, and a set of tubes $\T_{\ast} \subset \T'[T_{\rho}]$ such that $|\T_{\ast}| \ge \left(\frac{\de}{\rho}\right)^{-\beta-\alpha}$ and $\T_{\ast}^{T_{\rho}}$ satisfies the $t$-Frostman Convex Wolff Axiom with error $(\de/\rho)^{-\eta_1}$.
        \item There exist $\de \le \rho < \De \le 1$ with $\rho \le \de^{\eta_1/100}\De$ and $\rho < \de'_0$ and a set $\calW$ of $\rho \times \De \times 1$ prisms which factor $\T'$ from above and below with respect to the $t$-Frostman Convex Wolff Axiom with error $\le \rho^{-\eta_2}$. 
        \item $\T'$ satisfies the $t$-Frostman Convex Wolff Axiom at all scales with error $\de^{-\zeta}$.
    \end{enumerate}
\end{proposition*}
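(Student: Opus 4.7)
The plan is to run a case analysis on the profile $\rho \mapsto |\calD_\rho(\T)|$ over dyadic scales $\rho \in [\de,1]$, applying Lemma \ref{lem:WZ2546} to the appropriate ambient family (either $\calD_\rho(\T)$ itself or the rescaled restriction $\T^{T_\rho}$ inside a $\rho$-tube) and using Proposition \ref{prop:inheritance} to propagate Convex Wolff Axiom bounds between scales. I would fix $\alpha$ small compared to $\zeta$ at the outset, then at the end choose $\eta \ll \eta_2$ small enough to absorb the $\rho^{\pm\alpha}$ slack and the $\lessapprox_\de 1$ logarithmic losses.

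If there is a dyadic scale $\rho \in (\de,\min(\de'_0,\de^\eta))$ with $|\calD_\rho(\T)| \ge \rho^{-\beta-\alpha}$, Proposition \ref{prop:inheritance} (2) gives that $\calD_\rho(\T)$ satisfies the $t$-Frostman Convex Wolff Axiom with error $\de^{-\eta_1}$, which the constraint $\rho \le \de^\eta$ converts to $\rho^{-\eta_1}$, yielding conclusion (A). If instead there is a dyadic scale $\rho \in [\max(\de/\de'_0,\de^{1-\eta}),1]$ at which $|\calD_\rho(\T)|$ is sufficiently smaller than $\rho^{-\beta}$, then by pigeonholing some $\rho$-tube $T_\rho$ has $|\T[T_\rho]| \ge (\de/\rho)^{-\beta-\alpha}$; the inequality $|\T[T_\rho]|/|\T| \gtrsim \rho^\beta \ge \rho^{2t}$ combined with Proposition \ref{prop:inheritance} (4) shows the rescaled family $\T^{T_\rho}$ satisfies the $t$-FCWA with controlled error. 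Apply Lemma \ref{lem:WZ2546} to $\T^{T_\rho}$: if the resulting partitioning cover consists essentially of $(\de/\rho)$-tubes, we inherit the FCWA back through Proposition \ref{prop:inheritance} (2) and obtain conclusion (B); if it consists of strictly flatter prisms, then factoring from above and below yields conclusion (C) after pulling back through $\phi_{T_\rho}^{-1}$.

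If neither of the above branches triggers for any dyadic $\rho$, then $|\calD_\rho(\T)|$ is comparable to $\rho^{-\beta}$, up to $\rho^{\pm\alpha}$, on the full range of scales. Either $\T$ (after a logarithmic refinement) satisfies the $t$-FCWA at all scales with error $\de^{-\zeta}$, giving conclusion (D); or there is a scale $\rho_0$ where this fails. A failure at $\rho_0$ means that every $\rho$-tube cover in $[\rho_0,\de^{-\zeta}\rho_0]$ has FCWA error $> \de^{-\zeta}$, so when we apply Lemma \ref{lem:WZ2546} near $\rho_0$, the Katz--Tao constant of the output family is $\ge \de^{-\zeta}$, and by (\ref{eqn:WZ2546+}) the cover $\calW$ satisfies $|\calW| \lessapprox_\de \de^\zeta \rho_0^{-\beta}$. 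A cover by $\rho$-tubes of this sparseness contradicts the profile stability $|\calD_\rho(\T)| \approx \rho^{-\beta}$, so $\calW$ must consist of $\rho \times \De \times 1$ prisms that are genuinely flatter than tubes, furnishing conclusion (C) at a pair of scales $\rho < \De$ with strict separation.

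The main obstacle is the quantitative bookkeeping. We must pick $\alpha$, depending only on $\beta$ and $\zeta$, so that failures of (A) and (B) in their respective scale ranges actually force the profile stability needed for the final branch, and then pick $\eta$ small enough that the $\rho^{\pm\alpha}$ slack and the $\lessapprox_\de 1$ losses from Lemma \ref{lem:WZ2546} fit inside the $\rho^{-\eta_2}$ error budget in (C) and the $\de^{-\zeta}$ budget in (D). The most delicate step is arranging in (C) that the cover from Lemma \ref{lem:WZ2546} is genuinely by flat prisms (neither tubes nor nearly-cubes) with scale separation $\rho \le \de^{\eta_1/100}\De$; this uses the hypothesis $\eta_1 < \zeta/2$ to translate a quantitative FCWA failure at a single scale into a geometric flatness statement, together with the John-ellipsoid prism approximation discussed in Subsection \ref{subsec:nonconc}.
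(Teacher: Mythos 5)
Your overall architecture (profile analysis over dyadic scales, Lemma \ref{lem:WZ2546} inside tubes, case split on the shape of the resulting cover) matches the paper's, but your route to conclusion (B) has a genuine gap. You claim that once a $\rho$-tube $T_\rho$ with $|\T[T_\rho]| \ge (\de/\rho)^{-\beta-\alpha}$ is located, the rescaled family $\T^{T_\rho}$ satisfies the $t$-Frostman Convex Wolff Axiom with controlled error, citing the density ratio together with Proposition \ref{prop:inheritance} (4). But item (4) presupposes that $\T[T_\rho]$ satisfies the axiom \emph{in} $T_\rho$, which is exactly what must be proved; the most the global axiom plus the density lower bound yields is a relative error of order $\de^{-\eta_1+\alpha}\rho^{2t-\beta-\alpha}$, which is unbounded whenever $\beta>2t$ (allowed, since $\beta\in[2t,4]$) and far exceeds the $(\de/\rho)^{-\eta_1}$ budget demanded in (B). Indeed, essentially all of $\T[T_\rho]$ may sit inside a $\de\times\rho\times 1$ slab without violating the global axiom, so the relative axiom can genuinely fail --- this is precisely why outcome (C) exists. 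Your fallback, ``if the cover from Lemma \ref{lem:WZ2546} consists of $(\de/\rho)$-tubes, inherit the axiom back via Proposition \ref{prop:inheritance} (2)'', does not repair this: item (2) transfers the axiom from a finer family to a coarser cover and so again needs the axiom for $\T^{T_\rho}$ as input, while Lemma \ref{lem:WZ2546} only supplies a Katz--Tao factorization from above, which coincides with the Frostman version only when the cardinality matches $\vol(U)^{-1}$, not guaranteed here. The paper's actual mechanism is the step you are missing: when the cover $\calW$ of $\T^{T_{\de_k}}$ has $a > \de^{\eta_1/100}b$, it descends to the finer scale $\de_j\approx b\de_k$, uses the Katz--Tao bound $|\calW^{T_{\de_k}}|\lessapprox_{\de}(ab)^t$ to cover $\calW[T_{\de_k}]$ by few $\de_j$-tubes, pigeonholes one with density $\ge(\de/\de_j)^{-\beta-\alpha}$, and then verifies the Frostman axiom for $\T_*^{T_{\de_j}}$ with error $(\de/\de_j)^{-\eta_1}$ by a direct computation exploiting that only $\le\de^{-\eta_1/10}$ prisms of $\calW$ meet $T_{\de_j}$ and that each $\T_*[W]$ is factored from below. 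Without something of this kind, branch (B) does not close.

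Two secondary points. In branch (A) your error conversion runs the wrong way: for $\rho\le\de^{\eta}$ one has $\rho^{-\eta_1}\ge\de^{-\eta\eta_1}$, so the inherited error (comparable, up to logarithms, to the error of $\T$ itself) must be driven \emph{below} $\de^{-\eta\eta_1}$ by the choice of parameters; ``$\rho\le\de^{\eta}$ converts $\de^{-\eta_1}$ to $\rho^{-\eta_1}$'' is false as stated. In the final branch, the assertion that a sparse tube-like cover ``contradicts profile stability'' is not automatic near $\beta=2t$: with only the crude count $|\calW|\lesssim(ab)^{-t}$ the comparison against the profile lower bound does not produce a contradiction there, which is why the paper runs a separate argument for $\beta\le 2t$ using the sharpened bound (\ref{eqn:WZ2546+}) in terms of the Katz--Tao constant. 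Also, the separation $\rho\le\de^{\eta_1/100}\De$ in (C) is simply the case-splitting threshold $a<\de^{\eta_1/100}b$, not a consequence of the hypothesis $\eta_1<\zeta/2$.
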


Our proof of Proposition \ref{prop:WZ256.3} combines the reasoning in \cite[Proposition 4.1]{WZ24} and \cite[Proposition 6.3]{WZ25} but is adapted to easily fit into our proof of Theorem \ref{thm:realmain}. To prove Proposition \ref{prop:WZ256.3}, we start with a set of $\de$ tubes $\T$ and break the range of scales $[\de, 1]$ into $\de^{\eta}$-separated scales $\de_N = \de, \dots, \de_0 = 1$. At each scale $\de_j$, we first confirm that $|\T_{\de_j}| \le \de_j^{-\beta - \alpha}$, as otherwise we would be in case (A), and then we confirm that $|\T^{T_{\de_j}}| \le \left(\frac{\de}{\de_j}\right)^{-\beta - \alpha}$, as otherwise we would be in case (B) or (C). Once we have checked both cases for each $\de_j$, we apply Lemma \ref{lem:WZ2546} to $\T^{T_{\de_j}}$ for each $j$. If the resulting convex sets are essentially the unit ball for each $j$, then we are in case (D). The convex sets cannot be tubes, as that would contradict our assumptions on the size of $\T^{T_{\de_j}}$ for some $j$. The final alternative is that the convex sets are flat prisms for some $j$, in which case we arrive in case (C). 

We now proceed with the proof.
\begin{proof}[Proof of Proposition \ref{prop:WZ256.3}]
    We begin by briefly explaining how parameters will be chosen. The statement calls for a value $\alpha > 0$. We choose this value depending only on $\zeta$ over the course of the proof. We choose $\eta \in \frac{1}{\N}$ and write $N = \frac{1}{\eta}$. We allow $\eta$ to depend on all parameters except $\de_0$ and $\de_0$ to depend on all parameters. 
    
    Now, we proceed with the multiscale decomposition. Let $\de_j = \de^{j \eta}$ for $j = 0, \dots, N$. We call these \emph{usable scales}. The separation between usable scales is not very important, all we need is that it is much smaller than $\zeta$, so $\eta$ suffices. Through a standard iterative pigeonholing argument (see for example \cite[Lemma 3.4]{KS19} or \cite[Lemma 3.6]{S19}), we can find a collection $\T' \subset \T$ with $|\T'| \gtrapprox_{\de} |\T|$ such that for each $j = 1, \dots, N$, there exists a set of tubes $\T_{\de_j}$ such that $\T_{\de_j}$ is a $\lessapprox_{\de} 1$ balanced, partitioning cover of $\T'$ and $\T_{\de_{N}} = \T'$

    Suppose that $|\T_{\de_k}| \ge \de_k^{-\beta - \alpha}$ for some $k \in \{1, \dots, N-1\}$. We know that $\T_{\de_k} \subset \calD_{\de_k}(\T')$. Since the $t$-Frostman Convex Wolff Axiom is inherited upwards, we know that $\T_{\de_k}$ satisfies the $t$-Frostman Convex Wolff Axiom with error $\lessapprox_{\de} \de^{-\eta}$ for each $k$. As long as $\eta < \frac{\eta_1}{2}$ and $\de_0$ is sufficiently small, we conclude that $\T_{\de_k}$ satisfies the $t$-Frostman Convex Wolff Axiom with error $\le \de^{-\eta_1} \le \de_k^{-\eta_1}$. We choose $\de_0$ sufficiently small relative to $\eta$ so that $\de^{\eta} < \de_0^{\eta} < \de'_0$. Since $k \ge 1$, $\de_{k} \le \de^{\eta} < \de'_0$ and hence $\rho := \de_k \le \min(\de^{\eta}, \de'_0)$. We arrive in outcome (A).

    Now, suppose that $|\T_{\de_k}| \le \de^{\eta}\left(\frac{\de}{\de_k}\right)^{\beta + 2\alpha}|\T'|$ for some $k \in \{1, \dots, N-1\}$. Since $\T_{\de_k}$ is a $\lessapprox_{\de} 1$ partitioning, balanced cover of $\T$, each $T_{\de_k} \in \T_{\de_k}$ satisfies $|\T^{T_{\de_k}}| \gtrapprox_{\de} \frac{|\T|}{|\T_{\de_k}|} \ge \de^{-\eta} \left(\frac{\de}{\de_k}\right)^{-\beta - 2\alpha}$. Taking $\de$ sufficiently small depending on $\eta$, we have $|\T^{T_{\de_k}}| \ge \left(\frac{\de}{\de_k}\right)^{-\beta - 2\alpha}$ for each $T_{\de_k} \in \T_{\de_k}$. Apply Lemma \ref{lem:WZ2546} to $\T^{T_{\de_k}}$ for each $T_{\de_k} \in \T_{\de_k}$. This returns some $\T_{\ast}^{T_{\de_k}}$ and dimensions $a_{T_{\de_k}} \times b_{T_{\de_k}} \times 1$ and a set $\calW_{T_{\de_k}}$ of $a_{T_{\de_k}} \times b_{T_{\de_k}} \times 1$ prisms which factor $T_{\de_k}$ from below with respect to the $t$-Frostman Convex Wolff Axiom with error $\lessapprox_{\de} 1$. After pigeonholing on $\T_{\de_k}$ to some $\T^*_{\de_k}$, we may assume that $a, b$ are constants. Since $|\T^{\ast}_{\de_k}| \gtrapprox_{\de} |\T_{\de_k}|$ and $|\T_{\ast}^{T_{\de_k}}| \gtrapprox_{\de} |\T_{\de_N}^{T_{\de_k}}|$, if $\T_{\ast} \subset \T_{\de_N}$ is the set of tubes contained in $\T^{\ast}_{\de_k}$ for some $T_{\de_k}$, then $|\T_{\ast}| \gtrapprox_{\de} |\T_{\de_N}|$.  

    We now have two cases to consider depending on the relative values of $a$ and $b$. First, suppose that $a < \de^{\eta_1/100}b$. Recall that $\T_{\ast}^{T_{\de_k}} = \phi_{T_{\de_k}}(\T_{\ast}[T_{\de_k}])$. Denote by $\tilde{\calW}[T_{\de_k}] = \phi_{T_{\de_k}}^{-1}(\calW_{T_{\de_k}})$. We denote $\tilde{a} = a\de_k, \tilde{b} = b\de_k$ and see that $\tilde{\calW}[T_{\de_k}]$ is a family of $\tilde{a} \times \tilde{b} \times 1$ prisms covering $\T_{\ast}[T_{\de_k}]$. We would like a family of prisms covering $\T_{\ast}$, so we define $\tilde{\calW} = \bigcup_{T_{\de_k} \in \T_{\de_k}} \tilde{\calW}[T_{\de_k}]$. Since $\tilde{\calW}[T_{\de_k}]$ is a $\lessapprox_{\de} 1$-balanced, partitioning cover of $\T_{\ast}[T_{\de_k}]$ and $\T_{\de_k}$ is a $\lessapprox_{\de} 1$ partitioning cover of $\T^{\ast}$, we know that $\tilde{\calW}$ is a partitioning cover of $\T_*$. After a harmless round of pigeonholing, we can assume as well that it is a $\sim 1$-balanced cover as well. 
    
    Each $W \in \tilde{\calW}[T_{\de_k}]$ only contains tubes from $\T_{\ast}[T_{\de_k}]$, so since $\left(\T_{\ast}^{T_{\de_k}}\right)^W$ satisfies the $t$-Frostman Convex Wolff Axiom with error $\lessapprox_{\de} 1$ and hence (so long as $\de_0$ is sufficiently small) $\le \tilde{a}^{-\eta_2}$, $\T_{\ast}^W$ does as well. Since $\T$ satisfies the $t$-Frostman Convex Wolff Axiom with error $\le \de^{-\eta}$ and $|\T_{\ast}| \gtrapprox_{\de} |\T|$, $\T_{\ast}$ satisfies the $t$-Frostman Convex Wolff Axiom with error $\lessapprox_{\de} \de^{-\eta}$. We know that $\tilde{a} < a < \de^{\eta_1/100}b < \de^{\eta_1/100}$, so $\tilde{a}^{-\eta_2} > \de^{-\eta_1\eta_2/100}$, so as long as $\eta$ is sufficiently small relative to $\eta_1, \eta_2$ and $\de_0$ is sufficiently small, $\T_{\ast}$ satisfies the $t$-Frostman Convex Wolff Axiom with error $\lessapprox_{\de} \tilde{a}^{-\eta_2/2}$. In the previous paragraph, we established that $\tilde{\calW}$ is a partitioning cover of $\T_*$, so since the $t$-Frostman Convex Wolff Axiom is inherited upwards, after again cancelling out a logarithmic error, $\tilde{\calW}$ satisfies the $t$-Frostman Convex Wolff Axiom with error $\le \tilde{a}^{-\eta_2}$. Finally, we need $\tilde{a} < \de'_0$, but since $\tilde{a} < \de^{\eta_1/100} < \de_0^{\eta_1/100}$, we can accomplish this by taking $\de_0$ very small. We arrive in case (C) by taking $\tilde{\calW}$ as our set of prisms, $\rho = \tilde{a}$ and $\De = \tilde{b}$. 

    This handles the $a < \de^{\eta_1/100}b$ case. Now, suppose $a > \de^{\eta_1/100}b$. We aim to reach (B). Let $\de_j$ be the smallest usable scale greater than $\de_kb$. Since $b \le 1$, we have $j \ge k$ (recall that smaller scales correspond to larger values of $j$) and since usable scales are $\de^{\eta}$ separated, we know that $\frac{\de_j}{\de_k} \le \de^{-\eta}b$. Fix some choice of $T_{\de_k}$ such that \begin{equation}\label{eqn:sheep1}\left(\frac{\de}{\de_k}\right)^{-\beta - 2\alpha} \le \frac{|\T'|}{|\T_{\de_k}|} \approx \frac{|\T_{\ast}|}{|\T_{\de_k}|} \le |\T_{\ast}[T_{\de_k}]|.\end{equation}We claim that there is a $\de_j$-tube $T_{\de_j} \subset T_{\de_k}$ such that $\T^{T_{\de_j}}$ satisfies the conclusions of (B). First, recall that $\calW^{T_{\de_k}}$ satisfies the $t$-Katz-Tao Convex Wolff Axiom with error $\lessapprox_{\de} 1$. It follows that \begin{equation}\label{eqn:sheep2}|\calW^{T_{\de_k}}| \lessapprox_{\de} (ab)^t \le \left(\frac{\de_j}{\de_k}\right)^{-2t}.\end{equation}We can therefore cover $\calW[T_{\de_k}]$ with a set of $\de_j$-tubes $\T_{\de_j}$ with $|\T_{\de_j}| \le \left(\frac{\de_j}{\de_k}\right)^{-2t}$ by taking the $\de_j$ neighborhood of each $W \in \calW[T_{\de_k}]$. By comparing (\ref{eqn:sheep1}) with (\ref{eqn:sheep2}), we see that at least one of the tubes in $\T_{\de_j}$ satisfies \[|\T_{\ast}[T_{\de_j}]| \ge \frac{|\T_{\ast}[T_{\de_k}]|}{|\T_{\de_j}|} \gtrapprox \left(\frac{\de}{\de_k}\right)^{-\beta - 2\alpha}\left(\frac{\de_j}{\de_k}\right)^{2t} \ge \left(\frac{\de}{\de_j}\right)^{-\beta-2\alpha}.\]If we take $\de_0$ sufficiently small, we conclude that $|\T_{\ast}[T_{\de_j}]| \ge \left(\frac{\de}{\de_j}\right)^{-\beta-\alpha}$, as desired.
    Since $\de_j \ge \de^{1-\eta}$, $\de_j > \max\left(\frac{\de}{\de'_0}, \de^{1-\eta}\right)$ so long as $\de_0$ is sufficiently small. To reach (B), it remains to prove that $\T_{\ast}^{T_{\de_j}}$ satisfies the $t$-Frostman Convex Wolff Axiom with error $\left(\frac{\de}{\de_j}\right)^{-\eta_1}$. Recall that each $T \in \T_{\ast}[T_{\de_j}]$ is contained in some $W \in \calW[T_{\de_j}]$. Since $\calW[T_{\de_k}]$ satisfies the $t$-Katz-Tao Convex Wolff Axiom with error $\lessapprox_{\de} 1$ in $T_{\de_k}$, we have \[|\calW[T_{\de_j}]| \le \left(\frac{\vol(T_{\de_j})}{\vol(W)}\right)^t \left(\frac{\de_j^2}{ab \de_k^2}\right)^t\]for each $T_{\de_j}$. We know that $\frac{\de_j}{b\de_k} \le \de^{-\eta}$ by construction, and since $a \ge \de^{\eta_1/100}b$, we also have $\frac{\de_j}{a\de_k} \le \de^{-\eta-\eta_1/100}$. Therefore, by taking $\eta$ small relative to $\eta_1$, we can bound \[|\calW[T_{\de_j}]| \le \left(\frac{\de_j^2}{ab \de_k^2}\right)^t \le \de^{-2t\eta -t\eta_1/100} \le \de^{-\eta_1/10}.\]For any convex set $V \subset B(0,1)$, \[|\T^{T_{\de_j}}[V]| \le \sum_{W \in \calW[T_{\de_j}]} \T_*[W \cap V] \lessapprox \de^{-\eta_1/10} |\T^*[W]|\vol(W \cap V)^t \le \left(\frac{\de}{\de_j}\right)^{-\eta_1}\vol(V)^t|\T_*[T_{\de_j}]|.\]We have the desired error in the $t$-Frostman Convex Wolff Axiom of $\T^{T_{\de_j}}$, so this completes the argument in the case that $|\T_{\de_k}| \le \de^{\eta}\left(\frac{\de}{\de_k}\right)^{\beta + 2\alpha}|\T'|$.

    Now we conclude that for each $j \in \{1, \dots, N\}$, we have \begin{equation}\label{eqn:ADish}|\T_{\de_j}| \ge \de^{\eta}\left(\frac{\de}{\de_k}\right)^{\beta + 2\alpha}|\T'|\quad\text{ and }\quad|\T_{\de_j}| \le \de_j^{-\beta - \alpha}.\end{equation}We now apply an iterative refinement process to this set of tubes, starting from $i = N$ and proceeding towards $1$. At some point, this iterative process may end and we arrive at one of case (B), (C), or (D). If we reach the $i$th step of the iterative process, we have for each $j \le i$ a collection of tubes $\tilde{\T}_j$ such that for each $j < i$, $\tilde{\T}_{j}$ is a $\lessapprox_{\de} 1$ partitioning cover of $\tilde{\T}$ which factors it from above with respect to the $t$-Frostman Convex Wolff Axiom with error $\le \de^{-\zeta}$ and $|\tilde{\T}_{j}| \gtrapprox_{\de} |\T_{j}|$. 

    Suppose we reach step $i$ of this iterative process. For each $T_{\de_{i-1}} \in \T_{\de_{i-1}}$, apply Lemma \ref{lem:WZ2546} to $\hat{\T} = \tilde{\T}^{T_{\de_{i-1}}}$. This returns a subcollection $\hat{\T}_1$ with $|\hat{\T}_1| \gtrapprox_{\de} |\hat{\T}|$ and a cover $\calW_{T_{\de_{i-1}}}$ which factors $\hat{\T}_1$ from above with respect to the $t$-Frostman Convex Wolff Axiom with error $\lessapprox_{\de} 1$. We denote the dimensions of $\calW_{T_{\de_{i-1}}}$ by $a_{T_{\de_{i-1}}} \times b_{T_{\de_{i-1}}} \times 1$. After refining $\T_{\de_{i-1}}$ to some $\tilde{\T}_{\de_{i-1}}$ with $|\tilde{\T}_{\de_{i-1}}| \gtrapprox_{\de} |\T_{\de_{i-1}}|$, we may assume that $a_{T_{\de_{i-1}}} \in [a_i/2, a_i]$ and $b_{T_{\de_{i-1}}} \in [b_i/2, b_i]$ for fixed values $a_i, b_i$. We expand the prisms in each family $\calW_{T_{\de_{i-1}}}$ to all have dimensions $a_i \times b_i \times 1$ at the cost of a $\lesssim 1$ factor increase in the error of the $t$-Frostman Convex Wolff Axiom. 

    We retained in $\tilde{\T}_{\de_{i-1}}$ a $\gtrapprox_{\de} 1$ fraction of the tubes from $\T_{\de_{i-1}}$. We refine $\tilde{\T}$ to consists of only tubes covered by elements of $\tilde{\T}_{\de_j}$, which similarly retains a $\gtrapprox_{\de} 1$ fraction of the number of tubes from $\tilde{\T}$. We also defined a subset $\hat{\T}_1 \subset \T^{T_{\de_{i-1}}}$ with $|\hat{\T}_1|\gtrapprox_{\de} |T^{T_{\de_{i-1}}}|$ for each $T_{\de_{i-1}} \in \tilde{\T}_{\de_i}$. These two refinements induce a refinement of $\tilde{\T}$ which retains a $\gtrapprox_{\de} 1$ share of the tubes. We continue to refer to the remaining set of tubes as $\tilde{\T}$ and this set of tubes still satisfies $|\tilde{\T}| \gtrapprox_{\de} |\T|$. Finally, for each $j \ge i-1$, $\tilde{\T}_{\de_j}$ is still a $\lessapprox_{\de} 1$ partitioning cover of $\tilde{\T}$.

    We have several cases to consider depending on the values of $a_i, b_i$ and $\beta$. First, suppose that $|a_ib_i| \ge \de^{\zeta/2}$. For each $T_{\de_{i-1}} \in \T_{\de_{i-1}}$, we have a $\lessapprox_{\de} 1$ partitioning cover by $a \times b\times 1$ prisms $\calW_{T_{\de_{i-1}}}$ which factors $\tilde{\T}^{T_{\de_{i-1}}}$ from above with respect to the $t$-Frostman Convex Wolff Axiom with error $\lessapprox_{\de} 1$. This implies that $\tilde{\T}^{T_{\de_{i-1}}}[W]$ satisfies the $t$-Frostman Convex Wolff Axiom with error $\lessapprox_{\de} |a_ib_i|^{-t} \le \de^{-\zeta/2}$. Since this holds for each $W \in \calW_{T_{\de_{i-1}}}$, which is a $\lessapprox_{\de} 1$ partitioning cover of $\tilde{\T}^{T_{\de_{i-1}}}$, we have the same fact (with a $\lessapprox_{\de} 1$ increase in error) for $\tilde{\T}^{T_{\de_{i-1}}}$. So long as $\de$ is small enough relative to the implicit constant, we have that $\tilde{\T}_{\de_{i-1}}$ factors $\tilde{\T}$ from above with respect to the $t$-Frostman Convex Wolff Axiom with error $\de^{-2\zeta/3}$. 

    Before considering other cases for the values of $a_i, b_i$, we note how to close the argument and reach case (D) in the case that $|a_ib_i| \ge \de^{\zeta/2}$ at each step. We have a some $\tilde{\T} \subset \T_{\de_N}$ with $|\tilde{\T}| \gtrapprox_{\de} |\T_{\de_N}|$ and for each $\rho > \de$, there exists some $\de_{j} \in [\rho, \de^{-\zeta}\rho]$ and a $\de^{-\zeta}$-partitioning cover $\tilde{\T}_{\de_j}$ of $\tilde{\T}$ which factors $\tilde{\T}$ from above with respect to the $t$-Frostman Convex Wolff Axiom with error $\de^{-2\zeta/3}$. It remains to confirm that the $\tilde{\T}_{\de_j}$ are balanced. This is not true, but by repeatedly refining each scale to be a $\le \de^{-\zeta}$-balanced cover of the previous scale, we can refine $\tilde{\T}$ by a $\lessapprox_{\de} 1$ factor and restore this property, at the cost of a $\lessapprox_{\de} 1$ increase in the error in the $t$-Frostman Convex Wolff Axiom. Choosing $\de_0$ sufficiently small depending on $\eta$, we can ensure that the errors in the $t$-Frostman Convex Wolff Axiom are always $\le \de^{-\zeta}$ and we reach case (D).

    Therefore, we suppose for the remainder of the proof that $|a_ib_i| < \de^{\zeta/2}$. We have two cases to consider. First, we suppose that $a_i < \de^{\eta_1/100}b_i$. We reach case (C) by the same reasoning as earlier in the proof when we were in this situation. 
    
    Now, suppose $a_i > \de^{\eta_1/100}b_i$ and $\beta > 2t$. Recall that $\calW_{T_{\de_{i-1}}}$ factors $\tilde{\T}^{T_{\de_{i-1}}}$ from below with respect to the $t$-Katz-Tao Convex Wolff Axiom with error $\lessapprox_{\de} 1$. It follows from the definition of the $t$-Katz-Tao Convex Wolff Axiom that \begin{equation}\label{eqn:crane3}|\calW_{T_{\de_{i-1}}}| \le (a_ib_i)^{-t} < \de^{\eta_1/100}a_i^{-2t}.\end{equation}We define a cover $\tilde{\calW} = \bigcup_{T_{\de_{i-1}} \in \T_{\de_{i-1}}} \phi_{T_{\de_{i-1}}}^{-1}(\calW_{T_{\de_{i-1}}})$ consisting of $a_i\de_{i-1} \times b_i\de_{i-1} \times 1$ prisms and note that \begin{equation}\label{eqn:crane4}|\tilde{\calW}| \le \sum_{T_{\de_{i-1}} \in \T_{\de_{i-1}}} |\calW_{T_{\de_{i-1}}}| \le |\T_{\de_{i-1}}| \de^{\eta_1/100}a_i^{-2t}.\end{equation}Let $\tilde{a}$ be the largest usable scale $< a_i\de_{i-1}$. By (\ref{eqn:ADish}), we see that \begin{equation}\label{eqn:crane1}|\T_{\tilde{a}}| \ge \de^{\eta}\left(\frac{\de}{\tilde{a}}\right)^{\beta+2\alpha}|\T'| \ge \de^{\eta + 3\alpha}\tilde{a}^{-\beta-2\alpha}.\end{equation}We chose $\tilde{a} \ge \de^{\eta}a_i\de_{i-1}$. On the other hand, $\T_{\tilde{a}}$ consists of essentially distinct tubes, and there are $\lesssim \frac{a_i^2b_i^2\de^4_{i-1}}{\tilde{a}^4} \le \de^{-4\eta-\eta_1/50}$ many elements of $\T_{\tilde{a}}$ contained in the $\tilde{a}$ neighborhood of any $W \in \tilde{\calW}$. This bound, (\ref{eqn:crane4}), and (\ref{eqn:ADish}) combine to show that \begin{equation}\label{eqn:crane2}|\T_{\tilde{a}}| \lesssim |\T_{\de_{i-1}}|\de^{-4\eta-\eta_1/100}a_i^{-2t}\le \de^{-4\eta-\eta_1/100}\tilde{a}^{-2t}\de^{2t}_{i-1}|\T_{\de_{i-1}}| \le \de^{-4\eta-\eta_1/100}\tilde{a}^{-2t}\de^{-\beta - \alpha + 2t}_{i-1}.\end{equation}Comparing (\ref{eqn:crane1}) with (\ref{eqn:crane2}), we conclude that \[\de^{\eta + 3\alpha}\tilde{a}^{-\beta-2\alpha + 2t} \lesssim \de^{-4\eta-\eta_1/100}\de^{-\beta - \alpha + 2t}_{i-1} \le \de^{-4\eta-\eta_1/100}\de^{-\beta - 2\alpha + 2t}_{i-1}.\]We have assumed that $\eta_1 < \alpha$, and we take $\eta < \eta_1/10$, so this rearranges to $\tilde{a} \gtrsim \de^{5\alpha/(\beta + 2\alpha - 2t)}\de_{i-1}$. So long as $\alpha$ is sufficiently small relative to $\zeta$ and $\beta - 2t$, we see that $\tilde{a} \ge \de^{\zeta/6}\de_{i-1}$. We conclude that (so long as $\eta > 0$ is sufficiently small), $a_{i} \ge \de^{\zeta/5}$ and the same for $b_i$. Multiplying, we have $a_ib_i > \de^{\zeta/2}$, contradicting our assumption that $a_ib_i < \de^{\zeta/2}$.

    It remains to treat the $\beta \le 2t$ case. We know that $\beta > 2t - \eta$ (that is, $|\T| \ge \de^{-2t+\eta}$) since $\T$ satisfies the $t$-Frostman Convex Wolff Axiom with error $\de^{-\eta}$. It follows that $|\tilde{\T}| \ge \de^{-2t + 2\eta}$, so long as $\de_0$ is sufficiently small. We follow essentially the same reasoning as in the previous paragraph, except we need a better bound in (\ref{eqn:crane3}). To find this bound, we need to recall how we defined the prisms $\calW_{T_{\de_{i-1}}}$ in the first place. We would like to prove that $\tilde{\T}^{T_{\de_{i-1}}}$ satisfies the $t$-Frostman Convex Wolff Axiom with error $\le \de^{-\zeta}$. If $|\tilde{\T}^{T_{\de_{i-1}}}| \ge \de^{\zeta/2}\left(\frac{\de}{\de_{i-1}}\right)^{-2t}$, then it suffices to prove that $\tilde{\T}^{T_{\de_{i-1}}}$ satisfies the $t$-Katz-Tao Convex Wolff Axiom with error $\le \de^{-\zeta/2}$. Since $|\T_{\de_{i-1}}| \le \de_{i-1}^{-2t - \alpha}$, we see that $|\tilde{\T}^{T_{\de_{i-1}}}| \approx \frac{|\tilde{\T}|}{|\T_{\de_{i-1}}|} \ge \de^{\zeta/2}\left(\frac{\de}{\de_{i-1}}\right)^{-2t}$, so long as $\alpha, \eta$ are sufficiently small relative to $\zeta$. We can therefore assume that $C_{t-\text{KT-CW}}(\tilde{\T}^{T_{\de_{i-1}}}) \ge \de^{-\zeta/2}$. Using (\ref{eqn:WZ2546+}), we conclude that \[|\calW_{T_{\de_{i-1}}}| \le \de^{\zeta/2}(a_ib_i)^{-t}(\de/\de_{i-1})^{2t}|\tilde{\T}^{T_{\de_{i-1}}}|.\]Since we have assumed that $|\T_{\de_{i-1}}| \ge \de^{\eta}\left(\frac{\de}{\de_{i-1}}\right)^{2t+2\alpha-\eta}|\tilde{\T}|$, we have that \[|\tilde{\T}^{T_{\de_{i-1}}}| \lesssim \de^{-\eta}\left(\frac{\de}{\de_{i-1}}\right)^{-2t-2\alpha+\eta}.\]Therefore, \[|\calW_{T_{\de_{i-1}}}| \le \de^{\zeta/2-\eta}(a_ib_i)^{-t}\left(\frac{\de}{\de_{i-1}}\right)^{-2\alpha} \le \de^{\zeta/2-\eta-2\alpha}(a_ib_i)^{-t}.\]Choosing $\alpha, \eta$ sufficiently small, we can ensure that this is $\le \de^{\zeta/3}(a_ib_i)^{-t}$. We use this bound instead in (\ref{eqn:crane3}), follow the same reasoning in that paragraph, and conclude that so long as all parameters are sufficiently small relative to $\zeta$, $\tilde{a}^{-2\alpha+\eta} \le \de^{\zeta/5}\de_{i-1}^{-2\alpha + \eta}$. Recalling that $\tilde{a} \le \de_{i-1}$, if $\eta < -2\alpha$, this is a contradiction. At last, this completes the argument.
\end{proof}

\section{Reducing to the case of tubes concentrating in flat prisms}\label{sec:proof}

In this section, we reduce Theorem \ref{thm:realmain} to Proposition \ref{prop:prismsetup}, an upper spectrum bound for unions of tubes which concentrate in prisms. The statement of Proposition \ref{prop:prismsetup} refers to a shading, so we define this first.

\begin{definition}
    For an $\de \times \rho \times 1$ convex set $W$, we say that $Y(W)$ is a \emph{shading} on $W$ if $Y(W)$ is a union of dyadic $\de$-cubes incident to $W$. We define the \emph{full shading} $Y_{\text{full}}$ on $W$ to be the set of all dyadic $\de$-cubes incident to $W$. Since $\de \le a$, $Y(W) \subset 2W$. We say that $Y(W)$ is $\ge C$ dense if $|Y(W)|_{\de} \ge C|W|_{\de}$. 

    We often write $(\calW, Y)$ to denote that $Y$ is a shading on $\calW$. 
\end{definition}
\begin{remark}
    The definition of $Y_{\text{full}}(W)$ ensures that $|Y_{\text{full}}(W)|_{\de} = |W|_{\de}$. It is harmless to assume that $Y(W)$ is contained in $W$, $Y_{\text{full}}(W) = W$, and any $W \in \calW[R]$ is contained in $R$. We do so for the remainder of this paper. Accounting for this change would occasionally introduce constant-factor errors in some bounds which never impact any of the final results.
\end{remark}

We are now ready to state Proposition \ref{prop:prismsetup}.
\begin{proposition}\label{prop:prismsetup}
    For any $\omega, \e > 0$, there exists $\eta, \de_0 > 0$ such that the following holds for all $\de \in (0, \de_0)$. Suppose $\T$ is a collection of $\de$-tubes satisfying the $t$-Frostman Convex Wolff Axiom with error $\de^{-\eta}$, $Y$ is a shading $\de^{\eta}$ dense on $\T$, $\calR$ is a collection of $\de \times \De \times 1$ prisms for some $\De \ge \de^{1-\e}$, and $\calR$ factors $\T$ from above and below with respect to the $t$-Frostman Convex Wolff Axiom with error $\de^{-\eta}$. 
    
    Then $\bigcup_{T \in \T} Y(T)$ has upper spectrum at $\eta$ and scale $\de$ at least $2t+1 - \omega$. 
\end{proposition}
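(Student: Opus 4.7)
The plan is to first apply Proposition~\ref{prop:smallprism} to convert the $t$-dimensional internal shading into a $1$-dimensional internal shading at a coarser scale, then run a finite induction on scales to push the medium direction $\De$ of the enclosing prisms up to essentially $1$, at which point Proposition~\ref{prop:largeprism} delivers the bound. Concretely, I apply Proposition~\ref{prop:smallprism} to $(\T, Y, \calR)$. By its dichotomy, either $\bigcup_{T \in \T} Y(T)$ already has upper spectrum $\ge 2t+1 - \omega$ at scale $\de$ and we are done, or I obtain a coarser scale pair $(\overline{\de}, \overline{\De})$ with $\de \le \overline{\de} \ll \overline{\De}$, a family of $\overline{\de} \times \overline{\De} \times 1$ prisms $\overline{\calR}$ (essentially a coarsening of $\calR$), and an associated refined tube family (essentially $\calD_{\overline{\de}}(\T)$) with induced shading, such that $\overline{\calR}$ satisfies the $t$-Frostman Convex Wolff Axiom with small error while the internal tube family inside each $\overline{R}$ now satisfies the full $1$-Frostman Convex Wolff Axiom.

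With the internal shading promoted from $t$- to $1$-dimensional, I then run an induction on scales in the spirit of Step~2 of the outline in Subsection~\ref{subsec:bpop1}. At each stage $k$ I have scales $(\overline{\de}_k, \overline{\De}_k)$ and a prism family $\overline{\calR}_k$ factoring the current tubes from above with respect to the $t$-Frostman Convex Wolff Axiom and from below with respect to the $1$-Frostman Convex Wolff Axiom. I pigeonhole on the distribution of prism normals $n(R)$ through a typical shading point: in the tangential case, where these normals concentrate into a narrow spherical cap, the prisms of $\overline{\calR}_k$ are themselves contained in a coarser prism family at scales $(\overline{\de}_{k+1}, \overline{\De}_{k+1})$ with $\overline{\De}_{k+1}/\overline{\De}_k$ bounded below by a fixed factor depending only on $\omega, \e, t$; in the transverse case, where the normals are spread out, I stop. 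Since each bump multiplies $\overline{\De}$ by a definite factor and $\overline{\De} \le 1$, the induction terminates in finitely many steps. When it stops, I invoke Proposition~\ref{prop:largeprism}, whose hypothesis is exactly a $1$-dimensional internal shading together with a transverse outer configuration, to conclude.

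The main obstacle will be parameter bookkeeping. Each application of Proposition~\ref{prop:smallprism} and each tangential-to-transverse bump introduces multiplicative losses of the form $\de^{-\eta'}$ in the errors of the Convex Wolff Axioms, together with the usual logarithmic losses from pigeonholing, and these losses must stay below the $\overline{\de}_k^{-\eta}$ thresholds demanded by the hypotheses of Proposition~\ref{prop:largeprism}. Because the induction has length bounded in terms of $\omega, \e, t$ alone, this is manageable by choosing the top-level $\eta$ small relative to the product of the intermediate losses. A secondary concern is aligning scales between iterations and rescaling correctly through enclosing tubes, but Proposition~\ref{prop:inheritance} guarantees that both the Frostman and Katz--Tao Convex Wolff Axioms survive such rescalings, so this part is essentially mechanical. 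The assumption $\De \ge \de^{1-\e}$ in the hypothesis ultimately caps the total number of induction steps by a constant depending only on $\omega, \e, t$.
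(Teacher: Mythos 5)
Your overall architecture is the paper's: apply Proposition~\ref{prop:smallprism} once (its Conclusion A finishes immediately; its Conclusion B produces coarser prisms), then iterate Proposition~\ref{prop:largeprism} a bounded number of times, with the parameter choices made in reverse order so each stage's $\tau$ matches the error the next stage demands. But two concrete steps are missing. First, you cannot ``apply Proposition~\ref{prop:smallprism} to $(\T,Y,\calR)$'' as stated: its hypotheses require, for each $R$, a family $\T(R)\subset R$ with cardinality at most $\de^{-\eta}(\De/\de)^{t}$, and the hypothesis of Proposition~\ref{prop:prismsetup} (factoring from above and below with respect to the $t$-Frostman Convex Wolff Axiom) bounds concentration but puts no upper bound on $|\T[R]|$. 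The paper repairs this by rescaling $\T[R]$ through the $\De$-tube containing $R$ and running a random sparsification (Lemma~\ref{lem:folklore}, proved via Chernoff bounds) to extract a sub-family of the right cardinality that still satisfies the $t$-Frostman Convex Wolff Axiom; some such step is genuinely needed before Proposition~\ref{prop:smallprism} can be invoked.

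Second, your endgame does not close. Proposition~\ref{prop:largeprism} never ``delivers the bound'' once $\De'$ reaches $1$: it is a dichotomy, its hypotheses contain no transversality condition and no internal $1$-dimensional tube structure (the tangential/transverse analysis you describe doing by hand is internal to its proof, so that part of your induction step is redundant), and when $\De'=1$ its Conclusion B can recur with families of $\de'\times 1\times 1$ slabs while its hypothesis $\de'\le\overline{\de}^{\,\e}\De'$ eventually fails, since Conclusion B only guarantees $\de'\le\overline{\de}^{\,\e/2}$ and $\overline{\de}^{\,\e/2}>\overline{\de}^{\,\e}$. So the iteration can stall with a family of thin slabs satisfying the $t$-Frostman Convex Wolff Axiom and carrying a dense shading, and no conclusion extracted. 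The missing ingredient is a direct Córdoba-type estimate for exactly this configuration: Lemma~\ref{lem:RFPC} shows that such a family of $\de'\times 1\times 1$ slabs occupies $\gtrsim(1/\de')^{2t+1-\omega}$ many $\de'$-cubes, which is the ``prism claim'' the paper proves at the outset and uses both to terminate the induction when $\De_i=1$ and to rule out the degenerate case $\De_N=1$ right after Proposition~\ref{prop:smallprism}. Without this separate $L^2$ argument (or an equivalent), the induction on scales does not terminate in the desired upper spectrum bound.
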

For the purpose of the proof of Theorem \ref{thm:realmain}, we could only consider the shading $Y_{\text{full}}$ in Proposition \ref{prop:prismsetup}, but treating general cases may be useful in other applications and does not add much difficult in the proof. The proof of Proposition \ref{prop:prismsetup} will comprise the final four sections of the paper. 

For the proof of Theorem \ref{thm:realmain} itself, we follow a similar outline to that Wang and Zahl's argument in the $t=1$ case \cite[Section 5]{WZ24}. We assume that any set of tubes $\T$ satisfying the $t$-Frostman Convex Wolff Axiom at all scales has quasi-Assouad dimension at least $\sigma$. We let $\sigma^*$ denote the smallest posssible quasi-Assouad dimension of a set of tubes $\T$ satisfying the $t$-Frostman Convex Wolff Axiom. If $\sigma^* \ge \sigma$, are done, so we suppose otherwise. Then, we set $\beta$ to be the largest value such that there exists a set of $\de$-tubes $\T$ with quasi-Assouad dimension $\sigma^*$ and $|\T| = \de^{-\beta}$. We take $\T$ to be a set of tubes with quasi-Assouad dimension about $\sigma^*$ and $|\T| \approx \de^{-\beta}$ and apply Proposition \ref{prop:WZ256.3} to $\T$. 

We have four options to consider. Options (A) gives us a cover of $\T$ by $\rho$-tubes $\T_{\rho}$ satisfying the $t$-Frostman Convex Wolff Axiom and with $|\T_{\rho}| \gg \rho^{-\beta}$. Option (B) gives a $\rho$-tube $T_{\rho}$ such that $\T^{T_{\rho}}$ satisfies the $t$-Frostman Convex Wolff Axiom and with $|\T^{T_{\rho}}| \gg \left(\frac{\de}{\rho}\right)^{-\beta}$. We will see shortly that the quasi-dimension of $\T_{\rho}$ and of $\T^{T_{\rho}}$ must not exceed the quasi-Assouad dimension of $\T$, so either (A) or (B) would contradict the extremality of the cardinality of $\T$. 

Option (C) tells us that there is a set of flat prisms $\calR$ for which $\calR$, $\T$, and the full shading $Y_{\text{full}}$ satisfy the assumptions of Proposition \ref{prop:prismsetup}. The conclusion of Proposition \ref{prop:prismsetup} is that $\bigcup_{T \in \T} T$ has quasi-Assouad dimension $2t+1 \ge \sigma$. We contradict our assumption that $\sigma^* < \sigma$. Option (D) tells us that $\T$ is sticky, in which case it has quasi-Assouad dimension at least $\sigma$, similarly a contradiction. We conclude that $\sigma^* \ge \sigma$, which implies the conclusion of Theorem \ref{thm:realmain}.

Before we begin with a formal proof, we justify our claim that for a set of tubes $\T$, $\T_{\rho}$ and $\T^{T_{\rho}}$ both have quasi-Assouad dimension at most that of $\T$. We actually need a more quantitative statement than this, in terms of the upper spectrum of the various sets of tubes. Unfortunately, must accept a very small error for the upper spectrum $\T^{T_{\rho}}$, but this will be irrelevant in the proof of Theorem \ref{thm:realmain}. 

\begin{lemma}\label{lem:qa}
    For any set of $\de$-tubes $\T$, the following holds:

    \begin{enumerate}[label = (\Alph*)]
        \item For any $\rho > \de$, the upper spectrum of $\T$ at $\eta_{\de}$ and scale $\de$ is greater than the upper spectrum of $\calD_{\rho}(\T)$ at $\eta_{\rho}$ and scale $\rho$, where $\de^{\eta_{\de}} = \rho^{\eta_{\rho}}$. 
        \item There exists an absolute constant $C \ge 1$ such that the following holds. For any $\rho > \de$, let $\de^{\eta_{\de}} = \left(\frac{\de}{\rho}\right)^{\eta_{\de/\rho}}$. For any $\rho$-tube $T_{\rho}$, if the upper spectrum of $\T^{T_{\rho}}$ at $\eta_{\de/\rho}$ and scale $\frac{\de}{\rho}$ is at least $\sigma$, then the upper spectrum of $\T$ at $\eta_{\de}$ and scale $\de$ is at least $\sigma - \frac{\log_{1/\de}(C)}{\eta_{\de}}$.
    \end{enumerate}
\end{lemma}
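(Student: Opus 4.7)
My plan is to prove both parts by directly transferring witnessing scales for the upper spectrum, absorbing any absolute multiplicative constant $C$ into a small exponent loss via $\log C / \log(R/r) \le \log_{1/\de}(C)/\eta_\de$ (using that $R/r \ge \de^{-\eta_\de}$ whenever $(r,R)$ witnesses upper spectrum at $\eta_\de$, scale $\de$).

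For part (A), the key observation is that $E = \bigcup_{T \in \T} T \subset E_\rho = \bigcup_{T' \in \calD_\rho(\T)} T'$ is a containment with comparable $r$-covering numbers at scales $r \ge \rho$: every $r$-cube meeting $E_\rho$ lies within distance $\rho \le r$ of some $r$-cube meeting $E$, so $|E_\rho \cap B_R|_r \le C |E \cap B_R|_r$ for an absolute constant $C$. The hypothesis $\de^{\eta_\de} = \rho^{\eta_\rho}$ makes the scale-separation condition $r \le \rho^{\eta_\rho} R$ identical to $r \le \de^{\eta_\de} R$, and since $r \ge \rho \ge \de$, any witnessing pair $(r, R)$ for the upper spectrum of $E_\rho$ at $(\eta_\rho, \rho)$ automatically witnesses the upper spectrum of $E$ at $(\eta_\de, \de)$, up to the absolute-constant loss.

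For part (B), I would use the affine map $\phi = \phi_{T_\rho}$, which scales the two short directions of $T_\rho$ by $\sim 1/\rho$ and the long direction by $1$. Given witnessing scales $r', R'$ and a ball $B_{R'} \subset [0,1]^3$ for $\hat E = \bigcup \T^{T_\rho}$, I pull the covering by $\ge (R'/r')^\sigma$ disjoint $r'$-cubes back to the same number of pairwise disjoint anisotropic boxes of dimensions $r'\rho \times r'\rho \times r'$ in $3T_\rho$, each incident to $E$. I then set $r = r'\rho$ and choose an $R$-ball $B_R \subset \R^3$ with $R \sim R'$ containing $\phi^{-1}(B_{R'})$; the scale condition $r \le \de^{\eta_\de} R$ follows from $\de^{\eta_\de} = (\de/\rho)^{\eta_{\de/\rho}}$. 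The critical step is a tube-traversal count: any tube $T \in \T[T_\rho]$ makes angle $\lesssim \rho$ with $T_\rho$'s axis, so if $T$ meets an anisotropic box it traverses the full long dimension of the box and hence passes through $\sim 1/\rho$ distinct $r$-cubes. Moreover, the $r$-cubes coming from distinct pulled-back boxes are disjoint, since the $r'$-disjointness in the unit cube transports to $\ge r$ separation in the short directions (after scaling by $\rho$) and $\ge r/\rho$ separation in the long direction in $\R^3$. Therefore $|E \cap B_R|_r \gtrsim (R'/r')^\sigma / \rho = (R/r)^\sigma \rho^{\sigma - 1}$, and converting the multiplicative factor into an exponent loss produces the claimed bound.

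The main obstacle will be the tube-traversal step in (B): I must verify that every relevant tube fully traverses the long dimension of its pulled-back box and that the resulting $r$-cubes across distinct boxes remain pairwise disjoint. Both facts rely crucially on $\T$ being a union of tubes (rather than a general set) aligned with $T_\rho$'s axis to within angle $\lesssim \rho$, which in turn is what makes the anisotropic pullback through $\phi_{T_\rho}^{-1}$ compatible with isotropic $r$-cube coverings in $\R^3$.
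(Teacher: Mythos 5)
Your part (A) is essentially what the paper does (the paper treats it as immediate), and your starting move for (B) — pulling the witnessing $r'$-cubes back through $\phi_{T_\rho}^{-1}$ to disjoint $\rho r' \times \rho r' \times r'$ boxes — is also the paper's first step. But there is a genuine gap in (B), and it is not primarily the traversal step you flag: it is your choice of the final ball. You set $r=\rho r'$ but keep $R\sim R'$, so the count you must beat is $(R/r)^{\sigma}\sim \rho^{-\sigma}(R'/r')^{\sigma}$, while even a fully successful traversal count supplies only $\rho^{-1}(R'/r')^{\sigma}$. The deficit $\rho^{\sigma-1}$ is not an absolute constant — $\rho$ can be any power of $\de$ — so ``converting the multiplicative factor into an exponent loss'' yields a loss that can be as large as roughly $(\sigma-1)\log_{1/\de}(1/\rho)/\eta_{\de}$, not $\log_{1/\de}(C)/\eta_{\de}$ for an absolute $C$; the stated conclusion does not follow. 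The missing idea is the paper's pigeonholing step: $\phi_{T_\rho}^{-1}(B_{R'})$ is a $\rho R'\times\rho R'\times R'$ prism, hence covered by $\sim\rho^{-1}$ balls of radius $\rho R'$, and one of these balls receives $\gtrsim (R'/r')^{\sigma}$ of the $\gtrsim\rho^{-1}(R'/r')^{\sigma}$ many $\rho r'$-cubes. Taking the witnessing pair $(\rho r',\rho R')$, whose ratio is exactly $R'/r'\ge\de^{-\eta_{\de}}$, the only loss is the absolute constant from the pigeonholing and covering-number comparisons, which is precisely the $\log_{1/\de}(C)/\eta_{\de}$ term in the statement.

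On the traversal step itself: your concern is legitimate (the paper is also terse here). A tube meeting a pulled-back box need not traverse its full long dimension (it can clip a corner), and near $\partial B_{R'}$ the relevant tube segment inside $B_{R'}$ can be short. The usual repair is not to insist on traversal of the box: from a chosen point of the intersection in each box, follow its tube — which has length $\sim 1$ and makes angle $\lesssim\rho$ with the axis of $T_\rho$ — for length $r'$ in whichever direction it extends; before rescaling this stays in $B_{2R'}$, after rescaling it stays in an $O(1)$-enlargement of the box and sweeps $\gtrsim\rho^{-1}$ distinct $\rho r'$-cubes, with bounded overlap across the (disjoint) boxes. Enlarging $B_{R'}$ to $B_{2R'}$ costs only an absolute constant, compatible with the stated loss. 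Even with that repaired, though, your argument does not close without the $\rho R'$-ball pigeonholing described above.
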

\begin{remark}
    Item (B) is a clear point where the quasi-Assouad dimension is better suited for incidence problems than the Assouad dimension. If instead of assuming control on the quasi-Assouad dimension of sticky sets, we assumed control on the Assouad dimension of sticky sets in Theorem \ref{thm:main}, we would not have the scale separation $\eta_{\de} > 0$ and an analog of item (B) without some form of scale separation seems impossible. 
\end{remark}

Item (A) follows immediately from the definition of the upper spectrum, so we only prove Item (B).

\begin{proof}[Proof of (B)]
    Suppose the upper spectrum of $\T^{T_{\rho}}$ at $\eta_{\de/\rho}$ and scale $\frac{\de}{\rho}$ is at least $\sigma$. The definition of upper spectrum gives scales $r < \left(\frac{\de}{\rho}\right)^{\eta} R$ and an $R$-ball $B_R$ such that $|\T^{T_{\rho}} \cap B_R|_{r} \ge \left(\frac{R}{r}\right)^{\sigma}$. Let $E = \phi_{T_{\rho}}^{-1}\left(\T^{T_{\rho}} \cap B_R\right)$. This maps an $r$-cube to a $\rho r \times \rho r \times r$ prism. Each such prism incides with $\gtrsim \rho^{-1}$ dyadic $\rho r$-cubes, so $|E|_{\rho r} \gtrsim \rho^{-1}|\T^{T_{\rho}} \cap B_R|_{r}$. On the other hand, $B_R$ is mapped to an a $\rho R \times \rho R \times R$ prism, so by pigeonholing we can find a $\rho R$-ball such $B_{\rho R}$ such that \begin{equation}\label{eqn:rescale}|E \cap B_{\rho R}|_r \gtrsim \left(\frac{\rho R}{\rho r}\right)^{\sigma}.\end{equation}Since $R/r \ge \left(\frac{\de}{\rho}\right)^{-\eta_{\de/\rho}} = \de^{-\eta_{\de}}$, we have the desired spacing on the scales. Letting $1/C$ be the implicit constant in (\ref{eqn:rescale}), with a bit of arithmetic we arrive at the desired spectrum.
\end{proof}

Finally, we complete the proof of Theorem \ref{thm:realmain}, assuming Proposition \ref{prop:prismsetup}. Before doing so, we recall the statement of Theorem \ref{thm:realmain}.

\begin{theorem*}[\textbf{\ref{thm:realmain}}]
    Suppose that for any $\e > 0$, there exists $\eta, \de_0 > 0$ such that for any $\de \in (0, \de_0)$ and any set of $\de$-tubes $\T$ satisfying the $t$-Frostman Convex Wolff Axiom at all scales with error $\de^{-\eta}$, $\bigcup_{T \in \T} T$ has upper spectrum at $\eta$ and scale $\de$ at least $\sigma - \e$. 

    Then for any $\omega > 0$, there exists $\eta, \de_0 > 0$ such that for any $\de \in (0, \de_0)$ and any set of $\de$-tubes $\T$ satisfying the $t$-Frostman Convex Wolff Axiom with error $\de^{-\eta}$, $\bigcup_{T \in \T} T$ has upper spectrum at $\eta$ and scale $\de$ at least $\sigma - \omega$.
\end{theorem*}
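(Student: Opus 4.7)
I would argue by contradiction via an extremal-density argument on the minimum-dimension example, mirroring the template sketched in Subsection~\ref{subsec:bpop1} but with Proposition~\ref{prop:WZ256.3} providing the multiscale decomposition and Proposition~\ref{prop:prismsetup} replacing the ``fill the prism'' step used in the $t=1$ case. Suppose the conclusion fails with some $\omega_0>0$, and let $\sigma^\ast\le\sigma-\omega_0$ denote the infimum of the upper spectrum at $\eta$ and scale $\de$ taken over all $\T$ satisfying the $t$-Frostman Convex Wolff Axiom with error $\de^{-\eta}$, as $\eta,\de\to 0$. Among near-extremizers, let $\beta^\ast\in[2t,4]$ be the supremum of $\log_{1/\de}|\T|$, and fix a concrete $\T$ realizing $\sigma^\ast+o(1)$ in upper spectrum and $\beta^\ast-o(1)$ in log-cardinality, with $\de$ extremely small.

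Apply Proposition~\ref{prop:WZ256.3} to this $\T$ with $\beta=\beta^\ast$ and $\zeta=\omega_0/100$, then choose $\eta_1>\eta_2>\eta$ and $\de_0$ appropriately. Each of the four outcomes leads to a contradiction. In case (A), Lemma~\ref{lem:qa}(A) transfers the upper spectrum bound from $(\T,\de)$ to $(\T'_\rho,\rho)$ with matching $\eta$-parameters, producing a $t$-Frostman Convex Wolff Axiom family at scale $\rho$ with upper spectrum $\le\sigma^\ast+o(1)$ but log-cardinality $\ge\beta^\ast+\alpha$, contradicting the maximality of $\beta^\ast$. Case (B) is analogous via Lemma~\ref{lem:qa}(B); here the additive loss $\log_{1/\de}(C)/\eta_\de$ in the spectrum transfer is harmless because the forced scale separation gives $\eta_\de\ge\eta^2$, so the loss tends to zero as $\de\to 0$---this is the precise point where the quasi-Assouad setup beats the Assouad setup. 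In case (C), the prisms $\calW$ together with $\T'$ and the full shading verify the hypotheses of Proposition~\ref{prop:prismsetup} (the separation $\rho\le\de^{\eta_1/100}\De$ provides the required $\De\ge\de^{1-\e}$ after relabeling $\rho$ as the new base scale), hence $\bigcup\T'$ has upper spectrum at least $2t+1-\omega_0/2\ge\sigma-\omega_0/2$ (using $\sigma\le 2t+1$, which holds because sticky examples saturate this bound), contradicting $\sigma^\ast\le\sigma-\omega_0$. In case (D), the hypothesis of the theorem applied with $\e=\omega_0/2$ and error $\de^{-\zeta}$ yields upper spectrum $\ge\sigma-\omega_0/2$, the same contradiction.

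The main obstacle is the careful calibration of parameters across the four cases: the $t$-Frostman Convex Wolff Axiom errors in the refined sets are $\de^{-\eta_1}$, $(\de/\rho)^{-\eta_1}$, $\rho^{-\eta_2}$, and $\de^{-\zeta}$ in (A)--(D) respectively, and each must remain below the threshold required by whichever hypothesis is invoked at that step. The layered quantifier structure of Proposition~\ref{prop:WZ256.3}---choose $\beta,\zeta$ first, then $\alpha$, then $\eta_1>\eta_2$, then $\eta$ and $\de_0$---is exactly what allows this calibration: fixing the chain $\omega_0\to\zeta\to\alpha\to\eta_1,\eta_2\to\eta\to\de_0$ keeps all scale- and spectrum-transfer losses below $\omega_0/2$. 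Once this bookkeeping is handled, the argument reduces to the combinatorial shape of Wang--Zahl's $t=1$ proof, with Proposition~\ref{prop:prismsetup} replacing the trivial ambient-dimension argument available only when the tube family is $2$-dimensional.
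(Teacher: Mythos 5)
Your proposal is correct and follows essentially the same route as the paper: an extremal-density argument in $(\sigma^\ast,\beta^\ast)$, an application of Proposition~\ref{prop:WZ256.3} with the quantifiers fixed in the order $\omega\to\zeta\to\alpha\to\eta_1,\eta_2\to\eta\to\de_0$, cases (A)/(B) dispatched through Lemma~\ref{lem:qa} (with the $\log_{1/\de}(C)/\eta$ loss absorbed by shrinking $\de_0$, exactly the paper's use of the quasi-Assouad scale separation), case (C) through Proposition~\ref{prop:prismsetup} with the full shading and the $\e\sim\eta_1/200$ separation, and case (D) through the sticky hypothesis. The only cosmetic difference is that you phrase the (A)/(B) contradiction as violating the maximality of $\beta^\ast$ while the paper phrases it as violating the assumed spectral bound on the chosen $\T$; these are the same contradiction.
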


\begin{proof}[Proof of Theorem \ref{thm:realmain}]

    We fix $\omega > 0$ to be the value of $\omega$ we aim for in Theorem \ref{thm:realmain}. All parameters are allowed to depend on $\omega$. 

    We assume that there exists values $\zeta, \De_0 > 0$ such that if $\de < \De_0$ and $\T$ is a collection of $\de$-tubes satisfying the $t$-Frostman Convex Wolff Axiom at all scales with error $\de^{-\zeta}$, then $\T$ has upper spectrum at $\zeta$ and scale $\de$ at least $\sigma - \omega/2$. 
    
    Now, we set up the extremization argument to prove the same result for general sets of tubes satisfying the $t$-Frostman Convex Wolff Axiom. For a set of $\de$-tubes $\T$, define \[\sigma^*(\T, \eta) = \inf\{\sigma^* : \T \text{ has upper spectrum at } \eta \text{ and scale } \de \text{ at least } \sigma^*.\}\] Define $\sigma^{*}(\beta, \eta, \de_0) = \inf \sigma^*(\T, \eta),$where the infimum is taken over sets of $\de$-tubes $\T$ such that the following three conditions hold:

    \begin{enumerate}[label = (\roman*)]
        \item $\de \le \de_0$.
        \item $\T$ satisfies the $t$-Frostman Convex Wolff Axiom with error $\de^{-\eta}$.
        \item $|\T| \ge \de^{-\beta+2\eta}$
    \end{enumerate}

    We apply Proposition \ref{prop:WZ256.3} to close the argument. As the statement of Proposition \ref{prop:WZ256.3} involves several layers of quantifiers, to clarify our approach we briefly summarize how we apply this proposition. We assume that sets of tubes satisfying the $t$-Frostman Convex Wolff Axiom with arbitrarily small errors can have upper spectrum $< \sigma$. To apply Proposition \ref{prop:WZ256.3}, we need parameters $\beta$ and $\zeta$. We have already defined $\zeta$. We choose $\beta$ so that $\de^{-\beta}$ is the largest possible cardinality of such a set of tubes satisfying the $t$-Frostman Convex Wolff Axiom with small error and with the smallest possible upper spectrum among all tubes satisfying the $t$-Frostman Convex Wolff Axiom. Proposition \ref{prop:WZ256.3} tell us that for these paramters, there exists a value $\alpha > 0$ which works in the remainder of Proposition \ref{prop:WZ256.3}. For a given set of tubes $\T$ with small errors (more on this later) and cardinality about $\de^{-\beta}$, the first two outcomes of Proposition \ref{prop:WZ256.3} tell us that we can find a set of $\rho$-tubes $\T_1$ with upper spectrum at most that of $\T$ and $|\T_1| \gg \rho^{-\beta}$. Assuming that $\T_1$ satisfies the $t$-Frostman Convex Wolff Axiom with small error, we contradict our assumption about the minimality of $\beta$. We determine $\eta_1> 0$ sufficiently small to apply our assumption about the minimality of $\beta$ for any such $\T_1$; $\eta_2$ small enough to apply Proposition \ref{prop:prismsetup} with $\e = \eta_1$; and $\de'_0$ small enough to work in both. These three parameters gives a choice of parameters $\eta, \de_0 > 0$ which determine the maximum scale and error in the $t$-Frostman Convex Wolff Axiom for $\T$. 
    
    Now, we take a set of $\de$-tubes $\T$ which satisfies the $t$-Frostman Convex Wolff Axiom with error at most $\de^{-\eta}$, $\de \le \de_0$, $|\T| \approx \de^{-\beta}$, and upper spectrum $< \sigma$. We apply Proposition \ref{prop:WZ256.3} to this set of tubes. The previous paragraph describes what we do in cases (A) or (B); in either case, we reach a contradiction. As long as $\de'_0$ is sufficiently small, we can also use the good bounds from Proposition \ref{prop:prismsetup} for tubes concentrating in prisms in case (C) to prove that the upper spectrum of the set of tubes is about $2t+1 \ge \sigma$. As we have assumed $\T$ has upper spectrum $< \sigma$, this is impossible. And if (D) occurs, we use our assumption about the smallest possible upper spectrum of sets of tubes satisfying the $t$-Frostman Convex Wolff Axiom at all scales with error $\de^{-\zeta}$ to conclude that $\T$ has upper spectrum $\ge \sigma$, again a contradiction. As those were the only four possibilities, our assumption that sets of tubes satisfying the $t$-Frostman Convex Wolff Axiom with arbitrarily small error can have upper spectrum $< \sigma$ must be false, and hence the conclusion of Theorem \ref{thm:realmain} is true. This concludes our summary, we now proceed to fill in the details.
    
    Let $\sigma^*(\beta) = \inf_{\eta, \de_0 > 0} \sigma^*(\beta, \eta, \de_0)$ and $\sigma^* = \sigma^*(2t)$. If $\sigma^* \ge \sigma - 3\omega/4$, we have the desired result, so suppose otherwise. Let $\beta^* = \sup \{\beta \in [2t,4] :  \sigma^*(\beta) = \sigma^*\}$. Proposition \ref{prop:WZ256.3} tells us that for this value of $\beta$, there exists a value of $\alpha > 0$ satisfying the conclusions of Proposition \ref{prop:WZ256.3}. We know that $\sigma^*(\beta^* + \alpha/2) > \sigma^*$, so for some $\eta_1 < \alpha/2$ and $\hat{\de}_0 > 0$, $\sigma^*(\beta^* + \alpha/2, \eta_1, \hat{\de}_0) > \sigma^*$. Apply Proposition \ref{prop:prismsetup} with $\e = \eta_1/200$. We see that there are values $\eta_2 > 0$, $\overline{\de}_0 > 0$ such that the conclusions of the proposition hold. Let $\de'_0 = \min(\hat{\de}_0, \overline{\de}_0/2)$. Let $\eta, \de_0$ be the values given by Proposition \ref{prop:WZ256.3} for our choice of $\eta_1, \eta_2$ and $\de'_0$. By construction, we have that for any $\beta \in (\beta^* - \alpha, \beta^*)$, $\sigma^*(\beta) = \sigma^*$, so $\sigma^*(\beta, \eta, \de_0) \le \sigma^*$. Take such a value of $\beta$ and write $\e = \sigma^*(\beta^* + \alpha/2, \eta_1, \hat{\de}_0) - \sigma^*(\beta, \eta\eta_1, \de_0)$. For some $\de < \de_0$, can find a set of $\de$-tubes $\T$ satisfying the $t$-Frostman Convex Wolff Axiom with error $\de^{-\eta}$ so that $|\T| \ge \de^{-\beta}$, and $\T$ has does not have upper spectrum at $\eta$ and scale $\de$ at least $\sigma^* + \min(\e/2, \omega/4)$, since if we cannot find such a set of tubes, then $\sigma^*(\beta, \eta\eta_1, \de_0) > \sigma^*$, a contradiction.
    
    We finally apply Proposition \ref{prop:WZ256.3} to $\T$. Let $\T'$ denote the resulting set of tubes. We have four potential conclusions which we treat in turn. The first is conclusion (A): there exists a value of $\rho \le \hat{\de}_0$ and a collection of $\rho$-tubes $\T_{\rho}$ covering $\T'$ which satisfies the $t$-Frostman Convex Wolff Axiom with error $\rho^{-\eta_1}$ and with $|\T_{\rho}| \ge \rho^{-\beta - \alpha}$. Since $\sigma^*(\beta^* + \alpha/2, \eta_1, \de'_0) \ge \sigma^* + \e$, we conclude that $\T_{\rho}$ has upper spectrum at $\eta_1$ and scale $\rho$ at least $\sigma^* + \e$. Since $\rho < \de^{\eta}$, $\rho^{\eta_1} < \de^{\eta \eta_1}$, and hence $\T'$ has upper spectrum at $\eta\eta_1$ and scale $\de$ at least $\sigma^* + \e$ as well, by Lemma \ref{lem:qa}. The upper spectrum is monotonic, so the same holds for $\T$, contradicting our assumption otherwise. Hence, (A) cannot occur.

    The second is conclusion (B): there exists a value of $\rho > \frac{\de}{\de'_0}$ and a $\rho$-tube $T_{\rho}$ such that $|\T'^{T_{\rho}}| \ge \left(\frac{\de}{\rho}\right)^{-\beta - \alpha}$ and $\T'^{T_{\rho}}$ satisfies the $t$-Frostman Convex Wolff Axiom with error $\left(\frac{\de}{\rho}\right)^{-\eta_1}$. Since $\frac{\de}{\rho} < \de_0$ we conclude as in the previous paragraph that ${\T'}^{T_{\rho}}$ has upper spectrum at $\eta_1$ and scale $\frac{\de}{\rho}$ at least $\sigma^* + \e$, which as in the previous paragraph implies that $\T'$ has upper spectrum at $\eta\eta_1$ and scale $\de$ at least $\sigma^* + \e - \log_{1/\de}(C)/\eta$ again by Lemma \ref{lem:qa}. By taking $\de_0$ sufficiently small (note that this only requires knowing the absolute constant $C$ and $\eta$), we ensure that $\T'$ has upper spectrum at $\eta\eta_1$ and scale $\de$ at least $\sigma^* + 3\e/4$. As in the previous paragraph, this is a contradiction, so (B) cannot occur.

    The third conclusion is (C). Denote the resulting set of prisms by $\calR$. Let $\T_{\rho} = \calD_{\rho}(\T')$. If we set $\T_{\rho}(R) = \{T_{\rho} \in \calD_{\rho}(\T') : T_{\rho} = N_{\rho}(T), T \in \T[R]\}$, then since the $t$-Frostman Convex Wolff Axiom is inherited upwards, we have that $\T_{\rho}(R)$ satisfies the $t$-Frostman Convex Wolff Axiom with error $\rho^{-\eta_2}$ for each $R \in \calR$ and $\calR$ satisfies the $t$-Frostman Convex Wolff Axiom with error $\rho^{-\eta_2}$ as well. Finally, elements $R \in \calR$ have dimensions $\rho \times \De \times 1$. Since $\rho \le \de^{\eta_1/100}\De \le \de_0^{\eta/100}$ , so long as $\de_0$ is sufficiently small, we can ensure that $\rho \le \overline{\de}_0$. Then we can apply Proposition \ref{prop:prismsetup} with $Y_{\text{full}}(T)$ the full shading for each $T \in \T'$ and conclude that $\T'$ has upper spectrum at $\eta_1$ and scale $\de$ at least $ 2t+1 - \omega/2 \ge \sigma - \omega/2$. It follows that $\sigma^* \ge \sigma - 3\omega/4$, a contradiction. Hence (C) cannot occur.

    Finally, we have conclusion (D). Recall $\T$ satisfies the $t$-Frostman Convex Wolff Axiom at all scales with error $\de^{-\zeta}$ and $\de \le \De_0$. Then $\T$ has upper spectrum at $\zeta$ and scale $\de$ at least $\sigma - \omega/2$. By the same reasoning as in the previous paragraph, we reach a contradiction.

    In all cases, we contradict our assumption that $\sigma^* < \sigma - 3\omega/4$. It follows that $\sigma^*(2t) \ge \sigma - 3\omega/4$. We conclude that for $\eta, \de_0 > 0$ sufficiently small, $\sigma^*(2t, \eta, \de_0) \ge \sigma - \omega$. Then for any $\de < \de_0$ and any set of $\de$-tubes $\T$ satisfying the $t$-Frostman Convex Wolff Axiom with error $\le \eta$, we know that $|\T| \gtrsim \de^{-2t+\eta} \ge \de^{-2t + 2\eta}$. It follows that $\sigma^*(\T, \eta) \ge \sigma - \omega$, that is, $\bigcup_{T \in \T} T$ has upper spectrum $\ge \sigma - \omega$ at $\eta$ and scale $\de$. This is the conclusion of Theorem \ref{thm:realmain}, completing the proof.
\end{proof}

\section{Proving Proposition \ref{prop:prismsetup}: Induction on scales}\label{sec:prismsetup}

In this section, we prove Proposition \ref{prop:prismsetup}, whose statement we recall now.

\begin{proposition*}[\textbf{\ref{prop:prismsetup}}]
    For any $\omega, \e > 0$, there exists $\eta, \de_0 > 0$ such that the following holds for all $\de \in (0, \de_0)$. Suppose $\T$ is a collection of $\de$-tubes satisfying the $t$-Frostman Convex Wolff Axiom with error $\de^{-\eta}$, $Y$ is a shading $\de^{\eta}$ dense on $\T$, $\calR$ is a collection of $\de \times \De \times 1$ prisms for some $\De \ge \de^{1-\e}$, and $\calR$ factors $\T$ from above and below with respect to the $t$-Frostman Convex Wolff Axiom with error $\de^{-\eta}$. 
    
    Then $\bigcup_{T \in \T} Y(T)$ has upper spectrum at $\eta$ and scale $\de$ at least $2t+1 - \omega$. 
\end{proposition*}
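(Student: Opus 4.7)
The plan is to prove Proposition \ref{prop:prismsetup} by first applying Proposition \ref{prop:smallprism} to upgrade from $t$-Frostman behavior inside the prisms to the much stronger $1$-Frostman behavior, and then iterating Proposition \ref{prop:largeprism} to widen the prisms until we reach essentially $\de \times 1 \times 1$ slabs, where a transverse brush plus $L^2$ argument yields the desired upper spectrum. Concretely, I would first feed $(\T, Y, \calR)$ into Proposition \ref{prop:smallprism}. If its Conclusion A already produces scales $r < r^{1-\eta'} \le R$ and an $R$-ball $B_R$ with $|\bigcup_T Y(T) \cap B_R|_r \ge (R/r)^{2t+1-\omega}$, we are done. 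Otherwise Conclusion B returns a refinement $\T_{\overline{\de}} \subset \calD_{\overline{\de}}(\T)$ carrying a shading $\overline{Y}$, together with a collection $\overline{\calR}$ of $\overline{\de} \times \overline{\De} \times 1$ prisms (for some $\de \le \overline{\de}$ and $\De \ll \overline{\De}$) such that $\overline{\calR}$ factors $\T_{\overline{\de}}$ from above with respect to the $t$-Frostman Convex Wolff Axiom and from below with respect to the $1$-Frostman Convex Wolff Axiom, with errors sub-polynomial in $\overline{\de}$.

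From this improved starting point I would run an induction on the middle scale. At each stage $j$, given data $(\T_j, \overline{Y}_j, \calR_j)$ at scales $\de_j \times \De_j \times 1$ with the $1$-Frostman factoring from below and the $t$-Frostman factoring from above, I apply Proposition \ref{prop:largeprism}. Its output is either (a) the desired discrete upper spectrum bound, or (b) a wider configuration $(\T_{j+1}, \overline{Y}_{j+1}, \calR_{j+1})$ with $\De_{j+1}$ exceeding $\De_j$ by a definite factor depending only on $\eta$. Because $\De_j \le 1$, the iteration terminates in $O_{\omega,\eta}(1)$ steps, at which point either option (a) has occurred or $\De_j \approx 1$ and the final prisms are essentially $\de_j \times 1 \times 1$ slabs. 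The latter saturated case lands us in the very transverse regime of Proposition \ref{prop:largeprism}: the $1$-Frostman factoring fills each slab out by an $L^2$ argument, a brush argument across slabs uses the $t$-Frostman condition on $\calR_j$, and together these produce a ball of scale $\approx \De_j$ with density $\gtrsim (R/r)^{2t+1-\omega}$. Finally, Lemma \ref{lem:qa}(A) transfers any upper-spectrum bound obtained at scale $\de_j$ back to scale $\de$, enlarging the scale separation parameter by only a bounded factor.

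The principal obstacle is parameter bookkeeping. Each application of Proposition \ref{prop:smallprism} or \ref{prop:largeprism} multiplies the admissible error in the $t$-Frostman Convex Wolff Axiom by a power $\overline{\de}^{-\eta''}$, and each rescaling through an intermediate tube via $\phi_{T_\rho}$ worsens both the error and the scale separation by a bounded factor. Since the number of rounds is $O_\omega(1)$ once $\omega$ is fixed, I would choose the $\eta$ promised by Proposition \ref{prop:prismsetup} to be exponentially smaller than $\omega$, so that all accumulated losses remain $\le \de^{-\eta/2}$, and $\de_0$ correspondingly small so that the dimension gap exploited in Proposition \ref{prop:smallprism} survives. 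A secondary subtlety is that after each widening step the shading $\overline{Y}_{j+1}$ is a pigeonholed subset of the $\overline{\de}_{j+1}$-neighborhood of $\overline{Y}_j$, and one must verify that it remains $\overline{\de}_{j+1}^{\eta}$-dense to feed the next iteration; this follows from the same iterative pigeonholing template used in the proof of Proposition \ref{prop:WZ256.3}. The genuinely deep input is Proposition \ref{prop:smallprism}, whose proof occupies Section \ref{sec:smallprism}; once it and Proposition \ref{prop:largeprism} are available, the argument above is a finite, transparently terminating induction.
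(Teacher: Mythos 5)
Your plan is, in essence, the paper's own proof: apply Proposition \ref{prop:smallprism} once to trade the $t$-Frostman structure inside the prisms for (effectively) full-density shadings on wider prisms, then iterate Proposition \ref{prop:largeprism}, choosing the error parameters $\eta_i$ in reverse order (each round's $\tau$ is the previous round's admissible error), and close when the prism width saturates at $1$.

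Two concrete points where your write-up falls short of a complete argument. First, you feed $(\T,Y,\calR)$ directly into Proposition \ref{prop:smallprism}, but its hypotheses require, for each $R\in\calR$, a family $\T(R)$ with the cardinality bound $|\T(R)|\le\de^{-\eta}(\De/\de)^t$; the hypothesis of Proposition \ref{prop:prismsetup} only gives factoring from below (the $t$-Frostman Convex Wolff Axiom for $\T^R$) with no multiplicity control. The paper has to insert Lemma \ref{lem:folklore} (a random-sampling refinement of $\T[R]$ to a $t$-Frostman family of size $\approx(\De/\de)^t$) before Proposition \ref{prop:smallprism} can be invoked; your proposal skips this verification. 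Second, your endgame is not quite what is available as a black box: when $\De_j$ reaches $1$ you appeal to ``the very transverse regime of Proposition \ref{prop:largeprism}'' together with a brush argument, but the stated conclusions of Proposition \ref{prop:largeprism} do not guarantee Conclusion A once $\De=1$, and its transverse case is internal to its proof. The paper instead closes the induction with the ``prism claim,'' i.e.\ a direct application of Lemma \ref{lem:RFPC} (a Cordoba $L^2$ bound for $t$-Frostman slabs carrying a dense shading), after checking along the induction that the slab thickness $\de_i$ stays below $\de^{\e'/4}$ so that the claim applies and the resulting pair of scales has the required $\de^{\eta}$-separation; some bookkeeping of this kind (your properties would be the paper's (a)--(d), including $Y_i(\calR_i)\subset N_{\de_i}(Y(\T))$, which makes the transfer back to $Y(\T)$ immediate without Lemma \ref{lem:qa}) is needed to make the termination and the final upper-spectrum statement precise. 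Neither issue changes the architecture, but both must be filled in.
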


The proof of Proposition \ref{prop:prismsetup} follows from an induction on scales argument, which requires two further propositions. We state these propositions now.

\begin{proposition}\label{prop:largeprism}
    For any $\omega, \e, \tau > 0$, there exist $\eta, \de_0 > 0$ such that the following holds for all $\overline{\de} \in (0, \de_0)$. Suppose there exist values $\de, \De > 0$ with $\overline{\de} \le \de \le \De$ and $\de \le \overline{\de}^{\e}\De$ and a collection $\calR$ of $\de \times \De \times 1$ prisms satisfying the $t$-Frostman Convex Wolff Axiom with error $\de^{-\eta}$ and with a $\de^{\eta}$-dense shading $Y$ on $\calR$.

    Then one of the following conclusions must occur.

    \begin{enumerate}[label = \Alph*.]
        \item There exists $r > \overline{\de}^{-\eta}\de$ and an $r$-ball $B_r$ such that \begin{equation*}\left|\bigcup_{R \in \calR} Y(R) \cap B_r\right|_{\de} \ge \left(\frac{r}{\de}\right)^{2t + 1 - \omega};\end{equation*}or 
        \item There exist $\de' \le \De'$ with $\de' \in [\overline{\de}^{-\e\omega/6}\de, \overline{\de}^{\e/2}]$ and $\De' = \min\left(1, \frac{\De\de'}{\de}\right)$ and a collection $\calR'$ of $\de' \times \De' \times 1$ prisms satisfying the $t$-Frostman Convex Wolff Axiom with error $\de'^{-\tau}$ and a $\de'^{\tau}$-dense shading $Y'$ on $\calR'$ such that for all $R \in \calR'$ \begin{equation*}Y'(R) \subset N_{\de'}\left(\bigcup_{R \in \calR[R']} Y(R)\right).\end{equation*}
    \end{enumerate}
\end{proposition}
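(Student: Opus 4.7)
The plan is to dichotomize on a transversality scale $\theta \in [\de/\De, 1]$ governing the angular spread of normal directions $n(R)$ among prisms $R \in \calR$ meeting a typical shading point. A small $\theta$ will force prisms to cluster into slightly wider flat prisms, yielding conclusion B; a large $\theta$ will enable a brush plus Cauchy--Schwarz argument to fill out a ball at scale $\sim \De$, yielding conclusion A. To extract $\theta$, set $X := \bigcup_{R \in \calR} Y(R)$; for each $x \in X$ consider $\calR(x) := \{R \in \calR : x \in Y(R)\}$ and its normal set $\{n(R) : R \in \calR(x)\} \subset S^2$. A dyadic pigeonhole across the $\lessapprox_{\overline{\de}} 1$ possible angular scales, at a $\lessapprox_{\overline{\de}} 1$ cost in the density of $Y$, uniformizes a single scale $\theta$ and a single count $M$ such that at each $x \in X$, some $\theta$-cap about a central direction contains $\sim M$ normals from $\calR(x)$, while significantly finer sub-caps contain negligibly fewer.

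In the tangential case, $\theta \le \overline{\de}^{\e/2}$, set $\de' := \max(\theta, \overline{\de}^{-\e\omega/6}\de)$ and $\De' := \min(1, \De \de'/\de)$. Any two prisms in $\calR$ meeting at a common shading point whose normals lie in a common $\theta$-cap fit inside a single $\de' \times \De' \times 1$ prism aligned with the average direction of the cap. Clustering $\calR$ by these $\theta$-caps and by appropriate $\De'$-scale positional data yields a $\lessapprox_{\overline{\de}} 1$-balanced, $\lessapprox_{\overline{\de}} 1$-partitioning cover $\calR'$ by such prisms; take $Y'(R') := N_{\de'}\bigl(\bigcup_{R \in \calR[R']} Y(R)\bigr)$. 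The $t$-Frostman Convex Wolff Axiom with error $\de'^{-\tau}$ for $\calR'$ is inherited upwards from $\calR$ via Proposition~\ref{prop:inheritance}(1) (and via Proposition~\ref{prop:inheritance}(2) in the super-tangential subcase $\theta < \overline{\de}^{-\e\omega/6}\de$, where $\calR'$ is essentially $\calD_{\de'}(\calR)$), while the $\de'^{\tau}$-density of $Y'$ on $\calR'$ follows from the $\de^\eta$-density of $Y$ and the presence of $\sim M$ clustered prisms at each shading point, provided $\eta$ is chosen sufficiently small relative to $\tau$.

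In the transverse case, $\theta > \overline{\de}^{\e/2}$, fix a stem prism $R_* \in \calR$ with $|Y(R_*)|_\de$ near-maximal (by pigeonholing the shading density). At each $x \in Y(R_*)$, the broadness of $\theta$ yields a bristle $R(x) \in \calR(x)$ with $n(R(x))$ at angle $\sim \theta$ from $n(R_*)$, and the bristles through $R_*$ collectively fill a thickened prism $\widetilde{R}_*$ enveloping $R_*$ with short side $\sim \min(1, \theta\De)$ and medium side $\sim \De$. Bounding pairwise bristle overlaps via the $t$-Frostman Convex Wolff Axiom on $\calR$ applied to the convex envelope of each intersection, a Cauchy--Schwarz estimate gives
\[
\Bigl|\bigcup_{x \in Y(R_*)} Y(R(x))\Bigr|_\de \;\gtrsim\; \overline{\de}^{O(\eta)} (r/\de)^{2t+1}
\]
for $r \sim \min(1, \De)$. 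Pigeonholing this bound to an $r$-subball of $\widetilde{R}_*$ produces conclusion A; the required $r > \overline{\de}^{-\eta}\de$ follows from $\de \le \overline{\de}^\e \De$ once $\eta$ is small compared to $\e$.

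The main obstacle is the transverse case: the brush-plus-Cauchy--Schwarz estimate must recover precisely the exponent $2t+1$ while simultaneously absorbing the $t$-Frostman Convex Wolff Axiom error $\de^{-\eta}$ on $\calR$, the shading density loss $\de^\eta$, and the transversality angle $\theta$, with each pigeonholing step contributing a $\overline{\de}^{O(\eta)}$ factor that must be swallowed by the $\omega$ slack. A secondary delicacy, in the tangential case, is that the new error $\de'^{-\tau}$ for $\calR'$ is stronger than the one inherited from $\calR$, so $\eta$ must be chosen sufficiently small relative to both $\tau$ and $\omega$ in terms of $\e$.
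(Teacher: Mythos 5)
Your dichotomy tracks only the spread $\theta$ of the normal directions $n(R)$, but the key structural input in this setting is two-dimensional: one also needs a broadness scale $u$ for the \emph{long} directions $\dir(R)$ (in the paper this comes from Lemma \ref{lem:tubebroad}, covering $\calR$ by $u$-tubes, and $\theta$ is then extracted only \emph{after} rescaling through a $u$-tube). Without it, your tangential-case containment claim fails: two $\de \times \De \times 1$ prisms through a common point whose normals lie in a common $\theta$-cap need not fit inside a single $\de' \times \De' \times 1$ prism with $\De' = \De\de'/\de < 1$ — if their long directions differ by an angle $\gg \De'$ they only share a $\sim\theta \times 1 \times 1$ slab. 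The correct enlarged dimensions are $\de' \sim \theta u$, $\De' \sim u$, and the genuine dichotomy (the paper's Reduction V, obtained by playing a multiplicity lower bound against the $t$-Frostman Convex Wolff Axiom applied to the $\theta u \times u \times 1$ prism containing all prisms through a point) is ``either $\theta \gg \de/\De$ or $u \gg \De$,'' not a threshold on $\theta$ alone. Relatedly, in your super-tangential subcase ($\theta < \overline{\de}^{-\e\omega/6}\de$) taking $\calR' \approx \calD_{\de'}(\calR)$ produces $\de' \times \De \times 1$ sets, not $\de' \times \De' \times 1$ prisms, and padding them out destroys the density.

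The deeper gap is the density of $Y'$ in Conclusion B. You assert it ``follows from the $\de^{\eta}$-density of $Y$ and the presence of $\sim M$ clustered prisms at each shading point,'' but high multiplicity at points says nothing about how much of the much larger prism $R'$ the union $\bigcup_{R \in \calR[R']} Y(R)$ occupies: a priori those prisms could all essentially coincide or cluster in a $\De/\De'$-fraction of $R'$, giving density only $\sim \overline{\de}^{\e\omega/6}$, which does not meet the required $\de'^{\tau}$ bound for small $\tau$. This is the heart of the proposition: the paper proves density by using broadness of the long directions inside each $u$-tube to extract, inside each new prism $R'$, a family of $\de'$-tubes (neighborhoods of well-separated bristle prisms through a stem) that satisfies the $1$-Frostman Convex Wolff Axiom in $R'$ (verified by a Katz--Tao/Lemma \ref{lem:WZ2546} contradiction measured along a line), and then applies the Cordoba-type Lemma \ref{lem:RFPA} to show these tubes fill an $\approx \de'^{\tau}$ fraction of $R'$. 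Your proposal has no analog of this step. Finally, in your transverse case the claimed radius $r \sim \De$ is too large: a brush whose bristles meet the stem at angle $\sim\theta$ only fills a ball of radius $\sim \theta\De$ (the paper's $\nu$-ball), so with your threshold $\theta > \overline{\de}^{\e/2}$ and $t$ close to $1$ the exponent count $(\De/\de)^{2t+1-\omega}$ cannot hold; one needs the threshold $\theta \ge \de^{-c\gamma/\omega}\de/\De$ and must state the conclusion in a $\theta\De$-ball, which is exactly how the paper closes that case via Lemma \ref{lem:RFPC}.
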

We prove Proposition \ref{prop:largeprism} in Section \ref{sec:largeprism}. 
\begin{proposition}\label{prop:smallprism}
    For any $\omega, \e, \tau > 0$, there exists $\eta, \de_0 > 0$ such that the following holds for all $ \de \in (0, \de_0)$ and $\De \in (\de^{1-\e}, 1]$. Suppose $\calR$ is a collection of $\de \times \De \times 1$ prisms satisfying the $t$-Frostman Convex Wolff Axiom with error $\de^{-\eta}$, $\T$ is a collection of $\de$-tubes such that for each $R \in \calR$, there exists a family $\T(R)$ contained in $R$, satisfying the $t$-Frostman Convex Wolff Axiom with error $\de^{-\eta}$ and with $|\T(R)| \le \de^{-\eta} \left(\frac{\De}{\de}\right)^t$. Let $Y(T)$ be a $\de^{\eta}$-dense shading on $\T$. Then one of the following must hold. 

    \begin{enumerate}[label = \Alph*.]
        \item There exists $r > \de^{1-\eta}$ and an $r$-ball $B_r$ such that \[\left|\bigcup_{T \in \T} Y(T) \cap B_r\right|_{\de} \ge \left(\frac{r}{\de}\right)^{2t + 1 - \omega};\]or
        \item There exists some $\overline{\de}, \overline{\De}$ with $\frac{\overline{\De}}{\overline{\de}} = \left(\frac{\De}{\de}\right)^{\omega/(10t + 5)}$ and a collection $\overline{\calR}$ of $\overline{\de}\times \overline{\De}$ prisms satisfying the $t$-Frostman Convex Wolff Axiom with error $\overline{\de}^{-\tau}$, equipped with a $\overline{\de}^{\tau}$-dense shading $\overline{Y}$ such that $\overline{Y}(\overline{R}) \subseteq N_{\overline{\de}}\left(\bigcup_{R \in \calR} Y(R)\right)$ for each $\overline{R} \in \overline{\calR}$.
    \end{enumerate}
\end{proposition}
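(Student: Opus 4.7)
The plan is to follow the three-case analysis outlined in Subsection \ref{subsec:bpop3}, with the main work concentrated in selecting a broadness parameter $u$ and a transversality parameter $\theta$ and then examining the three parameter regimes. First I would choose, by iterative pigeonholing across dyadic scales, a broadness scale $u \in [\De, 1]$ and a cover $\T_u$ of $\T$ by $u$-tubes so that for each $x \in \bigcup_{T \in \T} Y(T)$ the direction set $\{\dir(R) : x \in \T[R]\}$ concentrates $\beta_1$-dimensionally inside a single $u$-ball, with $\beta_1$ a tiny constant depending on $\omega$. After rescaling by $\phi_{T_u}$, I would produce an analogous transversality parameter $\theta \in [\de/u,1]$ for the normals $n(R^{T_u})$ with a much smaller dimensional gap $\beta_2 \ll \beta_1$. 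Before branching into cases, I would establish a \emph{grain lemma}: for each $R \in \calR$ and each $\de \times \De \times (\De/u)$ subprism $P \subset R$, the restricted family $\T_P = \{T \cap P : T \in \T(R)\}$ satisfies the $t$-FCWA in $P$ with error $\de^{-O(\eta)}$. The argument is by contradiction: if $\T_P$ overconcentrates at some scale $\alpha$, then $\bigl|\bigcup_{T \in \calD_\alpha(\T[R])} T\bigr|_\alpha \ll |\calD_\alpha(\T[R])|\,\alpha^{-1}$, whereas the $t$-FCWA on $\T(R)$ together with the hypothesis $|\T(R)| \le \de^{-\eta}(\De/\de)^t$ gives a matching $L^2$ lower bound, a contradiction.

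Next, split into cases on $(u,\theta)$. In the \emph{very transverse case} $\theta \gtrsim 1$, break each $R^{T_u}$ into $(\de/u) \times (\De/u) \times (\De/u)$ slabs and run a brush argument from a fixed slab $P_1$; the transversality forces the resulting bristle family $\mathscr{S}^{T_u}_{P_1}$ to satisfy the $t$-FCWA in the concentric $(\theta\De/u)\times(\De/u)\times(\De/u)$-prism $G$, and coupling with the grain lemma and an $L^2$ argument shows that $\T^{T_u}$ concentrates $(2t{+}1)$-dimensionally in $G$. Since $\theta \gtrsim 1$, $G$ is essentially a $(\De/u)$-ball, so undoing $\phi_{T_u}$ produces Conclusion A at scale $r = \De$. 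In the \emph{very narrow case} $u \lesssim \De$, the prisms $\calR^{T_u}$ are essentially $(\de/u)\times 1\times 1$; the same brush-plus-$L^2$ machine yields, for each $T_u$, an envelope prism $P$ of dimensions $\theta \times 1 \times 1$ in which $\T^{T_u}$ looks $(2t{+}1)$-dimensional, and these envelopes across different $T_u$ are essentially disjoint by the broadness construction; summing against the $t$-FCWA for $\calR$ produces Conclusion A inside $B(0,1)$.

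In the remaining \emph{brush case} $\theta \ll 1$ and $u \gg \De$, set $\overline{\de} \sim u\theta$ and $\overline{\De} \sim u$, so that both separations $\overline{\de} \ll \overline{\De}$ and $\De \ll \overline{\De}$ hold automatically. A brush argument against a stem tube $T_{\text{stem}} \subset R_{\text{stem}}$ collects bristle prisms making angle $\sim u$ with the stem and all sitting inside a single $\overline{\de} \times \overline{\De} \times 1$ prism $\overline{R}$ concentric with $R_{\text{stem}}$. Taking $\overline{\de}$-neighborhoods of the bristles produces a family $\overline{\T}(R_{\text{stem}})$ of $\overline{\de}$-tubes each meeting $T_{\text{stem}}$ in length $\sim \de/u$; since essentially all of $T_{\text{stem}}$ is covered, this forces $\overline{\T}(R_{\text{stem}})$ to obey the $1$-FCWA in $\overline{R}$, and then an $L^2$ argument shows that the union essentially fills $\overline{R}$. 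Defining $\overline{\calR}$ to consist of one such $\overline{R}$ per stem (pigeonholed so that the dimensions are uniform), the $t$-FCWA is inherited upward from $\calR$, and the shading $\overline{Y}(\overline{R}) \subset N_{\overline{\de}}\bigl(\bigcup_{R \in \calR} Y(R)\bigr)$ is the natural $\overline{\de}$-thickening of $Y$; picking $\theta$ from the appropriate dyadic scale at the outset guarantees $\overline{\De}/\overline{\de} = 1/\theta = (\De/\de)^{\omega/(10t+5)}$.

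The main obstacle will be the brush case: one must simultaneously arrange (i) that $\overline{\T}(R_{\text{stem}})$ satisfies the $1$-FCWA in $\overline{R}$ with error good enough to feed into Proposition \ref{prop:largeprism}, (ii) that $\overline{\calR}$ inherits the $t$-FCWA with error $\overline{\de}^{-\tau}$, and (iii) that the ratio $\overline{\De}/\overline{\de}$ lands at exactly $(\De/\de)^{\omega/(10t+5)}$ --- this last is what makes the outer induction-on-scales inside Proposition \ref{prop:prismsetup} terminate in $O(1/\omega)$ steps. These tight requirements force quantitative tuning of $u$ and $\theta$ at carefully chosen dyadic scales and heavy pigeonholing to convert the various approximate inequalities into exact ones without losing more than a $\de^{O(\eta)}$-fraction of the shading $Y$. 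A secondary difficulty is the grain lemma itself, which is tight when $|\T(R)| \approx (\De/\de)^t$ and must be iterated at every internal subscale rather than applied only at scale $\de$, since the subsequent brush arguments invoke the $t$-FCWA of $\T_P$ at a range of sub-prism shapes.
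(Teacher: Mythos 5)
Your proposal follows essentially the same route as the paper's own proof: a broadness scale $u$ with a $u$-tube cover, a transversality scale $\theta$ for the rescaled normals, a grain lemma making $\T_P$ satisfy the $t$-Frostman Convex Wolff Axiom on $\de \times \De \times \frac{\De}{u}$ subprisms via the same overconcentration-versus-$L^2$ contradiction, and the identical trichotomy (very transverse, very narrow, brush) closed by brush and Cordoba-type arguments, with the brush case producing $\overline{\calR}$ exactly as in the paper. The only substantive differences of detail are that the grain lemma can only be secured for a large refined family of subprisms and a refined tube set (not for every $P \subset R$ as you state), and that the exact ratio $\overline{\De}/\overline{\de} = \left(\frac{\De}{\de}\right)^{\omega/(10t+5)}$ is obtained by building the envelopes at the threshold value $\theta_0 = \left(\frac{\de}{\De}\right)^{\omega/(10t+5)} \ge \theta$ rather than by tuning the pigeonholed $\theta$ itself --- both minor and consistent with your stated plan.
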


We prove Proposition \ref{prop:smallprism} in Section \ref{sec:smallprism}. While we actually apply Proposition \ref{prop:smallprism} before Proposition \ref{prop:largeprism} in the proof of Proposition \ref{prop:prismsetup}, the proof of Proposition \ref{prop:smallprism} is more technically challenging than the proof of Proposition \ref{prop:largeprism}, so the author suggests reading them in the order presented here.

We also require three $L^2$-type incidence bounds, which we state below. These three incidence bounds have a common set-up: we have error terms $C_1, C_2, C_3,$ and $C_4 \ge 1$, a choice of $t \in (0, 1]$, and some scale $\de > 0$.

\begin{lemma}\label{lem:RFPA}
    Fix $\De \in (\de, 1]$. Suppose $(\T, Y)$ has $|Y(T)|_{\de} \ge C_1^{-1}\de^{-1}$. Also, suppose for some $\de \times \De \times 1$ prism $R$ and all $T \in \T$, $T \subset R$  and $\T^R$ satisfies the $t$-Frostman Convex Wolff Axiom with error $C_2$. Then \[\left|\bigcup_{T \in \T} Y(T)\right|_{\de}\gtrsim |\log \de|^{-1}C_1^{-1}C_2^{-2}(\de/\De)^{-t}\de^{-1}.\]
        In particular, if $C_1, C_2 \le \de^{-\eta}$ and $\de$ is sufficiently small, then \[\left|\bigcup_{T \in \T} Y(T)\right|_{\de} \ge \de^{4\eta}(\de/\De)^{-t}\de^{-1}.\]
\end{lemma}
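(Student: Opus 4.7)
The plan is to prove Lemma \ref{lem:RFPA} by a standard C\'ordoba-style $L^{2}$ argument. Let $E=\bigcup_{T\in\T}Y(T)$ and, for dyadic $\de$-cubes $q$, set $\mu(q)=\#\{T\in\T:q\in Y(T)\}$. Cauchy--Schwarz on the support $E$ together with the density hypothesis $|Y(T)|_\de\ge C_1^{-1}\de^{-1}$ give
\begin{equation*}
|E|_\de\;\ge\;\frac{\bigl(\sum_T|Y(T)|_\de\bigr)^{2}}{\sum_{T,T'}|Y(T)\cap Y(T')|_\de}\;\ge\;\frac{|\T|^{2}C_1^{-2}\de^{-2}}{\sum_{T,T'}|T\cap T'|_\de},
\end{equation*}
so the task reduces to proving the high-multiplicity bound $\sum_{T,T'}|T\cap T'|_\de\lessapprox_{\de} C_2\De^{-t}\de^{\,t-1}|\T|^{2}$.

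For fixed $T$, I would decompose the inner sum dyadically by the angle $\theta$ between $T$ and $T'$, noting that $\theta\lesssim\De$ because all tubes fit inside the $\de\times\De\times 1$ prism $R$. The geometric estimate $|T\cap T'|_\de\lesssim 1/\max(\theta,\de)$ is standard. For each dyadic $\theta\in[\de,\De]$, the tubes $T'\in\T$ meeting $T$ at angle $\sim\theta$ lie inside $W_\theta$, the intersection of the $\theta$-neighborhood cylinder around the axis of $T$ with $R$, which is a convex set of volume $\sim\de\,\theta$---the key anisotropic observation, produced by the short direction of $R$. Translating the $t$-Frostman Convex Wolff Axiom on $\T^{R}$ back through $\phi_{R}$ reads $|\T[W]|\le C_2(\vol W/(\de\De))^{t}|\T|$ for any convex $W\subset 2R$; applied to $W_\theta$ this yields $|\{T':\angle(T,T')\sim\theta\}|\lesssim C_2(\theta/\De)^{t}|\T|$. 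Combining the two inputs,
\begin{equation*}
\sum_{T'\in\T}|T\cap T'|_\de\;\lesssim\;C_2\De^{-t}|\T|\sum_{\theta\in[\de,\De]\ \text{dyadic}}\theta^{\,t-1}\;\lessapprox_{\de}\;C_2\De^{-t}\de^{\,t-1}|\T|,
\end{equation*}
since the dyadic geometric sum is dominated by its endpoint $\theta=\de$ for $t<1$ and collects a single $\log$ for $t=1$. Multiplying by $|\T|$ and substituting back into Cauchy--Schwarz gives the stated bound; a preliminary pigeonhole making $|Y(T)|_\de$ essentially constant across $T\in\T$ redistributes the powers of $C_1,C_2$ to match the $C_1^{-1}C_2^{-2}$ in the statement, and in either arrangement the ``in particular'' clause follows by inserting $C_1,C_2\le\de^{-\eta}$ and absorbing $|\log\de|^{-1}$ into $\de^{\eta}$ for $\de$ small.

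The main difficulty is purely bookkeeping: verifying that $\vol(W_\theta)\sim\de\,\theta$ rather than the $\sim\theta^{2}$ that a cylinder of radius $\theta$ would occupy in $\R^{3}$. This anisotropic factor, supplied by the short direction $\de$ of $R$, is precisely what converts the $C_2\theta^{2t}$ bound one would get from Frostman in a box into the $C_2(\theta/\De)^{t}$ bound above, and consequently produces the $(\de/\De)^{-t}$ dependence in the conclusion; without it a naive application of the axiom would deliver only $\de^{-2t}$ in its place.
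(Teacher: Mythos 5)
Your proposal is correct and is essentially the paper's own argument: Cauchy--Schwarz, dyadic decomposition in the angle, containment of the tubes at angle $\sim\theta$ in a $\de\times\theta\times 1$ convex subset of $R$ of volume $\sim\de\theta$, the $t$-Frostman Convex Wolff Axiom giving the count $\lesssim C_2(\theta/\De)^t|\T|$, and the (at worst logarithmic) sum over dyadic angles. Note that the exponent arrangement your computation actually yields, $C_1^{-2}C_2^{-1}$, is also what the paper's proof produces despite the statement's $C_1^{-1}C_2^{-2}$, and either arrangement gives the ``in particular'' clause when $C_1,C_2\le\de^{-\eta}$, so no extra pigeonholing is needed.
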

\begin{lemma}\label{lem:RFPB}
    Fix $\De \in (\de, 1]$ and $\nu \in (\De, 1]$. Suppose $\calP$ is a collection of $\de \times \De \times \De$ prisms satisfying the $t$-Frostman Convex Wolff Axiom with error $C_1$ inside some $G$ a $\nu \times \De \times \De$ prism. Suppose furthermore that for each $P \in \calP$, we have a collection of tubes $\T(P)$ satisfying the $t$-Frostman Convex Wolff Axiom with error $C_2$ inside $P$ and $|\T(P)| \le C_3(\frac{\De}{\de})^t$. Finally, $\T(P)$ is equipped with a shading $Y$ such that $\left|\bigcup_{T \in \T(P)} Y(T)\right|_{\de} \ge C^{-1}_4 (\frac{\De}{\de})^{t+1}$.Then \[\left|\bigcup_{T \in \T} Y(T)\right|_{\de} \gtrsim |\log \de|^{-2}C_{4}^{2}C_3^{-1}C_2^{-1}C_1^{-1} \left(\frac{\nu}{\de}\right)^t \left(\frac{\De}{\de}\right)^{t+1}.\]
        In particular, if $C_i \le \de^{\eta}$ for $i = 1, 2, 3, 4$, then taking $\de$ sufficiently small, we have that \[\left|\bigcup_{T \in \T} Y(T)\right|_{\de} \gtrsim \de^{6\eta} \left(\frac{\nu}{\de}\right)^t \left(\frac{\De}{\de}\right)^{t+1}.\]
\end{lemma}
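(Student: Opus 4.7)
The plan is a Cauchy--Schwarz ($L^2$) incidence argument on the global tube collection $\T = \bigcup_{P \in \calP} \T(P)$. Writing $A = \bigcup_{T \in \T} Y(T)$, I start from the standard $L^2$ inequality
\[
|A|_{\de} \;\ge\; \frac{\bigl(\sum_{T \in \T} |Y(T)|_{\de}\bigr)^2}{\sum_{T, T' \in \T} |Y(T) \cap Y(T')|_{\de}},
\]
lower bound the numerator using the per-prism shading density, and upper bound the denominator by applying $|Y(T)\cap Y(T')|_{\de} \le |T\cap T'|_{\de}$ and then unwinding the two levels of Frostman non-concentration.

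\textbf{Numerator.} The hypothesis $\bigl|\bigcup_{T \in \T(P)} Y(T)\bigr|_{\de} \ge C_4^{-1}(\De/\de)^{t+1}$ holds for every $P$, so summing over prisms gives
\[
\sum_{T \in \T} |Y(T)|_{\de} \;\ge\; \sum_{P \in \calP} \bigl|\bigcup_{T \in \T(P)} Y(T)\bigr|_{\de} \;\ge\; |\calP|\,C_4^{-1}(\De/\de)^{t+1}.
\]

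\textbf{Denominator.} I split tube pairs $(T,T')$ into within-prism and cross-prism contributions. For within-prism pairs $T,T' \in \T(P)$, a dyadic decomposition in the angle $\theta \in [\de/\De, 1]$ between tube directions (using the standard geometric estimate $|T \cap T'|_{\de} \lesssim 1/\theta$), combined with the $t$-FCWA on $\T(P)$ in $P$ (error $C_2$) and the cardinality bound $|\T(P)| \le C_3(\De/\de)^t$, controls the within-prism sum up to logarithmic losses. For cross-prism pairs $T \in \T(P),\ T' \in \T(P')$ with $P \neq P'$, I dyadically partition in $\theta = \angle(n(P), n(P'))$: the prisms $P'$ at angle $\sim \theta$ from $P$ meeting $2P$ are contained in a $\theta\De \times \De \times \De$ convex neighborhood of $P$, so the $t$-FCWA on $\calP$ in $G$ (error $C_1$) bounds their count by $\lesssim C_1(\De\theta/\nu)^t |\calP|$; the tubes inside each such $P'$ are then counted via the within-prism axioms. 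Summing dyadically in $\theta$ produces the factor $(\nu/\de)^{-t}$ in the denominator. Inserting both estimates into the $L^2$ inequality and simplifying gives the claimed bound.

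\textbf{Main obstacle.} The technical heart is the cross-prism incidence analysis. One must carefully use that tubes in a $\de \times \De \times \De$ plate are essentially confined to its $\De \times \De$ tangent plane, so that intersections across distinct plates depend not only on the angular separation of the prism normals but also on how tubes within those tangent planes can meet. Organizing the dyadic angular decomposition so that the final denominator exhibits the correct power of $(\nu/\de)^t$ while each Frostman error $C_i$ contributes only at the exponent claimed (and the logarithmic loss is $|\log \de|^{-2}$) requires careful bookkeeping and pigeonholing on a common dyadic angular scale.
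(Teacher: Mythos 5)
Your plan runs the Cauchy--Schwarz argument at the level of individual tubes, and this is where the proposal breaks: with only the stated hypotheses, the tube-level ratio cannot reach the bound claimed in the lemma. The denominator $\sum_{T,T'}|Y(T)\cap Y(T')|_{\de}$ counts each $\de$-cube with two-sided tube multiplicity, while the hypotheses control only the \emph{union} $\bigl|\bigcup_{T\in\T(P)}Y(T)\bigr|_{\de}$ -- neither $\sum_{T\in\T(P)}|Y(T)|_{\de}$ nor the pointwise multiplicity is bounded below or above in a useful way. Concretely, the within-prism Cordoba bound you would invoke gives $\sum_{T,T'\in\T(P)}|Y(T)\cap Y(T')|_{\de}\lesssim|\log\de|\,C_2C_3^2(\De/\de)^{t+1}$, which after dividing your numerator $C_4^{-2}|\calP|^2(\De/\de)^{2t+2}$ already caps the conclusion at $C_3^{-2}$ rather than the claimed $C_3^{-1}$; and this is not just loose bookkeeping. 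For example, a ``bush'' of nearly all $C_3(\De/\de)^t$ tubes of $\T(P)$ through a common $\de$-cube, shaded only on short segments near that cube (the Frostman Convex Wolff Axiom does not forbid this, since a bush is not confined to a small convex set), together with a spread family realizing the union hypothesis, inflates the pairwise denominator by powers of $C_2,C_3$ without increasing your numerator, so in such configurations the tube-level Cauchy--Schwarz quotient is genuinely below the lemma's bound. The cross-prism pairwise sums suffer the same two-sided multiplicity loss (nearly parallel tubes lying in the two transversal plates can intersect along essentially their full length). A weaker bound with extra factors of $C_2,C_3$ would still serve the applications in this paper, but it is not the stated lemma. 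There is also a minor issue in your numerator: $\sum_{T\in\T}|Y(T)|_{\de}\ge\sum_{P}\bigl|\bigcup_{T\in\T(P)}Y(T)\bigr|_{\de}$ requires the families $\T(P)$ to be disjoint (or tubes to be indexed by their prism).

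The paper avoids all of this by running Cauchy--Schwarz at the level of prisms with the union shadings $Y(P)=\bigcup_{T\in\T(P)}Y(T)$: the $C_4$-hypothesis is exactly a lower bound on $|Y(P)|_{\de}$, and the heart of the proof is a union-level intersection estimate, namely that if $P_1,P_2$ meet at normal angle $\theta\sim 2^j\de/\De$ then $|Y(P_1)\cap Y(P_2)|_{\de}\lesssim|\log\de|\,C_2C_3\,2^{-jt}(\De/\de)^{t+1}$, where each $\de$-cube is counted once. That estimate is the step your sketch leaves open: one places $P_1\cap P_2$ in a $\de\times\frac{\de}{\theta}\times\De$ prism $V$ with core line $\ell$, pigeonholes the tubes of $\T(P_2)$ on their angle $\zeta$ to $\ell$, observes that the relevant tubes lie in the $\max(\De\zeta,\de/\theta)$-neighborhood of $\ell$ (so their number is controlled via $C_2,C_3$) while each meets $V$ in length $\min(\de/(\theta\zeta),\De)$, and treats the cases $\De\zeta\ge\de/\theta$ and $\De\zeta<\de/\theta$ separately. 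Combined with the prism count $\le C_1(2^j\de/\nu)^t|\calP|$ at angle $\lesssim 2^j\de/\De$ (which you do have), this yields exactly the stated powers of $C_1,\dots,C_4$. If you want to keep a tube-level formulation, you would first need a refinement to essentially constant multiplicity, after which the pairwise sums collapse to the union-level quantities -- at which point you have effectively reproduced the paper's prism-level argument.
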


\begin{lemma}\label{lem:RFPC}
    Fix $\De \in (\de, 1]$ and $\nu \in (\De, 1]$. Suppose that $\calP$ is a collection of $\de \times \De \times \De$ prisms satisfying the $t$-Frostman Convex Wolff Axiom with error $C_1$ inside some $G$ a $\nu \times \De \times \De$ prism. Suppose $\calP$ is equipped with a $C^{-1}_2$ dense shading $Y$. Then \[\left|\bigcup_{P \in \calP} Y(P)\right|_{\de} \gtrsim |\log \de|^{-1} C_1^{-1}C_2^{-2} \left(\frac{\nu}{\de}\right)^{t}\left(\frac{\De}{\de}\right)^2.\]
\end{lemma}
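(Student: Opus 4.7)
My plan is to prove this by a standard Cauchy--Schwarz $L^2$ argument. Set $f = \sum_{P \in \calP} \mathbf{1}_{Y(P)}$. Each $\de \times \De \times \De$ prism has $|P|_\de \sim (\De/\de)^2$, so the density hypothesis yields $\sum_P |Y(P)|_\de \gtrsim C_2^{-1}|\calP|(\De/\de)^2$. Cauchy--Schwarz then gives
\[
\left|\bigcup_{P \in \calP} Y(P)\right|_\de \ge \frac{\bigl(\sum_P |Y(P)|_\de\bigr)^2}{\sum_{P,P' \in \calP}|Y(P) \cap Y(P')|_\de},
\]
so the task reduces to an upper bound on $\sum_{P,P'}|P \cap P'|_\de$.

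For fixed $P$, I would decompose the other $P' \in \calP$ dyadically in $\theta$, the angle between the short directions of $P$ and $P'$. A direct computation in the plane spanned by these two short directions (the intersection of two $\de$-thick slabs at angle $\theta$) shows $|P \cap P'|_\de \lesssim \De/(\theta\de)$ for $\theta \ge \de/\De$, and $|P \cap P'|_\de \lesssim (\De/\de)^2$ in the nearly parallel regime $\theta \le \de/\De$. To bound the number of $P' \in \calP$ at angle $\le \theta$ from $P$ which can meet $P$, I would check that every such $P'$ is contained in an explicit concentric convex slab $W_\theta(P)$ of dimensions $O(\theta\De) \times O(\De) \times O(\De)$: the two long directions of $P'$ lie in the $2$-plane perpendicular to its short direction and therefore make angle at most $\theta$ with the $2$-plane perpendicular to the short direction of $P$, forcing the extent of $P'$ in the short direction of $P$ to be $\lesssim \de + \theta\De$, while the extent in each remaining direction is $\lesssim \De$. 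Applying the $t$-Frostman Convex Wolff Axiom for $\calP$ in $G$ to $W_\theta(P)$ then yields
\[
|\{P' \in \calP : P' \subset W_\theta(P)\}| \lesssim C_1 \left(\frac{\vol(W_\theta(P))}{\vol(G)}\right)^t |\calP| \lesssim C_1 \left(\frac{\theta \De}{\nu}\right)^t |\calP|.
\]

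Multiplying by the intersection bound, the contribution at dyadic $\theta \in [\de/\De, 1]$ is proportional to $C_1 |\calP| \De^{t+1}\nu^{-t}\de^{-1}\theta^{t-1}$. Since $t \le 1$, $\theta^{t-1}$ is non-increasing in $\theta$, so the dyadic sum is dominated, up to a single factor of $|\log \de|$, by its value at $\theta \sim \de/\De$; the parallel regime $\theta \le \de/\De$ contributes at the same order. Hence $\sum_{P' \in \calP}|P \cap P'|_\de \lesssim |\log \de|\, C_1 |\calP|\, \De^2 \nu^{-t}\de^{t-2}$. Summing over $P$ and substituting into the Cauchy--Schwarz inequality above produces
\[
\left|\bigcup_{P \in \calP} Y(P)\right|_\de \gtrsim \frac{C_2^{-2}|\calP|^2(\De/\de)^4}{|\log \de|\, C_1 |\calP|^2 \De^2 \nu^{-t}\de^{t-2}} = \frac{C_1^{-1}C_2^{-2}}{|\log \de|}\left(\frac{\nu}{\de}\right)^t \left(\frac{\De}{\de}\right)^2,
\]
as required. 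The main obstacle is the geometric step encoding the angular constraint $\angle(n(P),n(P')) \le \theta$ as containment in the single convex slab $W_\theta(P)$, so that the Frostman Convex Wolff Axiom converts the angular information into the required pair count; once that is in hand, the rest is bookkeeping with dyadic sums and Cauchy--Schwarz.
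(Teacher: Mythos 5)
Your argument is correct and is essentially the paper's own proof: the same Cauchy--Schwarz reduction, the same dyadic decomposition in the normal angle $\theta \in [\de/\De,1]$, the same containment of nearby prisms in a $\sim \theta\De \times \De \times \De$ slab to which the $t$-Frostman Convex Wolff Axiom in $G$ is applied, and the same intersection bound $|P\cap P'|_\de \lesssim \De/(\theta\de)$, with the dyadic sum dominated at $\theta \sim \de/\De$ since $t \le 1$. The bookkeeping matches the stated constants, so no changes are needed.
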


The bounds in these lemmas all follow from standard Cordoba-style arguments or small variations thereof. Their statements, in particular Lemma \ref{lem:RFPB}, are adapted to their use in the proofs of Proposition \ref{prop:prismsetup}, Proposition \ref{prop:largeprism}, and Proposition \ref{prop:smallprism}. We defer their proofs to Section \ref{subsec:RFP}. 

We also prove that we can reduce a set of tubes $\T$ satisfying the $t$-Frostman Convex Wolff Axiom in a slab $W$ with small error to a set $\T' \subset \T$ with $\approx \de^{-t}$ many elements satisfying the $t$-Frostman Convex Wolff Axiom in $W$. This is a special case of the general fact that sets of $\de$-tubes $\T$ satisfying the $t$-Frostman Convex Wolff Axiom contain sets of $\de$-tubes $\T_0$ satisfying the $t$-Frostman Convex Wolff Axiom with $|\T_0| \approx \de^{-t}$. This general fact is well-known but the author is not aware of a proof in the literature for the specific formulation we need, so we prove it here.

\begin{lemma}\label{lem:folklore}
    For any $\eta > 0$, there exists $\de_0$ sufficiently small such that for any $\de \times 1 \times 1$ slab $W$ and any set of tubes $\T \subset 3W$ satisfying the $t$-Frostman Convex Wolff Axiom with error $\de^{-\eta}$ in $W$, there exists a set $\T' \subset \T$ such that $|\T'| \ge \de^{-t-10\eta}$ which satisfies the $t$-Frostman Convex Wolff Axiom with error $\de^{-10\eta}$ in $W$.
\end{lemma}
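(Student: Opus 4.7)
The plan is to apply Lemma~\ref{lem:WZ2546} to $\T$ inside $3W$, after first affinely rescaling $3W$ to the unit cube (for example via $(x,y,z)\mapsto(x/\de,y,z)$). Under this rescaling each $\de$-tube of $\T$ becomes a convex set of dimensions roughly $1\times\de\times 1$ containing a ball of radius $\de$, so the hypotheses of Lemma~\ref{lem:WZ2546} are satisfied. The lemma produces a refinement $\T_0\subseteq\T$ with $|\T_0|\gtrapprox_{\de}|\T|$ together with a $\lessapprox_{\de}1$-balanced, $\lessapprox_{\de}1$-partitioning cover $\calW_0$ of $\T_0$ which factors $\T_0$ from above with respect to the $t$-Katz--Tao Convex Wolff Axiom and from below with respect to the $t$-Frostman Convex Wolff Axiom, in both cases with error $\lessapprox_{\de}1$.

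The heart of the argument is then to combine these two factoring conditions to deduce that $\T_0$ itself satisfies the $t$-Frostman Convex Wolff Axiom in $W$ with error $\lessapprox_{\de}1$. For any convex $V\subseteq 6W$ the partitioning property gives
\[
|\T_0[V]|\lessapprox_{\de}\sum_{W_0\in\calW_0}\bigl|\T_0[W_0]\cap\T_0[V]\bigr|.
\]
Each summand is bounded using the lower $t$-FCWA factoring in $W_0$ applied to the convex set $2W_0\cap 2V$, giving a bound of $\lessapprox_{\de}(\vol(2W_0\cap 2V)/\vol(W_0))^t|\T_0[W_0]|$; balancedness replaces $|\T_0[W_0]|$ by the common value $M\approx|\T_0|/|\calW_0|$; and the upper Katz--Tao factoring $|\calW_0[V']|\lessapprox_{\de}(\vol(V')/\vol(W_0))^t$, applied to a suitable enlargement $V'$ of $V$, controls the number of $W_0$ contributing to the sum. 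Splitting this sum according to whether $\vol(V)$ exceeds $\vol(W_0)$ and performing the elementary arithmetic yields $|\T_0[V]|\lessapprox_{\de}(\vol(V)/\vol(W))^t|\T_0|$, the desired $t$-FCWA for $\T_0$.

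The cardinality lower bound for $\T_0$ comes directly from the hypothesis: essential distinctness of tubes in $\T$ combined with the $t$-FCWA applied to a single $\de$-tube forces $|\T|\gtrsim\de^{-t+\eta}$, hence $|\T_0|\gtrapprox_{\de}\de^{-t+\eta}$, which meets the required cardinality bound for $\de$ sufficiently small. Choosing $\de_0$ small enough to absorb the $\lessapprox_{\de}1$ polylogarithmic losses into $\de^{-10\eta}$ completes the argument. The principal technical difficulty lies in the combining step, where the factor-of-two enlargements implicit in the definition $\T[W]=\{T\in\T:T\subseteq 2W\}$ must be carefully tracked, and boundary contributions from $W_0$ that intersect $V$ without being contained in $2V$ must be treated separately from the case where $W_0\subset 2V$.
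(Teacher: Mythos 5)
Your approach misses the actual content of the lemma, which is a \emph{sparsification} statement. Although the statement as printed reads ``$|\T'| \ge \de^{-t-10\eta}$,'' the inequality is evidently a typo for ``$\le$'': the paper's own construction produces $|\T'| \in [\tfrac12\de^{-t}, \de^{-t-10\eta}]$, and the one place the lemma is used (in the proof of Proposition \ref{prop:prismsetup}) invokes precisely the upper bound $|\tilde{\T}'_0| \le \tilde{\de}^{-10\eta}(\De/\de)^t$ in order to verify the multiplicity hypothesis $|\T(R)| \le \de^{-\eta'}(\De/\de)^t$ of Proposition \ref{prop:smallprism}. In other words, the whole point is to prune a possibly very large family (FCWA places no upper bound on $|\T|$, which can be far larger than $\de^{-t}$) down to $\approx \de^{-t}$ tubes while keeping the Frostman Convex Wolff Axiom. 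Your construction via Lemma \ref{lem:WZ2546} retains $|\T_0| \gtrapprox_{\de} |\T|$ tubes, so whenever $|\T| \gg \de^{-t-10\eta}$ the conclusion fails; and even under the literal reading of the statement, your bound $|\T_0| \gtrapprox_{\de} \de^{-t+\eta}$ does not meet $\de^{-t-10\eta}$, since $\de^{-t+\eta} < \de^{-t} < \de^{-t-10\eta}$. Without cardinality reduction the lemma is vacuous: $\T$ itself already satisfies the FCWA with error $\de^{-\eta} \le \de^{-10\eta}$. The paper's proof is instead probabilistic: include each tube independently with probability $p = \de^{-t}/|\T|$, use multiplicative Chernoff bounds over a net of dyadic scales $\de_j$ and a maximal family of $\de_j$-tubes to rule out over-concentration and to pin the sample size in $[\tfrac12\de^{-t}, \de^{-t-10\eta}]$, and then upgrade non-concentration on the net to the FCWA for arbitrary convex sets (any convex set in the slab is essentially a $3\de \times b \times 1$ prism contained in a net tube at a comparable scale).

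A secondary issue: your ``combining step'' does not yield FCWA error $\lessapprox_{\de} 1$ for $\T_0$. Converting the cell counts into a Frostman-type bound proportional to $|\T_0|$ requires $|\calW_0| \gtrsim (\vol(W)/\vol(W_0))^t$, and from the hypothesis one only gets this up to a factor $\de^{\eta}$ (via $M \le \de^{-\eta}(\vol(W_0)/\vol(W))^t|\T|$ and $M \gtrapprox_{\de} |\T_0|/|\calW_0|$), so the argument recovers error $\approx \de^{-\eta}$ rather than polylogarithmic. That loss alone would still fit inside the allowed $\de^{-10\eta}$, so it is not fatal; the missing sparsification is.
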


\begin{proof}
    To construct $\T'$, we randomly include tubes from $\T$ with probability $p = \frac{\de^{-t}}{\T}$. The size of $\T$ is a binomial random variable with probability $p$ and $n = |\T|$ trials. In particular, $\mu := \mathbb{E}[|\T|] = \de^{-t}$. Define the following events: \begin{itemize}
        \item $E_{-} = \{|\T'| \le \de^{-t-10\eta}\}$.
        \item $E_+ = \left\{|\T'| \ge \frac{1}{2}\de^{-t}\right\}$. 
        \item For each $j = 1, \dots, \left\lceil\frac{1}{\eta}\right\rceil$ and each $\de_j = \de^{j\eta}$, let $\T_{\de_j}$ be a maximal set of essentially distinct $\de_j$ tubes. For $T_{\de_j} \in \T_{\de_j}$, let \[E_{T_{\de_j}} = \left\{|\T'[T_{\de_j}]| \ge \de^{-5\eta}\left(\frac{\de_j}{\de}\right)^t\right\}.\]
    \end{itemize}
    Suppose \begin{equation}\label{eqn:probs}P(E_-) + P(E_+) + \sum_{j = 1}^J \sum_{T_{\de_j} \in \T_{\de_j}} P(E_{T_{\de_j}}) < 1.\end{equation}Then there exists a choice of $\T'$ with \begin{equation}\label{eqn:numbs}|\T'| \in \left[\frac{1}{2}\de^{-t}, \de^{-t-10\eta}\right]\end{equation} and for all $T_{\de_j}$ \begin{equation}\label{eqn:bumbs}|\T'[T_{\de_j}]| \le \de^{-5\eta}\left(\frac{\de_j}{\de}\right)^t.\end{equation}Suppose $\T'$ fails the $t$-Frostman Convex Wolff Axiom with error $\le \de^{-10\eta}$ in $W$. Then there exists an $a \times b \times 1$ prism $R$ with \begin{equation}\label{eqn:dialectic}|\T'[R]| \ge \de^{-10\eta}\frac{\text{Vol}(R)^t}{\vol(W)^t}|\T'|.\end{equation}We might as well assume that $a = 3\de$, since each tube is in $\T$ is contained in a common $3\de \times 1 \times 1$ slab $3W$. Let $\de_{j'}$ be the smallest choice of $\de_{j'}$ larger than $2b$. In particular, $b \gtrsim \de^{\eta}\de_{j'}$. Together with (\ref{eqn:numbs}), these facts imply that \[\frac{\text{Vol}(R)^t}{\vol(W)^t}|\T'| \gtrsim \de^{t\eta}\left(\frac{\de_{j'}}{\de}\right)^{t}.\]

    By the maximality of $\T_{\de_{j' - 1}}$, we can find a $\de_{j'-1}$ tube $T_{\de_{j'-1}}$ containing $R$, so by (\ref{eqn:dialectic}), we see that \[|\T'[T_{\de_{j'-1}}]| \ge |\T'[T_R]| \ge \de^{-10\eta}\frac{\text{Vol}(R)^t}{\vol(W)^t}|\T'| \gtrsim \de^{-9\eta}\left(\frac{\de_{j'}}{\de}\right)^{-t} \gtrsim \de^{-8\eta} \left(\frac{\de_{j'-1}}{\de}\right)^{t}.\]Comparing with (\ref{eqn:bumbs}), we reach a contradiction. Therefore, if (\ref{eqn:probs}) holds, then we can find a set $\T' \subset \T$ satisfying the conclusions of the lemma. 

    It remains to prove (\ref{eqn:probs}). For each $T_{\de_j}$, $|\T'[T_{\de_j}]|$ is binomially distributed with $p$ as defined above and $n_{T_{\de_j}} = |\T[T_{\de_j}]|$. Since $\T$ satisfies the $t$-Frostman Convex Wolff Axiom, $|\T[T_{\de_j}]| \le \de^{-\eta}\de_j^t |\T|$, $pn_{T_{\de_j}} \le \de^{-\eta} \left(\frac{\de_j}{\de}\right)^t$. Let $\nu = \frac{\de^{-5\eta}\left(\frac{\de_j}{\de}\right)^t - pn_{T_{\de_j}}}{pn_{T_{\de_j}}}$. Our bound for $pn_{T_{\de_j}}$ implies that $pn_{T_{\de_j}} \nu \ge \de^{-4\eta}$and $\nu \ge \frac{\de^{-4\eta}}{2}$, assuming $\de$ is sufficiently small \footnote{ The first bound is certainly very rough, but suffices for the proof.}. By the multiplicative Chernov bound, we see that \[P(E_{T_{\de_j}}) \le P(|\T'[T_{\de_j}]| \ge (1 + \nu)pn_{T_{\de_j}}) \le e^{-\nu^2 p n_{T_{\de_j}}/(2 + \nu)}.\]Since $\nu > 2$, $\frac{\nu}{2+\nu} > \frac{1}{2}$, so $e^{-\nu^2 p n_{T_{\de_j}}/(2 + \nu)} < e^{-\nu p n_{T_{\de_j}}/2} < e^{-\de^{-4\eta}/4}$. We know that $|\T_{\de_j}| \le \de^{-4}$, so we conclude that \[\sum_{j = 1}^J \sum_{T_{\de_j} \in \T_{\de_j}} P(E_{T_{\de_j}}) \le J \de^{-4}e^{-\de^{-4\eta}/4}.\]So long as $\de$ is sufficiently small, we conclude that $\sum_{j = 1}^J \sum_{T_{\de_j} \in \T_{\de_j}} P(E_{T_{\de_j}}) < \frac{1}{3}$.

    Applying the same argument with $B(0,1)$ taking the place of $T_{\de_j}$, we see that we can ensure $P(E_-) < \frac{1}{3}$. To prove that $P(E_+) < \frac{1}{3}$, we use a different variation of the multiplicative Chernov bound. We know \[P(E_+) = P\left(|\T'| > \frac{\mu}{2}\right) < e^{-\mu/8} = \de^{-\de^{-t}/8}.\]So long as $\de$ is sufficiently small, this is $<\frac{1}{3}$, as desired. Since each of the three summands in (\ref{eqn:probs}) is $< 1/3$, we know (\ref{eqn:probs}) holds, completing the proof.
\end{proof}

Now, we outline the proof of Proposition \ref{prop:prismsetup}. We start with a collection of $\de$-tubes $\T$ supporting a shading $Y$ and factored from above and below with respect to the $t$-Frostman Convex Wolff Axiom by some set of $\de \times \De \times 1$ prism $\calR$. We know that by applying Lemma \ref{lem:RFPB}, we have a good upper spectrum for $\T$ if $\De = 1$. So we suppose otherwise. Then, we apply Proposition \ref{prop:smallprism} with our choice of $Y, \T,$ and $\calR$. Proposition \ref{prop:smallprism} A implies good upper spectrum bounds for $\T$. Proposition \ref{prop:smallprism} B implies that a small neighborhood of $\T$ arranges into $\de_N \times \De_N \times 1$ prisms $\calR_N$ which satisfy the hypotheses of Proposition \ref{prop:largeprism}. We then apply Proposition \ref{prop:largeprism} to $\calR_N$. If outcome A occurs, we have good upper spectrum for $\calR_N$ and hence for $\T$ as well. If outcome B occurs, the prisms in $\calR_N$ arrange into a set of $\de_{N-1} \times \De_{N-1} \times 1$ prisms $\calR_{N-1}$ with $\De_{N-1} \ge \de^{-C}\De_N$, for some small fixed value $C > 0$ determined in the course of the proof. We repeat this argument at most $N = \left\lceil \frac{1}{C}\right\rceil$ many times, at which point we are sure to have reach $\De_i = 1$ for some $i$, at which point we use Lemma \ref{lem:RFPC} to complete the argument.

This ends the outline of the proof of Proposition \ref{prop:prismsetup}. We proceed with the proof itself.

\begin{proof}
    Let $\e' = \frac{\e\omega}{10t+5}$; this is a lower bound on the scale separation of the side lengths of the prisms returned by Proposition \ref{prop:smallprism} B. We claim that there exists some $\eta_0 < \frac{\omega}{100}$ satisfying the following claim.
    
    If $\calR_0$ is a collection of $\de_{(0)} \times 1 \times 1$ prisms with $\de_{(0)} \le \de^{\e'/4}$ and satisfying the Frostman Convex Wolff Axiom with error $\le \de_{(0)}^{-\eta_0}$ and with a $\ge \de_{(0)}^{-\eta_0}$-dense shading $Y_0$, then \[\left|\bigcup_{R \in \calR_0} Y_0(R)\right|_{\de_{(0)}} \ge \de_{(0)}^{-2t-1+\omega}.\]
    
    For the rest of this proof, we call this claim the \emph{prism claim}. To prove the prism claim, we apply Lemma \ref{lem:RFPC} to $\calR_0$ and $Y_0$, with $\nu = \De = 1, C_1 = \de_{(0)}^{-\eta_0}$, and $C_2 =\de_{(0)}^{-\eta_0}$. We conclude that, so long as $\eta_0$ and $\de_{(0)}$ are sufficiently small (note that we can make $\de_{(0)}$ smaller by choosing $\de_0$ smaller, since $\de_{(0)} \le \de^{\e'/4} \le \de_0^{\e'}$), we have \begin{equation}\label{eqn:whatsonemorenumber}\left|\bigcup_{R \in \calR_0} Y_0(R)\right|_{\de_{(0)}} \gtrapprox_{\de} \de_{(0)}^{3\eta_0}\de_{(0)}^{-2t-1} \ge \de_{(0)}^{-2t-1+\omega}.\end{equation}

    Let $N = \left\lceil\frac{4}{\e'\omega}\right\rceil$. Now that we have defined $\eta_0$, we inductively define sequences $\eta_i, \de_i$ for $i = 1, \dots, N$ by applying Proposition \ref{prop:largeprism} with $\e := \e'$ and $\tau = \eta_{i-1}$ and let $\tilde{\eta}_i, \de_i$ respectively be the values of $\eta, \de_0$ given by the proposition. Let $\eta_i = \min(\eta_{0}, \tilde{\eta}_i)$. We arrive after $N$ steps of this procedure at a choice of $\eta_N > 0$. We choose $\eta', \tilde{\de}_0$ sufficiently small to apply Proposition \ref{prop:smallprism} with $\tau = \eta_N$ and $\omega, \e$ as given in the statement of Proposition \ref{prop:prismsetup}. We choose $\eta = \frac{\e\eta'}{10}$, this is the final choice of $\eta$ in the proposition. We set our final choice of $\de_0$ to be $\le \min_{i=1, \dots, N} \de_i$, $\le \tilde{\de}_0$, and small enough to satisfy the implicit constraints from (\ref{eqn:whatsonemorenumber}). 

    Now, suppose $\calR, \T$, and $Y$ are as defined in the statement of this proposition with the our choices of $\eta'$ and $\de \le \de_0$. We first apply Proposition \ref{prop:smallprism} with $\tau = \eta_N$. We know $\calR$ satisfies the $t$-Frostman Convex Wolff Axiom with error $\de^{-\eta'}$ and $Y(T)$ is $\de^{\eta'}$-dense, but $\T[R]$ may be too large, so we must first define $\T(R)$ to satisfy the $t$-Frostman Convex Wolff Axiom with error $\de^{-\eta'}$ and with $|\T(R)| \le \de^{-\eta'}\left(\frac{\De}{\de}\right)^t$. To do so, we let $T_{\De}$ be the $\De$-tube containing $R$, set $\tilde{\T} = \T(R)^{T_{\De}}$, a set of $\tilde{\de} = \frac{\de}{\De} \le \de_0^{\e}$-tubes satisfying the $t$-Frostman Convex Wolff Axiom with error $\le \de^{-\e\eta'/10} \le \tilde{\de}^{-\eta'/10}$. We apply Lemma \ref{lem:folklore} to $\tilde{\T}_0$, taking $\de_0$ small enough to ensure Lemma \ref{lem:folklore} applies. Denote by $\tilde{\T}'_0$ the resulting set of tubes, with \[|\tilde{\T}'_0| \le \tilde{\de}^{-10\eta}\left(\frac{\De}{\de}\right)^t \le \de^{-\eta'}\left(\frac{\De}{\de}\right)^t\]and $\tilde{\T}'_0$ satisfies the $t$-Frostman Convex Wolff Axiom with error $\le \tilde{\de}^{-\eta'} \le \de^{-\eta'}$. Undoing the rescaling, we have a set of tubes $\T(R) \subset \T[R]$ with $|\T(R)| \le \de^{-\eta'}\left(\frac{\De}{\de}\right)^t$ and satisfying the $t$-Frostman Convex Wolff Axiom with error $\le \de^{-\eta'}$, as desired. We now apply Proposition \ref{prop:smallprism}.
    
    If Conclusion A of that proposition occurs, then $\bigcup_{T \in \T} Y(T)$ has upper spectrum at $\eta$ and scale $\de$ at least $2t+1 - \omega$. Otherwise, Conclusion B tells us that we have variables $\de_N := \overline{\de}, \De_N := \overline{\De}$ and a collection $\calR_N$ of $\de_N \times \De_N \times 1$ prisms satisfying the $t$-Frostman Convex Wolff Axiom with error $\le \de_N^{-\eta_N}$ and a $\le \de_N^{\eta_N}$ dense shading $Y_N$. Moreover, $Y_N(\calR_N) \subset Y(\T)$ and $\frac{\De_N}{\de_N} \ge \left(\frac{\De}{\de}\right)^{\omega/(10t + 5)} \ge \de^{-\e\omega/(10t+5)} = \de^{-\e'}$. 

    We now are in a position to inductively apply Proposition \ref{prop:largeprism}. If we ever reach Conclusion A, then we have the desired upper spectrum and so we stop the induction and end the proof. If $\De_i = 1$ after any step, then we show that $\de_i \le \de^{\e'/4}$, then apply the prism claim to conclude the argument. If neither happen, we continue with induction. Finally, we prove that if we continue with the induction until $i = 1$, we will have $\De_1 = 1$ and hence can prove the upper spectrum is sufficiently large.

    We maintain the following properties throughout the induction argument:
    \begin{enumerate}[label = (\alph*)]
        \item $(\calR_i, Y_i)$ is a $\de^{-\eta_i}$ dense shading on a collection $\calR_i$ of $\de_i \times \De_i \times 1$ prisms satisfying the $t$-Frostman Convex Wolff Axiom with error $\de^{-\eta_i}$;
        \item $\De_i \in (\de^{-\e'}\de_i, 1)$;
        \item $Y_i(\calR_i) \subset N_{\de_i}(Y(\T))$; and
        \item $\de_i > \de^{(i-N) \e'\omega/6}\de$.
    \end{enumerate}
    We have already confirmed each property of the four properties when $i = N$ except for the claim in (b) that $\De_N < 1$. Suppose otherwise. Since $\eta_N \le \eta_0$ and $\de_N \le \de^{\e'} \le \de^{\e'/2}$, we know by the prism claim that \[\left|\bigcup_{R \in \calR_N} Y_N(R)\right|_{\de_{N}} \ge \de_{N}^{-2t-1+\omega}.\]We know that $Y_i(\calR_i) \subset N_{\de_i}(Y(\T))$. It follows that \[\left|\bigcup_{T \in \T} Y(T)\right|_{\de_{N}} \ge \left|\bigcup_{R \in \calR_N} Y_N(R)\right|_{\de_{N}} \ge\de_{N}^{-2t-1+\omega}.\]If $\De = 1$, this completes the proof of the proposition, so we going forward we suppose $\De_N < 1$. 
    
    We claim that if properties (a) - (d) hold for $(\calR_i, Y_i)$ at step $i$, then one of three possibilities must hold: there exists a pair $(\calR_{i-1}, Y_{i-1})$ such that properties (a) - (d) also hold at step $i-1$; there exists a $r \ge \de^{-\eta}\de_{i}$ and an $r$-ball $B_r$ such that \begin{equation}\label{eqn:goal1}\left|\bigcup_{R \in \calR_i} Y_i(R) \cap B_r \right|_{\de_{i}} \ge \left(\frac{r}{\de_{i}}\right)^{2t+1-\omega};\end{equation}or \begin{equation}\label{eqn:goal2}\left|\bigcup_{R \in \calR_{i-1}} Y_{i-1}(R)\right|_{\de_{i-1}} \ge \de_{i-1}^{-2t-1+\omega}.\end{equation}We call this trichotomy the \emph{induction claim}. 
    
    To prove the induction claim, assume (a) - (d) hold for $(\calR_i, Y_i)$. Since $(\calR_i, Y_i)$ satisfies (a) and from how we chose $\eta_i$, we can apply Proposition \ref{prop:largeprism} with $\tau = \eta_{i-1}$, $(\calR_i, Y_i)$ the set of prisms, and $\overline{\de} = \de$. By (b), we have the needed scale separation. 
    
    First, suppose that Conclusion B occurs when we apply Proposition \ref{prop:largeprism}. Let $\de_{i-1}, \De_{i-1}$ be the scales returned in that case and $(\calR_{i-1}, Y_{i-1})$ the prisms and shading. Conclusion B gives us (a) immediately and (c) is a restatement of (\ref{eqn:lpcover}). Applying (d) from the $i$th step, we see that \[\de_{i-1} > \de^{-\e \omega/6}\de_i \ge \de^{(i-1-N)\e\omega/6}\de,\]and hence we have (d) on the $i-1$st step. If $\De_{i-1} = 1$, then since $\eta_{i-1} \le \eta_0$ and $\de_{i-1} \le \de^{\e'/4}$, we apply the prism claim to prove that (\ref{eqn:goal2}) holds. Otherwise, we know that $\De_{i-1} < 1$ and $\frac{\De_{i-1}}{\de_{i-1}} = \frac{\De_i}{\de_i} \ge \de^{-\e'}$, so (b) holds. Now, suppose Conclusion A occurs when we apply Proposition \ref{prop:largeprism}. After relabelling variables, (\ref{eqn:lpdense}) is (\ref{eqn:goal1}). This completes the proof of the induction claim.

    Suppose properties (a) - (d) hold for $(\calR_i, Y_i)$ step $i$. We apply the induction claim to $(\calR_i, Y_i)$. If there exists a pair $(\calR_{i-1}, Y_{i-1})$ such that properties (a) - (d) hold at step $i-1$, then we continue with the induction process. If (\ref{eqn:goal1}) or (\ref{eqn:goal2}) occur, then the induction process halts. Using property (c), we see that (\ref{eqn:goal1}) or (\ref{eqn:goal2}) imply that $Y(\T)$ has the desired upper spectrum at $\eta$ and scale $\de$. Since $\de_i < \De_i < 1$ at each step and $N \ge \frac{6}{\e'\omega}$, (d) implies that the induction procedure must halt by the time $i = 0$. This completes the proof.
\end{proof}

\section{Proving Proposition \ref{prop:largeprism}: Prism expansion versus larger upper spectrum}\label{sec:largeprism}

In this section, we prove Proposition \ref{prop:largeprism}. We begin this section by discussing some lemmas we need in the proof. Then we recall the statement of Proposition \ref{prop:largeprism} and give the proof. 

Our lemmas start with a few lemmas concerning a non-concentration property called broadness. We initially state this for an arbitrary finite set in a metric space, where it coincides with the definition of a $\beta$-Frostman measure on the metric space. We then apply it to the directions of the set of tubes containing a point.

\begin{definition}\label{def:broad}
    \begin{enumerate}
        \item If $(X, d)$ is a finite metric space and $A \subset X$ is a $\de$-separated set of points, we say $A$ is \emph{$\beta$-broad in $X$ with error $K$} if for any $y \in X$ and any $r > \de$, \[|\{x \in A: d(x, y) < r\}| \le K \left(\frac{r}{\text{diam}(X)}\right)^{\beta}|A|.\]
        \item If $\T$ is a collection of $\de$-tubes with a shading $Y(T)$, we say that $(\T, Y)$ is \emph{$\beta$-broad with error $K$} if for any $\de$-cube $Q \in \bigcup_{T \in \T} Y(T)$, $\{\dir(T) : Q \in Y(T)\}$ is $\beta$-broad.
        \item We say that $\T_{\rho}$ is a \emph{$\beta$-broad cover with error $K$} of $(\T, Y)$ if $\T_{\rho}$ is a balanced paritioning cover of $\T$ and for each $T_{\rho} \in \T_{\rho}$ and $(\T^{T_{\rho}}, Y^{T_{\rho}})$ is $\beta$-broad with error $K$.
        \item We say that $\T_{\rho}$ is a \emph{$\beta$-broad cover of $(\T, Y)$ with error $K$ broadness multiplicity $M$} if $\T_{\rho}$ is a $\beta$-broad cover with error $K$ of $\T$ and for each $B \in \bigcup_{T \in \T} Y(T)$, \[\left|\left\{T_{\rho} \in \T_{\rho} : B \in \bigcup_{T \in \T[T_{\rho}]} Y(T)\right\}\right| \le M.\] 
    \end{enumerate}
\end{definition}
\begin{remarkc}\label{rmk2}
    One useful consequence of this definition is that if $A$ is $\beta$-broad in $B(x,r)$ with error $K$, then there exists $r' \gtrsim K^{-1/\beta}r$ such that $A \cap (B(x,r) \setminus B(y, r')) \neq \emptyset$ for any $y \in B(x,r)$. In particular, this implies that for any $v \in A$, there exists $v' \in A$ with $d(v, v') \gtrsim K^{-1/\beta}r$. 

    Another useful consequence of this definition is that if $W \subset V$, $V$ is $\beta$-broad in some ambient set $X$ with error $C$, and $|W| \ge \kappa |V|$, then $W$ is $\beta$-broad in $X$ with error $\kappa^{-1}C$. By the same token, if $(\T, Y)$ is broad in some $\T_{\rho}$ with error $K$ and $(\T, Y')$ is a subshading of $Y$ so that for each cube $Q \in \bigcup_{T \in \T} Y'(T)$, $|{T \in \T : Q \in Y'(T)|}| \ge \kappa |{T \in \T : Q \in Y(T)|}|$, then $(\T, Y')$ is broad in $\T_{\rho}$ with error $\kappa^{-1}K$.  

    Combining these two consequences, we conclude that if $A$ is $\beta$-broad in $B(x,r)$ with error $K$ and $A' \subset A$ with $|A'| \ge \kappa |A|$, we have that for any $v \in A'$, there exists $v' \in A'$ with $d(v, v') \gtrsim \kappa^{1/\beta}K^{-1/\beta}r$.
\end{remarkc}

If (iii) or (iv) occur, we say that $\T$ is broad in $\T_{\rho}$. In practice, we investigate the broadness of $\calD_{\De}(\calR)$, where $\calR$ is a collection of $\de \times \De \times 1$ prisms. If $\calD_{\De}(\calR)$ is broad in a collection of $u$-tubes $\T_u$, we say that the long directions of $\calR$ are broad in $\T_u$. 

While these definitions focus on broadness of smaller tubes in larger tubes, we also need to apply broadness for the normal directions of sets of hyperplanes. We apply the following lemma to do so. The lemma is an easy consequence of \cite[Lemma 7.8]{WZ25}. 

\begin{lemma}[{\cite[Lemma 7.8]{WZ25}}]\label{lem:localbroad}
    Let $\de, \beta > 0$ and suppose $V \subset S^2$ is a $\de$-separated set. Then there exists some $\rho \in [\de, 1]$, a $\rho$-ball $B \subset S^2$, and some $V' \subset V \cap B$ such that 
    \begin{enumerate}[label = (\roman{*})]
        \item $|V'| \gtrsim \rho^{\beta}|V|$.
        \item $V'$ is $\beta$-broad in $B$ at scales $\ge \de$ with error $\lesssim 1$.
    \end{enumerate}
\end{lemma}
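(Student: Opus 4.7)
The plan is to select the pair $(\rho, B)$ by a single extremization. Consider the functional
\[F(\rho, B) := \frac{|V \cap B|}{\rho^\beta},\]
defined over $\rho \in [\de, 1]$ and $\rho$-balls $B \subset S^2$. Since $V$ is a finite $\de$-separated set, this supremum is attained at some pair $(\rho^*, B^*)$; set $\rho := \rho^*$, $B := B^*$, and $V' := V \cap B$. Both conclusions of the lemma will follow from the extremality of this pair.

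For part (i), I will cover $S^2$ by $O(1)$ unit balls; by pigeonhole some such ball $B_0$ has $|V \cap B_0| \gtrsim |V|$, giving $F(1, B_0) \gtrsim |V|$. Maximality then yields $|V'| = \rho^\beta F(\rho, B) \geq \rho^\beta F(1, B_0) \gtrsim \rho^\beta |V|$, which is exactly the desired lower bound.

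For part (ii), I will argue by contradiction. Suppose $V'$ fails to be $\beta$-broad in $B$ at scales $\geq \de$ with error $K$, where $K$ is some fixed absolute constant (say $K = 2$). Then there exist $y \in B$ and $r \geq \de$ with
\[|V' \cap B(y, r)| > K\left(\frac{r}{\rho}\right)^\beta |V'|.\]
The right-hand side already exceeds $|V'|$ whenever $r \geq \rho$ (since $K \geq 1$), so in fact $r \leq \rho$, and therefore $(r, B(y, r))$ is a legitimate competitor in the extremization. Then
\[F(r, B(y, r)) \geq \frac{|V' \cap B(y, r)|}{r^\beta} > K\cdot\frac{|V'|}{\rho^\beta} = K\cdot F(\rho, B),\]
contradicting the maximality of $(\rho^*, B^*)$ as soon as $K > 1$. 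Hence $V'$ is $\beta$-broad in $B$ at scales $\geq \de$ with error $\leq K \lesssim 1$.

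The argument is pure extremization/pigeonholing, and there is no substantive obstacle. The only bookkeeping care is in restricting the supremum to $\rho \in [\de, 1]$, which is what forces the output scale into the claimed interval, and in noticing that the broadness inequality trivializes for $r \geq \rho$, which is what lets us promote any potential violator to a valid competitor for the extremization.
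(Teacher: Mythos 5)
Your proof is correct, but it takes a genuinely different route from the paper, because the paper does not prove this lemma at all: it is imported wholesale from \cite[Lemma 7.8]{WZ25}, and the ``derivation'' consists only of choosing one ball $B$ from the collection produced there and setting $V' = \mathcal{V}_B$, so that (i) and (ii) are restatements of the corresponding items of that lemma. You instead give a short self-contained extremal argument: maximize $|V\cap B|/\rho^{\beta}$ over scales $\rho\in[\de,1]$ and $\rho$-balls $B$, obtain (i) by comparing against the trivial competitor at scale $1$, and obtain (ii) because any ball witnessing a failure of $\beta$-broadness would be a strictly better competitor, after the correct observation that the broadness inequality is vacuous for $r\ge\rho$, so a violating scale must lie in $[\de,\rho]$ and hence is admissible. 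This is the standard mechanism behind such localization lemmas, and it buys self-containment at the cost of two bookkeeping points worth making explicit: (a) attainment of the supremum --- with closed balls it follows from upper semicontinuity of $(y,\rho)\mapsto|V\cap \overline{B}(y,\rho)|$ and compactness of $S^2\times[\de,1]$; alternatively a near-maximizer within a factor $2$ suffices and only worsens the final error by a constant; (b) the paper's definition of broadness normalizes by $\operatorname{diam}(B)\sim 2\rho$ rather than by the radius $\rho$, so your error constant picks up a factor $2^{\beta}$; this is harmless in the paper's applications, where $\beta$ is a small parameter (and can be avoided entirely by defining the functional with the diameter), but it should be said so that the error is genuinely $\lesssim 1$ in the paper's normalization.
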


To derive Lemma \ref{lem:localbroad} from \cite[Lemma 7.8]{WZ25}, we choose some $B \in \mathcal{B}$ and take $V' = \mathcal{V}_B$. With this definition (i) and (ii) are restatements of (i) and (ii) from \cite[Lemma 7.8]{WZ25}. 

When we apply Lemma \ref{lem:localbroad} for the normal directions of sets of hyperplanes, we often will not have $\de$-separation (indeed, the ``sets of normals'' may in fact be multisets). We use a Vitali-style refinement followed by dyadic pigeonholing to find sets $V \subset \overline{V} \subset \tilde{V}$ so that $V$ is $\de$-separated, $\overline{V} \subset N_{\de}(V)$, $|\overline{V}| \approx_{\de} |\tilde{V}|$, and for each $\de$-ball in $B \in N_{\de}(V)$, $|\overline{V} \cap B|$ is constant. We then apply Lemma \ref{lem:localbroad} to $V$ to find some $V' \subset V$ and some $\rho$-ball $B$ which satisfies (i) and (ii). We set $\overline{V}' = \overline{V} \cap N_{\de}(V')$ and easily see that $|\overline{V'}| \gtrsim \rho^{\beta}|\overline{V}|$. While $\overline{V'}$ is not broad by our definition, it satisfies the two conclusions of Remark \ref{rmk2}. Instead of repeating this line of reasoning in the proofs of Propositions \ref{prop:largeprism} and \ref{prop:smallprism}, we state the result as a corollary here, which we later apply in both propositions.

\begin{corollary}\label{cor:localbroad}
    Let $\de, \beta > 0$ and suppose $V \subset S^2$ is a finite multiset. Then there exists some $\rho \in [\de, 1]$, a $\rho$-ball $B \subset S^2$, and some $V' \subset V \cap B$ such that 
    \begin{enumerate}[label = (\roman{*})]
        \item $|V'| \gtrapprox_{\de} \rho^{\beta}|V|$.
        \item For any $W \subset V'$ with $|W| \ge \kappa |V'|$, we have that for any $v \in W$, there exists $v' \in W$ with $d(v, v') \gtrsim \kappa^{1/\beta}\rho$
    \end{enumerate}
\end{corollary}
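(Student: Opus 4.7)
The plan is to follow the sketch already given in the paragraph preceding the corollary, filling in the pigeonholing and the translation of broadness into the distance statement (ii). The starting point is that Lemma \ref{lem:localbroad} is stated only for $\de$-separated sets, whereas here $V$ is a finite multiset; the reduction to the separated setting is the routine part, and the only mildly subtle point is preserving the conclusion under a further subset $W \subset V'$.

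First, I would cover $S^2$ by a maximal $\de$-separated net and, for each point $p$ of the net, collect the elements of $V$ lying in the $\de$-ball around $p$. Dyadic pigeonholing on the multiplicity of these fibers, losing only a $\lessapprox_{\de} 1$ factor, produces a sub-multiset $\overline{V} \subset V$ of comparable size whose fiber sizes over the selected net points are all within a factor of $2$. Let $V_0$ denote this $\de$-separated set of net points; by construction $|\overline{V}| \approx_{\de} m \cdot |V_0|$, where $m$ is the common fiber size, and the map $\pi:\overline{V} \to V_0$ sending each element to its nearby net point is $\approx_{\de} m$-to-$1$.

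Now apply Lemma \ref{lem:localbroad} to $V_0$ with the given $\beta$. This supplies a scale $\rho \in [\de,1]$, a $\rho$-ball $B \subset S^2$, and $V_0' \subset V_0 \cap B$ with $|V_0'| \gtrsim \rho^{\beta}|V_0|$ and $V_0'$ being $\beta$-broad in $B$ with absolute error. Define $V' := \pi^{-1}(V_0') \cap \overline{V}$. Uniformity of the fibers immediately gives $|V'| \gtrapprox_{\de} m|V_0'| \gtrsim \rho^{\beta}|\overline{V}| \gtrapprox_{\de} \rho^{\beta}|V|$, which is (i).

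For (ii), suppose $W \subset V'$ has $|W| \ge \kappa|V'|$. Set $W_0 := \pi(W) \subset V_0'$; since the fibers of $\pi$ over $V_0'$ all have size within a factor of $2$ of $m$, we get $|W_0| \gtrsim \kappa |V_0'|$. By the second half of Remark \ref{rmk2}, $W_0$ is still $\beta$-broad in $B$, now with error $\lesssim \kappa^{-1}$. The first half of Remark \ref{rmk2} then yields, for every $v_0 \in W_0$, some $v_0' \in W_0$ with $d(v_0,v_0') \gtrsim \kappa^{1/\beta}\rho$. Given any $v \in W$, applying this to $v_0 = \pi(v)$ produces $v_0' = \pi(v') \in W_0$ with $v' \in W$ and, since $d(v,v_0), d(v',v_0') \le \de \le \rho$, we still have $d(v,v') \gtrsim \kappa^{1/\beta}\rho$, which is (ii). There is no genuine obstacle here; the only thing to be careful about is the bookkeeping of the multiplicity so that relative densities of subsets of $V'$ transfer to relative densities of subsets of $V_0'$ without an extra loss depending on $\kappa$.
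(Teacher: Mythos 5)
Your proof is correct and follows essentially the same route as the paper: a covering/Vitali-style reduction with dyadic pigeonholing to a $\de$-separated set of net points with uniform fiber sizes, an application of Lemma \ref{lem:localbroad} to that set, and a pullback in which the fiber uniformity transfers relative density of $W\subset V'$ to $W_0\subset V_0'$ so that Remark \ref{rmk2} gives the separation statement (ii). The only caveat, which the paper also elects not to track, is the constant-factor bookkeeping when absorbing the $O(\de)$ error from replacing points by their net representatives.
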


\begin{definition}\label{def:broadlyspread}
    If a finite multiset $V'$ has property (ii) from Corollary \ref{cor:localbroad}, we say that $V'$ is $\beta$-broadly spread. 
\end{definition}

To be precise in this definition, we would need to keep track of the implicit error in $d(v, v') \gtrsim \kappa^{1/\beta}\rho$. We only apply this definition when the implicit error is $\sim 1$, either because we constructed $V'$ to be broad with error $\sim 1$ itself, or $V' \subset V$ with $|V'| \sim |V|$ and $V$ broad with error $\sim 1$. The result is that we can always take the implicit constant to be $\le C^{1/\beta}$ for some large fixed value $C$, and while this is not strictly a $\sim 1$ term, $C^{1/\beta}\lessapprox_{\de} 1$ for $\de$ sufficiently small, which is always be acceptabe for our purposes. So to simplify the exposition, we do not keep track of the implicit error terms in the definition of broadly spread. 

This concludes our discussion of Lemma \ref{lem:localbroad}. As Definition \ref{def:broad} above suggests, we also need to consider broadness in tubes. We have two variants of this proposition, one where we retain essentially all the mass of the shading at the cost of having broadness multiplicity substantially larger than $1$ and the other where sacrifice a non-trivial share of the mass of the shading while having broadness multiplicity one. Part (A) of this lemma is a rephrasing of \cite[Corollary 7.10]{WZ25}. Part (B) of the lemma is an easy consequence of Part (A).

We introduce a 

\begin{lemma}[{\cite[Corollary 7.10]{WZ25}}]\label{lem:tubebroad}
    For any $\beta > 0$, suppose $(\T, Y)_{\de}$ is a shading on a set of $\de$-tubes. Then 
    
    \begin{enumerate}[label=(\Alph{*})]
        \item There exists a scale $\rho \in [\de, 1]$ and a pair $(\T', Y')$ with $\T' \subset \T$, $Y'(T) \subset Y(T)$, and $\sum_{T \in \T'} |Y'(T)|_{\de} \gtrapprox_{\de} \sum_{T \in\T} |Y(T)|_{\de}$. The tubes $\T'$ have a balanced, partitioning cover $\T_{\rho}$ such that $\T_{\rho}$ is a $\beta$-broad cover with error $\lessapprox_{\de} 1$ of $(\T',Y')$ and has broadness multiplicity $\lessapprox_{\de} \rho^{-\beta}$
        \item There exists a scale $\rho \in [\de, 1]$ and a pair $(\T', Y')$ with $\T' \subset \T$, $Y'(T) \subset Y(T)$, and $\sum_{T \in \T'} |Y'(T)|_{\de} \gtrapprox_{\de} \rho^{-\beta}\sum_{T \in\T} |Y(T)|_{\de}$. The tubes $\T'$ have a balanced, partitioning cover $\T_{\rho}$ such that $\T_{\rho}$ is a $\beta$-broad cover with error $\lessapprox_{\de} 1$ of $(\T', Y')$ with broadness multiplicity $1$. 
    \end{enumerate}
\end{lemma}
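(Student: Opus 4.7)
The plan is to take Part (A) as given (it is quoted directly from \cite[Corollary 7.10]{WZ25}) and derive Part (B) from it by a probabilistic refinement of the shading that collapses the broadness multiplicity from $\lessapprox_{\de} \rho^{-\beta}$ down to $1$, at the cost of a $\rho^{\beta}$ factor in the retained shading mass.

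First I would apply Part (A) to $(\T, Y)$ to produce a pair $(\T', Y')$ with $\sum_{T \in \T'} |Y'(T)|_{\de} \gtrapprox_{\de} \sum_{T \in \T} |Y(T)|_{\de}$ together with a balanced, partitioning $\beta$-broad cover $\T_\rho$ of error $\lessapprox_{\de} 1$ and broadness multiplicity $M \lessapprox_{\de} \rho^{-\beta}$. This is the starting configuration; the cover $\T_\rho$ itself will be left untouched in the construction that follows, so the balanced and partitioning properties will automatically be preserved.

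To reduce the multiplicity to $1$, for each $\de$-cube $B \in \bigcup_{T \in \T'} Y'(T)$ let $S(B) = \{T_\rho \in \T_\rho : B \in \bigcup_{T \in \T'[T_\rho]} Y'(T)\}$; by the multiplicity hypothesis $|S(B)| \le M$. Choose $T_\rho(B) \in S(B)$ independently and uniformly at random for each $B$, and define a refined shading $Y''(T) = \{B \in Y'(T) : T \subset T_\rho(B)\}$. Because the cover is partitioning, each $T$ lies in a unique $T_\rho$, so a cube $B \in Y'(T)$ lands in $Y''(T)$ with probability exactly $1/|S(B)| \ge 1/M$. By linearity of expectation, $\mathbb{E}\bigl[\sum_{T \in \T'} |Y''(T)|_\de\bigr] \ge M^{-1} \sum_{T \in \T'} |Y'(T)|_\de \gtrapprox_{\de} \rho^{\beta} \sum_{T \in \T} |Y(T)|_\de$, so some deterministic realization of the assignment achieves this lower bound. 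By construction, for each surviving cube $B$ there is exactly one $T_\rho \in \T_\rho$ (namely $T_\rho(B)$) whose bucket $\T'[T_\rho]$ contains a tube with $B$ in its shading, giving broadness multiplicity $1$.

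The only point that needs a brief verification is that the $\beta$-broad cover property survives the refinement. The key observation is that within each fixed $T_\rho$, a cube $B$ is handled in an all-or-nothing way: if $T_\rho(B) = T_\rho$ then $\{\dir(T^{T_\rho}) : T \in \T'[T_\rho],\ B \in Y''(T)\}$ equals the corresponding set for $Y'$, and otherwise $B$ is removed from the shading inside $T_\rho$ entirely. Consequently the broadness condition at each cube that survives in $(\T'^{T_\rho}, Y''^{T_\rho})$ is identical to the one that held in $(\T'^{T_\rho}, Y'^{T_\rho})$, so the error stays $\lessapprox_{\de} 1$. I do not expect any serious obstacle in the argument; the mild subtlety is precisely this all-or-nothing observation, which is why random assignment at the level of cubes (rather than at the level of tube--cube incidences) is the right operation to perform.
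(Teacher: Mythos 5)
Your proposal is correct and follows essentially the same route as the paper: both apply Part (A) and then refine the shading by assigning to each $\de$-cube a single tube $T_{\rho}$ of the cover, using the all-or-nothing observation that within the chosen $T_{\rho}$ the incident direction set is unchanged, so broadness with error $\lessapprox_{\de} 1$ survives and the multiplicity drops to $1$. The only difference is cosmetic: the paper selects at each cube a $T_{\rho}$ carrying a $\gtrapprox_{\de} \rho^{\beta}$ share of that cube's multiplicity by pigeonholing on the $\lessapprox_{\de} \rho^{-\beta}$ candidates, while you select uniformly at random and invoke linearity of expectation, which yields the same $\rho^{\beta}$ loss in the total shading mass.
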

\begin{proof}[Proof of part (ii)]
    First, apply Lemma \ref{lem:tubebroad} (A) to $(\T, Y)$ with the given value of $\beta$. Let $(\T', Y')$ denote the $\gtrapprox_{\de} 1$ dense subsets and $\T_{\rho}$ the cover. At each cube $Q$, we have $\lessapprox_{\de} \rho^{-\beta}$ different tubes $T_{\rho}$ so that $Q \in \bigcup_{T \in \T'[T_{\rho}]} Y'(T)$, so we can find an element of $T_{\rho}$ such that \[|\{T \in \T'[T_{\rho}] : Q \in Y'(T) \}| \gtrapprox_{\de} \rho^{\beta} |\{T \in \T' : Q \in Y'(T)\}|.\]We set $Q \in Y''(T)$ if and only if $Q \in Y'(T)$ and $T \subset T_{\rho}$. Carry this procedure out at each cube $Q \in Y'(\T')$. since at each cube of $(\T', Y')$, we retained at least a $\gtrapprox_{\de} \rho^{\beta}$ share of the tubes with incident shading, so \[\sum_{T \in \T'} |Y''(T)|_{\de }\gtrapprox_{\de} \rho^{\beta} \sum_{T \in \T'} |Y'(T)|_{\de} \gtrapprox_{\de} \rho^{\beta} \sum_{T \in \T} |Y(T)|_{\de}.\]For each cube $Q \in Y''(\T')$, we have exactly one $T_{\rho} \in \T_{\rho}$ such that $Q \in Y''(\T'[T_{\rho}])$, but we keep $Q \in Y''(T)$ if $Q \in Y'(T)$ and $T \subset \T_{\rho}$. It follows that $\{T \in \T'[T_{\rho}] : Q \in Y''(T)\} = \{T \in \T'[T_{\rho}] : Q \in Y'(T)\}$, so since the latter is broad in the $\rho$-ball of directions determined by $T_{\rho}$, the former is as well. We conclude that $\T_{\rho}$ remains a $\beta$-broad cover with error $\lessapprox_{\de} 1$ of $(\T', Y'')$.
\end{proof}

Next, we introduce a combinatorial lemma which helps turn incidence properties that hold ``on average'' to incidence properties that hold ``uniformly''. This lemma proven in Wang and Zahl's resolution of the sticky Kakeya problem in $\R^3$ \cite[Lemma 8.2]{WZ22}. Wang and Zahl attribute the lemma to a generalization of a result by Dvir and Gopi \cite{DG15}. Before stating the lemma, we give a pair of relevant definitions.

\begin{definition}
    For finite sets $A_1, \dots, A_k$, let $G \subset A_1 \times \dots \times A_k$ be a $k$-partite hypergraph. The vertices of this hypergraph are the elements of $A_1 \sqcup \dots \sqcup A_k$ and the edges are $k$-tuples in $G$.

    The density of a $k$-partite hypergraph $G$ is defined to be $\frac{|G|}{\prod_{i=1}^k |A_k|}$. We say a $G$ is uniformly $c$-dense if for any partition $I \sqcup J$ of $\{1, \dots, k\}$ and any choice of $(a_i)_{i \in I}$, we have \[|\{(b_i)_{i=1}^k : a_i = b_i \text{ for all }i \in I\}| \ge c \prod_{j \in J} |A_j|.\]
\end{definition}

\begin{lemma}[{\cite[Lemma 8.2]{WZ22}}]\label{lem:WZ2547}
    Let $A_1, \dots, A_k$ be finite sets and $G \subset A_1 \times \dots \times A_k$ a $k$-uniform hypergraph (i.e. each hyper edge connects $k$ elements). Let $d = \frac{|G|}{\prod_{i=1}^k |A_k|}$ denote the density of $G$. There exists a uniformly $\sim_k d$-dense subgraph $G'$ of $G$ with $|G'| \ge \frac{1}{2}|G|$. 
\end{lemma}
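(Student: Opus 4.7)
My plan is to use an iterative cleaning argument. Fix a threshold $c = c_k \, d$ for a constant $c_k$ depending only on $k$, to be determined (something like $c_k = 1/(2(2^k-1))$ will do). Starting from $G_0 := G$, at step $t$ I check whether $G_t$ is uniformly $c$-dense. If not, there is a proper subset $I \subsetneq \{1,\dots,k\}$ and a tuple $\vec a = (a_i)_{i \in I}$ witnessing the failure, i.e.,
\[
|G_t[\vec a]| \;:=\; \bigl|\{e \in G_t : \pi_i(e) = a_i \text{ for all } i \in I\}\bigr| \;<\; c \prod_{j \in J} |A_j|,
\]
where $J = \{1,\dots,k\} \setminus I$. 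I then delete all edges in $G_t[\vec a]$ to form $G_{t+1}$. Iterating until no bad witness remains yields a terminal graph $G'$ which is by construction uniformly $c$-dense.

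The heart of the argument is to show that the total number of edges removed is at most $|G|/2$. The key observation is that once a witness $(I, \vec a)$ has been processed, $G_{t+1}[\vec a] = \emptyset$, and since we only ever delete edges, $G_s[\vec a] = \emptyset$ for all $s > t$. Hence each pair $(I, \vec a)$ can be invoked at most once during the entire process. Summing the removals across all possible witnesses,
\[
|G \setminus G'| \;\le\; \sum_{I \subsetneq [k]} \; \sum_{\vec a \in \prod_{i \in I} A_i} c \prod_{j \in J} |A_j| \;=\; (2^k - 1)\, c \prod_{i=1}^{k} |A_i|.
\]
Choosing $c_k = 1/(2(2^k - 1))$ makes the right-hand side at most $|G|/2$, giving $|G'| \ge |G|/2$ and $c \sim_k d$ as required.

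The main subtlety, rather than a deep difficulty, is precisely this ``once-per-witness'' bookkeeping. A naive per-iteration accounting does not work, since deleting edges can convert previously good pairs $(I, \vec a)$ into bad ones at later steps; one must sum over witnesses, not over iterations, and rely on the monotonicity of the removal process to prevent double-counting. The only other point to check is the interpretation of uniform density in the degenerate cases $I = \emptyset$ and $I = [k]$: the former yields the global density bound $|G'| \ge c \prod_i |A_i|$, which follows automatically from $|G'| \ge |G|/2$ once $c \le d/2$, and the latter is naturally read either vacuously or as excluded from the definition. Either reading is compatible with the plan above.
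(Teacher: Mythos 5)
The paper itself does not prove this lemma; it is quoted from \cite{WZ22}, and your iterative deletion argument is essentially the standard proof (and the one given there): repeatedly delete a low-density fiber, charge each deletion to its witness $(I,\vec a)$, observe that each witness can be charged at most once by monotonicity of deletion, and choose $c_k = 1/(2(2^k-1))$ so that the total loss is at most $\tfrac12 d\prod_i|A_i| = |G|/2$. The bookkeeping is correct. One point you should make explicit: for the iteration to terminate you must only treat $(I,\vec a)$ as a witness when its fiber in the current graph is nonempty; equivalently, uniform $c$-density has to be read as a condition only on tuples $(a_i)_{i\in I}$ that extend to at least one edge of $G'$ (with the literal ``for every tuple'' reading the lemma is false --- e.g.\ a vertex incident to no edge of $G$ can never acquire positive degree in a subgraph --- and an empty-fiber witness would cause your loop to stall, since deleting the empty set changes nothing). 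Under that intended reading, which is also how the paper's Remark on the $k=2$ case is meant, every invoked witness removes at least one edge, the process terminates, and your count goes through; your remarks on the degenerate cases $I=\emptyset$ and $I=[k]$ are fine but the empty-fiber case is the one that actually needs the caveat.
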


\begin{remarkc}\label{rmk}
    Mostly, we use this in the case $k = 2$. In that case, we can express the uniformity in a much simpler way: for each $v \in A_1$, $\deg(v) \gtrsim \frac{|G|}{|A_2|}$ and for each $v \in A_2$, $\deg(v) \gtrsim \frac{|G|}{|A_1|}$. This special case appears in \cite[Lemma 4.7]{WZ25}.
\end{remarkc}
Finally, we define the multiplicity of a cube in a shading on a collection of convex sets. 

\begin{definition}
    If $Y$ is a shading on a collection of convex sets $\calR$, then the multiplicity of $Y$ at a $\de$-cube $Q$ is $\mu(Q) = \{R \in \calR : Q \in Y(R)\}$. 

    To clarify which multiplicities are associated with which shadings, we use the same subscripts and superscripts for the shadings as the multiplicities. For example, $\mu_3^{T_u}$ will denote the multiplicity for the shading $Y_3^{T_u}$. 
\end{definition}

We are now ready to proceed with the proof of Proposition \ref{prop:largeprism}. We recall the statement here.

\begin{proposition*}[\textbf{\ref{prop:largeprism}}]
    For any $\omega, \e, \tau > 0$, there exist $\eta, \de_0 > 0$ such that the following holds for all $\overline{\de} \in (0, \de_0)$. Suppose there exist values $\de, \De > 0$ with $\overline{\de} \le \de \le \De$ and $\de \le \overline{\de}^{\e}\De$ and a collection $\calR$ of $\de \times \De \times 1$ prisms satisfying the $t$-Frostman Convex Wolff Axiom with error $\de^{-\eta}$ and with a $\de^{\eta}$-dense shading $Y$ on $\calR$.

    Then one of the following conclusions must occur.

    \begin{enumerate}[label = \Alph*.]
        \item There exists $r > \overline{\de}^{-\eta}\de$ and an $r$-ball $B_r$ such that \begin{equation}\label{eqn:lpdense}\left|\bigcup_{R \in \calR} Y(R) \cap B_r\right|_{\de} \ge \left(\frac{r}{\de}\right)^{2t + 1 - \omega};\end{equation}or 
        \item There exist $\de' \le \De'$ with $\de' \in [\overline{\de}^{-\e\omega/6}\de, \overline{\de}^{\e/2}]$ and $\De' = \min\left(1, \frac{\De\de'}{\de}\right)$ and a collection $\calR'$ of $\de' \times \De' \times 1$ prisms satisfying the $t$-Frostman Convex Wolff Axiom with error $\de'^{-\tau}$ and a $\de'^{\tau}$-dense shading $Y'$ on $\calR'$ such that for all $R \in \calR'$ \begin{equation}\label{eqn:lpcover}Y'(R) \subset N_{\de'}\left(\bigcup_{R \in \calR[R']} Y(R)\right).\end{equation}
    \end{enumerate}
\end{proposition*}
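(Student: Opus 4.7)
The plan is to implement Steps 2--4 of the $t=1$ outline from Subsection \ref{subsec:bpop1} in the quasi-Assouad setting. The key dichotomy is between a \emph{transverse} regime---where the normals $n(R)$ of the prisms contributing to the shading are locally spread out in $S^2$, from which Conclusion A follows via a brush and an $L^2$ estimate---and a \emph{tangential} regime---where those normals cluster to within a sub-polynomial factor of the intrinsic ambiguity $\de/\De$, from which Conclusion B follows by enveloping $\calR$ into a family of larger prisms with the same aspect ratio. First I would carry out a uniform broadness pigeonholing: at each $\de$-cube $Q$ in the shading, apply Corollary \ref{cor:localbroad} to the multiset $\{n(R) : Q \in Y(R)\}$ with a small broadness parameter $\beta$, and in parallel apply Lemma \ref{lem:tubebroad}(B) to localize the long directions $\dir(R)$ via the $\De$-tube cover $\calD_\De(\calR)$. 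After dyadic pigeonholing on the resulting angular scales $\theta,\phi$, on the two $S^2$-balls witnessing these scales, and on the shading multiplicity, I retain a subshading (at an $\lessapprox_{\overline{\de}} 1$ loss) for which every retained cube sees normals in a common $\theta$-ball with a $\beta$-broadly spread subset and long directions in a common $\phi$-ball, with essentially constant multiplicity.

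The case split is made at roughly $\theta\De = \overline{\de}^{-C\omega}\de$, where $C$ is an absolute constant calibrated from Lemma \ref{lem:RFPC}. In the transverse case $\theta\De \geq \overline{\de}^{-C\omega}\de$, I follow Step 4 of the outline: decompose each $R \in \calR$ into its $\sim 1/\De$ disjoint $\de \times \De \times \De$ sub-prisms, producing a family $\calP$ with an inherited dense shading. I fix a pigeonholed stem $P_0 \in \calP$ and collect the bristles $\calP_{\text{bristle}}$ meeting $P_0$ with $|n(P) - n(P_0)| \sim \theta$; broadness makes $|\calP_{\text{bristle}}|$ large, and all bristles lie in the $\theta\De \times \De \times \De$ prism $G$ centered on $P_0$. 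The $t$-FCWA for $\calP_{\text{bristle}}$ inside $G$ is inherited from $\calR$ via Proposition \ref{prop:inheritance}, and Lemma \ref{lem:RFPC} with $\nu = \theta\De$ supplies shading mass $\gtrapprox_{\overline{\de}} (\theta\De/\de)^t(\De/\de)^2$ inside $G$. Averaging over $\theta\De$-balls in $G$ produces an $r$-ball with $r = \theta\De \geq \overline{\de}^{-\eta}\de$ containing $\gtrapprox_{\overline{\de}} (r/\de)^{t+2}$ cubes of the shading, and since $t+2 \geq 2t+1$ for $t \leq 1$, this exceeds $(r/\de)^{2t+1-\omega}$ once $\eta$ is chosen sufficiently small, establishing Conclusion A.

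In the tangential case $\theta\De < \overline{\de}^{-C\omega}\de$, I form the envelope family $\calR'$ at scale $\de' \times \De' \times 1$, where $\de' \sim \max(\de,\theta)$ is chosen to absorb the normal-direction $\theta$-ball (so tangential prisms through a common cube fit in a single envelope) and $\De' := \min(1, \De\de'/\de)$ either preserves the aspect ratio or saturates to a slab; the long-direction $\phi$-ball is absorbed into the medium dimension. The induced shading $Y'(R') := N_{\de'}\bigl(\bigcup_{R \in \calR[R']} Y(R)\bigr)$ is $\de'^\tau$-dense from the density of $Y$ together with the balanced pigeonholing, and the $t$-FCWA for $\calR'$ with error $\de'^{-\tau}$ follows by inheriting the $t$-FCWA upwards from $\calR$ via Proposition \ref{prop:inheritance}. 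The principal obstacle is to calibrate $C$ (and the choices of $\beta,\eta$) so that in the tangential case $\de'$ lands in the window $[\overline{\de}^{-\e\omega/6}\de, \overline{\de}^{\e/2}]$ demanded by the conclusion, while simultaneously the transverse case produces an $r$-ball satisfying $r \geq \overline{\de}^{-\eta}\de$; this requires balancing $C\omega$ against $\eta$ and $\e\omega/6$, and tracking the several $\lessapprox_{\overline{\de}} 1$ pigeonholing losses so that the final error in the FCWA for $\calR'$ stays below $\de'^{-\tau}$.
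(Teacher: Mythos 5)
You have the right overall shape (a transverse/tangential dichotomy with a brush plus $L^2$ argument on one side and an enveloping construction on the other), but there are two genuine gaps, and the second one is structural. In your tangential case you key the new thin scale to the observed normal spread, $\de' \sim \max(\de,\theta)$, but Conclusion B requires $\de' \ge \overline{\de}^{-\e\omega/6}\de$, and nothing in your argument forces this growth: in the genuinely tangential regime $\theta$ can be comparable to $\de/\De$, and when $\De$ is not itself polynomially small (e.g.\ $\De \sim 1$) you get $\de' \sim \de$, so the scale window of Conclusion B is missed and the induction on scales in Proposition \ref{prop:prismsetup} stalls. The paper's source of scale growth is the missing idea: it measures broadness of the \emph{long} directions at a scale $u$ via Lemma \ref{lem:tubebroad} (and $u$ may be much larger than $\De$), measures the normal spread $\theta$ only \emph{after rescaling through $u$-tubes}, and plays the $t$-Frostman Convex Wolff Axiom of $\calR$ against the multiplicity lower bound (all prisms through a heavy cube lie in a common $u\theta \times u \times 1$ prism) to force $(u^2\theta)^t$ to be large; hence either $\theta$ is polynomially larger than $\de/\De$ (transverse case) or $u \ge \overline{\de}^{-\e\omega/6}\De$ (broad case), and in the broad case $\de' \sim \theta_0 u$ inherits its polynomial growth over $\de$ from $u$, not from $\theta$. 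Relatedly, your claim that $Y'$ is $\de'^{\tau}$-dense ``from the density of $Y$ together with the balanced pigeonholing'' is not enough: a single original prism carries only $\sim \de^{\eta}\de\De$ of $\de$-volume, far less than $\vol(R') \sim \de'\De'$, so density of $Y'$ requires showing that many prisms with spread long directions, thickened to scale $\de'$, form a family of $\de'$-tubes satisfying the $1$-Frostman Convex Wolff Axiom inside each envelope, and then invoking Lemma \ref{lem:RFPA}; this brush step is where most of the work in the paper's broad case lives and is absent from your sketch.

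In the transverse case there is a second gap: your application of Lemma \ref{lem:RFPC} rests on the claim that the bristle family satisfies the $t$-Frostman Convex Wolff Axiom inside $G$ ``inherited from $\calR$ via Proposition \ref{prop:inheritance}''. That proposition only passes the axiom upwards to covers, or transfers it under rescaling once it is already known inside the sub-region; a global $t$-Frostman Convex Wolff Axiom for $\calR$ does not localize to $\calR[G]$ or to a selected bristle subfamily, since those prisms could all concentrate in a thin slab of $G$, and broadness gives you a large bristle count but not non-concentration within $G$. The paper fills exactly this hole by constructing, along a transversal line segment in the stem, $\gtrsim \de^{\gamma}\nu/\de$ pairwise separated bristles and proving they satisfy the $1$-Frostman Convex Wolff Axiom in $G$ through a contradiction argument with Lemma \ref{lem:WZ2546}; Lemma \ref{lem:RFPC} is then applied with $t=1$. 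Your arithmetic $(r/\de)^{t+2} \ge (r/\de)^{2t+1}$ would indeed suffice if the non-concentration input held, but as stated it is unsupported, so Conclusion A does not yet follow from your argument.
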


We are in largely the same situation as the induction step \cite[Lemma 4.2]{WZ24} and our proof follows similar lines as their proof, with some of the technical steps moved to lemmas. We split our proof into three pieces: the setup; the slightly transverse case; and the slightly broad case. 

In the setup piece of the proof, we ensure the multiplicity of the set of prisms is fairly large, then use Lemma \ref{lem:tubebroad} to find a broadness parameter $u$ which describes the spread of the long direction of the prisms in $\calR$ and a collection of $u$-tubes $\T_u$ so that the long directions of $\calR$ are broad in $\T_u$. We then apply Lemma \ref{lem:localbroad} to find a transversality parameter $\theta$ which describes the spread of the normal direction to the prisms in $\calR^{T_u}$ for each $T_u$. To count the number of $\de$-cubes in our shading after rescaling through $u$-tubes, we need to pigeonhole to cover the shading with a disjoint collection of $\de \times \de \times \frac{\de}{u}$-tubelets all incident to $\sim M$ many $\de$-cubes. We define $u$ so that $u \ge \De$ and $\theta$ so that $\theta \ge \frac{\de}{\De}$. We apply the $t$-Frostman Convex Wolff Axiom to conclude that either $\theta \gg \frac{\de}{\De}$ or $u \gg \De$. As an aid to the reader we pause and recap the properties we established in the setup piece before proceeding with the remainder of the proof.

After the recap, we begin with the slightly transverse case, which is when $\theta \gg \frac{\de}{\De}$. In this case, for some $T_u$, we break the collection $\calR^{T_u}$ into a collection $\calP$ of $\frac{\de}{u} \times \frac{\De}{u} \times \frac{\De}{u}$ prisms coming from subsets of individual $\frac{\de}{u} \times \frac{\De}{u} \times 1$ prisms $R^{T_u} \in \calR^{T_u}$. We refine $\calP$ so that each of the prisms in $\calP$ has large intersection with the shading $Y$, then use a brush argument to reach Conclusion A. This completes the slightly transverse case.

After the slightly transverse case, we begin the slightly broad case. We define larger dimensions $\de', \De'$ satisfying the assumptions of Proposition \ref{prop:largeprism}, Conclusion B. We then use a brush method with each $R^{T_u} \in \calR^{T_u}$ to prove that $\calD_{\de'/u}(\calR^{T_u})$ can be arranged to largely fill out a $\frac{\de'}{u} \times \frac{\De'}{u} \times 1$ prism concentric with $R^{T_u}$. Undoing the rescaling through $T_u$ and then taking the union over all $T_u$, we arrive at a collection of $\de' \times \De' \times 1$ prisms $\calR'$ satisfying the requirements of Conclusion B. This completes the proof.

\begin{proof}~
    \subsection*{Reduction I}\vspace{-7pt}In this reduction, we refine the shading to have constant multiplicity $\mu_1$ and prove that if $\mu_1$ is too small, then we have Conclusion A.

    Let $(\calR, Y)$ be as described in the statement of the proposition. We begin with a constant multiplicity refinement. To accomplish this, we dyadically pigeonhole on the value of $\mu(Q) = |\{R \in \calR:Q \in Y(R)\}|$ over dyadic $\de$-cubes $Q \in \bigcup_{R \in \calR} Y(R)$. Denote the resulting multiplicity by $\mu_1$ and the resulting shading by $Y_1$. This shading satisfies \begin{equation}\label{eqn:prop5shading1}\sum_{R \in \calR} |Y_1(R)|_{\de} \gtrapprox_{\de} \sum_{R\in \calR} |Y(R)|_{\de} \ge |\calR|\de^{\eta}\frac{\De}{\de^2} \ge \de^{\eta-1} \left(\frac{\De}{\de}\right)^t|\calR|.\end{equation}Suppose $\mu_1 \le \overline{\de}^{-\e\omega/2} |\calR|(\de \De)^t$. Then \[\left|\bigcup_{R \in \calR} Y_1(R)\right|_{\de} \ge \frac{\sum_{R \in \calR} |Y_1(R)|_{\de}}{\mu_1} \gtrapprox_{\de} \de^{\eta-1}\overline{\de}^{\e\omega/2}(\de \De)^{-t}\left(\frac{\De}{\de}\right)^t.\]
    Since $\de \le \overline{\de}^{\e}\De \le \overline{\de}^{\e}$, we know that $\overline{\de}^{\e\omega/2} \ge \de^{\omega/2}$. We conclude that $\left|\bigcup_{R \in \calR} Y_1(R)\right|_{\de} \gtrapprox_{\de} \de^{-2t-1+\eta + \omega/2}$. So long as $\eta < \omega/4$ and $\de$ is sufficiently small, we conclude that $\left|\bigcup_{R \in \calR} Y_1(R)\right|_{\de} \ge \de^{-2t-1 + \omega}$. We arrive at Conclusion A, completing the proof. 

    Going forward, we assume that \begin{equation}\label{eqn:m1bound}\mu_1 \ge \overline{\de}^{-\e\omega/2}|\calR|(\de \De)^t.\end{equation}

    \subsection*{Reduction II}In this reduction, we find a cover of $\calR$ by $u$-tubes $\T_u$ so that the long direction of $\calR$ is broad in the $u$-tubes.

    Recall that for a collection $\mathcal{A}$ of subsets of $\R^3$, we defined $\calD_{\rho}(\mathcal{A}) = \{N_{\rho}(A) : A \in \mathcal{A}\}$. Set $\T_{\De} := \calD_{\De}(\calR)$, a set of $\De$-tubes. For $T_{\De} \in \T_{\De}$, we equip $T_{\De} = N_{\De}(R)$ with the shading $Y_1(T_{\De}) = Y_1(R)$. Now, we apply Lemma \ref{lem:tubebroad} (B) to $\T_{\De}$ with $\beta_1 = \beta_1(\omega, \e)$ a small parameter to be determined in the course of the proof. We arrive at parameter $u \ge \De$ and some $(\T'_{\De}, Y_2)$ with \begin{equation}\label{eqn:temp}\sum_{T \in \T'_{\De}} |Y_2(T)|_{\de} \gtrapprox_{\de} u^{\beta_1}\sum_{T \in \T_{\De}} |Y_1(T)|_{\de}.\end{equation}We set $\calR_1 = \{R \in \calR : N_{\De}(R) \in \T'_{\De}\}$ and $Y_2(R) := Y_2(N_{\De}(R))$. Our construction of $(\T_{\De}, Y_1)$ and (\ref{eqn:temp}) implies that \begin{equation}\label{eqn:21shadingdensity}\sum_{R \in \calR_1} |Y_2(R)|_{\de} \gtrapprox_{\de} \de^{\beta_1}\sum_{R \in \calR} |Y_1(R)|_{\de}.\end{equation}This collection of prisms $\calR_1$ is covered by $\T_u$ a set of $u$-tubes so that for any $Q \in Y_2(\calR_1)$ and any $T_u \in \T_u$, $\{\dir(R) : R \in \calR_1[T_u], Q \in Y_2(R)\}$ is $\beta_1$-broadly spread\footnote{Definition \ref{def:broadlyspread} and the discussion that precedes it applies equally well to the long direction of these prisms} with error $\sim 1$ inside the $u$-cube centered at $\dir(T_u) \subset S^2$.

    We conclude with another constant multiplicity refinement. In a small abuse of notation, we refer to the refined shading as $Y_2$ and denote the multiplicity by $\mu_2$. The change in mass of the shading is negligible and we either retain all incidences with a cube or remove them all, so we still have the same broadness properties.

    \subsection*{Reduction III} In this reduction, we break the shading in each $T_u$ up into short tubelets containing about the same number of $\de$-cubes, which allows us to control the mass of the shading after rescaling. 

    Recall that $\calR_1^{T_u}$ is a collection of approximately $\frac{\de}{u} \times \frac{\De}{u} \times 1$ prisms and $Y_{\text{full}}(\calR_1^{T_u})$ is the collection of dyadic $\frac{\de}{u}$-cubes covering each element of $\calR_1^{T_u}$. Then $\phi^{-1}_{T_u}(Y_{\text{full}}(\calR_1^{T_u}))$ is a collection of disjoint $\de \times \de \times \frac{\de}{u}$-tubelets $\mathcal{T}_{T_u}$ covering each element $\calR_1[T_u]$. We write \[E = \{ (\lambda, R) \in \mathcal{T}_{T_u} \times \calR_1 : \lambda \cap Y_2(R) \neq \emptyset\}.\]For each pair $(\lambda, R) \in E$, $\lambda$ intersects with some number of $\de$-cubes from $Y_2(R)$. We denote this number by $M_{\lambda, R}$. After pigeonholing on the value of $M_{\lambda, R}$ over $E$, we arrive at a subset $E'$ and a value $M_{T_u}$ with $M_{\lambda, R} = M_{T_u}$ for each $(\lambda, R) \in E'$ and a shading \[Y'_3(R) = \{Q \in Y_2(R) : Q \cap \lambda \neq \emptyset \text{ for some }\lambda \text{ with }(\lambda, R) \in E')\}.\]We ensure that \begin{equation}\label{eqn:we2}\sum_{R \in \calR_1[T_u]} |Y'_3(R)|_{\de} \gtrapprox_{\de} \sum_{R \in \calR_1[T_u]} |Y_2(R)|_{\de}.\end{equation}We use an argument in the style of the Vitali covering theorem to reduce $Y'_3(R)$ to $Y_3(R)$ so that any $\de$-cube $Q \in Y_3(R)$ intersects at most one tubelet from $\mathcal{T}_{T_u}$ and \[\sum_{R \in \calR_1[T_u]} |Y_3(R)|_{\de} \gtrsim \sum_{R \in \calR_1[T_u]} |Y'_3(R)|_{\de}.\]Together with (\ref{eqn:we2}), we conclude that \begin{equation}\label{eqn:pre23shadingdensity} \sum_{R \in \calR_1} |Y_3(R)|_{\de} \gtrapprox_{\de} \sum_{R \in \calR_1[T_u]} |Y_2(R)|_{\de}.\end{equation}

    We claim that we can find a value $M$ and a collection $\T'_u \subset \T_u$ such that $M_{T_u} \sim M$ such that each $T_u \in \T'_u$, if $\calR_2 = \{R \in \calR_1: R \subset T_u \text{ for some }T_u \in \T'_u\}$, then $|\calR_2| \gtrapprox_{\de} \de^{\beta_1 + \eta} |\calR|$, and $\sum_{R\in\calR_2} |Y_3(R)|_{\de} \gtrapprox_{\de} \sum_{R \in \calR_1} |Y_2(R)|_{\de}$. To see this, define for dyadic values of $M$ $\T_{u,M} = \{T \in \T_u : M_T \in [M, 2M)\}$ and $\calR_M =\{R \in \calR_1[T_u] : T_u \in \T_{u,M}\}$. Note that $\calR_1 = \bigsqcup_{M \text{ dyadic}} \calR_M$ and $\sum_{R \in \calR_1} |Y_3(R)|_{\de} = \sum_{M \text{ dyadic}}\sum_{R \in \calR_M} |Y_3(R)|_{\de}$. Dyadically pigeonholing (note that $M \in [0, \sim \de^{-1}] \cap \Z$), we can find a value of $M$ so that $\sum_{R \in \calR_M} |Y_3(R)|_{\de} \gtrapprox_{\de} \sum_{R \in \calR_1} |Y_3(R)|_{\de}$. Write $\calR_2 := \calR_M$ and $\T_u' := \T_{u, M}$. We claim that $|\calR_2| \gtrapprox_{\de} \de^{\beta_1 + \eta} |\calR|$. To see this, note that by (\ref{eqn:prop5shading1}), (\ref{eqn:21shadingdensity}), and (\ref{eqn:pre23shadingdensity}), as well as our initial assumption that $|Y(R)|_{\de} \ge \de^{-2+\eta}\De$, \begin{equation}\label{eqn:we3}\sum_{R \in \calR_2} |Y_3(R)|_{\de} \gtrapprox_{\de} \sum_{R \in \calR_1} |Y_2(R)|_{\de} \gtrapprox_{\de} \de^{\beta_1} \sum_{R \in \calR} |Y_1(R)|_{\de} \gtrapprox_{\de} \de^{\beta_1}\sum_{R \in \calR} |Y(R)|_{\de} \gtrapprox_{\de} \de^{-2+ \beta_1 + \eta}\De|\calR|.\end{equation}On the other hand, $\sum_{R \in \calR_1} |Y_3(R)|_{\de} \le |\calR_1|\de^{-2}\De$. As promised, we see that \begin{equation}\label{eqn:R2bound}|\calR_2| \gtrapprox_{\de} \de^{\beta_1 + \eta} |\calR|.\end{equation}By our definition of $\calR_2$ and (\ref{eqn:pre23shadingdensity}), we also have \begin{equation}\label{eqn:babycapybara}\sum_{R\in\calR_2} |Y_3(R)|_{\de} \gtrapprox_{\de} \sum_{R \in \calR_1} |Y_2(R)|_{\de}.\end{equation}
    
    Next, we bound $M$ from below. From (\ref{eqn:we3}), we know that \[\sum_{R \in \calR_2} |Y_3(R)|_{\de} \gtrapprox_{\de} \de^{\beta_1 + \eta -2}\De|\calR|.\]Together with (\ref{eqn:babycapybara}) and the obvious bound $|\calR_2| \le |\calR|$, we conclude that $\sum_{R\in\calR_2} |Y_3(R)|_{\de} \gtrapprox_{\de} \de^{\beta_1 + \eta } \frac{\De}{\de^2}|\calR_2|$. In particular, there must exist some $R \in \calR_2$ with $|Y_3(R)|_{\de} \gtrapprox_{\de} \de^{\beta_1 + \eta}\frac{\De}{\de^2}$. This choice of $R$ intersects $\lesssim \frac{\De u}{\de^2}$ many tubelets from $\mathcal{T}_{T_u}$ and $Y_3(R)$ is supported on those tubelets, so some tubelet must intersect $\gtrapprox_{\de} \de^{\beta_1 + \eta }\frac{1}{u}$ many $\de$-cubes from $Y_3(R)$. Since each tubelet intersects $\sim M$ $\de$-cubes from $Y_3(R)$ and each $\de$-cube in $Y_3(R)$ intersects one tubelet, we conclude that \begin{equation}\label{eqn:Mbound2}M \gtrapprox_{\de} \de^{\beta_1 + \eta }\frac{1}{u}.\end{equation}

    Finally, define $Y^{T_u}(R^{T_u}) = \bigcup_{\lambda: (\lambda, R) \in E'} \phi_{T_u}(\lambda)$. This is a collection of dyadic $\frac{\de}{u}$-cubes intersecting $R^{T_u}$ and hence is a well-defined shading. We know that $|Y^{T_u}(R^{T_u})|_{\de/u}M \sim |Y_3(R)|_{\de}$, so since $M \lesssim u^{-1}$, we conclude that $|Y^{T_u}(R^{T_u})|_{\de/u} \gtrsim u|Y_3(R)|_{\de}$. Using (\ref{eqn:we3}), we see that \begin{equation}\label{eqn:rescaledmultbound}\sum_{T_u \in \T'_u} \sum_{R \in \calR_2[T_u]} |Y^{T_u}(R^{T_u})|_{\de/u}\gtrsim u\sum_{R \in \calR_2} |Y_3(R)|_{\de} \gtrapprox_{\de} u\de^{-2 + \beta_1 + \eta}\De|\calR|.\end{equation}

    \subsection*{Reduction IV}In this reduction, we refine the shading $Y_3$ to some $Y_4$ so that after rescaling through $u$-tubes, for each $\de$-cube $Q \in Y_3(\calR_2)$, the set of prisms incident to $Q$ is transverse with parameter $\theta$. 

    To accomplish this, we apply a pointwise tangential broadness reduction. At each $\de/u$-cube $Q \in Y^{T_u}(\calR_2)$, we have a collection $\calR_Q= \{R \in \calR_2 : Q \in Y_3(R)\}$ of incident prisms, all contained in some $T_u \in \T_u$. Elements $R^{T_u} \in \calR_Q^{T_u}$ are $\frac{\de}{u} \times \frac{\De}{u} \times 1$ prisms and hence have a center plane $\Pi$ with normal direction defined up to error $\frac{\de}{\De}$. This center plane has a normal vector $n$, which we denote $n(R^{T_u}) \in S^2$. Let $\tilde{V}_Q = \{n(R^{T_u}) : R \in \calR_Q\}$. Apply Corollary \ref{cor:localbroad} to $\tilde{V}_Q$ with parameter $\beta_2 > 0$, a small parameter to be determined in the course of the proof. Let $\theta_Q$ denote the resulting broadness parameter and $V_Q \subset \tilde{V}_Q$ the $\beta_2$-broadly spread set satisfying $|V_Q| \gtrapprox_{\de} \de^{\beta_2}|V_Q|$. For dyadic choices of $\theta$, set \[Y^{T_u}_{1, \theta}(R^{T_u}) = \{Q \in Y^{T_u}(R^{T_u}) : \theta_Q \in [\theta, 2\theta], n(R) \in N_{\de}(V'_Q)\}.\]Set $\tilde{Y}^{T_u}(R^{T_u}) = \bigcup_{\theta \text{ dyadic}} Y_{1, \theta}^{T_u}(R^{T_u})$. We retained a $\gtrapprox_{\de} \de^{\beta_2}$ fraction of the incident prisms to each $\frac{\de}{u}$-cube, so $\sum_{R^{T_u} \in \calR^{T_u}} |\tilde{Y}^{T_u}(R^{T_u})|_{\de/u}\gtrapprox_{\de} \de^{\beta_2}\sum_{R^{T_u} \in \calR^{T_u}} |Y^{T_u}(R^{T_u})|_{\de/u}$. Dyadically pigeonholing, we can find a single value $\theta$ and a single multiplicity $\mu^{T_u}_1$ \emph{not depending on $T_u$}\footnote{Most of the we use depend on the $T_u$ in the super-script. This multiplicity is related to $Y^{T_u}_1$ so to be notationally consistent we denote it $\mu^{T_u}_1$, but it does not depend on $T_u$.} such that $Y^{T_u}_{1,\theta}$ is a $\gtrapprox_{\de} 1$ dense subshading of $\tilde{Y}$ with constant multiplicity $\mu^{T_u}_1$. Set $Y_1^{T_u} := Y^{T_u}_{1,\theta}$. This shading satisfies \begin{equation}\label{eqn:10shadingbound}\sum_{T_u \in \T'_u} \sum_{R \in \calR_2[T_u]} |Y_1^{T_u}(R^{T_u})|_{\de/u} \gtrapprox_{\de} \de^{\beta_2}\sum_{T_u \in \T'_u} \sum_{R \in \calR_2[T_u]} |Y^{T_u}(R^{T_u})|_{\de/u}\end{equation} let \[Y_4(R) = \{Q \in Y_3(R) : Q \cap \lambda \neq \emptyset \text{ for some }\lambda \in \phi^{-1}_{T_u}(Y_1^{T_u}(R^{T_u}))\}.\]Since each tubelet intersects the same number of $\de$-cubes in $Y_3$ and in $Y_4$ and $Y_1^{T_u}$ is a $\gtrapprox_{\de} \de^{\beta_2}$ dense subshading of $Y^{T_u}$, we know that \begin{equation}\label{eqn:34shadingbound2}\sum_{T_u \in \T_u}\sum_{R \in \calR_2[T_u]} |Y_4(R)|_{\de} \gtrapprox_{\de} \de^{\beta_2}\sum_{T_u \in \T_u}\sum_{R \in \calR_2[T_u]} |Y_3(R)|_{\de}.\end{equation}

    \subsection*{Reduction V}In this reduction, we prove that either $\theta \ge \de^{-20\gamma/\omega}\frac{\de}{\De}$ or $u \ge \overline{\de}^{-\e\omega/6}\De $.

    We define in the course of the remainder of this proof a parameter $\gamma \sim \beta_1 + \beta_2 + \eta$, which we can choose to be very small relative to $\tau, \e,$ and $\omega$ by making $\beta_1, \beta_2$, and $\eta$ sufficiently small. Suppose that both $u \le \overline{\de}^{-\e\omega/6}\De$ and $\theta \le \de^{-20\gamma/\omega}\frac{\de}{\De}$. We aim to reach a contradiction. 
    
    Applying (\ref{eqn:34shadingbound2}), (\ref{eqn:babycapybara}), and (\ref{eqn:21shadingdensity}) we see that $\sum_{R \in \calR_2} |Y_4(R)|_{\de} \gtrapprox_{\de} \de^{\beta_1 + \beta_2 + \eta}\sum_{R \in \calR_1} |Y_1(R)|_{\de}$. As long as $\de$ is sufficiently small, we can ensure $\sum_{R \in \calR_2} |Y_4(R)|_{\de} \ge \de^{\gamma}\sum_{R \in \calR_2} |Y_1(R)|_{\de}$. It follows that there exists a $\de$-cube $Q$ such that $\mu_4(Q) \ge \de^{\gamma}\mu_1(Q)$. Applying our bound (\ref{eqn:m1bound}) for $\mu_1$, we conclude that \begin{equation}\label{eqn:m4bound}\mu_4(Q) \ge \de^{\gamma}\overline{\de}^{-\e\omega/2}|\calR|(\de \De)^t.\end{equation}Let $\calR_Q = \{R \in \calR_2 : Q \in Y_4(R)\}$. Each $R \in \calR_Q$ must be contained in a common $T_u \in \T'_u$ and $Y_1^{T_u}(R^{T_u})$ each must contain a dyadic $\frac{\de}{u}$-cube intersecting $\phi_{T_1}(Q)$. It follows that $\{n(R^{T_u}) : R \in \calR_{Q}\} \subset S^2$ is contained in a $\theta$-ball with center $n$, so all elements of $\calR^{T_u}_{Q}$ are contained in $N_{3\theta}\left(n^{\perp} + \phi_{T_u}(Q)\right) \cap B(0,1)$, a $3\theta \times 1 \times 1$ prism. This is mapped by $\phi^{-1}_{T_u}$ to a $3\theta u \times u \times 1$ prism containing each prism in $\calR_{Q}$. Since $\calR$ satisfies the $t$-Frostman Convex Wolff Axiom with error $\de^{-\eta}$, we know that \begin{equation}\label{eqn:lynx1}\mu_{4}(Q) = |\calR_{Q}| \lesssim \de^{-\eta}(u^2\theta)^t |\calR|\end{equation}
    Comparing (\ref{eqn:lynx1}) with (\ref{eqn:m4bound}), we conclude that $(u^2 \theta)^t\de^{-\eta} \gtrapprox_{\de} \de^{\gamma}\overline{\de}^{-\e\omega/2}(\de \De)^t$. Since we have assumed $u \le \overline{\de}^{-\e\omega/6}\De$ and $\theta \le \de^{-20\gamma/\omega}\frac{\de}{\De}$, we see that \[(u^2 \theta)^t \de^{-\eta}\le \de^{-\eta}\left(\overline{\de}^{-\e\omega/3}\de^{-20\gamma/\omega}\de \De\right)^t\]and hence \[\de^{\gamma}\overline{\de}^{-\e\omega/2} \lessapprox_{\de}\de^{-\eta} \overline{\de}^{-t\e\omega/3}\de^{-20t\gamma/\omega}.\]Rearranging and recalling that $t \le 1$ and $\overline{\de} \le \de$, we conclude that \[\de^{-\e\omega/6} \le \overline{\de}^{-\e\omega/6} \lessapprox_{\de} \de^{-20t\gamma/\omega - \gamma -\eta}.\]So long as $\beta_1, \beta_2, \eta, \gamma, \de_0$ are sufficiently small depending on $\e$, $\omega$, and $t$, we arrive at a contradiction. We conclude that either $\theta \ge \de^{-20\gamma/\omega}\frac{\de}{\De}$ or $u \ge \de^{-\e\omega/4}\De$. 

    \subsection*{Recap of the proof so far.} At this point, we have found values $\theta, u$, a refinement $\calR_2$ of $\calR$, a set of $u$-tubes $\T'_u$, and $Y_4$ a subshading of $Y_2$ a subshading of $Y$. We recap the properties established for these objects roughly in the order in which we use them in the remainder of the proof. If we repeat a numbered expression from earlier in the proof, we give it the same number.

    \begin{enumerate}[label = (\alph*)]
        \item There exists a value $\mu_1^{T_u}$ \emph{not depending on $T_u$} such that for each $T_u \in \T'_u$, $Y_1^{T_u}$ is a shading on $\calR_2^{T_u}$ with multiplicity $\mu_1^{T_u}$. Combining (\ref{eqn:10shadingbound}) and (\ref{eqn:rescaledmultbound}), we see that $Y_1^{T_u}$ satisfies \begin{equation}\label{eqn:1rescaledmultbound}\sum_{T_u \in \T'_u} \sum_{R \in \calR_2[T_u]} |Y_1^{T_u}(R^{T_u})|_{\de/u}\gtrsim \de^{\beta_2} u\sum_{R \in \calR_2} |Y_3(R)|_{\de} \gtrapprox_{\de} u\de^{-2 + \beta_1 + \beta_2 + \eta}\De|\calR|.\end{equation} For each $Q \in Y_1^{T_u}(\calR_2^{T_u})$, $\{n(R^{T_u}) : Q \in Y_1^{T_u}(\calR_2^{T_u})\}$ is $\beta_2$-broadly spread in a $\theta$ ball. These properties were established in Reduction V.
        \item For each $\frac{\de}{u}$-cube $Q \in Y_1^{T_u}$, $\phi_{T_u}^{-1}(Q)$ is incident to $\sim M$ $\de$-cubes in $Y_3(\calR_2)$ and each $\de$-cube in $Y_3(\calR_2)$ is incident to $\phi_{T_u}^{-1}(Q)$ for $1$ $\frac{\de}{u}$-cube $Q$. Moreover \begin{equation*}
            M \gtrapprox_{\de} \de^{\beta_1+ \eta} \frac{1}{u}. \tag{\ref{eqn:Mbound2}}
        \end{equation*}
        \item By (\ref{eqn:34shadingbound2}) and (\ref{eqn:babycapybara}), we see that \begin{equation}\label{eqn:24shadingbound2}
            \sum_{R \in \calR_2} |Y_4(R)|_{\de} \gtrapprox_{\de} \de^{\beta_2}\sum_{R \in \calR_1} |Y_2(R)|_{\de}.
        \end{equation}Also, $Y_2$ has constant multiplicity $\mu_2$. 
        \item By (\ref{eqn:24shadingbound2}), (\ref{eqn:21shadingdensity}), and (\ref{eqn:prop5shading1}), we see that, so long as $\de_0$ is sufficiently small, \begin{equation}\label{eqn:40shadingbound}
            \sum_{R \in \calR_2 }|Y_4(R)|_{\de} \gtrapprox_{\de} \de^{\beta_1 + \beta_2 + \eta}|\calR_2|\frac{\De}{\de^2} \ge \de^{\gamma}|\calR_2|\frac{\De}{\de^2}
        \end{equation}
        \item From Reduction III, we have \begin{equation*}
            |\calR_2| \gtrapprox_{\de} \de^{\beta_1 + \eta} |\calR| \tag{\ref{eqn:R2bound}}
        \end{equation*}
    \end{enumerate}
    This concludes the recap. We now finish the proof in the dichotomous cases $\theta \ge \de^{-20\gamma/\omega}\frac{\de}{\De}$ and the case $u \ge \overline{\de}^{-\e\omega/6}\De$ and $\theta < \de^{-20\gamma/\omega}\frac{\de}{\De}$.
    \subsection*{The slightly transverse case} We assume $\theta \ge \de^{-20\gamma/\omega}\frac{\de}{\De}$ and use a brush argument for prisms to reach Conclusion A.
    
    For each $T_u \in \T_u$, $\calR_2^{T_u}$ is a set of approximately $\frac{\de}{u} \times \frac{\De}{u} \times \frac{\De}{u}$ prisms. We divide each $R^{T_u} \in \calR_2^{T_u}$ into a collection of $\frac{u}{\De}$ many $\frac{\de}{u} \times \frac{\De}{u} \times \frac{\De}{u}$ prisms, which we denote $\calP_R^{T_u}$. Set $\calP^{T_u} = \bigsqcup_{R \in \calR_2[T_u]} P_R^{T_u}$, $A = \bigsqcup_{T_u \in \T'_u} Y_1^{T_u}(\calR_2^{T_u})$ and $B = \bigsqcup_{R \in \calR_2}\calP_R$. Define the bipartite graph \[E = \{(Q, P^{T_u}) \in A \times B : Q \in Y_1^{T_u}(R^{T_u}), Q \cap P^{T_u} \neq \emptyset, P \in \calP_R\}.\]This graph has vertex set $A \sqcup B$ and edge set $E$. For each $R$, if $Q \in Y_1^{T_u}(R^{T_u})$, then $Q \cap P^{T_u} \neq \emptyset$ for $\sim 1$ different choices of $P^{T_u} \in \calP_R$, so \begin{equation}\label{eqn:lpE}|E| \sim \sum_{T_u \in \T'_u} \sum_{R \in \calR_2[T_u]} |Y_1^{T_u}(R)|_{\de/u} = \mu_1^{T_u}\left(\sum_{T_u \in \T'_u} |Y_1^{T_u}(P^{T_u})|_{\de/u}\right).\end{equation}Apply Lemma \ref{lem:WZ2547} to this graph and let $E'$ denote the resulting edge set. Define a shading on $B$ by $Y_2^{T_u}(P^{T_u}) = \{Q : (Q, P) \in E'\}$. By construction, $|A| = \sum_{T_u \in \T'_u} |Y^{T_u}_1(\calR_2(T_u))|_{\de/u}$, so by Lemma \ref{lem:WZ2547} and Remark \ref{rmk}, we know that for each $Q$, \begin{equation}\label{eqn:lpmubound}\mu_2^{T_u}(Q) \gtrsim \frac{|E'|}{|A|} \gtrsim \mu_1^{T_u}\end{equation}

    Since $|\calP_R| \lesssim \frac{u}{\De}$ for each $R \in \calR_2$, we know that $|B| \le |\calR_2| \frac{u}{\De}$. Let $\calP^{T_u}_1 = \{P \in \calP^{T_u} : Y_2^{T_u}(P) \neq \emptyset\}$.  Applying (\ref{eqn:1rescaledmultbound}), we see that \[|E| \sim \sum_{T_u \in \T'_u} \sum_{R \in \calR_2[T_u]} |Y_1^{T_u}(R)|_{\de/u} \gtrapprox_{\de} u\de^{-2 + \beta_1 + \beta_2+ \eta}\De|\calR|,\]so using Remark \ref{rmk} again, we know that for each $P^{T_u} \in \calP^{T_u}_1$, \begin{equation}\label{eqn:lp2bound}|Y_2^{T_u}(P^{T_u})|_{\de/u} \ge \frac{|E'|}{|B|} \gtrapprox_{\de} \de^{-2+\beta_1 + \beta_2 +  \eta}\De^2.\end{equation} 
    
    Recall that $\nu = \theta \De$. We claim that for a prism $P^{T_u} \in \calP_1^{T_u}$, if $G$ is the $3\frac{\nu}{u} \times 3\frac{\De}{u} \times 3\frac{\De}{u}$ prism concentric with $P^{T_u}$, then we can find some collection $\calP^{T_u}(G)$ satisfying the $1$-Frostman Convex Wolff Axiom with error $\le \de^{-5\gamma}$ in $G$. Since $\{n(R^{T_u}) : Q \in Y^{T_u}_1(R)\}$ is $\beta_2$-broadly-spread in a $\theta$-ball, by applying the definition $\beta_2$-broadly spread to $\{n(R^{T_u}) : Q \in Y^{T_u}_2(R)\} \subset \{n(R^{T_u}) : Q \in Y^{T_u}_1(R)\}$ and recalling (\ref{eqn:lpmubound}), we see that $\{n(R^{T_u}) : Q \in Y^{T_u}_2(R)\} $ is $\beta_2$-broadly spread is as well, so we conclude that there must exist a prism $P_Q^{T_u}$ with $\angle(n(P_Q^{T_u}), n(P^{T_u})) \sim \theta$ and $Q \in Y_2^{T_u}(P^{T_u})$. It follows that $P^{T_u}_Q \cap P^{T_u}$ is contained in a $\frac{\de}{u} \times \frac{\de}{\theta u} \times \De$ prism with center line $\ell_Q$. After pigeonholing on $Q$ to some collection $\tilde{Y}(P^{T_u}) \subset Y_2^{T_u}(P^{T_u})$ with $|\tilde{Y}(P^{T_u})|_{\de/u} \sim |Y_2^{T_u}(P^{T_u})|_{\de/u}$, we may assume for all choices of $Q$, $\ell_Q$ is parallel up to angle $\frac{1}{100}$ with a fixed direction $\ell_{||}$. Let $\ell_{\perp}$ denote a line segment in $P^{T_u}$ orthogonal to $\ell_{||}$ such that $\vol_{\R^1}(\ell_{\perp} \cap \tilde{Y}(P^{T_u})) \ge \de^{\gamma}\frac{\De}{u}$. This is possible by Fubini's theorem. 

    We know that $\vol_{\R^1}(\ell_{\perp} \cap \tilde{Y}(P^{T_u})) \ge \de^{\gamma}\frac{\De}{u}$ and that for each $Q \in \tilde{Y}(P^{T_u})$, $\vol_{\R^1}(\ell_{\perp} \cap \tilde{Y}(P_Q^{T_u})) \lesssim \frac{\de}{\theta u}.$ It follows that we can find $\frac{\de}{u}$-cubes $Q_1, \dots, Q_N$ and distinct prisms $P_{Q_1}^{T_u}, \dots, P_{Q_N}^{T_u} \subset G$ so that $P_{Q_i}^{T_u} \cap P_{Q_j}^{T_u} \cap \ell_{\perp} = \emptyset$, $N \gtrsim \de^{\gamma}\frac{\De \theta}{\de} = \de^{\gamma}\frac{\nu}{\de}$, and $N \le \frac{\nu}{\de}$. Set $\calP^{T_u} = \{P_{Q_i} : i = 1, \dots, N\}$. If this set of prisms satisfies the $1$-Katz-Tao Convex Wolff Axiom with error $\le \de^{-4\gamma}$ in $G$, then it satisfies the $1$-Frostman Convex Wolff Axiom with error $\le \de^{-5\gamma}$ in $G$. So suppose $C_{1\text{-KT-CW}}(\calP^{T_u}) \ge \de^{-4\gamma}$. We apply Lemma \ref{lem:WZ2546} to this set of prisms. We arrive at a collection of $\alpha \times \frac{\De}{u}  \times \frac{\De}{u}$ prisms $\mathcal{W}$ covering some $\calP' \subset \calP^{T_u}$ with $|\calP| \gtrapprox_{\de} |\calP^{T_u}|$, $\mathcal{W}$ a $\lessapprox_{\de} 1$ partitioning set, and $|\calP'[W]| \gtrapprox \de^{-4\gamma} \frac{\alpha u}{\de}$. Since $\mathcal{W}$ is a $\lessapprox_{\de} 1$ partitioning set, we conclude that $|\calW| \lessapprox_{\de} \frac{|\calP^{T_u}|}{|\calP'[W]|} \le \de^{4\gamma}\frac{\nu}{\alpha u}$. Since each $W \in \mathcal{W}$ contains some $P \in \calP^{T_u}$, we know elements of $W$ intersect $\ell_{\perp}$ with length $\sim \frac{\alpha}{\theta}$. It follows that \begin{equation}\label{eqn:mink1}\vol_{\R^1}\left(\bigcup_{W \in \calW} W \cap \ell_{\perp}\right) \lessapprox_{\de} \de^{4\gamma}\frac{\nu}{\alpha u}\frac{\alpha}{\theta} = \de^{4\gamma}\frac{\de}{u}.\end{equation}On the other hand, we know that \begin{equation}\label{eqn:mink2}\vol_{\R^1}\left(\bigcup_{P \in \calP'} P \cap \ell_{\perp}\right) = |\calP'| \frac{\de}{u\theta} \gtrapprox_{\de} \de^{\gamma} \frac{\De}{\de}.\end{equation}
    So long as $\de$ is sufficiently small depending on $\gamma$ and the implicit log factors, comparing (\ref{eqn:mink1}) with (\ref{eqn:mink2}), we arrive at a contradiction. It follws $\calP^{T_u}$ satisfies the $1$-Frostman Convex Wolff Axiom in $G$ with error $\le \de^{-5\gamma}$. 

    We then apply Lemma \ref{lem:RFPC} to $\calP^{T_u}(G)$ with the shading $Y_2^{T_u}$; with $\frac{\de}{u}$ taking the place of $\de$ and $\frac{\De}{u}$ taking the place of $\De$ in the statement of Lemma \ref{lem:RFPC}; $t=1$; and $\nu = \theta \De$. Since $\calP^{T_u}(G)$ satisfies the $1$-Frostman Convex Wolff Axiom in $G$ with error $\le \de^{-5\gamma}$, we take $C_1 = \de^{-5\gamma}$. Recalling (\ref{eqn:lp2bound}), the shading is $\ge \de^{\beta_1 + \beta_2 +  \eta}$ dense, but for convenience we take $C_2 = \de^{-\gamma}$. The conclusion of Lemma \ref{lem:RFPC} is that $|Y^{T_u}_2(\calP^{T_u})|_{\de/u} \gtrsim \de^{7\gamma} \frac{\nu \De^2}{\de^3}$, which is contained in a $\frac{3\nu}{u} \times \frac{3\De}{u} \times \frac{3 \De}{u}$ prism. Since $Y^{T_u}_2(\calP^{T_u})$ is contained in a $\frac{\nu}{u} \times \frac{\De}{u} \times \frac{\De}{u}$ prism $G$, we can find a $\frac{\nu}{u} \times \frac{\nu}{u} \times \nu$ prism $\tilde{G}$ with $|Y^{T_u}_2(\calP^{T_u}) \cap \tilde{G}|_{\de/u} \gtrsim \de^{7\gamma} \frac{\nu^3u}{\de^3}$ such that $\phi_{T_u}^{-1}(\tilde{G})$ is a dyadic $\nu$-cube $B_{\nu}$. When we undo the rescaling under $\phi_{T_u}$, we map a $\frac{\de}{u}$-cube in $Y^{T_u}_2(\calP^{T_u})$ to $\sim M$ $\de$-cubes in $Y_3(\calR_2)$. Different $\frac{\de}{u}$-cubes from $Y^{T_u}_2$ map to different $\de$-cubes from $Y_3(\calR_2)$, so $|Y_3(\calR_2) \cap B_{\nu}|_{\de} \gtrsim M\de^{7\gamma}\frac{\nu^3u}{\de^3}$. Recall from (\ref{eqn:Mbound2}) that $M \gtrapprox_{\de} \de^{\beta_1 + \eta}\frac{1}{u}$. As long as $\de$ is sufficiently small, we conclude that $M \ge \de^{\gamma}\frac{1}{u}$, so therefore \[|Y_3(\calR_2) \cap B_{\nu}|_{\de} \gtrapprox_{\de} M\de^{7\gamma}\frac{\nu^3u}{\de^3} \ge \de^{8\gamma}\frac{\nu^3}{\de^3}.\]
    Since we assumed that $\theta \ge \de^{-20\gamma/\omega}\frac{\de}{\De}$, we have that $\left(\frac{\nu}{\de}\right)^{-\omega} \le \de^{20\gamma}$ and hence \[|Y_3(\calR) \cap B_{\nu}|_{\de} \ge \left(\frac{\nu}{\de}\right)^{3-\omega} \ge \left(\frac{\nu}{\de}\right)^{2t+ 1 -\omega}.\]We also know that $\nu = \De \theta \ge \de^{-20\gamma/\omega}\de$. Taking $\eta$ small enough to ensure $\eta < 20\gamma/\omega$, we arrive at Conclusion A.

    \subsection*{The slightly broad case.}This case happens when $u \ge \overline{\de}^{-\e\omega/6}\De$ and $\theta \le \de^{-20\gamma/\omega}\frac{\de}{\De}$.

    Take $\theta_0 = \de^{-20\gamma/\omega}\frac{\de}{\De}$, $\de' = 2\theta_0 u$ and $\De' = \min(2\de^{-20\gamma/\omega}u, 1) = \min\left(\frac{\De\de'}{\de},1\right)$. We immediately have $\frac{\De'}{\de'} = \frac{\De}{\de}$, and $\de' \ge \overline{\de}^{-\e\omega/6}\de$ from our lower bound on $u$ and the definition of $\theta_0$. 
    
    There are two subcases two consider, depending on if $\de' < \De$ and if $\de' > \De$. The former is slightly more complicated, we treat it first.

    For each $R \in \calR_2$, we have a shading $Y_4(R)$. We have a cover of $R$ by by $\de \times \de' \times 1$ planks $\calW_R$. Define $\calW = \bigsqcup_{R \in \calR_1} \calW_R$. Note that $|\calW| = |\calR_1|\frac{\De}{\de'}$. We apply Lemma \ref{lem:WZ2547} to the hypergraph $E \subset Y_4(\calR) \times \calW \times \calR_2$ given by $(Q, W, R) \in E$ if $Q \in W$ and $W \subset R$. Since each $Q \in Y_4(R)$ can be in $\sim 1$ different choices of $P \in \calW_R$, applying (\ref{eqn:40shadingbound}), we know that \[|E| \sim \sum_{R \in \calR_2} |Y_4(R)|_{\de} \gtrsim \de^{\gamma}|\calR|\frac{\De}{\de^2}.\]

    Denote by $E'$ the resulting refinement from Lemma \ref{lem:WZ2547}. Define a shading $\tilde{Y}$ on $\calR_2$ by $Q \in \tilde{Y}(R)$ if there exists $W \in \calW_R$ such that $(Q, W, R) \in E'$. Define $\tilde{\calW}_R = \{P \in \calP_R : \exists Q \text{ s.t. }(Q, W, R) \in E'\}$ and $\tilde{\calR}_2 = \{R \in \calR_2 : \tilde{Y}(R) \neq \emptyset\}$. By Remark \ref{rmk} know that if $\tilde{\mu}$ is the multiplicity for $\tilde{Y}$, then $\tilde{\mu} \sim \mu_{4,\text{avg}}$, where $\mu_{4,\text{avg}}$ is the average multiplicity for $Y_4$. Since $Y_2(\calR_2)$ has constant multiplicity $\mu_2$, applying (\ref{eqn:24shadingbound2}) we see that $\tilde{\mu} \gtrapprox_{\de} \de^{\beta_2} \mu_2$. 
    
    For $W \in \calW$, \[|\tilde{Y}(W)|_{\de} \gtrsim \frac{|E|}{|\calW|} \ge \frac{\de^{\gamma}|\calR_2|\frac{\De}{\de^2}}{|\calW|} \ge \frac{\de^{\gamma}\de'}{\de^2},\]while we always have $|\tilde{Y}(W)|_{\de} \le \frac{\de'}{\de^2}$. We also know that |$\tilde{Y}(R)|_{\de} \gtrsim \frac{\de^{\gamma}|\calR|\frac{\De}{\de^2}}{|\calR|} \gtrsim \de^{\gamma} \frac{\De}{\de^2}$. It follows that for each $R \in \tilde{\calR}$, $|\tilde{\calW}_R| \ge \de^{\gamma} \frac{\De}{\de'}$. Applying similar reasoning for $\tilde{\calR}$ relative to $G'$ and recalling (\ref{eqn:R2bound}), we conclude that \begin{equation}\label{eqn:calrbound}|\tilde{\calR}| \ge \de^{\gamma}|\calR_2| \gtrapprox_{\de} \de^{\beta_1 + \gamma + \eta} |\calR|.\end{equation}

    Define $\calR'$ to be the multiset of $\de' \times \De' \times 1$ prisms concentric with elements of $\tilde{\calR}$. For $R' \in \calR'$, define $Y'(R') = N_{\de'}\left(\bigcup_{R \in \calR[R']} Y(R)\right)$. We prove that this shading is $\de^{-\tau}$ dense on $R'$. Fix a choice of $R'$. We know that $R'$ is the $\de' \times \De' \times 1$ prism concentric with some $R \in \tilde{\calR}$. Since $|\tilde{Y}(R)|_{\de} \ge \de^{\gamma} \frac{\De}{\de^2}$, we can pigeonhole to find a unit line $\ell$ intersecting $\gtrsim \de^{\gamma}\frac{1}{\de}$ many $\de$-cubes from $\tilde{Y}(R)$. Take a collection of $N \ge \de^{\gamma + 2\beta_2/\beta_1} \frac{u}{\De}$ many $\de^{-2\beta_2/\beta_1}\frac{\De}{u}$-separated cubes $Q_1, \dots, Q_N$ intersecting $\ell \cap \tilde{Y}(R)$. The long directions of the prisms through each point of $Y_2$ is $\beta_1$-broad with error $\lessapprox_{\de} 1$ and $\tilde{Y}$ has multiplicity $\tilde{\mu}(Q) \gtrapprox_{\de} \de^{\beta_2} \mu_2(Q)$, we conclude that $\tilde{Y}$ is $\beta_1$-broad with error $\lessapprox_{\de} \de^{\beta_2}$. It follows that, so long as $\de$ is sufficiently small, for each $Q_j$ there is a distinct prism $R_j \in \tilde{\calR}$ with $\angle(\dir(R_j), \dir(R)) \ge \de^{2\beta_2/\beta_1} u$. By the same reasoning we applied in proving (\ref{eqn:lynx1}), we know that $R_j$ is contained in the $u\theta \times u \times 1$ prism concentric with $R$, so since $\theta \le \theta_0$, $R_j \subseteq R'$. Now for each $R_j$, we have a collection $\tilde{\calW}_{R_j}$ of $\de \times \de' \times 1$ prisms with a $\ge \de^{\gamma}$-dense shading $\tilde{Y}$. It follows that $\calD_{\de'}(\tilde{\calP}_{R_j})$ is a set of $\de'$-tubes $\T_j$ with a $\ge \de^{\gamma}$-dense shading $Y'(T) = \calD_{\de'}(\tilde{Y}) \cap T$. 

    We claim that $\T_{R'} = \bigsqcup_{j = 1}^N \T_j$ satisfies the $1$-Frostman Convex Wolff Axiom in $R'$ with error $\le \de^{-\tau/10}$ so long as $\gamma, \beta_2/\beta_1, \de_0$ sufficiently small relative to $\tau, \omega$ (we choose $\beta_1$ before $\beta_2$ to make sure this is possible). If we can prove this claim, then applying Lemma \ref{lem:RFPA} with $t= 1$ and noting that if $\gamma, \beta_2/\beta_1, \de_0$ sufficiently small relative to $\tau, \omega$, then $Y'(T)$ is $\de^{-\tau/10}$-dense, we conclude that \begin{equation}\label{eqn:largerscalebound}|Y'(R')|_{\de'} \ge \de^{-\tau} \frac{\De'}{\de'^2}.\end{equation}Since $\tilde{\calR} \subset \calR$ and by (\ref{eqn:calrbound}) $|\tilde{\calR}| \gtrapprox_{\de} \de^{\gamma + \beta_1 + \eta} |\calR|$ and $\calR$ satisfies the $t$-Frostman Convex Wolff Axiom with error $\de^{-\eta}$, we conclude that $\tilde{\calR}$ satisfies the $t$-Frostman Convex Wolff Axiom with error $\lessapprox_{\de}\de^{-\gamma -\beta_1-2\eta}$. So long as $\de_0, \gamma, \beta_1$ and $\eta$ are sufficiently small, $\tilde{\calR}$ satisfies the $t$-Frostman Convex Wolff Axiom withe error $\le \de^{-\tau} \le {\de'}^{-\tau}$. The $t$-Frostman Convex Wolff Axiom is inherited upwards, so the same holds for $\calR'$. Together with (\ref{eqn:largerscalebound}), $(\calR', Y')$ satisfies Conclusion B.

    We now prove our claim. Since $N \ge \de^{\gamma + 2\beta_2/\beta_1} \frac{u}{\De}$ and each $|\T_j| \ge\de^{\gamma} \frac{\De}{\de'}$ for each $j = 1, \dots, N$, we know $|\T_{R'}| \ge \de^{2\gamma + 2\beta_2/\beta_1}\frac{u}{\de'} \ge \de^{22\gamma/\omega + 2\beta_2/\beta_1}\frac{\De'}{\de'}$. Therefore, if $\T_{R'}$ satisfies the $1$-Katz-Tao Convex Wolff Axiom in $R'$ with error $\de^{-\tau/20}$ and $\gamma, \omega, \beta_2,\beta_1$ are sufficiently small, we conclude $\T_{R'}$ satisfies the $1$-Frostman Convex Wolff Axiom in $R'$ with error $\de^{-\tau/10}$. So $\T_{R'}$ does not satisfy the $1$-Katz-Tao Convex Wolff Axiom in $R'$. Applying Lemma \ref{lem:WZ2546}, we can find some $\T' \subset \T_{R'}$ with $|\T'| \gtrapprox_{\de} |\T_{R'}|$ and a collection $\calU$ of $\de' \times \alpha \times 1$ prisms covering $\T'$ so that $|\calU| \lessapprox_{\de} \de^{\tau/25}\frac{\De'}{\alpha}$. Moreover, each $U \in \calU$ makes angle $\ge \de^{\beta_2/\beta_1}u$ with $\ell$ and hence $\vol_{\R^1}(U \cap \ell) \lessapprox_{\de} \de^{-\beta_2/\beta_1}\frac{\alpha}{u}$. Combining our bounds for $\vol_{\R^1}(U \cap \ell)$ and $|\calU|$ and recalling that $\De' = \de^{-20\gamma/\omega}u$, we see that \[\text{Vol}_{\R^1}\left(\bigcup_{U \in \calU} U \cap \ell\right) \lessapprox_{\de} \de^{\tau/25-\beta_2/\beta_1} \frac{\De'}{u} \lessapprox_{\de} \de^{\tau/25-\beta_2/\beta_1-20\gamma/\omega}.\]On the other hand, $\vol_{\R_1}\left(\tilde{Y}(\T') \cap \ell\right) \ge \de^{\gamma}$. Taking $\de_0, \beta_2/\beta_1$ and $\gamma$ smal, we conclude that \[\text{Vol}_{\R^1}\left(\bigcup_{U \in \calU} U \cap \ell\right) \le \vol_{\R_1}\left(\tilde{Y}(\T') \cap \ell\right),\]a contradiction since $\bigcup_{U \in \calU} U \cap \ell \supset \tilde{Y}(\T') \cap \ell$. This proves our claim.

    Finally, we consider the $\de' > \De$ subcase. We apply several arguments already done in this proof, so we omit the tedious details in those arguments. Using Lemma \ref{lem:WZ2547}, we can refine $(\calR_2, Y_4)$ to some $\tilde{\calR}, \tilde{Y}$ so that $\tilde{Y}(R)$ is $\gtrsim \de^{\gamma}$ dense, has multiplicity $\tilde{\mu} \sim \mu_4$, and $|\tilde{\calR}| \gtrsim \de^{\gamma}|\calR_2|$. We take $\calR'$ to be the set of $u\theta \times u \times 1$ prisms concentric with elements of $\tilde{\calR}$. For each $R' \in \calR'$, let $R \in \tilde{\calR}$ be the central prism of $R'$. Take a unit line segment $\ell$ intersecting $\gtrsim \de^{\gamma-1}$ many $\de$-balls from $\tilde{Y}$. Pick $\gtrsim \de^{\gamma + 2\beta_2/\beta_1} \frac{u}{\de'}$ many $\de^{-2\beta_2/\beta_1}\frac{\de'}{u}$-separated points on $\ell$. The $\de'$-neighborhood of each prism is a $\de'$-tube contained in $R'$ and makes angle $\ge \de^{\beta_2/\beta_1}u$ with $\ell$. We apply the same argument as in the $\de' < \De$ case to prove this set of tubes satisfies the $1$-Frostman Convex Wolff Axiom with error $\le \de^{-\tau/10}$. We apply Lemma \ref{lem:RFPC} to prove we have the desired density. We arrive again in Conclusion B.

\end{proof}

\section{Proving Proposition \ref{prop:smallprism}: Upgrading the dimensionality of tubes in prisms}\label{sec:smallprism}

The proof of Proposition \ref{prop:smallprism} requires one more intermediate result. We state and prove it here.

\begin{lemma}\label{lem:grain}
    For any $\zeta > 0$, there exists $\eta', \de_0 > 0$ such that the following holds for all $\de \in (0, \de_0)$, any $\de < \De \le u$. 
    
    Suppose we have a collection $\calR$ of $\de \times \De \times 1$ prisms and a collection of tubes $\T$ with $|\T(R)| \le \de^{-\eta'}\left(\frac{\de}{\De}\right)^{-t}$ for each $R \in \calR$ and $\T(R)$ satisfying the $t$-Frostman Convex Wolff Axiom with error $\le \de^{-\eta'}$. Suppose we have a shading $Y$ with $\left|\bigcup_{T \in \T(R)} Y(T)\right|_{\de} \ge \de^{-1-t+\eta'}\De^t$ for each $R \in \calR$. Then there exists $\eta_1 \sim \eta'$ such that the following holds.

    For each $R \in \calR$, there exists $\T'(R) \subset \T(R)$ satisfying $|\T'(R)| \ge \de^{\eta_1}|\T(R)|$. Moreover, for each $R \in \calR$, $Y'(\T'(R))$ is contained in a union $\calP_{R}$ of disjoint $\de \times \De \times \frac{\De}{u}$ prisms with $|\calP_R| \ge \de^{\eta_1} \frac{u}{\De}$ and for each $P \in \calP_R$, $\T'_{P,R} = \{T \cap P: T \in \T'(R)\}$ satisfies the $t$-Frostman Convex Wolff Axiom with error $\de^{-\zeta}$ and $Y'$ a shading on $\T'$ with $|Y'(T_P)|_{\de} \ge \de^{\eta_1}\frac{\De}{\de u}$ for each $T_P \in \T'_{P,R}$.
\end{lemma}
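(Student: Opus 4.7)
The plan is to follow the sketch given in Subsection \ref{subsec:bpop3}: partition each $R \in \calR$ into $\de \times \De \times \De/u$ subprisms, refine by dyadic pigeonholing so all relevant incidence counts are uniform, and then prove by contradiction that in almost every subprism the restricted tube family satisfies the $t$-Frostman Convex Wolff Axiom with error $\de^{-\zeta}$, using Lemma \ref{lem:WZ2546} to produce a sparse cover out of any hypothetical failure and Lemma \ref{lem:RFPA} to refute it.

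First I would fix $R \in \calR$ and partition $R$ into a family $\calP_R$ of $\sim u/\De$ disjoint $\de \times \De \times \De/u$ prisms. Since each $T \in \T(R)$ makes angle $\lesssim \De$ with $\dir(R)$, any $T$ meeting $P \in \calP_R$ satisfies $|T \cap P|_\de \sim \De/(\de u)$, so each $T$ contributes $\sim 1/\de$ to $\sum_{P} |Y(T) \cap P|_\de$. Using the pigeonholing lemma on incidence triples $(T, P, Q)$ with $T \in \T(R)$, $P \in \calP_R$, and $Q \in Y(T) \cap P$ a dyadic $\de$-cube, together with an application of Lemma \ref{lem:WZ2547} in the style of Remark \ref{rmk}, I would produce subsets $\T'(R) \subset \T(R)$ with $|\T'(R)| \ge \de^{\eta_1}|\T(R)|$, a refined $\calP'_R \subset \calP_R$ with $|\calP'_R| \ge \de^{\eta_1} u/\De$, and a subshading $Y'$ so that $|Y'(T_P)|_\de \ge \de^{\eta_1}\De/(\de u)$ uniformly for every tube--subprism pair that survives. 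The careful bookkeeping here gives $\eta_1 \sim \eta'$ as in the statement.

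Next I would argue that, up to a further mild refinement, every $P \in \calP'_R$ has $\T'_{P,R}$ satisfying the $t$-Frostman Convex Wolff Axiom in $P$ with error $\de^{-\zeta}$. Suppose not: then for a $\de^{\eta_1/2}$-fraction of subprisms $P$, after rescaling $P$ to the unit cube, the family $\T'_{P,R}$ fails the $t$-Frostman Convex Wolff Axiom with error $\de^{-\zeta}$. Apply Lemma \ref{lem:WZ2546} in each such $P$: since our per-$P$ tube count is close to the threshold $(\De u/\de)^t$ where the Frostman and Katz--Tao axioms agree, failure of the Frostman axiom forces a quantitative failure of the Katz--Tao axiom, and so the estimate (\ref{eqn:WZ2546+}) produces a cover $\calW_P$ of $\T'_{P,R}$ at some intermediate scale $\alpha_P$ with $|\calW_P| \lessapprox_\de \de^{\zeta/2}(\alpha_P/\de)^{-t}|\T'_{P,R}|$. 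Dyadic pigeonholing selects a common scale $\alpha$, and combining these sparse covers over the bad $P$'s inside $R$ gives
\[
|\calD_\alpha(\T'(R))| \lessapprox_\de \de^{\zeta/2}(\alpha/\de)^{-t}|\T'(R)|,
\]
which translates to the upper bound
\[
\Big|\bigcup_{T \in \T'(R)} Y'(T)\Big|_\alpha \lessapprox_\de \de^{\zeta/2}(\alpha/\de)^{-t}|\T'(R)|\,\alpha^{-1}.
\]
On the other hand, $\calD_\alpha(\T'(R))$ is a family of $\alpha$-tubes in $R$ which inherits the $t$-Frostman Convex Wolff Axiom upward by Proposition \ref{prop:inheritance}, and carries a shading still of density $\gtrapprox_\de \de^{\eta_1}$ after passing from $\de$-cubes to $\alpha$-cubes. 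Applying Lemma \ref{lem:RFPA} at scale $\alpha$ in the slab $R$ (after rescaling $R$ to a $\de/\De$-slab) yields
\[
\Big|\bigcup_{T \in \T'(R)} Y'(T)\Big|_\alpha \gtrapprox_\de (\de/\De)^{-t}(\alpha/\de)^{-t}\alpha^{-1}.
\]
Using $|\T'(R)| \le \de^{-\eta'}(\de/\De)^{-t}$ and comparing the two estimates yields $\de^{\zeta/2} \gtrapprox_\de \de^{-\eta'}$, which is a contradiction once $\eta'$ and $\de_0$ are chosen sufficiently small relative to $\zeta$. Removing the offending fraction of $P$'s from $\calP'_R$ then produces the desired family while keeping the cardinality $\ge \de^{\eta_1}u/\De$.

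The main obstacle I anticipate is the pigeonholing bookkeeping: one must simultaneously arrange that $\T'(R)$, $\calP'_R$, and $Y'$ are all $\de^{\eta_1}$-dense versions of their unprimed counterparts, that the incidence count $|Y'(T_P)|_\de$ is essentially constant across all $(T,P)$ that survive, and that after removing the $\de^{\eta_1/2}$-fraction of bad subprisms one has not broken the two preceding uniformity properties. A secondary technical issue is handling the scale $\alpha$ returned by Lemma \ref{lem:WZ2546} in the rescaled subprism: one must check that the resulting covers really can be combined into a single cover of $\T'(R)$ by $\alpha$-tubes at a common scale, which requires a dyadic pigeonhole over the $\lessapprox_\de 1$ many relevant scales and the observation that the relative dimensions of the convex sets produced by Lemma \ref{lem:WZ2546} in each $P$ can be taken to be uniform after a bounded-loss refinement.
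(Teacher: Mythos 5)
Your overall strategy is the same as the paper's: partition each $R$ into $\de \times \De \times \frac{\De}{u}$ subprisms, uniformize the triple incidences with Lemma \ref{lem:WZ2547}, and then rule out widespread failure of the $t$-Frostman Convex Wolff Axiom by playing a sparse cover from Lemma \ref{lem:WZ2546} against the $L^2$ lower bound of Lemma \ref{lem:RFPA}. However, the contradiction step has a genuine gap. The sparse covers $\calW_P$ exist only inside the \emph{bad} subprisms, so combining them does not bound $|\calD_\alpha(\T'(R))|$, nor the $\alpha$-covering number of the full union $\bigcup_{T \in \T'(R)} Y'(T)$: the good subprisms (which in your setup are all but a $\de^{\eta_1/2}$-fraction, and in any case at least a constant fraction) can carry an $\alpha$-covering number as large as $(\De/\alpha)^t\alpha^{-1}$ all by themselves, which is exactly the size of the lower bound you get from Lemma \ref{lem:RFPA}. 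So your two displayed estimates are about different quantities: the upper bound is only valid for the portion of the shading lying in the bad subprisms, while the lower bound is applied to the full shading, and no contradiction follows as written.

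To close this you need the extra counting step that the paper carries out: after pigeonholing the bad subprisms onto a common scale $\alpha$ (giving a family $\calP_i$ with $|\calP_i|$ a $\de^{O(\eta')}$-fraction of all subprisms), double count $\sum_T N(T) = \sum_{P \in \calP_i} |\T^{**}_P|$ to extract a subfamily of tubes each lying in $\gtrapprox_{\de} |\calP_i|$ bad subprisms, thicken these to $\alpha$-tubes (which still satisfy the $t$-Frostman Convex Wolff Axiom by upward inheritance), and define the $\alpha$-shading as the union of the $\alpha$-neighborhoods of the segments \emph{inside the bad subprisms only}. One then checks this restricted shading is $\de^{O(\eta')}$-dense, applies Lemma \ref{lem:RFPA} to it, and compares with the summed sparse covers, which do bound this restricted quantity by $\lessapprox_{\de} \de^{\zeta/2}(\De/\alpha)^t\alpha^{-1}$. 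This is where the size of the bad fraction actually enters (it must be $\de^{O(\eta')}$, so your $\de^{\eta_1/2}$ threshold is fine, but the density verification cannot be skipped). Two smaller points: the threshold count at which the Frostman and Katz--Tao axioms coincide inside $P$ is $(\De/\de)^t$, not $(\De u/\de)^t$, since $\vol(P)/\vol(T \cap P) = \De/\de$; and the final comparison should read $\de^{O(\eta')} \lessapprox_{\de} \de^{\zeta/2}$, which is the contradiction for $\eta'$ small relative to $\zeta$.
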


First, we ensure the shading is uniformly spread through the prisms $\calP_R$. To accomplish this, we apply Lemma \ref{lem:WZ2547} to the $3$-partite hypergraph induced by the incidences between $\de$-balls in $Y(T)$, $\de$-tubes in $\T(R)$ and $\de \times \frac{\De}{u} \times 1$ prisms in $\calP_R$. After completing this step, we include a brief labelled recap describing the results. 

We then proceed to refine $\T'_{P,R}$ to satisfy the $t$-Frostman Convex Wolff Axiom with small error. To prove this, we prove that for any set of tubes $\T(R)$ for which $\T_{P,R} = \{T \cap P: T \in \T(R)\}$ fails the $t$-Frostman Convex Wolff Axiom for many choices of $P \in \calP_R$ must overconcentrate in a set of $\de \times 2^{-i} \times 1$ prisms $\calW_P$ for choices $P$ in a large subset $\calP_i \subset \calP_{R}$. Going forward, we write $2^{-i} = \alpha$. We prove that since $\T_{P,R}$ overconcentrates in $\calW_P$, we have \begin{equation}\label{eqn:weasel1}\left|\bigcup_{P \in \calP_i} \calW_P\right|_{\alpha} \ll \left(\frac{\De}{\alpha}\right)^t\alpha^{-1}.\end{equation}We define $\tilde{\T} = \calD_{\alpha}(\T)$ and define a shading $\tilde{Y}(\tilde{T}) = \tilde{T} \cap \left(\bigcup_{P \in \calP_i} P\right)$. By construction, $\bigcup_{\tilde{T} \in \tilde{T}}\tilde{Y}(\tilde{T}) \subset \bigcup_{P \in \calP_i} \calW_P$, but Lemma \ref{lem:RFPA} proves that \[\left|\bigcup_{\tilde{T} \in \tilde{T}} Y(\tilde{T})\right|_{\alpha} \approx \left(\frac{\De}{\alpha}\right)^t\alpha^{-1},\]contradicting (\ref{eqn:weasel1}). This completes our outline.

\begin{proof}
    It suffices to prove Lemma \ref{lem:grain} one prism at a time. Take $R \in \calR$. We being with a constant multiplicity refinement of $(\T(R), Y)$ to some $(\T(R), Y_1)$ with constant multiplicity $\mu_1$, so that $|Y_1(\T(R))|_{\de} \gtrapprox_{\de} |Y(\T(R))|_{\de}$ and for each $\de$-cube $Q \in Y(\T_1(R))$, $|\{T \in \T(R) : Q \in Y_1(T)\}| = \mu_1$. 
    
    Cover $R$ with a collection of $\de \times \De \times \frac{\De}{u}$ prisms, which we denote $\calP_R$. Define the following $3$-partite hypergraph $G_R$ on $Y_1(\T(R)) \times \T(R) \times \calP_R$. We say that $(Q, T, P)$ is an edge in the graph if $Q \in Y_1(T) \cap P$. This graph has $\sim 1$ hyperedge for each pair $(Q, T)$ of a $\de$-cube $Q \in Y_1(T)$, since each $\de$-cube can lie in $\sim 1$ different prisms. By Lemma \ref{lem:RFPA}, $|Y_1(\T(R))|_{\de} \gtrapprox_{\de} |Y(\T(R))|_{\de} \gtrsim \de^{4\eta'}\de^{-1}\left(\frac{\De}{\de}\right)^t$. Each cube $Q \in Y_1(T)$ incides with at least $1$ tube, so $|G_R| \gtrapprox_{\de} \de^{4\eta'}\de^{-1}\left(\frac{\De}{\de}\right)^t$ \footnote{Here, we bound $|G_R| \ge |Y_1(\T(R))|_{\de}$. We can and later will sharpen this to $|G_R| \sim \mu_1 |Y_1(\T(R))|_{\de}$, but at this point in the proof, doing so would only serve to complicate (\ref{eqn:density}).}. We know that $|\T(R)| \le \de^{-\eta'}\left(\frac{\de}{\De}\right)^{-t}$, $|Y_1(\T(R))|_{\de} \lesssim \de^{-\eta'}\de^{-1}\left(\frac{\De}{\de}\right)^t$, and $|\calP_R| = \frac{u}{\De}$. We conclude that $G_R$ has density \begin{equation}\label{eqn:density}d \sim \frac{|E|}{|Y_1(\T(R))|_{\de}}\gtrsim \frac{\de^{4\eta'}\de^{-1}\left(\frac{\De}{\de}\right)^t}{\de^{-\eta'}\frac{u}{\de \De}\left(\frac{\De}{\de}\right)^{2t}} = \de^{5\eta'}\frac{\De}{u}\left(\frac{\De}{\de}\right)^t.\end{equation}
    By Lemma \ref{lem:WZ2547}, we can find a subgraph $G^*_R \subset G_R$ with $|G^*_R| \sim |G_R|$ which is uniformly $\sim d$ dense. This induces a new shading $Y^*$ on $\T(R)$ with \[Y^*(T) = \{ Q : \text{there exists } P \in \calP_R  \text{ such that }(Q, T, P) \in G^*\},\]a new set of tubes $\T^*(R)= \{T \in \T(R) : Y^*(T) \neq \emptyset\}$, and a new set of prisms \[\calP^*_R = \{P \in \calP_R : \text{ there exists } Q \in Y^*(\T^*(R)), T \in \T^*(R) \text{ such that  } (Q, T, P) \in G^*\}.\]
    Each $\de$-cube in $Y^*(\T^*(R))$ is incident to $\sim 1$ prisms $P \in \calP^*_R$, so we have that \begin{equation}\label{eqn:raccoon}|G^*_R| = \sum_{Q \in Y^*(\T^*(R))} |\{T \in \T^*(R) : Q \in Y^*(T)\}| \le |Y^*(\T^*(R))|_{\de} \mu_1 \end{equation}On the other hand, $|G_R| \sim \mu_1|Y_1(\T(R))|_{\de}$. Since $|G^*_R| \gtrsim |G_R|$, we conclude that \begin{equation}\label{eqn:raccoon4}|Y^*(\T^*(R))|_{\de} \gtrsim |Y_1(\T(R))|_{\de} \gtrapprox_{\de} \de^{4\eta'}\de^{-1}\left(\frac{\De}{\de}\right)^t.\end{equation} Since $G^*_R$ is uniformly $\sim d$ dense, recalling (\ref{eqn:density}), for each tube $T \in \T^*(R)$ and each $P \in \calP^*$, \begin{equation}\label{eqn:raccoon6}|Y^*(T) \cap P|_{\de} \gtrsim d |Y^*(\T^*(R))|_{\de} \gtrapprox_{\de} \de^{9\eta'}\de^{-1}\frac{\De}{u}.\end{equation}Again using the fact that each $\de$-cube in $Y^*(\T^*(R))$ incides with $\sim 1$ many $P \in \calP^*_R$, we see that \begin{equation}\label{eqn:raccoon2}\sum_{P \in \calP^*_R} |\{ (Q, T, P) \in G^*_R\}| \sim |G^*_R|.\end{equation}For each fixed $P$, \begin{equation}\label{eqn:raccoon3}|\{ (Q, T, P) \in G^*_R\}| \le \mu_1|Y_1(\T(R)) \cap P|_{\de} \le \mu_1 \de^{-\eta'}\frac{\De}{u\de}\left(\frac{\De}{\de}\right)^t.\end{equation}Meanwhile, $|G^*_R| \sim \mu_1|Y_1(\T(R))|_{\de} = \mu_1 \de^{2\eta'}\de^{-1}\left(\frac{\De}{\de}\right)^t$, so recalling (\ref{eqn:raccoon2}) and (\ref{eqn:raccoon3}), we conclude that \begin{equation}\label{eqn:raccoon7}|\calP^*_R| \gtrsim \de^{3\eta'}\frac{u}{\De}.\end{equation}

    On the other hand, $|Y^*(T)|_{\de} \lesssim \de^{-1}$ for each $T \in \T^*(R)$, so since $|Y^*(\T^*(R))|_{\de} \gtrapprox_{\de} \de^{4\eta'}\de^{-1}\left(\frac{\De}{\de}\right)^t$, we conclude that $|\T^*(R)| \gtrapprox_{\de} \de^{4\eta'}\left(\frac{\De}{\de}\right)^t$. We assumed in the statement of this lemma that $\T(R)$ satisfies the $t$-Frostman Convex Wolff Axiom with error $\de^{-\eta'}$ in $R$ and has $|\T(R)| \le \de^{-\eta'}\left(\frac{\De}{\de}\right)^t$. Then $\frac{|\T(R)|}{|\T^*(R)|} \lessapprox_{\de} \de^{-5\eta'}$, so $\T^*(R)$ satisfies the $t$-Frostman Convex Wolff Axiom with error $\lessapprox_{\de}\de^{-6\eta'}$.

    \subsection*{Recap of the proof so far:} At this point in the proof of Lemma \ref{lem:grain}, we have established several useful uniformity properties for the tubes in $\T^*(R)$ with the shading in $Y^*$ between different prisms in $\calP^*_R$. 
    
    \begin{enumerate}[label = (\alph*)] 
        \item The shading $Y^*(R)$ is supported on a union of $\de \times \De \times \frac{\De}{u}$-prisms $\calP^*_R$ with \begin{equation*}
            |\calP^*_R| \gtrsim \de^{3\eta'}\frac{u}{\De}.
        \end{equation*}
        \item For each $P \in \calP^*_R$, \begin{equation*}|Y^*(T) \cap P| \gtrapprox_{\de} \de^{9\eta'}\de^{-1}\frac{\De}{\de}\tag{\ref{eqn:raccoon6}}.\end{equation*}
        \item The refined set of tubes $\T^*(R)$ satisfies the $t$-Frostman Convex Wolff Axiom with error $\lessapprox_{\de} \de^{-6\eta'}$.
    \end{enumerate}
    We now continue with the proof.
    
    For each $P \in \calP^*_R$, we have a set of $\de \times \de \times \frac{\De}{u}$ tubes $\T^*_P = \{T \cap P: T \in \T^*_R\}$. Set $\calP_{\text{good}} := \{P \in \calP_R^* : C_{t-\text{KT-CW}}(\T^*_P) \ge \de^{-\zeta/2}\}$. So long as $\eta'$ is small enough relative to $\zeta$, we ensure that if $P \in \calP_{\text{good}}$, then $\T^*_P$ satisfies the $t$-Frostman Convex Wolff Axiom with error $\de^{-\zeta}$. We aim to prove that $|\calP_{\text{good}}| \ge \frac{|\calP_R^*|}{2}$. Suppose otherwise. Set $\calP_{\text{bad}} = \calP_R^* \setminus \calP_{\text{good}}$. For each $P \in \calP_{\text{bad}}$, we apply Lemma \ref{lem:WZ2546} to arrive at some $\T^{**}_P \subset \T^*_P$ with $|\T^{**}_P| \gtrapprox_{\de} |\T^*_P|$ and a $\lessapprox_{\de} 1$ partitioning cover of $\T^{**}_P$ by $\de \times \alpha_P \times \frac{\De}{u}$ prisms with $\alpha_P \in [\de, \De]$. We refer to the family of prisms as $\mathcal{W}_P$. We know for $W \in \mathcal{W}_P$ that $|\T^{**}_P[W]| \gtrapprox_{\de} \de^{\zeta/2}(\alpha_P/\de)^t$. Since $|\T^{**}_{P}| \gtrapprox_{\de} |\T^*_R| \gtrapprox_{\de} \de^{4\eta'} \left(\frac{\De}{\de}\right)^{t}$, we conclude that $|\mathcal{W}_P| \le \de^{\zeta/4}\left(\frac{\De}{\alpha_P}\right)^t$. Since for each $\bigcup_{\T^{**}_P} T \subset \bigcup_{W \in \mathcal{W}_P} W$, it follows that \begin{equation}\label{eqn:seal1}\left|\bigcup_{T_P \in \T^{**}_P} T_P\right|_{\alpha_P} \le |\calW_P| \frac{\De}{\alpha_P u} \le \de^{\zeta/4}\left(\frac{\De}{\alpha_P}\right)^t\frac{\De}{\alpha_P u}.\end{equation}
    
    We dyadically decompose $\calP_{\text{bad}}$ into $\bigsqcup_i \calP_i$, where $\calP_i = \{P \in \calP_{\text{bad}} : \alpha_P \sim 2^{-i}\}$. Since $|\calP_{\text{bad}}| \gtrsim |\calP^*_R|$, for some $i$, $|\calP_i| \gtrapprox_{\de} |\calP^*_R|$. Write $\alpha = 2^{-i}$. For each tube $T \in \T^*_R$, define $N(T) = |\{P \in \calP_i: T \in \T^{**}_P\}|$. By construction, $|\T^{**}_P| \gtrapprox_{\de} |\T^*_P| = |\T^*_R|$, so \begin{equation}\label{eqn:capybara1}\sum_{T \in \T^*_R} N(T) = \sum_{P \in \calP_i} |\T^{**}_P| \gtrapprox_{\de} |\calP_i||\T^*(R)|.\end{equation}We know that $N(T) \le |\calP_i|$ for each $T \in \T^*_R$, so we can find some $\T^{**}_R \subset \T^*_R$ with $|\T^{**}_R| \gtrapprox_{\de} |\T^*_R|$ and $N(T) \gtrapprox_{\de} |\calP_i|$ for each $T \in \T^{**}_R$, since otherwise would contradict (\ref{eqn:capybara1}).

    Let $\tilde{\T} = \calD_{\alpha}(\T^{**}_R)$. Since $\T^{*}_R$ satisfies the $t$-Frostman Convex Wolff Axiom with error $\lessapprox_{\de} \de^{-6\eta'}$ and $|\T^{**}_R| \gtrapprox_{\de} |\T^*_R|$, we conclude that $\T^{**}_R$ satisfies the $t$-Frostman Convex Wolff Axiom with error $\lessapprox_{\de} \de^{-6\eta'}$ as well. For each $\tilde{T} = N_{\alpha}(T)$, define a shading \[\tilde{Y}(\tilde{T}) = \bigcup_{P \in \calP_i} \bigcup_{\substack{T_P \in \T^{**}_P \\ T_P \subset \tilde{T}}} N_{\alpha}(T_P).\]Each $\tilde{T} \in \tilde{\T}$ contains some $T \in \T^{**}_R$ with $N(T) \gtrapprox_{\de} |\calP_i|$. If $T \cap P \in \T^{**}_P$, then $\tilde{Y}(\tilde{T}) \cap P \supset Y^*(T) \cap P$, so \begin{equation}\label{eqn:capybara2}|\tilde{Y}(\tilde{T})|_{\alpha} \sim \alpha^{-1}\frac{\De}{u}.\end{equation}Using (\ref{eqn:raccoon7}), we see that there are $N(T) \gtrapprox_{\de} |\calP_i| \gtrsim \de^{3\eta'}\frac{u}{\De}$ many choices of $P$ with $T \cap P \in \T^{**}_P$, so together with (\ref{eqn:capybara2}), we conclude that $|\tilde{Y}(\tilde{T})|_{\alpha} \gtrapprox_{\de} \de^{3\eta'}\alpha^{-1}$. 
    
    Our construction of $\tilde{Y}(\tilde{T})$ implies that $\bigcup_{T \in \tilde{\T}} \tilde{Y}(\tilde{T}) \subset N_{\alpha}\left(\bigcup_{P \in \calP_i}\bigcup_{T \in \T^*_P} Y^*(T)\right)$. Summing (\ref{eqn:seal1}) over the $\le \frac{u}{\De}$ elements of $\calP_i$, we conclude that \begin{equation}\label{eqn:pika1}
        \left|\bigcup_{T \in \tilde{\T}} \tilde{Y}(\tilde{T})\right|_{\alpha} \le \left|\bigcup_{P \in \calP_i}\bigcup_{T \in \T^{**}_P} Y^*(T)\right|_{\alpha} \le \de^{\zeta/4}\left(\frac{\De}{\alpha}\right)^t\alpha^{-1}
    \end{equation}
    
    On the other hand, we have that $\tilde{\T}$ satisfies the $t$-Frostman Convex Wolff Axiom with error $\lessapprox_{\de}\de^{-6\eta'}$ and has a $\de^{3\eta'}$ dense shading $\tilde{Y}$, we conclude using Lemma \ref{lem:RFPA} that \begin{equation}\label{eqn:pika2}\left|\bigcup_{\tilde{T} \in \tilde{\T}} \tilde{Y}(\tilde{T})\right|_{\alpha} \gtrapprox_{\de} \de^{15\eta'}\left(\frac{\De}{\alpha}\right)^t \frac{1}{\alpha}.\end{equation}As long as $\eta'$ is sufficiently small relative to $\zeta$ and $\de_0$ is sufficiently small, comparing (\ref{eqn:pika1}) and (\ref{eqn:pika2}), we reach a contradiction and finally conclude that $|\calP_{\text{good}}| \ge \frac{1}{2}|\calP|$. This was all carried out in the context of a fixed prism $R$, so we relabel $\calP_{\text{good}}$ to $\calP_R$ to reflect the prism where this applies. 

    We define a shading $Y'(T)$ by $Y'(T) \cap P = Y^*(T) \cap P$ if $P \in \calP_{\text{good}}$ and $Y'(T) \cap P= \emptyset$ otherwise. Take $\eta_1 =10\eta'$. We know $\T^*(R)$ itself satisfies the $t$-Frostman Convex Wolff Axiom with error $\de^{-6\eta'} \le \de^{-\eta_1}$ and multiplicity $|\T^*(R)| \le \de^{-\eta_1}|\T(R)|$ in $R$. We also know $|\calP_{\text{good}}| \gtrsim \de^{3\eta'}\frac{u}{\De} \ge \de^{\eta_1}\frac{u}{\De}$. For each prism $P \in \calP_{\text{good}}$, we have the collection $\T'_P$ satisfying the $t$-Frostman Convex Wolff Axiom with error $\de^{-\zeta}$ in $P$. The shading $Y'(T) \cap P$ is $\de^{\eta_1}$ dense within this set as well. We follow this procedure for each $R \in \calR$ and label the resulting set of prisms with $\calP_{R}$ and the tube set for each prism by $\T'_{P, R}$. 
\end{proof}

Our final lemma is a brush argument for unions of prisms with shading supported on collections of tubes. The hypotheses and conclusion of this lemma are carefully adapted to its use Proposition \ref{prop:smallprism}.

\begin{lemma}\label{lem:brush}
    For any $\tau > 0$ there exists $\eta' > 0$ such that for any $\beta_2 > 0$ there exists $\de_0> 0$ such that the following holds for all $\de \in (0, \de_0)$, any $\De \in (\de, 1)$ and any $\theta \ge \frac{\de}{\De}$. . 

    Let $\calP$ be a set of $\de \times \De \times \De$ prisms, $\T$ a set of $\de \times \de \times \De$ tube segments, and $Y$ a shading on $\T$. Suppose that $\T = \bigcup_{P \in \calP} \T(P)$, where $\T(P)$ satisfies the $t$-Frostman Convex Wolff Axiom in $P$ with error $\de^{-\eta'}$ and multiplicity $\le \de^{-\eta'}\left(\frac{\De}{\de}\right)^t$ and $|Y(\T(P))|_{\de} \ge \de^{\eta'}\left(\frac{\De}{\de}\right)^{t+1}$. Suppose furthermore that for each $\de$-cube $Q \in Y(\T)$, $\{n(P) : Q \in Y(\T(P))\} \subset S^2$ is $\beta_2$-broadly spread in a $\theta$-ball. 

    Then for any $P_0 \in \calP$, if $G(P_0)$ is the concentric $3\theta\De \times 3\De \times 3\De$ prism, there exists some $\mathscr{S}_{P_0} \subset \calP$ so that each $P' \in \mathscr{S}_{P_0}$ is contained in $G(P_0)$, $\mathscr{S}_{P_0}$ satisfies the $t$-Frostman Convex Wolff Axiom in $G(P_0)$ with error $\de^{-\tau}$, and $Y(\T(P')) \cap Y(\T(P_0)) \neq \emptyset$ for each $P' \in \mathscr{S}_{P_0}$.
\end{lemma}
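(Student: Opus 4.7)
The plan is a brush argument anchored at $P_0$. After a constant-multiplicity pigeonhole on the shading $Y$, I would pigeonhole on $Y(\T(P_0))$ to select a cube $Q_0 \in Y(\T(P_0))$ whose total incidence count in the shading is close to the average over $Y(\T(P_0))$, and set $\mathscr{S}_{P_0} := \{P' \in \calP : Q_0 \in Y(\T(P'))\}$. This choice trivially satisfies the shared-shading conclusion $Q_0 \in Y(\T(P')) \cap Y(\T(P_0))$ for every $P' \in \mathscr{S}_{P_0}$.

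For the geometric containment $\mathscr{S}_{P_0} \subset G(P_0)$, I would use the broadly spread hypothesis at $Q_0$: the set of normals $\{n(P') : P' \in \mathscr{S}_{P_0}\}$ is confined to a single $\theta$-ball of the unit sphere, and since $n(P_0)$ is itself in this set, the ball may be taken centered at $n(P_0)$. A $\de \times \De \times \De$ prism $P'$ containing $Q_0 \in P_0$ with $\angle(n(P'), n(P_0)) \le \theta$ has total extent at most $\de + \theta\De \lesssim \theta\De$ in the $n(P_0)$ direction (using the assumed $\de \le \theta\De$), and is therefore contained in $G(P_0)$ after the implicit constants are absorbed.

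The main obstacle is proving the $t$-Frostman Convex Wolff Axiom for $\mathscr{S}_{P_0}$ in $G(P_0)$ with error $\de^{-\tau}$. The broadly spread hypothesis alone yields only a $\beta_2$-type concentration bound, which is insufficient when $\beta_2 < t$, since the quantifier order permits arbitrarily small $\beta_2$. The additional ingredient I would exploit is the quantitative shading hypothesis: combining $|Y(\T(P_0))|_{\de} \ge \de^{\eta'}(\De/\de)^{t+1}$ with the per-prism tube-count bound $|\T(P)| \le \de^{-\eta'}(\De/\de)^t$ forces the typical shading multiplicity over $Y(\T(P_0))$ to be $\gtrapprox_\de \de^{O(\eta')}(\theta\De/\de)^t$ — the maximal $t$-dimensional count in $G(P_0)$ — yielding the cardinality lower bound $|\mathscr{S}_{P_0}| \gtrapprox_\de (\theta\De/\de)^t$ at the pigeonholed $Q_0$. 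I would then apply Lemma \ref{lem:WZ2546} to $\mathscr{S}_{P_0}$ viewed as a family of congruent prisms inside $G(P_0)$, and argue by contradiction: if the $t$-Katz–Tao Convex Wolff Axiom failed with error $\de^{-\tau/2}$, the resulting sparse cover $\calW$ would confine the tubes $\bigcup_{P' \in \mathscr{S}_{P_0}} \T(P')$ to a small volume, but applying Lemma \ref{lem:RFPA} inside each element of $\calW$ and summing — using the per-prism $t$-Frostman Convex Wolff Axiom of each $\T(P')$ together with the shading density assumption — would produce more total shading than the sparsity of $\calW$ can accommodate. Combining the resulting $t$-Katz–Tao Convex Wolff Axiom with the matching cardinality lower bound upgrades it to the $t$-Frostman Convex Wolff Axiom with error $\de^{-\tau}$, provided $\eta' \ll \tau$ (as the quantifier order requires) and $\de_0$ is taken small enough to absorb all $\lessapprox_\de$ losses.
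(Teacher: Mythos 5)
Your construction of $\mathscr{S}_{P_0}$ is not the one the lemma needs, and the gap is not repairable by the contradiction argument you sketch. Taking $\mathscr{S}_{P_0}$ to be \emph{all} prisms through a single pigeonholed cube $Q_0$ means the only hypothesis controlling the family is the broadly spread condition at $Q_0$, and, as you yourself note, that only gives $\beta_2$-type anti-concentration of the normals. Nothing else in the hypotheses constrains the angular distribution of the prisms incident to one fixed cube: the tube-level $t$-Frostman Convex Wolff Axiom, the tube count, and the shading density are all conditions \emph{internal to each prism}, so one can take a family whose normals at $Q_0$ form a $\beta_2$-broadly-spread but essentially $\beta_2$-dimensional set in the $\theta$-ball (e.g.\ a $\de^{c\beta_2}$-fraction of them within an $r$-ball, $r=\theta\de^{c}$ with $c(t-\beta_2)>\tau$), equip each prism with a perfectly generic $\T(P)$ and shading, and every hypothesis of the lemma holds while your $\mathscr{S}_{P_0}$ violates the $t$-Frostman Convex Wolff Axiom in $G(P_0)$ with error $\de^{-\tau}$. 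Your proposed rescue cannot detect this: prisms through a common cube with nearly parallel normals essentially coincide as sets, so their shadings overlap almost completely, and summing the per-prism shading lower bounds (or applying Lemma \ref{lem:RFPA} inside elements of the cover produced by Lemma \ref{lem:WZ2546}) does not lower bound the \emph{union} of the shadings; any $L^2$/Córdoba-type count that would convert prism multiplicity into volume (Lemma \ref{lem:RFPB}) already presupposes the non-concentration you are trying to prove. Separately, the claimed lower bound $|\mathscr{S}_{P_0}|\gtrapprox_{\de}(\theta\De/\de)^t$ does not follow from the hypotheses: they bound the tube count and shading per prism from above and below respectively, but give no lower bound on the multiplicity of the shading beyond the couple of $\sim\theta$-transverse partners that broadly spread forces at each cube.

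The paper's proof avoids this by using the broadly spread hypothesis only to extract \emph{one} transverse prism per base point, and then drawing the base points from many different locations of $P_0$: after pigeonholing the angle $\zeta$ between the tubes of $\T(P_0)$ and the intersection lines $\ell_Q$, it selects $M$ tubes that are $5\De\zeta$-separated (inheriting the $t$-Frostman Convex Wolff Axiom from $\T(P_0)$) and, on each, $N$ points of the shading that are $\de/(\theta\zeta)$-separated with a Frostman spacing property along the tube. The bristle family consists of one incident $\sim\theta$-transverse prism per such point; distinctness of the bristles and, crucially, the $t$-Frostman Convex Wolff Axiom for $\mathscr{S}_{P_0}$ are then verified by intersecting a test convex set $W$ with $P_0$ and counting base points in $W\cap P_0$ using the tube-level FCWA across tubes and the spacing along tubes. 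In other words, the non-concentration of the bristles is inherited from the \emph{spatial} spread of their base points inside $P_0$, not from the angular distribution at a single cube; that is the idea missing from your proposal.
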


We defer the proof of this lemma to Section \ref{subsec:brush}. 

We now prove Proposition \ref{prop:smallprism}. This is the first step we take in the induction on scales argument in Proposition \ref{prop:prismsetup}. Roughly, it says that if a collection of $\de \times \De \times 1$ prisms $\calR$ factors a collection of $\de$-tubes $\T$ from above and below with respect to the $t$-Frostman Convex Wolff Axiom, then $\bigcup_{T \in \T} T$ has upper spectrum about $2t+1$ or there exist scales $\overline{\de}, \overline{\De}$ and a collection $\overline{\calR}$ of $\overline{\de} \times \overline{\De} \times 1$ prisms such that $\overline{\calR}$ factors $\calD_{\overline{\de}}(\T)$ from above with respect to the $t$-Frostman Convex Wolff Axiom but from below with respect to the $1$-Frostman Convex Wolff Axiom. In other words, $\overline{\calR}, \calD_{\overline{\de}}(\T)$ and $\calD_{\overline{\de}}(Y)$ satisfies the hypotheses for Proposition \ref{prop:largeprism}. 

\begin{proposition*}[\textbf{\ref{prop:smallprism}}]
    For any $\omega, \e, \tau > 0$, there exists $\eta, \de_0 > 0$ such that the following holds for all $ \de \in (0, \de_0)$ and $\De \in (\de^{1-\e}, 1]$. Suppose $\calR$ is a collection of $\de \times \De \times 1$ prisms satisfying the $t$-Frostman Convex Wolff Axiom with error $\de^{-\eta}$, $\T$ is a collection of $\de$-tubes such that for each $R \in \calR$, there exists a family $\T(R)$ contained in $R$, satisfying the $t$-Frostman Convex Wolff Axiom with error $\de^{-\eta}$ and with $|\T(R)| \le \de^{-\eta} \left(\frac{\De}{\de}\right)^t$. Let $Y(T)$ be a $\de^{\eta}$-dense shading on $\T$. Then one of the following must hold. 

    \begin{enumerate}[label = \Alph*.]
        \item There exists $r > \de^{1-\eta}$ and an $r$-ball $B_r$ such that \[\left|\bigcup_{T \in \T} Y(T) \cap B_r\right|_{\de} \ge \left(\frac{r}{\de}\right)^{2t + 1 - \omega};\]or
        \item There exists some $\overline{\de}, \overline{\De}$ with $\frac{\overline{\De}}{\overline{\de}} = \left(\frac{\De}{\de}\right)^{\omega/(10t + 5)}$ and a collection $\overline{\calR}$ of $\overline{\de}\times \overline{\De}$ prisms satisfying the $t$-Frostman Convex Wolff Axiom with error $\overline{\de}^{-\tau}$, equipped with a $\overline{\de}^{\tau}$-dense shading $\overline{Y}$ such that $\overline{Y}(\overline{R}) \subseteq N_{\overline{\de}}\left(\bigcup_{R \in \calR} Y(R)\right)$ for each $\overline{R} \in \overline{\calR}$.
    \end{enumerate}
\end{proposition*}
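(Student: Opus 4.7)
The plan is to execute the three-case decomposition sketched in Subsection \ref{subsec:bpop3}. First I will refine $(\T, Y)$ to constant multiplicity and apply Lemma \ref{lem:tubebroad}(B) with a small parameter $\beta_1$ to the $\De$-tubes $\calD_{\De}(\calR)$. This yields a broadness scale $u \in [\De, 1]$ and a balanced partitioning cover $\T_u$ such that, after a $\lessapprox_\de u^{-\beta_1}$ loss of shading mass, the long directions $\{\dir(R) : Q \in Y(\T[R])\}$ are $\beta_1$-broadly spread in a $u$-ball for each $\de$-cube $Q$. Next, I will apply Lemma \ref{lem:grain} to each $R \in \calR$ with a very small $\zeta > 0$. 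Its density hypothesis is verified by Lemma \ref{lem:RFPA} applied to $\T(R)$; its conclusion breaks each $R$ into disjoint $\de \times \De \times (\De/u)$ subprisms $\calP_R$, on a $\gtrapprox_\de 1$ fraction of which the tubes $\T'_{P,R} = \{T \cap P : T \in \T'(R)\}$ satisfy the $t$-Frostman Convex Wolff Axiom with error $\de^{-\zeta}$ and carry a shading with controlled density.

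Having prepared the data at scale $\de$, I will rescale through each $u$-tube $T_u$, producing a collection $\calR^{T_u}$ of essentially $(\de/u) \times (\De/u) \times 1$ prisms with normals $n(R^{T_u}) \in S^2$. For each $(\de/u)$-cube in the rescaled shading I will apply Corollary \ref{cor:localbroad} to the multiset of these normals with a parameter $\beta_2 \ll \beta_1$, producing a transversality scale $\theta \ge \de/\De$. After pigeonholing $\theta$ to a common dyadic value and restoring constant multiplicities on both sides of the rescaling, the argument branches on the relative size of $\theta$ and $u$. A dichotomy parallel to Reduction V in the proof of Proposition \ref{prop:largeprism}, obtained by applying the $t$-FCWA of $\calR$ to the ambient $u \times u\theta \times 1$ slab around each cube, forces either $\theta \gtrsim \De/\de \cdot \de^{-\omega/C}$ (very transverse), $u \gtrsim \De \cdot \de^{-\omega/C}$ (very narrow), or a configuration where we can enlarge the prisms (brush case).

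In the very transverse case, the prism $G(P_0)$ produced by Lemma \ref{lem:brush} has all three short directions comparable, so Lemma \ref{lem:RFPB} applied to $\mathscr{S}_{P_0}$ gives $|Y(\T) \cap G(P_0)|_{\de/u} \gtrapprox_\de (\theta \De / \de)^{2t+1}$ inside the rescaled picture, which undoes to Conclusion A at an $r \ge \theta \De$-ball. In the very narrow case, $\calR^{T_u}$ is essentially a collection of $(\de/u) \times 1 \times 1$ slabs, so Lemma \ref{lem:brush} inside each slab yields essentially disjoint envelope prisms $\calR_{\env}^{T_u}$ whose pairwise disjoint intersections with $\T^{T_u}$ are each $(2t+1)$-dimensional; summing and using the $t$-FCWA of $\calR$ to count the envelopes after undoing the rescaling recovers a $B(0,1)$-wide $(2t+1-\omega)$-dimensional concentration, again yielding Conclusion A.

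The main obstacle is the brush case, where neither $\theta$ nor $\De/u$ is close to $1$. I set $\overline{\de} = u\theta$ and $\overline{\De} = u$, so that $\overline{\De}/\overline{\de} = 1/\theta$; the dichotomy above guarantees $\overline{\De}/\overline{\de} \ge (\De/\de)^{\omega/(10t+5)}$. For each stem prism $R_{\text{stem}} \in \calR$ and each bristle-producing tube $T_{\text{stem}} \in \T'(R_{\text{stem}})$, Lemma \ref{lem:brush} produces a bristle family $\mathscr{S}$ of $\calP_R$-subprisms meeting $T_{\text{stem}}$ at angle $\sim u$, contained in the $\overline{\de} \times \overline{\De} \times 1$ prism $\overline{R}$ concentric with $R_{\text{stem}}$. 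Viewed at scale $\overline{\de}$, these bristles become $\overline{\de}$-tubes which cross $T_{\text{stem}}$ with length $\sim \de/u$; combined with the density of the shading on $T_{\text{stem}}$, an $L^2$ argument in the style of Lemma \ref{lem:RFPA} with $t = 1$ forces $\calD_{\overline{\de}}(\T)[\overline{R}]$ to satisfy the $1$-FCWA in $\overline{R}$ with error $\overline{\de}^{-\tau}$. The resulting collection $\overline{\calR}$ inherits the $t$-FCWA from $\calR$ upward via Proposition \ref{prop:inheritance}. The delicate technical point is that the pigeonhole losses accrued in choosing $\beta_1, \beta_2, \zeta, \eta$ must be absorbed into $\overline{\de}^{\tau}$; this forces a hierarchy $\beta_2 \ll \beta_1 \ll \zeta \ll \eta \ll \tau, \omega$, and the scale separation exponent $\omega/(10t+5)$ is exactly the quantity that survives after paying for all of these losses.
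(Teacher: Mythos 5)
Your outline follows the same route as the paper's proof of Proposition \ref{prop:smallprism}: broadness of the long directions in $u$-tubes via Lemma \ref{lem:tubebroad}, Lemma \ref{lem:grain} to obtain the $t$-Frostman Convex Wolff Axiom on the $\de \times \De \times \frac{\De}{u}$ subprisms, rescaling through $u$-tubes and Corollary \ref{cor:localbroad} for the transversality scale $\theta$, and the trichotomy (very transverse, very narrow, brush). But the brush case as you describe it would not close. Lemma \ref{lem:brush} produces prisms transverse in the \emph{normal} direction at angle $\sim \theta$, confined to a $3\theta\De \times 3\De \times 3\De$ box; it cannot supply bristles meeting $T_{\text{stem}}$ at angle $\sim u$. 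Those bristles must come from the $\beta_1$-broadness of the long directions inside $u$-balls, and the real work is ensuring this broadness survives every later refinement with only $\de^{O(\eta_1 + \beta_2)}$ multiplicity loss, so that the surviving crossing angle $\de^{(O(\eta_1)+\beta_2)/\beta_1}u$ has an exponent that can be made small relative to $\tau$. Your ordering undermines exactly this: applying Lemma \ref{lem:tubebroad}(B) up front costs a $\de^{\beta_1}$ fraction of the shading per prism, so Lemma \ref{lem:grain} can then only be applied with $\eta' \gtrsim \beta_1$, its output errors $\eta_1$ become comparable to $\beta_1$, and the crossing-angle exponent degenerates to an absolute constant rather than a quantity $\ll \tau$. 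The paper avoids this by using the mass-preserving part (A) of Lemma \ref{lem:tubebroad} first, running Lemma \ref{lem:grain} on a $\de^{O(\eta)}$-dense shading, and only upgrading to broadness multiplicity one afterwards, precisely so that $\eta_1, \beta_2 \ll \beta_1$ remains available.

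A second, related problem: you write that an $L^2$ argument in the style of Lemma \ref{lem:RFPA} with $t=1$ ``forces'' $\calD_{\overline{\de}}(\T)[\overline{R}]$ to satisfy the $1$-Frostman Convex Wolff Axiom. This is backwards: Lemma \ref{lem:RFPA} consumes a Convex Wolff Axiom hypothesis and outputs a covering-number bound; it cannot produce a non-concentration condition. The $1$-FCWA of the bristle tubes in $\overline{R}$ has to be proved from the brush geometry itself: the bristles cross the stem tube in essentially disjoint segments of length $\sim \overline{\de}/u$, so a thin slab containing too many of them would cover too much one-dimensional measure of the stem's shading (a Katz--Tao-style contradiction via Lemma \ref{lem:WZ2546}); Lemma \ref{lem:RFPA} with $t=1$ is then what converts this into the required $\overline{\de}^{\tau}$-density of $\overline{Y}$. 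Your remaining deviations look repairable: the case thresholds are scrambled (very narrow should be $u \le \De\de^{-\omega/(10t+5)}$, very transverse $\theta \ge (\de/\De)^{\omega/(10t+5)}$, consistent with your later requirement $1/\theta \ge (\De/\de)^{\omega/(10t+5)}$), and in the very transverse case the natural landing spot for Conclusion A is a ball of radius $\sim \De$, where the $\theta^t$ loss from Lemma \ref{lem:RFPB} is comfortably absorbed by the case hypothesis, rather than a $\theta\De$-ball, where the margin is far tighter.
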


Proposition \ref{prop:smallprism} is the most difficult result in this paper. In Section \ref{subsec:bpop3}, we gave a rough outline of the proof of this result. As in the proof of Proposition \ref{prop:largeprism}, we break the proof into a sequence of setup reductions, which we recap before proceeding with the trichotomy described in Section \ref{subsec:bpop3}.

\begin{proof}~
    \subsection*{Reduction I}\vspace{-7pt}In this reduction, we use Lemma \ref{lem:tubebroad} to find a broad cover of $\calR$ by $u$-tubes.  
    
    Let $\T_{\De} = \calD_{\De}(\calR)$. Equip the tube $T_{\De} = N_{\De}(R)$ with the shading $Y(R)$, which we still refer to as $Y$. Apply Lemma \ref{lem:tubebroad} (A) to $(\T_{\De}, Y)$ with some $\beta_1 > 0$ to be determined in the course of the proof. Let $(\T'_{\De}, Y_1)$ denote the refined collection of tubes and the refined shading. This gives a set of tubes and a shading $(\T'_{\De}, Y')$ which induces a subset $(\calR, Y_1)$ which we denote by $(\calR_1, Y_1)$ satisfying \begin{equation}\label{eqn:Y10}\sum_{R \in \calR_1} |Y_1(R)|_{\de} \gtrapprox_{\de} \sum_{R \in \calR} |Y(R)|_{\de}.\end{equation}We record for future reference that \begin{equation}\label{eqn:R10}|\calR_1|\gtrapprox_{\de} |\calR|,\end{equation}since otherwise (\ref{eqn:Y10}) would be impossible. We also have a parameter $u \ge \De$ and a collection $\T_u$ of $u$-tubes which is a $1$-partitioning cover of $\calR_1$, so $(\calR_1, Y_1)$ is $\beta_1$-broad in $\T_u$ with error $\lessapprox_{\de} 1$ and broadness multiplicity $\lessapprox_{\de} u^{-\beta_1}$.

    We carry out two further refinements. First, we carry out a constant multiplicity refinement from $Y_1$ to $Y'_2$ so that \begin{equation}\label{eqn:dragon3}\sum_{R \in \calR_1} |Y'_2(R)|_{\de} \gtrapprox_{\de} \sum_{R \in \calR_1} |Y_1(R)|_{\de} \gtrapprox_{\de} \sum_{R \in \calR_1} |Y(R)|_{\de}.\end{equation}We call the resulting multiplicity $\mu_2$. For the second refinement, we claim we can find a subshading $Y_2$ of $Y'_2$ such $\sum_{R \in \calR_1} |Y_2(R)|_{\de} \gtrsim \sum_{R \in \calR_1} |Y'_2(R)|_{\de}$ and for each $\de$-cube $Q \in \bigcup_{R \in \calR_1} Y_2(R)$, \begin{equation}\label{eqn:dragon}|\{R \in \calR_1[T_u] : Q \in Y_2(R)\}| \gtrapprox_{\de} \de^{\beta_1}\mu_2.\end{equation}To prove this claim, we note that it suffices to prove that for each $Q \in \bigcup_{R \in \calR_1} Y_2(R)$, we can find some $\T'_u(Q) \subset \T_u$ such that for each $T_u \in \T'_u(Q)$, \begin{equation}\label{eqn:dragon2}|\{R \in \calR_1[T_u] : Q \in Y'_2(R)\}| \gtrapprox_{\de} \de^{\beta_1}\mu_2\end{equation}and $\sum_{T_u \in \T'_u(Q)} |\{R \in \calR_1[T_u] : Q \in Y'_2(R)\}| \ge \frac{\mu_2}{2}$. Since the broadness multiplicity is $\lessapprox_{\de} \de^{-\beta_1}$, choosing implicit constants appropriately, we can ensure that the set of tubes $\T''_u(Q)$ failing (\ref{eqn:dragon2}) satisfies $\sum_{T_u \in \T''_u(Q)} |\{R \in \calR_1[T_u] : Q \in Y'_2(R)\}| \le \frac{\mu_2}{2}$. We take $\T'_u(Q) = \T_u \setminus \T''_u(Q)$ and then set $Q \in Y_2(R)$ if $R \subset \T'_u(Q)$. This shading satisfies the claimed properties: $\sum_{R \in \calR_1} |Y_2(R)|_{\de} \gtrsim \sum_{R \in \calR_1} |Y'_2(R)|_{\de}$, so by (\ref{eqn:dragon3}), \begin{equation}\label{eqn:dragon4} \sum_{R \in \calR_1} |Y_2(R)|_{\de} \gtrapprox_{\de} \sum_{R \in \calR_1} |Y_1(R)|_{\de};\end{equation}and for each $Q \in \bigcup_{R \in \calR_1} Y_2(R)$, \begin{equation}\label{eqn:dragon5}|\{R \in \calR_1[T_u] : Q \in Y_2(R)\}| \gtrapprox_{\de} \de^{\beta_1}\mu_2.\end{equation}

    \subsection*{Reduction II}In this reduction, we refine the prisms and shading so that the number of $\de$-balls assigned to each prism is about the same and the number of prisms incident to each $\de$-ball is the same. 
    
    Specifically, we refine $\calR_1$ to some $\calR_3$ and $Y_2$ to some $Y_3$ so that each prism $R_3 \in \calR_3$ has about the same mass from $Y_3$ and each point in $Y_3$ still has multiplicity $\sim \mu_2$. Define $E = \{(Q, R) \in Y_2(\calR_1) \times \calR_1: Q \in Y_2(R)\}$. Note that $|E| = \sum_{R \in \calR_1} |Y_2(R)|_{\de}$. Apply Lemma \ref{lem:WZ2547} and Remark \ref{rmk} to the graph $(Y_2(\calR_1)  \sqcup  \calR_1, E)$. This induces a collection $\calR_3$ of prisms and a shading $Y_3(R)$ with $Q \in Y_3(R)$ if and only if $(Q, R) \in E'$. Since $|E'| \gtrsim |E|$ and $R \in \calR_3$ if and only if $Y_3(R) \neq \emptyset$, we know that \begin{equation}\label{eqn:bunny2}\sum_{R \in \calR_3} |Y_3(R)|_{\de} \gtrsim \sum_{R \in \calR_1} |Y_2(R)|_{\de}.\end{equation}For $Q \in Y_3(\calR)$, we know that $\mu_3(Q) \gtrsim \frac{|E|}{|Y_2(\calR_1)|} \sim \mu_2$. We also have that \begin{equation}\label{eqn:temp2}|Y_3(R)|_{\de} \gtrsim \frac{|E|}{|\calR_1|} = \frac{\sum_{R \in \calR_1} |Y_2(R)|_{\de}}{|\calR_1|}.\end{equation}By Lemma \ref{lem:RFPA}, for each $R$ \begin{equation}\label{eqn:L2}|Y(R)|_{\de} \gtrapprox_{\de} \de^{-4\eta}\de^{-1}\left(\frac{\De}{\de}\right)^t,\end{equation}which together with (\ref{eqn:dragon4}) and (\ref{eqn:Y10}) implies that \[\sum_{R \in \calR_1} |Y_2(R)|_{\de} \gtrapprox_{\de} \de^{2\eta}\De^t \de^{-t-1}|\calR_1|.\]Plugging this bound into (\ref{eqn:temp2}), we see that \begin{equation}\label{eqn:bunny3}|Y_3(R)|_{\de} \gtrapprox_{\de} \de^{2\eta}\De^t\de^{-t-1}.\end{equation}We know by a union bound that $|Y_3(R)|_{\de} \lesssim |\T(R)|\de^{-1} \lesssim \de^{-1-t-\eta}\De^t$, so \[|\calR_3|\de^{-1-t-\eta}\De^t \gtrsim \sum_{R \in \calR_3} |Y_3(R)|_{\de} = |E'| \gtrsim |E| = \sum_{R \in \calR_1} |Y_2(R)|_{\de} \gtrapprox_{\de} \de^{2\eta}\de^{-1-t}\De^t|\calR_1|.\]We conclude that \begin{equation}\label{eqn:R31}|\calR_3| \gtrapprox_{\de} \de^{2\eta}|\calR_1|.\end{equation}Finally, at each point the multiplicity of the shading is $\mu_3 \sim \mu_2$ ($\mu_3(Q)$ can vary by a $\sim 1$ factor over different cubes, so it is harmless to pretend that it is a constant).

    \subsection*{Reduction III}In this reduction, we use Lemma \ref{lem:grain} to refine $\T(R)$ to satisfy the $t$-Frostman Convex Wolff Axiom on many $\de \times \De \times \frac{\De}{u}$ subprisms of $\T(R)$, while maintaining most of the mass of the shading. 
    
    Let $\zeta = \zeta(\omega)$ be a parameter to be determined over the course of the proof. Apply Lemma \ref{lem:grain} with this value of $\zeta$ and $\eta' = 3\eta$. We assume the value of $\de_0$ in this proof of less than the value of $\de_0$ for Lemma \ref{lem:grain} and $\eta$ is sufficiently small depending on $\zeta$ to apply Lemma \ref{lem:grain}. Our choice of prisms is $\calR_3$. The choice of tubes is $\T(R)$, which certainly satisfy the $t$-Frostman Convex Wolff Axiom with error $\le \de^{\eta'}$ and has $|\T(R)| \le \de^{-\eta'}\left(\frac{\de}{\De}\right)^{-t}$. We take the shading to be $Y_3$. By (\ref{eqn:bunny3}), $\left|\bigcup_{T \in \T(R)} Y_3(T)\right|_{\de} \gtrapprox_{\de} \de^{-1-t+2\eta}\De^t$. So long as $\de_0$ is sufficiently small, $\left|\bigcup_{T \in \T(R)} Y_3(T)\right|_{\de} \ge \de^{-1-t+\eta'}\De^t$. 
    
    We have checked all the hypotheses for Lemma \ref{lem:grain}. We apply the lemma and conclude that there is a subshading $Y_4$ of $Y_3$, for each prism $R \in \calR_3$ a subset $\T_1(R)$ of $\T(R)$ with $|\T_1(R)| \ge \de^{\eta_1}|\T(R)|$, and a collection of $\de \times \De \times \frac{\De}{u}$-prisms $\calP_R$ with \begin{equation}\label{eqn:calp}|\calP_R| \ge \de^{\eta_1}\frac{u}{\De}.\end{equation}For each $P \in \calP_R$, there is a set of tubes $\T_{P,R} = \{T \cap P : T \in \T_1(R)\}$ satisfying the $t$-Frostman Convex Wolff Axiom in $P$ with error $\de^{-\zeta}$. Finally, $|Y_4(T_P)|_{\de} \ge \de^{\eta_1}\frac{\De}{u\de}$ for each $T_P \in \T_{P,R}$. This implies that for each $T \in \T_1(R)$, \[|Y_4(T)|_{\de} \ge |\calP_R|\de^{\eta_1}\frac{\De}{u\de} \ge \de^{2\eta_1}\de^{-1}.\]Since $\T(R)$ satisfies the $t$-Frostman Convex Wolff Axiom in $R$ with error $\de^{-\eta}$ and $|\T_1(R)| \ge \de^{\eta_1}|\T(R)|$, we see that $\T_1(R)$ satisfies the $t$-Frostman Convex Wolff Axiom in $R$ with error $\de^{-\eta_1-\eta}$. We then apply Lemma \ref{lem:RFPA} to $(\T_1(R), Y_4)$ to see that \begin{equation}\label{eqn:4shadingbound}|Y_4(R)|_{\de} \gtrapprox_{\de} \de^{5\eta_1 + \eta}\De^t\de^{-1-t}.\end{equation}Since $|Y_3(R)| \lesssim \De^t\de^{-1-t-\eta}$, conclude that \begin{equation}\label{eqn:34shadingbound}|Y_4(R)|_{\de} \gtrapprox_{\de} \de^{5\eta_1 + 2\eta}|Y_3(R)|_{\de} \gtrapprox_{\de} \de^{6\eta_1}|Y_3(R)|_{\de}\end{equation}

    \subsection*{Reduction IV}In this reduction, we upgrade the broadness at each point to have multiplicity $1$, at the cost of an acceptable loss in the shading.
    
    We first apply another constant multiplicity refinement between prisms to arrive at a shading $Y_5$ with constant multiplicity $\mu_5$ and \begin{equation}\label{eqn:45shadingbound}\sum_{R \in \calR_3} |Y_5(R)|_{\de} \gtrapprox_{\de} \sum_{R \in \calR_3} |Y_4(R)|_{\de}.\end{equation}Recall that $Y_5$ is a subshading of $Y_2$, $(\calR_1, Y_2)$ was $\beta_1$-broad in $\T_u$ with error $\lessapprox_{\de} 1$, and $Y_2$ has multiplicity $\mu_2$. Applying (\ref{eqn:45shadingbound}), (\ref{eqn:34shadingbound}) and (\ref{eqn:bunny2}), we conclude that \begin{equation}\label{eqn:52shadingbound}\sum_{R \in \calR_3} |Y_5(R)|_{\de} \gtrapprox_{\de} \de^{6\eta_1}\sum_{R \in \calR_1} |Y_2(R)|_{\de}.\end{equation}Since $\bigcup_{R \in \calR_3} Y_5(R) \subset \bigcup_{R \in \calR_1} Y_2(R)$, we conclude that $\mu_5 \gtrapprox_{\de} \de^{6\eta_1}\mu_2$. 
    
    At each $\de$-cube $Q \in \bigcup_{R \in \calR_3} Y_5(R)$ and each $T_u \in \T_u$, by (\ref{eqn:dragon5}) we know that $\{\dir(R) : R \in \calR_1[T_u], Q \in Y_2(R)\}$ is either empty or broad in a $u$-ball with error $\lessapprox_{\de} 1$ and has $\gtrapprox_{\de} \de^{\beta_1}\mu_2$ many elements. Since \[\sum_{T_u \in \T_u} |\{R \in \calR_3[T_u] : B \in Y_5(R)\}| = \mu_5 \gtrapprox_{\de}\de^{6\eta_1} \mu_2 = \de^{6\eta_1}\sum_{T_u \in \T_u} |\{R \in \calR_1[T_u]: B \in Y_2(R)\}|,\]for each $Q \in Y_5(\calR_3)$, we can find a choice $T_u(Q) \in \T_u$ such that \[|\{R \in \calR_3[T_u(Q)] : Q \in Y_5(R)\}| \gtrapprox_{\de} \de^{6\eta_1} |\{R \in \calR_1[T_u(Q)]: Q \in Y_2(R)\}|\gtrapprox_{\de} \de^{6\eta_1 + \beta_1}\mu_2.\]Define a new shading by $Q \in Y_6(R)$ if $R \subset T_u(Q)$ and note that this shading contains each $Q \in Y_5(\calR_3)$. Our new shading also satisfies $\mu_6(Q) \gtrapprox_{\de} \de^{\beta_1 + 5\eta_1}\mu_2 \gtrapprox_{\de} \de^{\beta_1 + 6\eta_1}\mu_5(Q)$\footnote{The last inequality follows from the easy bound $\mu_2(Q) \ge \mu_5(Q)$.}, so \begin{equation}\label{eqn:56shadingbound}\sum_{R \in \calR_3} |Y_6(R)|_{\de} \gtrapprox_{\de} \de^{\beta_1 + 5\eta_1} \sum_{R \in \calR_3} |Y_5(R)|_{\de}.\end{equation}Moreover, $Y_6$ is a subshading of $Y_2$ and for each $\de$-cube $Q$, \[|\{R \in \calR_3[T_u] : Q \in Y_5(R)\}| \gtrapprox_{\de} \de^{5\eta_1 }|\{R \in \calR_1[T_u]: Q \in Y_2(R)\}|.\]By Remark \ref{rmk2}, we know that $(\calR_3, Y_6)$ is $\beta_1$-broad in $\T_u$ with error $\lessapprox_{\de} \de^{-5\eta_1}$ and broadness multiplicity $1$.

    \subsection*{Reduction V} In this reduction, we break the shading in each $T_u$ up into short tubelets incident to the same number of $\de$-cubes, which allows us to control the mass of the shading after rescaling. 
    
    This is similar to the argument in Proposition \ref{prop:largeprism}, Reduction III, except that the shading is now supported on a union of tubes, rather than the entire prism. We only sketch the parts that are identical to Proposition \ref{prop:largeprism}. Recall that for a $u$-tube $T_u$, $\phi_{T_u}$ is the affine linear map sending $T_u$ to the unit ball. We start by covering $\calR_3[T_u]$ by a disjoint union $\mathcal{T}'_{T_u}$ consisting of $\de \times \de \times \frac{\de}{u}$ tube segments, which rescale under $\phi_{T_u}$ to dyadic $\frac{\de}{u}$-cubes. Refine $\mathcal{T}'_{T_u}$ and $Y_6(\calR_4[T_u])$ so that each $Q \in Y'_6(\calR_4[T_u])$ intersects exactly one $\lambda \in \mathcal{T}'_{T_u}$ and \begin{equation}\label{eqn:66prime}\sum_{R \in \calR_4[T_u]} |Y'_6(R)|_{\de} \gtrsim \sum_{R \in \calR_4[T_u]} |Y_6(R)|_{\de}.\end{equation} Let $E = \{(\lambda, R) \in \mathcal{T}'_{T_u} \times \calR_4[T_u]: \lambda \cap Y'_6(R) \neq \emptyset\}$. Pigeonhole on $E$ to some $E'$ to ensure that for each $(\lambda, R) \in E'$, $|\lambda \cap Y'_6(R)|_{\de} \sim M_{T_u}$ while \begin{equation}\label{eqn:76prime}\sum_{R \in \calR_3[T_u]} \left|\bigcup_{(\lambda, R) \in E'} \lambda \cap Y'_6(R)\right|_{\de} \gtrapprox_{\de} \sum_{R \in \calR_3[T_u]} |Y'_6(R)|_{\de}.\end{equation}Set \[Y_7(R) = \{ Q \in Y'_6(R) : Q \cap \lambda \neq \emptyset \text{ for some } \lambda \text{ with } (\lambda, R) \in E'\}.\]Combining (\ref{eqn:66prime}) and (\ref{eqn:76prime}), we see that \begin{equation}\label{eqn:pre67shadingbound}\sum_{R \in \calR_3} |Y_7(R)|_{\de} \gtrapprox_{\de} \sum_{R \in \calR_3} |Y_6(R)|_{\de}.\end{equation}

    Decompose $\T_u$ into $\T_{u,M} = \{T \in \T_u : M_{T_u} \in [M, 2M)\}$ for dyadic choices of $M \in \N$. Decompose $\calR_3$ into $\calR_M = \{R \in \calR_3 : R \subset T_u \text{ for some } T_u \in \T_M\}$. Dyadically pigeonholing, we see that there is a choice of $M$ such that \begin{equation}\label{eqn:mid7shadingbound}\sum_{R \in \calR_M} |Y_7(R)|_{\de} \gtrapprox_{\de} \sum_{R \in \calR_3} |Y_7(R)|_{\de}.\end{equation}We claim that $|\calR_M| \ge \de^{\beta_1 + 15\eta_1}|\calR_3|$. If not, then since $\eta_1 > 3\eta$ and $|Y(R)|_{\de} \le \de^{-1-t}\left(\frac{\De}{\de}\right)^t$, we have \[\sum_{R \in \calR_M} |Y_7(R)|_{\de} \le |\calR|\de^{\beta_1 + 15\eta_1} \sup_{R \in \calR}|Y(R)|_{\de} \le  |\calR|\de^{\beta_1 + 15\eta_1 - \eta} \de^{-1-t}\De^t \le  |\calR|\de^{\beta_1 +15 \eta_1} \de^{-1-t}\De^t.\]On the other hand, combining (\ref{eqn:pre67shadingbound}), (\ref{eqn:56shadingbound}), (\ref{eqn:52shadingbound}), (\ref{eqn:dragon4}), (\ref{eqn:Y10}), and (\ref{eqn:L2}), we see that \[\sum_{R \in \calR_M} |Y_7(R)|_{\de} \gtrapprox_{\de} \de^{\beta_1 + 11\eta_1}\sum_{R \in \calR}|Y(R)|_{\de} \gtrapprox_{\de} \de^{\beta_1 + 11\eta_1 + 5\eta}\de^{-1-t}\De^t |\calR|.\]So long as $\de_0$ is sufficiently small, we conclude that $|\calR_M| \ge \de^{\beta_1 + 15\eta_1}|\calR_3|$. We relabel $\T'_u = \T_{u,M}$ and $\calR_4 = \calR_M$. We see that \begin{equation}\label{eqn:43card}
        |\calR_4| \ge \de^{\beta_1 + 15\eta_1}|\calR_3|.
    \end{equation}Combining (\ref{eqn:pre67shadingbound}) and (\ref{eqn:mid7shadingbound}), we also conclude that \begin{equation}\label{eqn:67shadingbound}
        \sum_{R \in \calR_4} |Y_7(R)|_{\de} \gtrapprox_{\de} \sum_{R \in \calR_3} |Y_6(R)|_{\de}.
    \end{equation}
    
    The last step of this reduction is to bound $M$ from below. Fix a choice of $T_u \in \T'_u$ and note that $\phi_{T_u}$ maps $\T(R)$ to a collection of essentially $\frac{\de}{u}$-tubes. Each $\frac{\de}{u}$-tube intersects $\lesssim \frac{u}{\de}$ many dyadic $\frac{\de}{u}$-cubes. After undoing the rescaling, we see that each $T \in \T(R)$ intersects $\lesssim \frac{u}{\de}$ many tube segments. Since $|\T(R)| \le \de^{-\eta_1}\left(\frac{\De}{\de}\right)^t$, $\T(R)$ intersects $\lesssim |\T(R)| \frac{u}{\de} \lesssim \de^{-\eta}\left(\frac{\De}{\de}\right)^t\frac{u}{\de}$ many tube segments from $\mathcal{T}_{T_u}$. We conclude that \begin{equation}\label{eqn:bunny4}\sum_{R \in \calR_4} |Y_7(R)|_{\de} \lesssim |\calR_4|\de^{-\eta_1}\left(\frac{\De}{\de}\right)^t\frac{u}{\de}M.\end{equation}
    
    We know by (\ref{eqn:67shadingbound}), (\ref{eqn:56shadingbound}), (\ref{eqn:45shadingbound}) we have that $\sum_{R \in \calR_4} |Y_7(R)|_{\de} \gtrapprox_{\de} \de^{\beta_1 + 6\eta_1}\sum_{R \in \calR_3} |Y_4(R)|_{\de}$, so by (\ref{eqn:4shadingbound}) it follows that \begin{equation}\label{eqn:7shadingbound}\sum_{R \in \calR_4} |Y_7(R)|_{\de} \gtrapprox_{\de} \de^{\beta_1 + 6\eta_1}|\calR_3|\de^{-1-t}\De^t \gtrapprox_{\de} \de^{\beta_1 + 6\eta_1}|\calR_4|\de^{-1-t}\De^t.\end{equation}Comparing with (\ref{eqn:bunny4}), we conclude that \begin{equation}\label{eqn:Mlowerbound}M \gtrapprox_{\de} \de^{\beta_1 + 7\eta_1}u^{-1}.\end{equation}On the other hand, we certainly have \begin{equation}\label{eqn:Mupperbound}M \lesssim u^{-1}.\end{equation}

    For $R^{T_u} \in \calR_4^{T_u},$ we define $Y^{T_u}(R^{T_u}) = \{ \phi_{T_u}(\lambda) : \lambda \in \mathcal{T}_{T_u}, \lambda \cap Y_7(R) \neq \emptyset\}$, where $T_u$ is the unique $u$-tube from $\T'_u$ containing $R$. This is a well-defined shading, since $\mathcal{T}_{T_u}$ is the preimage under $\phi_{T_u}$ of a collection of dyadic $\frac{\de}{u}$-cubes. By construction, $Y^{T_u}(R^{T_u}) \supset \phi_{T_u}(Y_7(R))$, each $\de$-cube in $Q \in Y_7(R)$ intersects exactly one tube segment from $\mathcal{T}_{T_u}$, and each $\lambda \in \phi^{-1}_{T_u}(Y^{T_u}(R^{T_u}))$ intersects $\sim M$ many $\de$-cubes from $Y_7(R)$.
    
    \subsection*{Reduction VI}In this reduction, we find a transversality parameter $\theta$ so that at each $\de$-cube $Q \in Y^{T_u}(\calR^{T_u})$, the normal directions of prisms incident to $Q$ are broadly spread in a $\theta$-ball.
    
    This is similar to Reduction III from Proposition \ref{prop:largeprism}, so we again only sketch the argument. For each $T_u$ and each $Q \in Y^{T_u}(\calR^{T_u})$, set $\tilde{V}_Q = \{n(R^{T_u}) : Q \in Y^{T_u}(R^{T_u})\}$. Apply Corollary \ref{cor:localbroad} to $\tilde{V}_Q = \{n(R^{T_u}) : Q \in Y^{T_u}(R^{T_u})\}$ with $\beta = \beta_2$, where $\beta_2$ is a parameter depending on $\omega$ and $\beta_1$ to be determined over the course of the proof. Denote the value returned by Corollary \ref{cor:localbroad} by $\theta_Q$ and the resulting multiset of directions by $V_Q$. We define a new shading $\tilde{Y}_1^{T_u}$ by setting $Q\in \tilde{Y}_1^{T_u}(R^{T_u})$ if $n(R^{T_u}) \in V_Q$. Since $|V_Q| \gtrapprox_{\de} \de^{\beta_2} |\tilde{V}_Q|$, $\mu^{T_u}_1(Q) \gtrapprox_{\de} \de^{\beta_2} \mu^{T_u}(Q)$ for every $Q \in Y^{T_u}(\calR_4^{T_u})$, while $Y^{T_u}(\calR_4^{T_u}) = Y_1^{T_u}(\calR_4^{T_u})$, so \[\sum_{R \in \calR_4^{T_u}} |\tilde{Y}^{T_u}_1(R^{T_u})|_{\de/u} \gtrapprox_{\de} \de^{\beta_2} \sum_{R \in \calR_4^{T_u}} |Y^{T_u}(R^{T_u})|_{\de/u}.\]After dyadically pigeonholing on the value of $\theta_Q$ and on the multiplicity over all $T_u \in \T_u$, we arrive at a single value $\theta$, a single multiplicity $\mu_1^{T_u}$, and a shading $Y_1^{T_u} \subset \tilde{Y}_1^{T_u}$ so that \begin{equation}\label{eqn:rescaled12}\sum_{T_u \in \T'_u}\sum_{R^{T_u} \in \calR_4^{T_u}} |Y_1^{T_u}(R^{T_u})|_{\de/u} \gtrapprox_{\de} \sum_{T_u \in \T'_u}\sum_{R^{T_u} \in \calR_4^{T_u}} |\tilde{Y}_1^{T_u}(R^{T_u})|_{\de/u}\gtrapprox_{\de} \de^{\beta_2}\sum_{T_u \in \T'_u}\sum_{R^{T_u} \in \calR_4^{T_u}} |Y^{T_u}(R^{T_u})|_{\de/u}.\end{equation}Also, for each $\frac{\de}{u}$-cube $Q$, $\{n(R^{T_u}) : Q \in Y_1^{T_u}(R^{T_u})\}$ is $\beta_2$-broadly spread in a $\theta$-ball and $Y_1^{T_u}$ has constant multiplicity $\mu^{T_u}_1$. 

    The refinement of the rescaled shading induces a refinement $Y_8$ of $Y_7$. To construct this, take the unique $T_u \in \T_u$ such that $R \subset T_u$ and we define \[Y_8(R) = \{Q \in Y_7(R) : Q \cap \lambda \neq \emptyset \text{ for some }\lambda \in \phi_{T_u}^{-1}(Y_1^{T_u}(R^{T_u}))\}. \]Each $\lambda \in \phi_{T_u}^{-1}(Y_1^{T_u}(R^{T_u}))$ intersects $\sim M$ $\de$-cubes from $Y_8(R)$ and each $\de$-cube $Y_8(R)$ intersects at most one $\lambda \in \phi_{T_u}^{-1}(Y_1^{T_u}(R^{T_u}))$. The analogous statement holds for $Y^{T_u}$ and $Y_7$, so (\ref{eqn:rescaled12}) implies that \begin{align*}\sum_{R \in \calR_4} |Y_8(R)|_{\de} &\sim M \sum_{T_u \in \T'_u}\sum_{R^{T_u} \in \calR_4^{T_u}} |Y_1^{T_u}(R^{T_u})|_{\de/u} \nonumber\\ &\gtrapprox_{\de} M\de^{\beta_2}\sum_{T_u \in \T'_u}\sum_{R^{T_u} \in \calR_4^{T_u}} |Y^{T_u}(R^{T_u})|_{\de/u} \sim  \de^{\beta_2}\sum_{R \in \calR_4} |Y_7(R)|_{\de}\numberthis \label{eqn:78density}.\end{align*}
    
    \subsection*{Recap of the proof so far.} 

    We start by reviewing the myriad objects defined in the proof so far; subsequently we review their useful properties. We began with a collection of $\de \times \De \times 1$ prisms $\calR$, which we successively refined it to families $\calR_i$ for $i$ from $1$ to $4$. Each prism $R \in \calR$ contains a collection of tubes $\T(R)$, which we refine to a collection of tubes $\T_1(R)$. We began with a shading $Y$ on $\calR$ supported on $\T(R)$, and successively refined to shadings $Y_i$ for $i$ from $1$ to $8$ with $Y_1, \dots, Y_3$ supported on $\T(R)$ and $Y_4, \dots, Y_8$ supported on $\T_1(R)$. We defined a set of $u$-tubes $\T_u$ and refined it to a set $\T'_u$. For each $T_u$, we defined a set of essentially $\frac{\de}{u} \times \frac{\De}{u} \times 1$ prisms $\calR_4^{T_u}$, supporting shadings $Y^{T_u}$ and $Y_1^{T_u}$. For each $R \in \calR_3$, we have a collection $\calP_R$ of $\de \times \De \times \frac{\De}{u}$ prisms. Finally, for each $P \in \calP_R$, we have a collection $\T_{P,R} \subset \{T \cap P : T \in \T_1(R), P \in \calP_R\}$. 

    We also defined small parameters $\eta, \eta_1, \beta_2, \beta_1$, and $\zeta$. These parameters have the following dependencies: $\eta_1 \sim \eta = \eta(\beta_2, \beta_1, \zeta, \omega)$, $\beta_2 = \beta_2(\beta_1, \omega)$, $\beta_1 = \beta_1(\omega)$, and $\zeta = \zeta(\omega)$. The values of these parameters will be finalized over the remainder of the proof.

    We use the following properties of these objects in the remainder of the proof. As an aid to the reader, we repeat the bounds proven up to this point which we need in the rest of the proof, keeping the same equation numbering for those bounds. The properties are presented here roughly in the order they will be used.

    \begin{enumerate}[label = (\alph*)]
        \item There exists a value $\mu_1^{T_u}$ (not depending on $T_u$) such that for each $T_u \in \T'_u$, $Y^{T_u}_1$ is a shading on $\calR_4^{T_u}$ with constant multiplicity $\mu_1^{T_u}$. For each $Q \in Y_1^{T_u}(\calR_4^{T_u})$, $\{n(R^{T_u}) : Q \in Y_1^{T_u}(\calR_4^{T_u})\}$ is $\beta_2$-broadly spread in a $\theta$-ball. These properties were established in Reduction VI. 
        \item For each $T_u \in \T'_u$, each $R \in \calR_4[T_u]$, and each $\frac{\de}{u}$-cube $Q \in Y_1^{T_u}(R^{T_u})$, $\phi^{-1}_{T_u}(Q)$ intersects $\sim M$ $\de$-cubes from $Y_8(R)$ and each $\de$-cube from $Y_8(R)$ intersects $\phi^{-1}_{T_u}(Q)$ for one choice of $Q \in Y_1^{T_u}(\calR_4^{T_u})$. Moreover, \[M \lesssim u^{-1}\tag{\ref{eqn:Mupperbound}}\]and \[M \gtrapprox_{\de} \de^{\beta_1 + 7\eta_1}u^{-1}.\tag{\ref{eqn:Mlowerbound}}\]
        \item We break (\ref{eqn:78density}) into two useful bounds: \begin{align*}
            \sum_{R \in \calR_4} |Y_8(R)|_{\de} &\sim M \sum_{T_u \in \T'_u} \sum_{R^{T_u} \in \calR_4^{T_u}} |Y_1^{T_u}(R^{T_u})|_{\de/u} \tag{\ref{eqn:78density}.A}\\
            \sum_{R \in \calR_4} |Y_8(R)|_{\de} &\gtrapprox_{\de} \de^{\beta_2} \sum_{R \in \calR_4} |Y_7(R)|_{\de} \tag{\ref{eqn:78density}.B}
        \end{align*}
        Combining (\ref{eqn:7shadingbound}) and (\ref{eqn:78density}.B), we see that \begin{equation}\label{eqn:8shadingbound}
            \sum_{R \in \calR_4} |Y_8(R)|_{\de} \gtrapprox_{\de} \de^{\beta_2 + \beta_1 + 6\eta_1} |\calR_4|\de^{-1-t}\De^t.
        \end{equation}
        \item For each $R \in \calR_3$ (and hence each $R \in \calR_4$), there exists a collection of $\de \times \De \times \frac{\De}{u}$ prisms $\calP_R$ such that for each $P \in \calP_R$, $\T_{P, R} = \{P \cap T : T \in \T_1(R)\}$ satisfies the $t$-Frostman Convex Wolff Axiom in $P$ with error $\de^{-\zeta}$ and multiplicity $\le \de^{-\eta}\left(\frac{\De}{\de}\right)^t$. Moreover, $Y_4(R) \cap P$ is supported on $\T_{P, R}$. These properties were established in Reduction III. We note two important consequences of this: if $R \subset T_u$, then $\T_{P,R}^{T_u} = \{\phi_{T_u}(P \cap T) : T \in \T_1(R)\}$ satisfies the $t$-Frostman Convex Wolff Axiom in $P^{T_u}:= \phi_{T_u}(P)$ with error $\de^{-\zeta}$ and $Y_1^{T_u}(R^{T_u}) \cap P^{T_u}$ is supported on $\T_{P,R}^{T_u}$. 
        \item By (\ref{eqn:43card}), (\ref{eqn:R31}), and (\ref{eqn:R10}), we have \begin{equation}\label{eqn:R40}
            |\calR_4| \gtrapprox_{\de} \de^{\beta_1 + 13\eta_1 + 2\eta}|\calR|.
        \end{equation}We also recall our assumption that $\calR$ satisfies the $t$-Frostman Convex Wolff Axiom with error $\de^{-\eta}$.
        \item Recall that at each $\de$-cube $Q_{\de} \in Y_8(\calR_4)$, $\{R \in \calR_4 : Q_{\de} \in Y_8(R)\}$ are all contained in a $u$-tube in $\T'_u$. For each $\de$-cube $Q_{\de} \in Y_8(\calR_4[T_u])$, there exsts a $\frac{\de}{u}$-cube $Q_{\de/u}$ such that $Q_{\de/u} \in Y_1^{T_u}(\calR_4^{T_u})$ and $Q_{\de/u} \cap \phi_{T_u}(Q_{\de}) \neq \emptyset$. For each $R \in \{R \in \calR_4 : Q_{\de} \in Y_8(R)\}$, $Q_{\de/u} \in Y_1^{T_u}(R^{T_u})$. Recap item (a) implies that $\{R^{T_u} \in \calR_4^{T_u} : Q_{\de/u} \in Y_1^{T_u}(R^{T_u})\}$ are all contained in a $2\theta \times 1 \times 1$ slab. Undoing the rescaling through $T_u$ maps each such slab to a $2u \theta \times u \times 1$ prism, which contains each $R \in \phi_{T_u}^{-1}(\{R^{T_u} \in \calR_4^{T_u} : Q_{\de/u} \in Y_1^{T_u}(R^{T_u})\})$ and hence each $R \in\{R \in \calR_4 : Q_{\de} \in Y_8(R)\}$. We conclude that $\{R \in \calR_4 : Q_{\de} \in Y_8(R)\}$ are all contained in a common $2u\theta \times u \times 1 $ prism. An important consequence is that for any fixed $R_0$, each $R \in \calR_4$ with $Y_8(R) \cap Y_8(R_0)$ is contained in a common $2u\theta \times u \times 1$ prism concentric with $R_0$. We denote the $2u\theta \times u \times 1$ prism by $\env(R_0)$ and call the fact that each prism $R$ with $Y_8(R) \cap Y_8(R_0)$ is contained in $\env(R_0)$ the \emph{envelope property}\footnote{ The idea being that the prisms in $\calR_4$ are pages being put into larger envelopes. Yixuan Pang suggested this name to the author in a similar context for Kakeya problem.}. 
        \item Recall that the set of prisms in $\calR_3$ with the shading $Y_6$ is $\beta_1$-broad in $\T_u$ with error $\lessapprox_{\de} \de^{-5\eta_1}$ and broadness multiplicity $1$. This was established at the end of Reduction IV. By (\ref{eqn:67shadingbound}) and (\ref{eqn:78density}.B), we see that \begin{equation}\label{eqn:68shadingbound}\sum_{R \in \R_4} |Y_8(R)|_{\de} \gtrapprox_{\de} \de^{\beta_2} \sum_{R \in \calR_3} |Y_6(R)|_{\de}.\end{equation}
    \end{enumerate}

    This concludes the recap. We now consider three possibilities, at least one of which must occur:
    \begin{enumerate}[label = \arabic*.]
        \item $\theta \ge \left(\frac{\de}{\De}\right)^{\omega/(10t + 5)}$. This is referred to as the very transverse case in Section \ref{subsec:bpop3}.
        \item $u \le \De \de^{-\omega/(10t + 5)}$.  This is referred to as the very narrow case in Section \ref{subsec:bpop3}.
        \item $u \ge \De \de^{-\omega/(10t + 5)}$ and $\theta \le \left(\frac{\de}{\De}\right)^{\omega/(10t +5)}$. This is referred to as the brush case in Section \ref{subsec:bpop3}.
    \end{enumerate}
    We show the proof closes for each of these possibilities, finishing our argument.

    \subsection*{Possibilities 1 and 2 common setup} Possibilities 1 and 2 are both variations of the argument in the slightly transverse case of Proposition \ref{prop:largeprism}. They start in the same way, which requires no assumptions on $u$ or $\theta$, so we present their common set-up before breaking into the two cases.

    For each $T_u \in \T_u$ and each $R \in \calR_4[T_u]$, we know the prisms $P \in \calP_R$ are aligned in the long direction of $\T_u$, so $\phi_{T_u}(P)$ is essentially a $\frac{\de}{u} \times \frac{\De}{u} \times \frac{\De}{u}$ prisms, which we denote $P^{T_u}$. Define by $\calP^{T_u}_R = \{P^{T_u} : P \in \calP_{R}\}$ and $\calP^{T_u} = \bigsqcup_{R \in \calR_4[T_u]} \calP_R^{T_u}$. Set $A = \bigsqcup_{T_u \in \T'_u} Y_1^{T_u}(\calR_4^{T_u})$, $B = \bigsqcup_{R \in \calR_4} \calP_R$, and \[E = \{(Q, P^{T_u}) \in A \times B : Q \in Y_1^{T_u}(R^{T_u}), Q \cap P^{T_u} \neq \emptyset, P \in \calP_R\}.\]This defines a bipartite graph on $A  \sqcup B $. Apply Lemma \ref{lem:WZ2547} to $(A \sqcup B, E)$. Let $E'$ denote the remaining collection of edges. For $P \in \calP^{T_u}_R$, define a shading $Y_2^{T_u}(P) \subset P^{T_u} \cap Y_1^{T_u}(R^{T_u})$ by $Q \in Y_2^{T_u}(P^{T_u})$ if $(Q, P^{T_u}) \in E'$. Define $\calP^{T_u}_{1} = \{P \in \calP^{T_u} : Y_2^{T_u}(P^{T_u}) \neq \emptyset\}$ and $\calP_1 = \bigsqcup_{T_u \in \T'_u} \calP_1^{T_u}$. By construction, \[|E| = \sum_{T_u \in \T'_u}\sum_{P^{T_u} \in \calP^{T_u}} |Y_1^{T_u}(P^{T_u})|_{\de/u} \sim \sum_{T_u \in \T'_u}\sum_{R^{T_u} \in \calR^{T_u}} |Y_1^{T_u}(R^{T_u})|_{\de/u} = \mu_1^{T_u}\sum_{T_u \in \T'_u} |Y_1^{T_u}(\calR^{T_u})|_{\de/u}.\]Clearly, $|A| = \sum_{T_u \in \T'_u} |Y_1^{T_u}(\calR_4[T_u])|_{\de/u}$, so the conclusion of Lemma \ref{lem:WZ2547} and Remark \ref{rmk} imply that \begin{equation}\label{eqn:mu2}\mu_2^{T_u}(Q) \gtrsim \frac{|E|}{\sum_{T_u \in \T'_u} |Y_1^{T_u}(\calR_4[T_u])|_{\de/u}} \gtrsim \mu_1^{T_u}.\end{equation}We know that $|\calP_R| \lesssim \frac{u}{\De}$ for each $R \in \calR_4$, and hence $|B| \le |\calR_4|\frac{u}{\De}$. Since $M \le \frac{1}{u}$ by (\ref{eqn:Mupperbound}), we can use (\ref{eqn:78density}.A) to bound \[|E| = \sum_{T_u \in \T'_u}\sum_{P^{T_u} \in \calP^{T_u}} |Y_1^{T_u}(P^{T_u})|_{\de/u} \sim \frac{1}{M}\sum_{R \in \calR_4} |Y_8(R)|_{\de} \gtrsim u\sum_{R \in \calR_4} |Y_8(R)|_{\de}.\]Using (\ref{eqn:8shadingbound}), we see that \[\sum_{R \in \calR_4} |Y_8(R)|_{\de} \gtrapprox_{\de} \de^{\beta_1 + \beta_2 + 6\eta_1}|\calR_4|\de^{-1-t}\De^t.\]Combining the previous two inequalities, we arrive at another bound for $E$: \begin{equation}\label{eqn:Ebound}|E| \gtrapprox_{\de}\de^{\beta_1 + \beta_2 + 6\eta_1} \frac{u}{\de} \left(\frac{\De}{\de}\right)^t|\calR_4|.\end{equation}Therefore, for each $P^{T_u} \in \calP^{T_u}_1$, \begin{equation}\label{eqn:Pbound}|Y_2^{T_u}(P^{T_u})|_{\de/u} \gtrsim \frac{|E|}{|\calP^{T_u}|} \gtrapprox_{\de} \frac{\de^{\beta_1 + \beta_2 + 6\eta_1}\frac{u}{\de}\left(\frac{\De}{\de}\right)^t|\calR_4|}{|\calR_4|\frac{u}{\De}} = \de^{\beta_1 + \beta_2 + 6\eta_1}\left(\frac{\De}{\de}\right)^{t+1}.\end{equation}

    For each $T_u \in \T'_u$ and each $P_0^{T_u} \in \calP_1^{T_u}$ with $P_0^{T_u} \in \calP_R^{T_u},$ we apply Lemma \ref{lem:brush} with $P_0^{T_u}$ taking the place of $P_0$, $\calP_1^{T_u}$ taking the place of $\calP$, $\T^{T_u}_{P^{T_u}_0, R}$ standing in for $\T(P)$ (recall this was defined in Recap (d)), and $Y_2^{T_u}(\T(P_0^{T_u}))$ the shading on $P_0^{T_u}$. Let $G^{T_u}(P^{T_u}_0)$ be the $\frac{3\theta\De}{u} \times \frac{3\De}{u} \times \frac{3\De}{u}$ prism concentric with $P^{T_u}_0$. We apply Lemma \ref{lem:brush} with $\tau$ replaced by some $\gamma  = \gamma(\omega)> 0$ to be determined over the course of the proof. The lemma gives us a choice of $\eta'$. We choose $\beta_1, \beta_2$ and $\eta_1$ all less than $\eta'/10$. We then take $\beta_2$ as itself, which means that we must check that for each $Q \in Y_2^{T_u}(\calP^{T_u})$, $\{n(P^{T_u}) : Q \in Y_2^{T_u}(P^{T_u})\}$ is $\beta_2$-broadly spread in a $\theta$-ball with error $\lesssim 1$. This is the case because, as noted in Recap (a), $\{n(R^{T_u}) : Q \in Y_1^{T_u}(R^{T_u})\}$ is $\beta_2$-broadly spread in a $\theta$-ball with error $\lesssim 1$, which implies the same for $\{n(P^{T_u}) : Q \in Y_1^{T_u}(P^{T_u})\}$. By (\ref{eqn:mu2}), $\mu_2^{T_u}(Q) \gtrsim \mu_1^{T_u}(Q)$ for each $Q$, so the same holds for $\{n(P^{T_u}) : Q \in Y_2^{T_u}(P^{T_u})\}.$ The lemma finally gives a value $\de'_0 > 0$. The objects we apply Lemma \ref{lem:brush} to have ambient scale $\frac{\de}{u}$, which we need to be $\le \de'_0$. We certainly know that $\frac{\de}{u} \le \frac{\de}{\De} \le \de^{\e}$. If necessary, we decrease $\de_0$ so that $ \de_0^{\e}\le \de'_0$. We can indeed apply Lemma \ref{lem:brush}, which gives a family $\mathscr{S}^{T_u}[P_0^{T_u}] \subset \mathcal{P}_1^{T_u}$ satisfying the $t$-Frostman Convex Wolff Axiom in $G^{T_u}(P^{T_u}_0)$ with error $\de^{-\gamma}$, and for each $P^{T_u} \in \mathscr{S}^{T_u}[P_0^{T_u}]$, \begin{equation}\label{eqn:env}Y_2^{T_u}(\T(P^{T_u})) \cap Y_2^{T_u}(\T(P_0^{T_u})) \neq \emptyset.\end{equation}
    
    We then apply Lemma \ref{lem:RFPB} with $\mathscr{S}^{T_u}[P^{T_u}_0]$ taking the place of $\mathcal{P}$, $\T^{T_u}_{P, R}$ standing in for $\T(P)$ for each $P \in \mathscr{S}^{T_u}[P^{T_u}_0]$, and $Y^{T_u}_2$ taking the place of $Y$. We have error terms $C_1 = \de^{-\gamma}$ from the previous paragraph, $C_2 = \de^{-\zeta}$ from Recap (d), $C_3 = \de^{-\eta}$ since $|\T^{T_u}_{P, R}| \le |\T(R)| \le \de^{-\eta}\left(\frac{\De}{\de}\right)^t$, and $C_4 \lessapprox_{\de} \de^{-\beta_1 -\beta_2 - 6\eta_1}$ from (\ref{eqn:Pbound}). We conclude that \begin{equation}\label{eqn:GBound}\left|\bigcup_{P^{T_u}\in \mathscr{S}^{T_u}[P_0^{T_u}]} Y^{T_u}_2(P^{T_u})\right|_{\de/u} \gtrapprox_{\de} \de^{2\beta_1 + 2\beta_2 + 12 \eta_1 + \eta + \gamma + \zeta}\left(\frac{\theta\De}{\de}\right)^t\left(\frac{\De}{\de}\right)^{t+1}.
    \end{equation}Denote the right-hand side of (\ref{eqn:GBound}), including the implicit logarithmic term, by $N$.

    For one choice of $T_u$ and one $P^{T_u} \in \calP^{T_u}_R$, we have a $\frac{3\theta \De}{u} \times \frac{3\De}{u} \times \frac{3\De}{u}$ prism $G^{T_u}(P^{T_u})$, which is mapped under $\phi^{-1}_{T_u}$ to a $3\theta \De \times 3\De \times \frac{3\De}{u}$ prisms $G(P)$. We define $\mathscr{S}[P] = \phi_{T_u}^{-1}(\mathscr{S}^{T_u}[P^{T_u}])$, a set of $\de \times \De \times \frac{\De}{u}$ prisms satisfying the $t$-Frostman Convex Wolff Axiom in $G(P)$. Since $\phi^{-1}_{T_u}(Y^{T_u}_2(P))$ contains a collection of $N$ $\de \times \de \times \frac{\de}{u}$ prisms, each intersecting $\gtrsim M$ distinct $\de$-cubes from $Y_8(\calR_4[T_u])$, we conclude that for each $P \in \calP_1$, \begin{equation}\label{eqn:GBound2}\left| Y_8(\mathscr{S}[P])\right|_{\de} \gtrsim MN.\end{equation}

    Our choices of $T_u$ and $P \in \calP_1^{T_u}$ were arbitrary throughout the common setup, so we conclude that (\ref{eqn:GBound}) and (\ref{eqn:GBound2}) holds for all choice of $T_u$ and all $P \in \calP^{T_u}_1$. This concludes the common setup for Possibilities 1 and 2. We now close the argument for both.
    \subsection*{Finishing possibility 1}We assume $\theta \ge \left(\frac{\de}{\De}\right)^{\omega/(10t + 5)}$ and prove Conclusion A holds.

    Cover $G(P)$ with $\sim \frac{1}{u}$ many $3\theta \De \times 3\De \times 3\De$ prisms and pigeonhole to conclude that one of them, which we label $\tilde{G}$, satisfies $\left| \tilde{G} \cap Y_8(\calR[T_u])\right|_{\de} \gtrsim uMN$. Let $B_{\De}$ be a $3\De$-ball containing $\tilde{G}$. We see that \begin{equation}\label{eqn:MNbound}\left| B_{\De} \cap Y_8(\calR[T_u])\right|_{\de} \gtrsim uMN.\end{equation}

    Suppose we can prove \begin{equation}\label{eqn:MNbound2}uMN \gtrapprox_{\de} \left(\frac{\De}{\de}\right)^{2t+1-\omega/2}.\end{equation}Then (\ref{eqn:MNbound}) implies that \[\left| B_{\De} \cap Y_8(\calR[T_u])\right|_{\de} \gtrapprox_{\de} \left(\frac{\De}{\de}\right)^{\omega/2}\left(\frac{\De}{\de}\right)^{2t+1-\omega} \gtrapprox_{\de} \de^{-\e\omega/2}\left(\frac{3\De}{\de}\right)^{2t+1-\omega}.\]Taking $\de_0$ sufficiently small so that $\de^{-\e\omega}$ is larger than the reciprocal of the implicit logarithmic loss, we see that Conclusion A holds. 
    
    It remains to prove (\ref{eqn:MNbound2}). Recall by (\ref{eqn:Mlowerbound}) that $M \gtrapprox_{\de} \de^{\beta_1 + 7\eta_1}u^{-1}$. We have from (\ref{eqn:GBound}) that $N \gtrapprox_{\de} \de^{2\beta_1 + 2\beta_2 + 12 \eta_1 + \eta + \gamma + \zeta}\left(\frac{\theta\De}{\de}\right)^t\left(\frac{\De}{\de}\right)^{t+1}$. Since $\theta \ge \left(\frac{\de}{\De}\right)^{\omega/(10t + 5)}$, we know that \[\left(\frac{\theta\De}{\de}\right)^{t} \ge \left(\frac{\De}{\de}\right)^{-\omega/(10 + 5/t)}\left(\frac{\De}{\de}\right)^t \ge \left(\frac{\De}{\de}\right)^{-\omega/10}\left(\frac{\De}{\de}\right)^t.\]We conclude that \[uMN \gtrapprox_{\de} \de^{3\beta_1 + 2\beta_2 + 19\eta_1 + \eta + \gamma + \zeta}\left(\frac{\De}{\de}\right)^{2t+1-\omega/10}.\]We can freely choose $\beta_1, \beta_2, \eta_1, \eta, \gamma$, and $\zeta$ to be less than $\e\omega/1000$ and conclude that \[uMN \gtrapprox_{\de} \de^{\e\omega/5}\left(\frac{\De}{\de}\right)^{2t+1-\omega/10}.\]Since $\de^{\e} \ge \frac{\de}{\De}$, we finally have \[uMN \gtrapprox_{\de} \left(\frac{\De}{\de}\right)^{2t+1-\omega/2}.\]This completes the Possibility I case.

    \subsection*{Finishing possibility 2}We assume that $u \le \De \de^{-\omega/(10t + 5)}$ and prove Conclusion A. 

    We first aim to bound $|\calP_1|$. Recall that this is the vertices one side of a bipartite graph with edge set $E'$, where the other side of the bipartite graph comprises a set of $\de$-cubes. As an immediate consequence of how we defined $E'$, we see that if $(Q, P^{T_u}) \in E'$, then $Q \in Y_1^{T_u}(P^{T_u}) \subset \T^{T_u}_{P, R}$. We can therefore bound the maximal degree of $P^{T_u} \in \calP_1$ by $|Y_{\text{full}}(\T^{T_u}_{P,R})|_{\de/u}$. This is a collection of $\le \de^{-\eta}\left(\frac{\De}{\de}\right)^t$ many $\frac{\de}{u} \times \frac{\de}{u} \times \frac{\De}{u}$-tubes, so $|Y_{\text{full}}(\T^{T_u}_{P,R})|_{\de/u} \le \de^{-\eta}\left(\frac{\De}{\de}\right)^{t+1}$. It follows that \[|E'| \le |\calP_1|\max_{P^{T_u} \in \calP_1} \deg_{E'}(P^{T_u}) \le |\calP_1|\de^{-\eta}\left(\frac{\De}{\de}\right)^{t+1},\]which together with (\ref{eqn:Ebound}) implies \begin{equation}\label{eqn:P1bound}|\calP_1| \gtrapprox_{\de} \de^{\beta_1 + \beta_2 + 6\eta_1 + \eta}\frac{u}{\De}|\calR_4|.\end{equation}Going forward, we denote by $\gamma_1 = \beta_1 + \beta_2 + 6\eta_1 + \eta$.

    Define $\tilde{\calP}_1 = \{\phi_{T_u}^{-1}(\calP^{T_u}) : \calP^{T_u} \in \calP_1\} \subset \bigsqcup_{R \in \calR_4} \calP_R$. Clearly $|\tilde{\calP}_1| = |\calP_1|$. Define \[\calR_5 = \{R \in \calR_4: \calP_R \cap \tilde{\calP}_1 \neq \emptyset\}.\]Since $|\calP_R| \le \frac{u}{\De}$ for each $R \in \calR_5$, (\ref{eqn:P1bound}) implies that $|\calR_5| \ge \de^{\gamma_1}|\calR_4|$. Together with (\ref{eqn:R40}), we see that $|\calR_5| \ge \de^{3\gamma_1}|\calR|$. Since $\calR$ satisfies the $t$-Frostman Convex Wolff Axiom with error $\de^{-\eta} \le \de^{-\gamma_1}$, we see that $\calR_5$ satisfies the $t$-Frostman Convex Wolff Axiom with error $\le \de^{-4\gamma_1}$. Denote by $\calR'_{\text{env}} = \{\text{env}(R) : R \in \calR_5\}$. With a Vitali-style covering argument\footnote{ Vitali covering-style reductions are more common for families of tubes or of slabs, where they correspond nicely to the actual Vitali covering theorem applied on the affine Grassmannian. The Vitali covering reduction for prisms we use here follows from the the case of tubes and of slabs by applying a Vitali covering argument to $\T'_u$ and then on the set of $\theta \times 1 \times 1$ prisms concentric with elements of $\calR_5^{T_u}$ for each remaining $T_u$.}, we refine this set to some $\calR_{\text{env}}$ so that $\calR_5[3R_{\text{env}}] \cap \calR_5[3R'_{\text{env}}] = \emptyset$ for each $R_{\text{env}} \neq R'_{\text{env}}$ and if we set \[\calR_6 = \{R \in \calR_5 : R \subset R_{\text{env}} \text{ for some }R_{\text{env}} \subset \calR_{\text{env}}\},\]then $|\calR_6| \gtrsim |\calR_5|$. Since $\calR_5$ satisfies the $t$-Frostman Convex Wolff Axiom with error $\le \de^{-4\gamma_1}$, we see that $\calR_6$ satisfies the $t$-Frostman Convex Wolff Axiom with error $\lesssim \de^{-4\gamma_1}$. This implies that for each $R_{\text{env}}$ in $\calR_{\text{env}}$, \begin{equation}\label{eqn:letitend}|\calR_6[R_{\text{env}}]|\le \de^{-4\gamma_1}(\theta u^2)^t |\calR_6|.\end{equation}Since $\calR_{\text{env}}$ is a $1$-partitioning cover of $\calR_6$, $\sum_{R_{\text{env}} \in \calR_{\text{env}}} |\calR_6[R_{\text{env}}]| = |\calR_6|$. Then summing (\ref{eqn:letitend}) over $\calR_{\text{env}}$ and rearranging, we conclude that \begin{equation}\label{eqn:Renvlowerbound}|\calR_{\text{env}}| \gtrsim \de^{4\gamma_1} (\theta u^2)^{-t}.\end{equation}
        
    Each $R_{\text{env}} \in \calR_{\text{env}}$ contains a non-empty family \[\tilde{\calP}_1(R_{\text{env}}) = \{ P \in \tilde{\calP}_1 : P \in \calP_R \text{ for some } R \subset R_{\text{env}}\}.\]Choose one $P_{R_{\text{env}}} \in \tilde{\calP}_1(R_{\text{env}})$ for each $R_{\text{env}} \in \calR_{\text{env}}$. We claim that each $P \in \mathscr{S}[P_{R_{\text{env}}}]$, $P \in \calP_R$ for some $R \subset 2R_{\text{env}}$. Let $R_0 \in \calR_6$ be such that $P_{R_{\text{env}}} \in \calP_{R_0}$, which by the definition of $\tilde{\calP}_1(R_{\text{env}})$ implies that $R_0 \subset R_{\text{env}}$. By (\ref{eqn:env}), $Y_2^{T_u}(P^{T_u}_{R_{\text{env}}}) \cap Y_2^{T_u}(P^{T_u}) \neq \emptyset$, so $Y_2^{T_u}(R^{T_u}_0) \cap Y_2^{T_u}(R^{T_u}) \neq \emptyset$. By the envelope property (see Recap (f)) and the fact that $R_0 \subset 2R_{\text{env}}$, we conclude that $R \subset \text{env}(R_0) \subset 2R_{\text{env}}$, as desired. 
    
    Now suppose there exists $R_{\text{env}} \neq R'_{\text{env}}$ with $Y_8(P) \cap Y_8(P') \neq \emptyset$ for some $P \in \mathscr{S}[P_{R_{\text{env}}}]$ and $P' \in \mathscr{S}[P_{R'_{\text{env}}}]$. We know that $P \in \calP_R$ for some $R \subset 2R_{\text{env}}$ and $P' \in \calP_{R'}$ for some $R' \subset 2R'_{\text{env}}$. Furthermore, since $Y_8(P) \subset Y_8(R)$ and $Y_8(P') \subset Y_8(R')$, again by the envelope property we conclude that $R' \subset \env(R) \subset 3R_{\text{env}}$. It follows that $R' \in \calR_5[3R_{\text{env}}] \cap \calR_5[3R'_{\text{env}}]$, contradicting construction of $\calR_{\env}$. Recalling our notation $Y_8(\mathscr{S}[P_{R_{\env}}]) = \bigcup_{P \in \mathscr{S}[P_{R_{\env}}]} Y_8(P)$, we conclude that $Y_8(\mathscr{S}[P_{R_{\env}}]) \cap Y_8(\mathscr{S}[P_{R'_{\env}}]) = \emptyset$ and hence \[|Y(\calR)|_{\de} \ge \sum_{R_{\env} \in \calR_{\env}} |Y_8(\mathscr{S}[P_{R_{\env}}])|_{\de}.\]Together with (\ref{eqn:GBound2}), this implies that \begin{equation}\label{eqn:we4}|Y(\calR)|_{\de} \ge MN|\calR_{\text{env}}|.\end{equation}
    
    Combining (\ref{eqn:Mlowerbound}), our definition of $N$ from (\ref{eqn:GBound}), and (\ref{eqn:Renvlowerbound}), we conclude that \[MN|\calR_{\text{env}}| \gtrapprox_{\de}\de^{3\beta_1 + 2\beta_2 + 19\eta_1 +\eta + \gamma + \zeta + 4\gamma_1}\left(\frac{\De}{u}\right)^{2t+1}\de^{-2t-1}\]Recalling our assumption that $u \le \de^{-\omega/(10t + 5)}\De$ and choosing error terms so that \[\de^{3\beta_1 + 2\beta_2 + 19\eta_1 +\eta + \gamma + \zeta + 4\gamma_1} \ge \de^{\omega/5},\]we see that $MN|\calR_{\text{env}}| \gtrapprox_{\de} \de^{\omega/2-2t-1}$. So long as $\de_0$ is sufficiently small, we have $MN|\calR_{\text{env}}| \ge \de^{-2t-1+\omega}$. Using this bound in (\ref{eqn:we4}), we reach Conclusion A.

    \subsection*{Possibility 3}We use a brush argument similar to the second subcase of the slightly broad case in Proposition \ref{prop:largeprism} to reach Conclusion B. 

    We assume $u \ge \De \de^{-\omega/(10t+5)}$ and $\theta \le \left(\frac{\de}{\De}\right)^{\omega/(10t + 5)}$ Set $\theta_0 := \left(\frac{\de}{\De}\right)^{\omega/(10t + 5)} \ge \theta$, $\overline{\de} = 3u\theta_0$ and $\overline{\De} = 3u$. These are the final choices of scales in Conclusion B. As promised, we have $\frac{\overline{\De}}{\overline{\de}} = \theta_0^{-1} = \left(\frac{\De}{\de}\right)^{\omega/(10t + 5)}$. 

    Refine $Y_8$ to some $Y_9 = \{Q \in Y_8(\calR_4) : \mu_8(Q) \gtrapprox_{\de} \de^{\beta_2}\mu_6(Q)\}$. Choosing the implicit logarithmic term sufficiently small, we ensure that $\sum_{R \in \calR_4} |Y_9(R)|_{\de} \gtrsim \sum_{R \in \calR_4} |Y_8(R)|_{\de}$, since otherwise (\ref{eqn:68shadingbound}) would be impossible. Define \begin{equation*}\calR_{\text{stem}} = \left\{R \in \calR_4 : |Y_9(R)|_{\de} \ge \de^{2\beta_1} \De^{t}\de^{-1-t}\right\}.\end{equation*}We claim that $|\calR_{\text{stem}}| \ge \de^{2\beta_1}|\calR|$. Suppose otherwise. Then since $|Y_9(R)|_{\de} \le |\T(R)|\de^{-1} \le \De^{t}\de^{-1-t-\eta}$, we know that \[\sum_{R \in \calR_{\text{stem}}} |Y_9(R)|_{\de} \le \de^{2\beta_1}|\calR_4|\De^t \de^{-1-t-\eta}.\]Clearly the same is true if we replace $\calR_{\text{stem}}$ with $\calR_4 \setminus \calR_{\text{stem}}$. We conclude that $\sum_{R \in \calR_4} |Y_9(R)|_{\de} \lesssim \De^{t}\de^{2\beta_1-1-t-\eta}|\calR_4|$ and hence $\sum_{R \in \calR_4} |Y_8(R)|_{\de} \lesssim \De^{t}\de^{2\beta_1-1-t-\eta}|\calR_4|$. We can choose $\beta_2, \eta_1,$ and $\eta$ to be much smaller than $\beta_1$ and reach a contradiction with (\ref{eqn:8shadingbound}), so long as $\de_0$ is sufficiently small. Therefore, $|\calR_{\text{stem}}| \ge \de^{2\beta_1}|\calR|$, as desired.

    Define $\overline{\calR} = \{N_{\overline{\de}}(\env(R)) : R \in \calR_{\text{stem}}\}$. We claim $\overline{\calR}$ satisfies the $t$-Frostman Convex Wolff Axiom with error $\le \de^{-\tau} \le \overline{\de}^{-\tau}$. Since $|\calR_{\text{stem}}| \ge \de^{2\beta_1}|\calR|$ and $\calR$ satisfies the $t$-Frostman Convex Wolff Axiom with error $\de^{-\eta}$, we know that $\calR_{\text{stem}}$ satisfies the $t$-Frostman Convex Wolff Axiom with error $\le \de^{-2\beta_1-\eta}$. Since the $t$-Frostman Convex Wolff Axiom is inherited upwards, we conclude that $\overline{\calR}$ satisfies the $t$-Frostman Convex Wolff Axiom with error $\le \de^{-2\beta_1 - \eta}$. We can freely take $\beta_1, \eta < \tau/3$ and conclude that $\overline{\calR}$ satisfies the $t$-Frostman Convex Wolff Axiom with error $\le \de^{-\tau}$.

    It remains to construct the shading $\overline{Y}$ and prove it has the desired density in $\overline{\calR}$. Since $u \ge \de^{-\omega/(10t + 5)}\De$, the definition of $\theta_0$ ensures that $\overline{\de} = 3\theta_0u > \De$. It follows that $\T_{\overline{\de}} = \calD_{\overline{\de}}(\calR)$ consists of $\overline{\de}$-tubes, and if $T = N_{\de}(R) \in \calD_{\overline{\de}}(\calR)$, then $\overline{Y}(T) = N_{\overline{\de}}(Y(R))$ is an $\de^{-\eta}$-dense shading on $\T_{\overline{\de}}$. We claim that for each $\overline{R} \in \overline{\calR}$, we can find a set $\T_{\overline{\de}}(\overline{R})$ satisfying the Frostman Convex Wolff Axiom in $\overline{R}$ with error $\le \de^{-\tau/2}$. If the claim holds, then we apply Lemma \ref{lem:RFPA} with $\T_{\overline{\de}}$ the set of tubes, $\overline{Y}$ the shading, and $\overline{R}$ the ambient prism. We conclude that \[\left|\bigcup_{T \in \T_{\overline{\de}}(\overline{R})} \overline{Y}(R)\right|_{\overline{\de}} \gtrapprox_{\de} \de^{-2\eta -\tau/2}\frac{\overline{\De}}{\overline{\de}^2}.\]So long as $\eta < \tau/5$ and $\de_0$ is sufficiently small so that the implicit logarithmic term is $\le \de^{-\tau/10}$, we ensure that $\overline{Y}(\overline{R})$ is $\ge \de^{\tau}$ dense, as desired.

    It remains to construct the set of tube $\T_{\overline{\de}}(\overline{R}_0)$ for an arbitrary $\overline{R}_0 \in \overline{\calR}$. We know that $\overline{R}_0 = N_{\overline{\de}}(\env(R_0))$ for some $R_0 \in \calR_{\text{stem}}$. By the construction of $\calR_{\text{stem}}$, we know that $|Y_9(R_0)|_{\de} \ge \de^{2\beta_1}\De^t \de^{-1-t}$. Since $Y_9(R_0)$ is supported on a union of $\le \de^{-\eta}\left(\frac{\De}{\de}\right)^t$ many $\de$-tubes, we know one tube $T_0$ has $|Y_9(T_0)|_{\de} \ge \de^{2\beta_1 + \eta}\de^{-1}$. Each $\de$-cube $Q \in Y_9(T_0)$ is also contained in $Y_6(\calR_3)$, so $\{\dir(R) : R \in \calR_3, Q \in Y_6(R)\}$ is $\beta_1$-broad in a $u$-ball with error $\lessapprox_{\de} \de^{-5\eta_1}$. Since $\mu_9(Q) \gtrapprox_{\de} \de^{\beta_2} \mu_6(Q)$, we see that $\mu_9(Q)$ is $\beta_1$-broad in a $u$-ball with error $\lessapprox_{\de} \de^{-5\eta_1 -\beta_2}$. Following the discussions in Remark \ref{rmk2}, there exists some $R \in \calR_4$ with $Q \in Y_9(R)$ and $\angle(\dir(R), \dir(R_0)) \gtrapprox_{\de} \de^{C}u$, where $C = \frac{5\eta_1 + \beta_2}{\beta_1}$. 

    Set $\tilde{u} = \de^Cu$. Since $|Y_9(T_0)|_{\de} \ge \de^{2\beta_1 + \eta}\de^{-1}$, \begin{equation}\label{eqn:J}J = |Y_9(T_0)|_{\overline{\de}/\tilde{u}} \ge \de^{2\beta_1 + \eta} \frac{\tilde{u}}{\overline{\de}} \ge \de^{2\beta_1 + \eta + C}\frac{u}{\overline{\de}}.\end{equation}Enumerate the $J$ many $\frac{\overline{\de}}{\tilde{u}}$-cubes incident to $Y_9(T_0)$ as $Q_1, \dots Q_J$. For each $j$ from $1$ to $J$, there exists some prism $R_j$ with $Q_j \cap Y_9(R_j) \neq \emptyset$ and $\angle(\dir(R_0), \dir(R_j)) \gtrapprox_{\de} \de^C u$. Let $\overline{T}_j = N_{\overline{\de}}(R_j) \in \T_{\overline{\de}}$. By the envelope property, $R_j \subset \env(R_0)$, so $\overline{T}_j \subset \overline{R}_0$. Set $\T_{\De}(\overline{R}_0 ) = \{\overline{T}_j : j = 1, \dots J\}$. 
    
    It remains to prove $\T_{\overline{\de}}(\overline{R}_0 )$ satisfies the $1$-Frostman Convex Wolff Axiom in $\overline{R}_0$ with error $\le \de^{-\tau/2}$. Suppose otherwise. It follows that $\T_{\overline{\de}}(\overline{R}_0)^{\overline{R}_0}$, a set of $\frac{\overline{\de}}{u} \times 1 \times 1$ slabs, has $1$-Frostman Convex Wolff Axiom error $> \de^{-\tau/2}$. Moreover, each slab intersects $T_0^{\overline{R}_0}$ with angle $\ge \de^C$. By (\ref{eqn:J}), so long as $\beta_1, \eta, C < \tau/100$, we must have that $J \ge \de^{\tau/4}\frac{u}{\overline{\de}}$ and hence $C_{1-\text{KT-CW}}(\T_{\overline{\de}}(\overline{R}_0)) \ge \de^{-\tau/4}$. Apply Lemma \ref{lem:WZ2546} to $\T_{\overline{\de}}(\overline{R}_0)^{\overline{R}_0}$ and let $\T'$ denote the resulting subset of $\T$ and $\calW$ the larger cover of $\T'$, by $\alpha \times 1 \times 1$ prisms. 
    
    We know that $|\T'| \gtrapprox_{\de} |\T|$, so using (\ref{eqn:WZ2546+}), we conclude that \begin{equation}\label{eqn:W}|\calW| \lessapprox_{\de} \de^{\tau/4}|\T'| \frac{\overline{\de}}{u\alpha}\end{equation}If $\alpha \ge \de^{C}$, (\ref{eqn:W}) implies $|\calW| = 0$, so long as $C$ is sufficiently small relative to $\tau$ and $\de_0$ is sufficiently small, so assume $\alpha < \de^C$. Then since each $W \in \calW$ contains some slab in $\T_{\overline{\de}}(\overline{R}_0)^{\overline{R}_0}$, $W$ intersects $T_0^{\overline{R}_0}$ with angle $\gtrsim \de^C$. If we denote by $\ell$ the center line of $T_0^{\overline{R}_0}$, we see that $\vol_{\R^1}(W \cap \ell) \lesssim \de^{-C}\alpha$. Together with \ref{eqn:W}, we conclude that \begin{equation}\label{eqn:almost1}\vol_{\R^1}\left(\bigcup_{T \in \T'} T \cap \ell\right) \lessapprox_{\de} \de^{\tau/4-C}|\T'|\frac{\overline{\de}}{u}.\end{equation}On the other hand, by construction $T \cap \ell$ is disjoint for different $T \in \T'$ and intersects $\ell$ with length $\ge \frac{\overline{\de}}{u}$, so \begin{equation}\label{eqn:almost2}\vol_{\R^1}\left(\bigcup_{T \in \T'} T \cap \ell\right) \ge |\T'| \overline{\de}{u}.\end{equation}Comparing (\ref{eqn:almost1}) with (\ref{eqn:almost2}), we reach a contradiction, so long as $C$ is sufficiently small relative to $\tau$ and $\de_0$ is sufficiently small. We conclude that $\T_{\overline{\de}}(\overline{R}_0)$ satisfies the $1$-Frostman Convex Wolff Axiom, completing the proof.
    
\end{proof}

\section{The proofs Lemmas \ref{lem:RFPA}, \ref{lem:RFPB}, \ref{lem:RFPC}, and \ref{lem:brush}}\label{sec:lem}

\subsection{The proofs of Lemmas \ref{lem:RFPA}, \ref{lem:RFPB}, and \ref{lem:RFPC}}\label{subsec:RFP}

These three lemmas are various $L^2$ arguments for incidences. In some cases, it may be convenient to know the precise relationship of each parameter in the error terms, so we keep track of those in these results. We recall the statement of each lemma and give the proof of each lemma in numerical order. For Lemmas \ref{lem:RFPA} and \ref{lem:RFPC}, the proofs are largely the same as results already in the literature, respectively \cite[Appendix A]{HSY21} and \cite[Lemma 4.4]{WZ24}. Lemma \ref{lem:RFPB} is a little more difficult and may be novel. Recall the initial common set-up for these lemmas: we have error terms $C_1, C_2, C_3,$ and $C_4 \ge 1$, a choice of $t \in (0, 1]$, and some scale $\de > 0$.

\begin{lemma*}[\textbf{\ref{lem:RFPA}}]
    Fix $\De \in (\de, 1]$. Suppose $(\T, Y)_{\de}$ has $|Y(T)|_{\de} \ge C_1^{-1}\de^{-1}$. Also, suppose for some $\de \times \De \times 1$ prism $R$ and all $T \in \T$, $T \subset R$  and $\T^R$ satisfies the $t$-Frostman Convex Wolff Axiom with error $C_2$. Then \[\left|\bigcup_{T \in \T} Y(T)\right|_{\de}\gtrsim |\log \de|^{-1}C_1^{-1}C_2^{-2}(\de/\De)^{-t}\de^{-1}.\]
    In particular, if $C_1, C_2 \le \de^{-\eta}$ and $\de$ is sufficiently small, then \[\left|\bigcup_{T \in \T} Y(T)\right|_{\de} \ge \de^{4\eta}(\de/\De)^{-t}\de^{-1}.\]
\end{lemma*}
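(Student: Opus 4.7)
\emph{Approach.} The plan is to run the classical Córdoba-type $L^{2}$ / Cauchy--Schwarz incidence argument, adapted to the thin prism $R$ via the affine rescaling $\phi_{R}$. Let $E := \bigcup_{T \in \T} Y(T)$ and set $\mu(Q) := |\{T \in \T : Q \in Y(T)\}|$ for dyadic $\de$-cubes $Q$. Cauchy--Schwarz gives
\[
|E|_{\de} \;\ge\; \frac{\bigl(\sum_{T} |Y(T)|_{\de}\bigr)^{2}}{\sum_{T,T'} |Y(T) \cap Y(T')|_{\de}}.
\]
The hypothesis $|Y(T)|_{\de} \ge C_{1}^{-1} \de^{-1}$ immediately gives the numerator the lower bound $C_{1}^{-2}\de^{-2}|\T|^{2}$, so the whole task reduces to estimating $\sum_{T,T'} |T \cap T'|_{\de}$ from above.

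\emph{Pairwise intersection count.} Two $\de$-tubes in $\R^{3}$ meeting at angle $\theta \ge \de$ intersect in at most $\lesssim \theta^{-1}$ many $\de$-cubes (the intersection being a $\de \times \de \times (\de/\theta)$ parallelepiped), while for $\theta < \de$ the trivial bound $\lesssim \de^{-1}$ suffices. To count how many $T' \in \T$ meet a fixed $T$ at angle $\sim \theta$, I would pass to the rescaled picture: after $\phi_{R}$, each $T^{R}$ becomes essentially a $1 \times (\de/\De) \times 1$ slab in $[0,1]^{3}$. Since all tubes lie in $R$, the dominant angular difference between distinct tubes takes place in the medium ($\De$) direction, which $\phi_{R}$ stretches by a factor of $\De^{-1}$; the family of $(T')^{R}$ within rescaled angle $\theta/\De$ of $T^{R}$ is therefore contained in a convex $1 \times (\theta/\De) \times 1$ slab of volume $\sim \theta/\De$. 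Feeding this convex set into the $t$-Frostman Convex Wolff Axiom for $\T^{R}$ yields
\[
\#\{T' \in \T : \angle(T,T') \lesssim \theta\} \;\lesssim\; C_{2}\,(\theta/\De)^{t}\,|\T|.
\]

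\emph{Summation and the main obstacle.} Combining the two bounds, I would sum dyadically over $\theta \in [\de, O(\De)]$ to get
\[
\sum_{T,T'} |T \cap T'|_{\de} \;\lesssim\; C_{2}\,|\T|^{2} \sum_{\theta \text{ dyadic}} (\theta/\De)^{t}\,\theta^{-1} \;\lesssim\; |\log\de|\,C_{2}\,(\de/\De)^{t}\,\de^{-1}\,|\T|^{2},
\]
the geometric sum being dominated by the endpoint $\theta \sim \de$ when $t < 1$ and contributing a single logarithm when $t = 1$. Plugging back into Cauchy--Schwarz produces the claimed density estimate (up to the precise placement of $C_{1},C_{2}$ in the exponents, which is innocuous for the applications); absorbing $C_{1},C_{2} \le \de^{-\eta}$ and the logarithm into a single factor of $\de^{O(\eta)}$ then gives the ``in particular'' statement. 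The main obstacle, and the only substantive point beyond the textbook Córdoba argument, is the bookkeeping under the non-isotropic rescaling $\phi_{R}$: because the short ($\de$) and medium ($\De$) widths of $R$ stretch by very different factors, one must carefully argue that for distinct tubes with $\angle(T,T') = \theta \gtrsim \de$ the angular difference is concentrated in the medium direction, so that the correct convex set fed into the Frostman Convex Wolff Axiom has volume $\theta/\De$ rather than the naive $\theta/\de$.
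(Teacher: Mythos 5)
Your proposal is correct and follows essentially the same Córdoba-type Cauchy--Schwarz argument as the paper: the paper likewise pigeonholes dyadically on the angle, bounds the number of tubes at angle $\lesssim \theta$ by feeding the $\de \times \theta \times 1$ prism $N_{\theta}(T)\cap R$ into the Convex Wolff Axiom to get $\lesssim C_2(\theta/\De)^t|\T|$, and bounds each pairwise intersection by $\lesssim \theta^{-1}$ cubes, so your rescaled-slab formulation of the counting step is just the same estimate viewed through $\phi_R$. The constant bookkeeping matches too (the paper's own proof also lands on $C_1^{-2}C_2^{-1}$ rather than the stated $C_1^{-1}C_2^{-2}$, which is harmless for the $\de^{4\eta}$ conclusion).
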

\begin{proof}
    By Cauchy-Schwarz, we know that \begin{equation}\label{eqn:611}\left|\bigcup_{T \in \T} Y(T)\right|_{\de} \ge \frac{|\T|^2 (\inf_{T \in \T} |Y(T)|)^2_{\de}}{\sum_{T_1, T_2 \in \T} |Y(T_1) \cap Y(T_2)|_{\de}}.\end{equation}We see that \[\sum_{T_1, T_2 \in \T} |Y(T_1) \cap Y(T_2)|_{\de} \le |\T| \sup_{T_1 \in \T} \sum_{j = 0}^{|\log \de |} \sum_{\substack{T_2 \in \T: \angle(T_1, T_2) \in [2^j\de, 2^{j+1}\de]}} |Y(T_1) \cap Y(T_2)|_{\de}.\]
        Fix a choice of $T \in \T$. For any $T_2 \in \T$ with $\angle(T_1, T_2) \in [2^j\de, 2^{j+1}\de)$, we see that $T_2 \subset N_{2^{j+1}\de}(T_1) \cap R$, a prism with dimensions $\de \times 2^j\de \times 1$. Since $\T$ satisfies the $t$-Frostman Convex Wolff Axiom in $R$ with error $C_2$, \[|\{T_2 : \angle(T_1, T_2) \le 2^{j+1}\de\}| \le C_2\left(\frac{2^{j+1}\de}{\De}\right)^t|\T|.\]For each such $T_2$, we have that $Y(T_1) \cap Y(T_2)$ is contained in a $\de \times \de \times \frac{\de}{2^{j}\de}$ subset of $T_2$ and hence consists of $\le (2^j\de)^{-1} \le 2^{-tj}\de^{-1}$ $\de$-balls. It follows that \[\sum_{\substack{T_2 \in \T: \angle(T_1, T_2) \in [2^j\de, 2^{j+1}\de]}} |Y(T_1) \cap Y(T_2)|_{\de} \lesssim C_2 \de^{-1+t}\De^{-t}|\T|.\]
        As there are $\lesssim |\log \de|$ many choices of $j$, we conclude that \[\sum_{T_1, T_2 \in \T} |Y(T_1) \cap Y(T_2)|_{\de} \le |\T|^2|\log \de| \de^{-1+t}\De^{-t}.\] Since $|Y(T)|_{\de} \ge C_1^{-1}\de^{-1}$ for each $T \in \T$, plugging the above bound into (\ref{eqn:611}), we conclude that \[\left|\bigcup_{T \in \T} Y(T)\right|_{\de} \gtrsim |\log \de|^{-1} C_1^{-2}C_2^{-1} \de^{-1}(\de/\De)^{-t}.\]
\end{proof}

\begin{lemma*}[\textbf{\ref{lem:RFPB}}]
        Fix $\De \in (\de, 1]$ and $\nu \in (\De, 1]$. Suppose $\calP$ is a collection of $\de \times \De \times \De$ prisms satisfying $t$-Frostman Convex Wolff Axiom with error $C_1$ inside some $G$ a $\nu \times \De \times \De$ prism. Suppose furthermore that for each $P \in \calP$, we have a collection of tubes $\T(P)$ satisfying the $t$-Frostman Convex Wolff Axiom with error $C_2$ inside $P$ and $|\T(P)| \le C_3(\frac{\De}{\de})^t$. Finally, $\T(P)$ is equipped with a shading $Y$ such that $\left|\bigcup_{T \in \T(P)} Y(T)\right|_{\de} \ge C^{-1}_4 (\frac{\De}{\de})^{t+1}$.Then \[\left|\bigcup_{T \in \T} Y(T)\right|_{\de} \gtrsim |\log \de|^{-2}C_{4}^{2}C_3^{-1}C_2^{-1}C_1^{-1} \left(\frac{\nu}{\de}\right)^t \left(\frac{\De}{\de}\right)^{t+1}.\]
        In particular, if $C_i \le \de^{\eta}$ for $i = 1, 2, 3, 4$, then taking $\de$ sufficiently small, we have that \[\left|\bigcup_{T \in \T} Y(T)\right|_{\de} \gtrsim \de^{6\eta} \left(\frac{\nu}{\de}\right)^t \left(\frac{\De}{\de}\right)^{t+1}.\]
\end{lemma*}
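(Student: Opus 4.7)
The plan is to use a Cauchy--Schwarz / $L^2$ argument in the spirit of Lemma \ref{lem:RFPA}, but with a two-scale decomposition that exploits both the $t$-Frostman Convex Wolff Axiom of $\calP$ inside $G$ and of $\T(P)$ inside each prism $P$. Let $f(Q) = |\{T \in \T : Q \in Y(T)\}|$; then by Cauchy--Schwarz,
\[
|Y(\T)|_\de \;\ge\; \frac{\bigl(\sum_{T \in \T}|Y(T)|_\de\bigr)^2}{\sum_{T_1,T_2 \in \T}|Y(T_1)\cap Y(T_2)|_\de}.
\]
Since tube segments lying in different prisms $P$ are automatically distinct, the shading hypothesis gives
\[
\sum_{T \in \T}|Y(T)|_\de \;\ge\; \sum_{P \in \calP} \Bigl|\bigcup_{T \in \T(P)} Y(T)\Bigr|_\de \;\ge\; C_4^{-1}|\calP|(\De/\de)^{t+1}.
\]

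For the denominator I would split into a diagonal part ($T_1,T_2$ in a common prism $P$) and an off-diagonal part ($T_1 \in \T(P_1)$, $T_2 \in \T(P_2)$ with $P_1 \neq P_2$). The diagonal part is handled exactly as in Lemma \ref{lem:RFPA}: inside a fixed $P$, dyadically decompose on the angle $\theta_k = 2^k(\de/\De)$ between $T_1$ and $T_2$. The $t$-Frostman Convex Wolff Axiom of $\T(P)$ in $P$ with error $C_2$ bounds the number of $T_2$'s at angle $\lesssim \theta_k$ from a fixed $T_1$ by $\lesssim C_2\theta_k^t|\T(P)|$, while each such pair intersects in $\lesssim \theta_k^{-1}$ many $\de$-cubes. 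Summing the geometric series in $k$ (bounded by its largest term when $t<1$ and logarithmic at $t=1$), then summing over $P$ and using $|\T(P)|\le C_3(\De/\de)^t$, produces the diagonal bound $\lesssim |\log\de|\,C_2 C_3^2|\calP|(\De/\de)^{t+1}$.

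The main obstacle is the off-diagonal sum, which is where the $t$-FCWA of $\calP$ in $G$ must enter. The plan is a nested dyadic decomposition: first on $\alpha$, the angle between the short directions $n(P_1)$ and $n(P_2)$, and then on $\beta=\angle(T_1,T_2)$. For fixed $P_1$, the prisms $P_2$ with $\angle(n(P_1),n(P_2))\lesssim \alpha$ that meet $P_1$ are contained in a convex thickening of $P_1$ inside $G$ of volume $\lesssim \De^2(\de+\alpha\De)$; the $t$-FCWA of $\calP$ in $G$ with error $C_1$ then bounds the number of such $P_2$'s by $\lesssim C_1\bigl((\de+\alpha\De)/\nu\bigr)^t|\calP|$. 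For each admissible $P_2$, the tubes in $\T(P_2)$ at angle $\sim\beta$ from a fixed $T_1$ are controlled by the $t$-FCWA of $\T(P_2)$ in $P_2$ with error $C_2$, and each pair contributes $\lesssim \beta^{-1}$ cubes to $|Y(T_1)\cap Y(T_2)|_\de$. Summing the $\beta$-geometric series first and the $\alpha$-geometric series second, both saturating at their largest scale thanks to $t\le 1$, and then summing over $T_1 \in \T(P_1)$ and $P_1 \in \calP$, should yield the off-diagonal bound
\[
\sum_{P_1 \neq P_2}\sum_{\substack{T_1 \in \T(P_1) \\ T_2 \in \T(P_2)}} |Y(T_1)\cap Y(T_2)|_\de \;\lesssim\; |\log\de|^2\,C_1 C_2 C_3^2 |\calP|^2 (\De/\de)^{t+1}(\de/\nu)^t.
\]
The off-diagonal piece dominates the diagonal one once $|\calP|$ exceeds $(\nu/\de)^t$, which is the relevant regime; plugging the numerator and denominator back into Cauchy--Schwarz then gives the claimed estimate of the form $|\log\de|^{-2}C_4^{-2}C_3^{-2}C_2^{-1}C_1^{-1}(\nu/\de)^t(\De/\de)^{t+1}$, which is consistent with the $\de^{6\eta}$ loss stated in the ``in particular'' clause. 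The main technical difficulty will be executing the joint $(\alpha,\beta)$ sum cleanly at the boundary regimes $\alpha \sim \de/\De$ and $\beta\sim\de$, carefully distinguishing the two ways in which the geometry of $T_1\cap T_2$ depends on the short-direction angle $\alpha$ of the ambient prisms versus the in-plane tube angle $\beta$, so that no spurious power of $(\De/\de)$ appears.
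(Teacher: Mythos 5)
Your route is genuinely different from the paper's: you run Cauchy--Schwarz at the level of individual tubes, so your denominator is the pairwise sum $\sum_{T_1,T_2}|Y(T_1)\cap Y(T_2)|_{\de}$, whereas the paper first passes to tubes with dense shading and then runs Cauchy--Schwarz at the level of the per-prism unions $Y(P)=\bigcup_{T\in\T(P)}Y(T)$, so its denominator only involves $|Y(P_1)\cap Y(P_2)|_{\de}$, counted without multiplicity. This difference is not cosmetic, and your claimed off-diagonal bound is false in general. The hypotheses place no bound on the number of tubes of $\T(P)$ through a single $\de$-cube: a ``bowtie'' of $N\sim(\De/\de)^t$ tubes through one point $x_0$, with a $t$-Frostman set of directions, satisfies the $t$-Frostman Convex Wolff Axiom in $P$ with error $O(1)$, has $|\T(P)|\sim(\De/\de)^t$, and its shading union has $\sim(\De/\de)^{t+1}$ cubes. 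Now take every $P\in\calP$ to contain $x_0$, with normals forming a $t$-Frostman set in a $\nu/\De$-ball (this satisfies the $t$-FCWA of $\calP$ in $G$ with error $O(1)$), and put such a bowtie through $x_0$ in each prism. For two prisms at dihedral angle $\theta$, the pairs $(T_1,T_2)$ with both tubes roughly perpendicular to the line $P_1\cap P_2$ and in-plane angles within $\theta$ of each other number $\gtrsim N\cdot\theta^t N$ and each contributes $\sim\theta^{-1}$ cubes, so $\sum_{T_1\in\T(P_1),T_2\in\T(P_2)}|Y(T_1)\cap Y(T_2)|_{\de}\gtrsim N^2\theta^{t-1}$. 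Summing over the $\sim|\calP|^2$ pairs at angle $\theta\sim\nu/\De$ already gives $\gtrsim|\calP|^2(\De/\de)^{2t}(\De/\nu)^{1-t}$, which exceeds your claimed off-diagonal bound $|\calP|^2(\De/\de)^{t+1}(\de/\nu)^t$ by a factor $(\nu/\de)^{2t-1}$ whenever $t>1/2$. Consequently the tube-level Cauchy--Schwarz is intrinsically lossy here (the counting function $f(Q)=\#\{T:Q\in Y(T)\}$ is very far from flat), and no amount of care in the $(\alpha,\beta)$ summation can recover the stated conclusion; the difficulty is not, as you suggest, merely at the boundary regimes of the dyadic sums.

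Two further points. First, your count of ``tubes of $\T(P_2)$ at angle $\sim\beta$ from $T_1$'' cannot be obtained directly from the $t$-FCWA: a direction sector is not a convex subset of $P_2$, so one must localize to a neighborhood of $T_1\cap P_2$, whose length depends on a third parameter (the angle of $T_1$, or of the tubes of $\T(P_2)$, to the line $\ell$ in $P_1\cap P_2$); this is exactly the $\zeta$-pigeonholing the paper performs, and it is invisible to your two-parameter $(\alpha,\beta)$ decomposition. Second, even granting your off-diagonal bound, your final constant is $C_3^{-2}$ rather than the stated $C_3^{-1}$; that weaker constant would still suffice for the applications in this paper, but it does not match the lemma (note also that the statement's $C_4^{2}$ and ``$C_i\le\de^{\eta}$'' are sign typos for $C_4^{-2}$ and $C_i\le\de^{-\eta}$). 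The fix is structural: refine first to tubes with $|Y(T)|_{\de}\gtrsim(C_3C_4)^{-1}\De/\de$, apply Cauchy--Schwarz to the prism unions $Y(P)$, and bound $|Y(P_1)\cap Y(P_2)|_{\de}$ inside the slab $V=P_1\cap P_2$ by pigeonholing on the angle $\zeta$ of the tubes of $\T(P_2)$ to a line of $V$ and applying the $t$-FCWA to the resulting tube-neighborhood of that line, as the paper does.
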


We start the proof of (B) following the standard Cordoba argument, but to estimate the size of the intersection of the shadings between two different prisms requires a little more work. We must bound $|Y(P) \cap Y(P')|_{\de}$ from above for two prisms intersecting in some angle $\theta$. Denote the intersection of the prisms by $V$. It suffices to bound $|Y(P) \cap V|_{\de}$ from above. After pigeonholing on the set of tubes supporting $Y(P)$, we may assume they all make the same angle with $V$. If that angle is close to $\de$, then they are all contained in $V$ but we have fewer tubes intersecting $V$, since the tubes satisfy the $t$-Frostman Convex Wolff Axiom. If the angle is close to $1$, we have all tubes intersecting $V$ but the size of the intersection of each with $V$ is smaller. In both extremal cases, we arrive at the desired upper bound for the size of intersection. The intermediate angle case follows by combining the reasoning of the two extremal cases.

\begin{proof}
    We begin by reducing the set of tubes in each prism to ensure the shading is uniformly dense. Let $\T'(P) = \{T \in \T(P) : |Y(T)|_{\de} \ge 2(C_3C_4)^{-1}\frac{\De}{\de}\}$. With a union bound and our assumption on $|Y(T)|_{\de}$, we have that \[\left|\bigcup_{T \in \T \setminus \T'(P)} Y(T)\right|_{\de} \le \frac{1}{2}C_3^{-1}C_4^{-1}\frac{\De}{\de}|\T(P)| \le \frac{1}{2}\left|\bigcup_{T \in \T(P)} Y(T)\right|_{\de}.\]
        We conclude that $\left|\bigcup_{T \in \T'(P)} Y(T)\right|_{\de} \ge \frac{1}{2} C^{-1}_4 \left(\frac{\De}{\de}\right)^{t+1}$ and hence $|\T'(P)| \gtrsim C^{-1}_4 \left(\frac{\De}{\de}\right)^{t}$. As $\T(P)$ has multiplicity $\le C_3 \left(\frac{\De}{\de}\right)^t$, we see that $|\T'(P)| \ge (C_3 C_4)^{-1} |\T(P)|$, so since $\T(P)$ satisfies the $t$-Frostman Convex Wolff Axiom with error $C_2$, $\T'(P)$ satisfies the $t$-Frostman Convex Wolff Axiom with error $\sim C_2C_3C_4$. We carry this argument out for each prism $P$.

        Now, we apply a Cordoba style argument. Denote by $Y(P) = \bigcup_{T \in \T(P)} Y(T)$. By Cauchy-Schwarz, we know that \begin{equation}\label{eqn:meerkat1}\left|\bigcup_{P \in \calP} Y(P)\right|_{\de} \ge \frac{\left(\sum_{P \in \calP }|Y(P)|_{\de} \right)^2}{\sum_{P_1, P_2} |Y(P_1) \cap Y(P_2)|_{\de}} \gtrsim C_{4}^{2} |\calP|^2 \left(\frac{\De}{\de}\right)^{2t+2}\left(\sum_{P_1, P_2} |Y(P_1) \cap Y(P_2)|_{\de}\right)^{-1}.\end{equation}
             
        As is standard for these sorts of arguments, we dyadically pigeonhole on the incidence angle as follows: \begin{align*}\sum_{P_1, P_2} |Y(P_1) \cap Y(P_2)|_{\de} &= \sum_{P_1 \in \calP} \sum_{j = 0}^{~|\log (\de/\De)|^{-1}} \sum_{\substack{P_2 \in \calP \\ \angle(n(P_1), n(P_2)) \in [2^j\frac{\de}{\De}, 2^{j+1}\frac{\de}{\De}) }} |Y(P_1) \cap Y(P_2)|_{\de}\\ &\le |\calP||\log \de|^{-1}\sup_{0 \le j \le |\log \de/\De|^{-1}} \sum_{\substack{P_2 \in \calP \\ \angle(n(P_1), n(P_2)) \in [2^j\frac{\de}{\De}, 2^{j+1}\frac{\de}{\De}) }} |Y(P_1) \cap Y(P_2)|_{\de}.\end{align*}
        If two prisms $P_1, P_2 \in \calP$ intersecting with angle $\sim 2^{j}\frac{\de}{\De}$, then $P_2$ is contained in the $\sim 2^j\de \times \De \times \De$-neighborhood of $P_1$. Applying the fact that $\calP$ satisfies the $t$-Frostman Convex Wolff Axiom in $G$ with error $C_1$, we conclude that \[\left|\left\{ P_2 \in \calP : Y(P_1) \cap Y(P_2) \neq \emptyset, \angle(n(P_1), n(P_2)) \lesssim 2^j \frac{\de}{\De}\right\}\right| \le C_1|\calP|\left(\frac{2^j\de}{\nu}\right)^t.\]

        It follows that for each $j$ between $0$ and $|\log \de|^{-1}$, \[\sum_{\substack{P_2 \in \calP \\ \angle(n(P_1), n(P_2)) \in [2^j\frac{\de}{\De}, 2^{j+1}\frac{\de}{\De}) }} |Y(P_1) \cap Y(P_2)|_{\de} \le C_1|\calP|\left(\frac{2^j\de}{\nu}\right)^t \sup_{\substack{P_2 \in \calP \\ \angle(n(P_1), n(P_2)) \in [2^j\frac{\de}{\De}, 2^{j+1}\frac{\de}{\De}) }} |Y(P_1) \cap Y(P_2)|_{\de}\]
        Comparing with (\ref{eqn:meerkat1}), we see that the conclusion of the lemma would follow from proving that for each $P_1, P_2$ intersecting at angle $\theta \sim \frac{2^j\de}{\De}$, \begin{equation}\label{eqn:621}|Y(P_1) \cap Y(P_2)|_{\de} \le |\log \de| C_2C_3 2^{-jt} \left(\frac{\De}{\de} \right)^{t+1}.\end{equation}It remains to prove (\ref{eqn:621}).

        Since $P_1$ intersects $P_2$ at angle $\theta$, $P_1 \cap P_2$ is contained in a $\de \times \frac{\de}{\theta} \times \De$ prism $V$. Denote by $\ell$ a line of length $\De$ in this prism. Dyadically pigeonholing, we refine $\T'(P_2)$ to some subcollection $\T''(P_2)$ for which $\angle(T, \ell) \sim \zeta$ for each $T \in \T''(P_2)$ and \[\sum_{T \in \T''(P_2)} |V \cap Y(P_2)|_{\de} \ge |\log \de|^{-1} \sum_{T \in \T'(P_2)} |V \cap Y(P_2)|_{\de}.\]
        Note that the summands of $\sum_{T \in \T''(P_2)} |V \cap Y(P_2)|_{\de}$ must fall in the $\De \zeta + \frac{\de}{\theta} \sim \max\left(\De \zeta, \frac{\de}{\theta}\right)$-neighborhood $W$ of $\ell$ and each tube intersects $V$ in a $\de$-tube segment of length $L = \min\left(\frac{\de}{\theta \zeta}, \De\right)$. First, suppose that $\De \zeta \ge \frac{\de}{\theta}$. Equivalently, $\De \ge \frac{\de}{\theta \zeta}$, so for each $T \in \T''(P_2)$, $|T \cap V|_{\de} \sim \frac{L}{\de} = \frac{1}{\theta\zeta}$. Since $\T(P_2)$ satisfies the Frostman Convex Wolff Axiom in $P$ with multiplicity $\le C_3\left(\frac{\De}{\de}\right)^t$ and error $C_2$, we conclude that \begin{equation}\label{eqn:622}|\T''(P_2)[W]| \le |\T(P_2)| \lesssim C_2C_3\left(\frac{\De\zeta}{\de}\right)^t.\end{equation}Since we assumed $\De \zeta \ge \frac{\de}{\theta}$, we know that $(\theta \zeta)^{t-1} \le \left(\frac{\De}{\de}\right)^{1-t}$. Recall also that $\theta \sim 2^j\frac{\de}{\De}$. Applying these bounds and (\ref{eqn:622}), we union bound to see \[|Y(\T''(P_2)[W]) \cap V|_{\de} \le C_2C_3\left(\frac{\De\zeta}{\de}\right)^t(\theta \zeta)^{-1} \le \theta^{-t}(\zeta\theta)^{t-1}\left(\frac{\De}{\de}\right)^t \le \theta^{-t}\frac{\De}{\de} \le 2^{-jt}\left(\frac{\De}{\de}\right)^{t+1}.\]We confirm (\ref{eqn:621}) in this case. 
        
        Now suppose that $\De \zeta < \frac{\de}{\theta}$. Then the tubes incident to $V$ must lie in $2V$ so similarly to the previous paragraph, we have $|\T(P_2)[2V]| \lesssim C_2 C_3 \theta^{-t}$, while $|T \cap V|_{\de} \le |T|_{\de} = \frac{\De}{\de}$. We conclude that $|Y(\T(P_2)[2V])|_{\de} \lesssim C_2 C_3 \theta^{-t} \frac{\De}{\de} \le C_2 C_3 2^{-jt} \left(\frac{\De}{\de} \right)^{t+1}$. Either way, we have (\ref{eqn:621}).
\end{proof}
\begin{lemma*}[\textbf{\ref{lem:RFPC}}]
    Fix $\De \in (\de, 1]$ and $\nu \in (\De, 1]$. Suppose that $\calP$ is a collection of $\de \times \De \times \De$ prisms satisfying the $t$-Frostman Convex Wolff Axiom with error $C_1$ inside some $G$ a $\nu \times \De \times \De$ prism. Suppose $\calP$ is equipped with a $C^{-1}_2$ dense shading $Y$. Then \[\left|\bigcup_{P \in \calP} Y(P)\right|_{\de} \gtrsim |\log \de|^{-1} C_1^{-1}C_2^{-2} \left(\frac{\nu}{\de}\right)^{t}\left(\frac{\De}{\de}\right)^2.\]
\end{lemma*}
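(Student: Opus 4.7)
The plan is a standard Cauchy--Schwarz / C\'ordoba-style $L^2$ argument that parallels the proofs of Lemmas \ref{lem:RFPA} and \ref{lem:RFPB}, but is simpler because the shading is controlled directly on each prism rather than through an intermediate collection of tubes. Setting $Y(P) := $ the shading on $P$, the starting inequality is
\[
\left|\bigcup_{P\in\calP} Y(P)\right|_{\de}
\;\ge\;
\frac{\left(\sum_{P \in \calP} |Y(P)|_{\de}\right)^2}{\sum_{P_1,P_2 \in \calP} |Y(P_1)\cap Y(P_2)|_{\de}}.
\]
The numerator is immediate: each $\de\times\De\times\De$ prism contains $\sim (\De/\de)^2$ dyadic $\de$-cubes and the shading is $C_2^{-1}$ dense, so $\sum_P |Y(P)|_{\de} \gtrsim C_2^{-1}|\calP|(\De/\de)^2$.

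To bound the denominator, I will dyadically pigeonhole on the angle $\theta := \angle(n(P_1),n(P_2))$ between the normals at the scales $\theta \in [2^{j}\de/\De, 2^{j+1}\de/\De)$ for $j=0,1,\dots,O(\log(\nu/\de))$. Two ingredients are needed at each dyadic scale. First, a purely geometric bound on the intersection of two plates: if $P_1, P_2$ are $\de\times\De\times\De$ prisms meeting at angle $\theta\ge \de/\De$, then $P_1\cap P_2$ is contained in a box of dimensions $\sim \frac{\de}{\theta}\times\De\times\de$, so
\[
|Y(P_1)\cap Y(P_2)|_{\de}\;\le\;|P_1\cap P_2|_{\de}\;\lesssim\;\frac{\De}{\theta\,\de}
\;\sim\;2^{-j}(\De/\de)^{2}.
\]
Second, a counting bound from the $t$-Frostman Convex Wolff Axiom: since any $P_2$ with $\angle(n(P_1),n(P_2))\lesssim\theta$ is contained in the $(\theta\De)$-neighbourhood of the centre plane of $P_1$ inside $G$, which is a convex set of dimensions $\sim \theta\De\times\De\times\De$ inside the $\nu\times\De\times\De$ prism $G$, the $t$-FCWA for $\calP$ inside $G$ gives
\[
|\{P_2\in\calP:\angle(n(P_1),n(P_2))\lesssim\theta\}|\;\le\;C_1\left(\frac{\theta\De}{\nu}\right)^{\!t}|\calP|.
\]

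Substituting $\theta\sim 2^{j}\de/\De$ and multiplying these two bounds, for each fixed $P_1$ and dyadic $j$,
\[
\sum_{\substack{P_2:\,\angle(n(P_1),n(P_2))\\ \in[2^j\de/\De,\,2^{j+1}\de/\De)}}\!\!|Y(P_1)\cap Y(P_2)|_{\de}
\;\lesssim\;
C_1\,|\calP|\;2^{j(t-1)}\!\left(\frac{\de}{\nu}\right)^{\!t}\!(\De/\de)^{2}.
\]
Because $t\le 1$ we have $2^{j(t-1)}\le 1$, so the supremum over $j$ is $\lesssim C_1|\calP|(\de/\nu)^t (\De/\de)^2$. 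Summing over the $O(|\log\de|)$ dyadic scales of $\theta$ and over $P_1\in\calP$ gives
\[
\sum_{P_1,P_2}|Y(P_1)\cap Y(P_2)|_{\de}\;\lesssim\;|\log\de|\,C_1|\calP|^{2}(\de/\nu)^{t}(\De/\de)^{2},
\]
and plugging this together with the lower bound on the numerator into the Cauchy--Schwarz inequality yields precisely
\[
\left|\bigcup_{P\in\calP}Y(P)\right|_{\de}\;\gtrsim\;|\log\de|^{-1}C_1^{-1}C_2^{-2}(\nu/\de)^{t}(\De/\de)^{2}.
\]

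There is no real obstacle here; the only nonroutine input is the geometric computation of $|P_1\cap P_2|_{\de}$ for two plates meeting transversely, which is a direct slab-intersection estimate in $\R^{3}$, and the $t$-FCWA count, which is applied in $G$ exactly as in the proof of Lemma \ref{lem:RFPB}. In particular, no pigeonholing on a tube shading is required, which is why the final bound is cleaner than in that lemma.
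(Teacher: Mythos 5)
Your proposal is correct and follows essentially the same route as the paper's proof: a Cauchy--Schwarz (C\'ordoba-style) estimate, dyadic pigeonholing on the angle between normals, the plate-intersection bound $|P_1\cap P_2|_{\de}\lesssim \frac{\De}{\theta\de}$, and the $t$-Frostman Convex Wolff Axiom count $\lesssim C_1\theta^t(\De/\nu)^t|\calP|$ inside $G$, with the same use of $t\le 1$ and $\theta\ge\de/\De$ to absorb the $\theta$-dependence. No substantive differences.
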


\begin{proof}
    This follows a similar but fortunately simpler Cordoba style argument as Lemma \ref{lem:RFPB}. We again see that \begin{equation}\label{eqn:elk2}\left|\bigcup_{P \in \calP} Y(P)\right|_{\de} \ge \frac{|\calP|^2 (\inf_{P \in \calP} |Y(P)|_{\de})^2}{\sum_{P_1, P_2 \in \calP} |Y(P_1) \cap Y(P_2)|_{\de}} \ge C_2^{-2}\left(\frac{\De}{\de}\right)^2 \frac{|\calP|^2(\De/\de)^2}{\sum_{P_1, P_2 \in \calP} |Y(P_1) \cap Y(P_2)|_{\de}}.\end{equation}As in Lemmas \ref{lem:RFPA} and \ref{lem:RFPB}, we have that \begin{equation}\label{eqn:elk1}\sum_{P_1, P_2 \in \calP} |Y(P_1) \cap Y(P_2)|_{\de} \lessapprox_{\de} |\calP| \sup_{P_1 \in \calP} \sup_{\theta \in [\de/\De, 1] \text{ dyadic}} \sum_{\substack{P_2 \in \calP \\ \angle(n(P_1), n(P_2)) \in [\theta, 2\theta]}} |P_1 \cap P_2|_{\de}.\end{equation}For a fixed choice of $P_1 \in \calP$ and dyadic value $\theta \in [\de/\De, 1]$, at most $C_1\frac{(\theta \De^3)^{t}}{(\nu \De^2)^t}|\calP| = C_1 \theta^t \left(\frac{\De}{\nu}\right)^t |\calP|$ many prisms $P_2$ intersect $P_1$ with $\angle(n(P_1), n(P_2)) \in [\theta, 2\theta]$. For each such prism, $|P_1 \cap P_2|_{\de} \le \frac{\De}{\theta\de}$. Using this bound in (\ref{eqn:elk1}) and the fact that $\theta \ge \frac{\de}{\De}$, we conclude that \[\sum_{P_1, P_2 \in \calP} |Y(P_1) \cap Y(P_2)|_{\de} \lessapprox_{\de} C_1|\calP|^2 \theta^t \left(\frac{\De}{\nu}\right)^t \frac{\De}{\theta\de} \le C_1|\calP|^2 \left(\frac{\de}{\nu}\right)^t\left(\frac{\De}{\de}\right)^{2}.\]Comparing with (\ref{eqn:elk2}), we see that \[\left|\bigcup_{P \in \calP} Y(P)\right|_{\de} \gtrapprox_{\de} C_1^{-1}C_2^{-2}\left(\frac{\nu}{\de}\right)^t\left(\frac{\De}{\de}\right)^{2}.\]
\end{proof}

\subsection{The proof of Lemma \ref{lem:brush}}\label{subsec:brush}

Lemma \ref{lem:brush} is an example of a brush argument, where we construct a family of prisms satisfying the $t$-Frostman Convex Wolff Axiom with small error by taking almost parallel prisms all incident to a common ``stem'' prism. Here, we apply it for families of prisms with a shading supported on unions of tubes. Unlike the usual brush argument, we only aim to build the set of bristles of the brush and prove they satisfy the $t$-Frostman Convex Wolff Axiom. 

\begin{lemma*}[\textbf{\ref{lem:brush}}]
    For any $\tau > 0$ there exists $\eta' > 0$ such that for any $\beta_2 > 0$ there exists $\de_0> 0$ such that the following holds for all $\de \in (0, \de_0)$, any $\De \in (\de, 1)$ and any $\theta \ge \frac{\de}{\De}$.

    Let $\calP$ be a set of $\de \times \De \times \De$ prisms, $\T$ a set of $\de \times \de \times \De$ tube segments, and $Y$ a shading on $\T$. Suppose that $\T = \bigcup_{P \in \calP} \T(P)$, where $\T(P)$ satisfies the $t$-Frostman Convex Wolff Axiom in $P$ with error $\de^{-\eta'}$ and multiplicity $\le \de^{-\eta'}\left(\frac{\De}{\de}\right)^t$ and $|Y(\T(P))|_{\de} \ge \de^{\eta'}\left(\frac{\De}{\de}\right)^{t+1}$. Suppose furthermore that for each $\de$-cube $Q \in Y(\T)$, $\{n(P) : Q \in Y(\T(P))\} \subset S^2$ is $\beta_2$-broadly spread in a $\theta$-ball. 

    Then for any $P_0 \in \calP$, if $G(P_0)$ is the concentric $3\theta\De \times 3\De \times 3\De$ prism, there exists some $\mathscr{S}_{P_0} \subset \calP$ so that each $P' \in \mathscr{S}_{P_0}$ is contained in $G(P_0)$, $\mathscr{S}_{P_0}$ satisfies the $t$-Frostman Convex Wolff Axiom in $G(P_0)$ with error $\de^{-\tau}$, and $Y(\T(P')) \cap Y(\T(P_0)) \neq \emptyset$ for each $P' \in \mathscr{S}_{P_0}$.
\end{lemma*}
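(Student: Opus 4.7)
The plan is to take $\mathscr{S}_{P_0}$ to be (after a small refinement) the candidate set $\mathscr{S}_{P_0}^{\text{cand}} := \{P \in \calP : P \subset G(P_0),\ Y(\T(P)) \cap Y(\T(P_0)) \neq \emptyset\}$. The argument splits into a cardinality lower bound coming from broadness, and a non-concentration bound obtained from Lemma \ref{lem:WZ2546} together with a contradiction argument if the Katz–Tao Convex Wolff Axiom fails badly.

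For the cardinality lower bound, for each $Q \in Y(\T(P_0))$ the $\beta_2$-broad spreading of $V_Q := \{n(P) : Q \in Y(\T(P))\}$ in a $\theta$-ball (applied with $\kappa = 1$) produces $P' \neq P_0$ with $Q \in Y(\T(P'))$ and $\angle(n(P'), n(P_0)) \gtrsim \theta$. An elementary geometric check (using $\theta \ge \de/\De$) shows $P' \subset G(P_0)$ and that $P' \cap P_0$ is contained in a $\de \times (\de/\theta) \times \De$ tubelet, so $|Y(\T(P')) \cap Y(\T(P_0))|_{\de} \lesssim \De/(\theta\de)$. Double-counting the incidence pairs $(P,Q)$ with $Q \in Y(\T(P)) \cap Y(\T(P_0))$ then gives
$$|\mathscr{S}_{P_0}^{\text{cand}}| \gtrsim \frac{|Y(\T(P_0))|_{\de}}{\De/(\theta\de)} \gtrsim \de^{\eta'}\theta\left(\frac{\De}{\de}\right)^t \ge \de^{\eta'}\left(\frac{\theta\De}{\de}\right)^t = \de^{\eta'}\left(\frac{\vol(G(P_0))}{\vol(P)}\right)^t,$$
where the second-to-last inequality uses $t \le 1$ and $\theta \le 1$.

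To obtain the $t$-FCWA, apply Lemma \ref{lem:WZ2546} to $\mathscr{S}_{P_0}^{\text{cand}}$ inside ambient $G(P_0)$ (after rescaling to the unit ball), producing $\mathscr{S}_{P_0}' \subset \mathscr{S}_{P_0}^{\text{cand}}$ with $|\mathscr{S}_{P_0}'| \gtrapprox_{\de} |\mathscr{S}_{P_0}^{\text{cand}}|$ and a cover $\calW$ factoring $\mathscr{S}_{P_0}'$ from above with respect to the $t$-KT-CWA with error $\lessapprox_{\de} 1$ and from below with respect to the $t$-FCWA with error $\lessapprox_{\de} 1$. The main claim is $C := C_{t\text{-KT-CW}}(\mathscr{S}_{P_0}') \le \de^{-\tau/2}$; granting this and choosing $\eta' \le \tau/4$, the chain
$$|\mathscr{S}_{P_0}'[V]| \le C\left(\frac{\vol(V)}{\vol(P)}\right)^t = C\left(\frac{\vol(G(P_0))}{\vol(P)}\right)^t\left(\frac{\vol(V)}{\vol(G(P_0))}\right)^t \lessapprox_{\de} \de^{-\tau/2-\eta'}\left(\frac{\vol(V)}{\vol(G(P_0))}\right)^t|\mathscr{S}_{P_0}'|$$
yields the $t$-FCWA with error $\de^{-\tau}$ for $\de_0$ sufficiently small, so $\mathscr{S}_{P_0} := \mathscr{S}_{P_0}'$ works. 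To prove the claim, I argue by contradiction: if $C > \de^{-\tau/2}$, then \eqref{eqn:WZ2546+} gives $|\calW| \lessapprox_{\de} \de^{\tau/2}|\mathscr{S}_{P_0}'|(\vol(W)/\vol(P))^{-t}$, so $\calW$ is quantitatively sparse. Since every $P \in \mathscr{S}_{P_0}'$ lies in some $3W$ and contributes at least one $\de$-cube of $Y(\T(P_0))$ through its tubelet intersection, this gives a lower bound on the shading $Y(\T(P_0))$ concentrating inside $\bigcup_W 3W$. An $L^2$-style incidence count in the spirit of Lemma \ref{lem:RFPA}, applied to the tubes of $\T(P_0)$ inside $P_0 \cap \bigcup_W 3W$, then contradicts the density hypothesis $|Y(\T(P_0))|_{\de} \ge \de^{\eta'}(\De/\de)^{t+1}$ provided $\eta'$ is sufficiently small compared to $\tau$.

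The main obstacle is the final $L^2$ step. The convex covering sets $W \in \calW$ can have various shapes depending on where $\mathscr{S}_{P_0}^{\text{cand}}$ overconcentrates, and the incidence count between $\T(P_0)$ and $\bigcup_W 3W$ must handle all possible dimensions of $W$ uniformly. The natural way forward is to use the factoring of $\mathscr{S}_{P_0}'$ from below by $\calW$ (also supplied by Lemma \ref{lem:WZ2546}) together with the FCWA of $\T(P_0)$ in $P_0$ and the tubelet geometry to perform a two-scale $L^2$ count at the scale of $\calW$ rather than at scale $\de$; a case split on the dimensions of $W$ parallel to the one in Proposition \ref{prop:WZ256.3} may also be necessary. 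Selecting $\eta'$, $\beta_2$, and $\de_0$ in the correct order to absorb all logarithmic losses is routine but technical.
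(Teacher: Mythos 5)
Your candidate set is the wrong object, and the step you flag as the ``main obstacle'' is a genuine gap rather than a technicality. The claim $C_{t\text{-KT-CW}}(\mathscr{S}_{P_0}')\le \de^{-\tau/2}$ is not provable from the hypotheses and is false in general: testing the Katz--Tao condition with $V=G(P_0)$ itself, the claim already forces $|\mathscr{S}_{P_0}^{\text{cand}}|\lesssim \de^{-\tau/2}\left(\frac{\theta\De}{\de}\right)^t$, but nothing in the lemma bounds how many prisms of $\calP$ are incident to $Y(\T(P_0))$ inside $G(P_0)$ from above, nor how they are distributed --- the hypotheses are per-prism conditions on $\T(P)$ plus a pointwise angular condition on normals, with no cardinality cap on $\calP$ and no Convex Wolff hypothesis on $\calP$. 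The broadly-spread condition cannot substitute for one: for small $\beta_2$ it only guarantees partners at separation $\kappa^{1/\beta_2}\theta$, so it is compatible with a constant fraction of the incident prisms at every cube lying in a single thin slab (or fanning through a single $\de\times\frac{\de}{\theta}\times\De$ tubelet), configurations in which the full incident family genuinely fails both the Katz--Tao and Frostman Convex Wolff Axioms; moreover $\eta'$ is chosen before $\beta_2$, so no quantitative use of $\beta_2$ can enter the error anyway. The proposed contradiction also fails structurally: overconcentration of the prisms in the sparse cover $\calW$ from Lemma \ref{lem:WZ2546} does not force the shading $Y(\T(P_0))$ to concentrate, because the sets $3W$ can be slabs nearly parallel to $P_0$ (or simply long in the directions of $P_0$) whose union covers essentially all of $P_0$, and because the refined family $\mathscr{S}_{P_0}'$ need not cover $Y(\T(P_0))$ at all --- the covering property comes from the one transverse prism per cube supplied by broadness, which may be a negligible subfamily of your candidate set. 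So no $L^2$ count against the density hypothesis can close the argument.

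What the paper does differently, and what your proposal is missing, is a selection step: it never works with all incident prisms. After producing, for each cube $Q\in Y(\T(P_0))$, a transverse prism $P_Q$ and the line $\ell_Q$ of its intersection with $P_0$, it pigeonholes on the angle $\zeta$ between $\ell_Q$ and the tubes of $\T(P_0)$, passes to a $5\De\zeta$-separated family of $\De\zeta$-tubes that inherits the $t$-Frostman Convex Wolff Axiom from $\T(P_0)$, chooses $\frac{\de}{\theta\zeta}$-separated shading points on each (recording the Frostman-type separation (\ref{eqn:theendofbeginnings})), and takes exactly one incident transverse prism per chosen point. The Frostman Convex Wolff Axiom for $\mathscr{S}_{P_0}$ is then verified directly, by intersecting an arbitrary convex $W$ with $P_0$ and counting (tubes hit) $\times$ (points per tube) in a case analysis on $\max(\De\zeta,\kappa/\theta)$, with the non-concentration coming from the separations built into the selection rather than from any property of the ambient family $\calP$. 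Your cardinality lower bound via double counting is fine and parallels the paper's count $NM$, but a large incident family is not automatically a non-concentrated one; the pigeonholing on $\zeta$ and the one-prism-per-separated-point selection are the essential missing ideas.
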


We now outline the proof of Lemma \ref{lem:brush}. The proof requires some modification from the standard brush argument, since we need to make sure the bristles of the brush (which are slabs in this setting) are essentially parallel to ensure we can find enough space on the ``stem'' of the brush for all the bristles we need. This requires a round of pigeonholing. We also must take additional care in how (or if) we define a ``stem'' for the brush. Let $P$ be the base slab we are building the brush from. The family of parallel incident prisms mentioned previously intersect $P$ in a collection of $\de \times \frac{\de}{\theta} \times \De$ prisms $\mathcal{S}$ contained in $P$. If elements of $\mathcal{S}$ are all essentially perpendicular to each tube in $P$, then we can use one tube as the ``stem'' on the brush. If the elements of $\mathcal{S}$ are all essentially parallel to each tube in $P$, then they look like the $\frac{\de}{\theta}$-neighborhood of the set of tubes in $P$. We can use the fact that the tube set satisfies the $t$-Frostman Convex Wolff Axiom to find enough prisms. This describes the two extreme cases. The intermediate cases combine the reasoning of the extreme cases. 

\begin{proof}
    Take some $P_0 \in \calP$. For any $\de$-cube $Q \in Y(P_0)$, since $\{n(P) : Q \in Y(\T(P))\}$ is $\beta_2$-broadly spread in a $\theta$ ball for each $Q \in Y(P_0)$, there exists a prism $P_Q$ with $\angle(n(P_Q), n(P_0)) \sim \theta$ and $Q \in Y(P_Q)$. Since $P_Q$ is a $\de \times \De \times \De$ prism, $P_Q \cap P_0$ is contained in a $\sim\de \times \frac{\de}{\theta}\times \De$ \footnote{ The $\sim$ denotes that each constant side length can vary by a $\sim 1$ factor depending on the broadness error} prism $S_A$. We denote by $\ell_Q$ a line in this prism of length $\De$. Take some $\T' \subset \T[P_0]$ on which $|Y(T)|_{\de} \ge \de^{3\eta'}\frac{\De}{\de}$ for each $T \in \T'$ with $|\T'| \ge \de^{3\eta'}|\T|$. this can be done since the shading is $\de^{\eta'}$ dense and supported on $\T[P_0]$. For each tube $T \in \T'$ and each $Q \in Y(\T')$, let $\zeta(T, Q) = \max\left( \angle(T, \ell_Q), \frac{\de}{\De\theta}\right)$. After pigeonholing, we may assume that $\zeta(T, Q) \in [\zeta, 2\zeta)$ for some fixed $\zeta \in [\de, 1]$ and for each $T \in \T^* \subset \T'$ and $Q \in Y^*(\T^*) \subset Y'(\T^*)$ with $|\T^*|\gtrapprox_{\de} |\T'|$ and $|Y^{\ast}(\T^*)| \gtrapprox_{\de} |Y(\T')|$. After another refinement of $\T'$, we can ensure that $|\T^{\ast}| \ge \de^{3\eta'}|\T'|$ and $|Y^{\ast}(T)|_{\de} \gtrapprox_{\de} \de^{6\eta'}\frac{\De}{\de}$ for each $T \in \T^{\ast}$. It follows that $\T^{\ast}$ satisfies the $t$-Frostman Convex Wolff Axiom with error $\de^{-7\eta'}$.
    
    Using a Vitali-style covering argument and dyadic pigeonholing, we can find a set of tubes $\T^{\ast}_{\text{small}} \subset \T^{\ast}$ and a set of $\De\zeta$ tubes $\T^*_{\text{big}}$ a $\sim 1$-balanced, partitioning cover of $\T^{\ast}_{\text{small}}$ such that $|\T^{\ast}_{\text{small}}| \sim |\T^{\ast}|$ and the elements of $\T^{\ast}_{\text{big}}$ are $5\De\zeta$-separated. Since $|\T^{\ast}_{\text{small}}| \sim |\T^{\ast}|$ and $\T^{\ast}$ satisfies the $t$-Frostman Convex Wolff Axiom with error $\de^{-7\eta'}$, we know that as long as $\de$ is sufficiently small, $\T^{\ast}_{\text{small}}$ satisfies the $t$-Frostman Convex Wolff Axiom with error $\sim \de^{-7\eta'} \le \de^{-8\eta'}$. Since the $t$-Frostman Convex Wolff Axiom is inherited upwards, $\T^{\ast}_{\text{big}}$ satisfies the $t$-Frostman Convex Wolff Axiom with error $\le \de^{-8\eta'}$ as well. We denote $M = |\T^{\ast}_{\text{big}}|$ and enumerate the tubes in $\T^{\ast}_{\text{big}}$ as $T_1, T_2, \dots, T_M$. 

    Each $T_i \in \T^*_{\text{big}}$ contains some $T_{\text{small}} \in \T^{\ast}_{\text{small}}$, which supports a $\de^{6\eta'}$-dense shading $Y^*(T)$. It follows that we can find $\frac{\de}{\theta \zeta}$-separated points $x_{i,1}, \dots, x_{i,N}$ with each $x_{i,j}$ contained in $Y^*(T)$ and $N = \max\left(\left\lfloor \de^{6\eta'}\frac{\theta \zeta}{\de} \right\rfloor, 1\right)$. We record a useful Frostman non-concentration condition for these points.

    \begin{equation}\label{eqn:theendofbeginnings}|\{j : x_{i, j} \in Y^*(\lambda) \}| \le \de^{-6\eta'}\rho^tN \text{ for any }\de \times \de \times \rho \text{ tubelet }\lambda \subset T.\end{equation}

    After carrying this out for each of $M$ tubes in $\T^*_{\text{big}}$, we arrive at a collection of $NM$ $\frac{C\de}{\theta \zeta}$-points $x_{i,j}$, each of which is incident to a prism $P_{x_{i,j}}$ making angle $\gtrsim \theta$ with $P_0$. 

    Suppose $P_{x_{i,j}} = P_{x_{k, l}}$. If $i \neq k$ (that is, the points are drawn from different tubes), note that $\ell_{x_{i,j}}$ makes angle $\approx \zeta$ with $T_i$ and with $T_k$. If $\zeta \le \frac{\de}{\theta\De}$, since $S_{x_{k,l}} \cap S_0 = S_{x_{i,j}} \cap S_0$ is a $\de \times \frac{\de}{\theta} \times 1$ prism and $\De \zeta \le \frac{\de}{\theta}$, $T_i, T_k \subset N_{2\De\zeta}(\ell_{x_{i,j}})$, so $T_i \subset N_{5\De\zeta}(T_k)$, a contradiction. If $\zeta > \frac{\de}{\theta\De}$, then $\ell_{x_{i,j}}$ intersects both $T_i$ and $T_k$. It then follows that both $T_i, T_k \subset N_{2\De\zeta}(\ell_{x_{i,j}})$, leading in the same way to a contradiction.
    If $i = k$ but $j \neq l$, then $S_{x_{i, k}} \cap T_i$ and $S_{x_{j, l}} \cap T_i$ are both intervals of length $\frac{\de}{\theta\zeta}$, so since $j \neq l$, they must be disjoint. It follows that $(i, j) = (k, l)$, hence $\mathscr{S}_{P_0} = \left\{S_{x_{i,j}} : i = 1, \dots, M; j = 1, \dots, N\right\}$ consists of $NM$ distinct prisms. This serves as our choice of $\mathscr{S}_{P_0}$. As desired, for each $P_x \in \mathscr{S}_{P_0}$, $x \in Y(\T(P_Q)) \cap Y(\T(P_0))$ and in particular $Y(\T(P_Q)) \cap Y(\T(P_0)) \neq \emptyset$.

    It remains to prove that $\mathscr{S}_{P_0}$ satisfies the $t$-Frostman Convex Wolff Axiom in $G(P)$ with error $\de^{-\tau}$. Suppose $W \subset G(P)$ is convex. For convenience, we denote $\nu = 3\theta \De$ for the rest of the proof. If $\mathscr{S}_{P_0}[W] = \emptyset$, then certainly $|\mathscr{S}_{P_0}[W]| \le \de^{-\tau}(|W|/|P|)^t|\mathscr{S}_{P_0}|$, so assume $\mathscr{S}_{P_0}[W] \neq \emptyset$. This means that $W$ is essentially a $\kappa \times \De \times \De$ prism with $\kappa \in [\de, \nu]$ and $\angle(n(W), n(S_0)) \sim \theta$. It follows that $W \cap S_0 := \tilde{W}$ is approximately a $\de \times \kappa/\theta \times \De$ prism. Furthermore, there exists a length $\De$ line segment $\ell_W$ in $W \cap S_0$ which makes angle $\sim \zeta$ with each $T \in \T^{\ast}_{\text{small}}$. It follows that the tubes from $\T^{\ast}_{\text{big}}$ intersecting $\tilde{W}$ are contained in $N_{\De\zeta}(\tilde{W})$, a $\de \times \max(\De\zeta, \kappa/\theta) \times \De$ prism. 

    If $\max(\De\zeta, \kappa/\theta) < \De^2$, then there are $\lesssim \de^{-8\eta'}\max\left(\De\zeta, \frac{\kappa}{\theta}\right)^{t}\De^{-2t}M$ many such tubes, since $\T^{*}_{\text{big}}$ satisfies the $t$-Frostman Convex Wolff Axiom in $P_0$ with multiplicity $M$ wotj error $\de^{-8\eta'}$. We also see that for each $T \in \T^{\ast}_{\text{small}}$ intersecting $\tilde{W}$, $\tilde{W} \cap T$ has length $\min\left(\frac{\kappa}{\theta \zeta}, \De\right)$ and by (\ref{eqn:theendofbeginnings}) intersects fewer than $\de^{-6\eta'}\min\left(\frac{\kappa}{\theta \zeta}, \De\right)^tN$ many $Q_{i,j}$. From this and the fact that $S_{Q_{i,j}} \subset W$ implies the $Q_{i,j}$ intersects $W$, we conclude that \begin{align*}|\mathscr{S}_{P_0}[W]| &\lesssim \de^{-14\eta'}\min\left(\frac{\kappa}{\theta \zeta}, \De\right)^tN\max\left(\De\zeta, \frac{\kappa}{\theta}\right)^{t}\De^{-2t}M \\ &\quad\quad\le \de^{-14\eta'} \left(\frac{\kappa}{\nu}\right)^t MN = \de^{-14\eta'}(|W|/|P|)^t|\mathscr{S}_{P_0}|.\end{align*}

    If $\max\left(\De\zeta, \frac{\kappa}{\theta}\right) > \De^2$, we trivially bound the number of incident tubes from $\T^*_{\text{small}}$ by $M$. As in the previous case, we use (\ref{eqn:theendofbeginnings}) to bound the number of cubes on each tube by $\de^{-6\eta'}\min\left(\frac{\kappa}{\theta \zeta}, \De\right)^tN$. If $\max\left(\De\zeta, \frac{\kappa}{\theta}\right) = \De\zeta$, then $\zeta \ge \De$ and so $\min\left(\frac{\kappa}{\theta \zeta}, \De\right) = \frac{\kappa}{\theta \zeta} \le \frac{\kappa}{\nu}$ (recall $\nu = \theta \De$). If $\max\left(\De\zeta, \frac{\kappa}{\theta}\right) = \frac{\kappa}{\theta}$, then $\frac{\kappa}{\nu} \ge \De$, so $\min\left(\frac{\kappa}{\theta \zeta}, \De\right) = \De \le \frac{\kappa}{\nu}$. In either case, we see as in the previous paragraph that \[|\mathscr{S}_{P_0}[W]| \lesssim \de^{-14\eta'}\min\left(\frac{\kappa}{\theta \zeta}, \De\right)^tNM \le \de^{-14\eta'} \left(\frac{\kappa}{\nu}\right)^t MN = \de^{-14'\eta}(|W|/|P|)^t|\mathscr{S}_{P_0}|.\]
    Hence, as long as $14\eta' \le \tau$, $\mathscr{S}_{P_0}$ satisfies the $t$-Frostman Convex Wolff Axiom in $G(P)$ with error $\le \de^{-\tau}$. 
\end{proof}

\printbibliography
\end{document}